\newtheorem{theorem}{Theorem}[section]
\newtheorem{lemma}[theorem]{Lemma}
\newtheorem{remark}[theorem]{Remark}
\newtheorem{definition}[theorem]{Definition}
\newtheorem{conjecture}[theorem]{Conjecture}
\newtheorem{proposition}[theorem]{Proposition}
\newtheorem{prop}[theorem]{Proposition}
\newtheorem{corollary}{Corollary}
\newtheorem{example}[theorem]{Example}
\let\oldtocsection=\tocsection
\let\oldtocsubsection=\tocsubsection
\renewcommand{\tocsection}[2]{\hspace{0em}\oldtocsection{#1}{#2}}
\renewcommand{\tocsubsection}[2]{\hspace{1em}\oldtocsubsection{#1}{#2}}
\title[Two-dimensional topological theories]{Two-dimensional topological theories, rational functions and their tensor envelopes}
\author{Mikhail Khovanov}
 \address{Department of Mathematics, Columbia University, New York, NY 10027, USA}
 \email{\href{mailto:khovanov@math.columbia.edu}{khovanov@math.columbia.edu}}
 \author{Victor Ostrik}
 \address{Department of Mathematics, University of Oregon, Eugene, OR 97403, USA
 \newline
 \indent Laboratory of Algebraic Geometry,
National Research University Higher School of Economics, Moscow, Russia}
 \email{\href{mailto:ostrik@uoregon.edu}{ostrik@uoregon.edu}}
 \author{Yakov Kononov}
 \address{Department of Mathematics, Columbia University, New York, NY 10027, USA,
 \newline
 \indent  Kharkevich Institute for Information Transmission Problems, Moscow, 127994, Russia}
 \email{\href{mailto:ya.kononoff@gmail.com}{ya.kononoff@gmail.com}}
\date{December 1, 2020}
\begin{document}

\begin{abstract}
 We study generalized Deligne categories and related tensor envelopes for the universal two-dimensional cobordism theories described by rational functions, recently defined by Sazdanovic and one of the authors.
\end{abstract}

\def\R{\mathbb R}
\def\Q{\mathbb Q}
\def\Z{\mathbb Z}
\def\N{\mathbb N}
\def\C{\mathbb C}
\def\S{\mathbb S}
\def\CP{\mathbb P}
\renewcommand\SS{\ensuremath{\mathbb{S}}}
\def\l{\lbrace}
\def\r{\rbrace}
\def\o{\otimes}
\def\lra{\longrightarrow}
\def\Ext{\mathrm{Ext}}
\def\mc{\mathcal}
\def\mf{\mathfrak}
\def\uFr{\underline{\mathrm{Fr}}}
\def\mcC{\mathcal{C}}
\def\RB{\mathrm{RB}}

\def\mfgl{\mathfrak{gl}}
\def\mfglN{\mathfrak{gl}_N}
\def\Cat{\mathrm{Cat}}
\def\ovF{\overline{F}}
\def\ovb{\overline{b}}
\def\tr{{\mathrm{tr}}}
\def\det{\mathrm{det}}
\def\Mantwo{\mathrm{Man}_2}
\def\tral{\tr_{\alpha}}
\def\one{\mathbf{1}}   
\def\nchar{\mathrm{char}\,}  
\def\uRep{\underline{\mathrm{Rep}}}

\newcommand{\myrightleftarrows}[1]{\mathrel{\substack{\xrightarrow{#1} \\[-.9ex] \xleftarrow{#1}}}}

\def\lra{\longrightarrow}
\def\kk{\mathbf{k}}  

\def\gdim{\mathrm{gdim}}  
\def\rk{\mathrm{rk}}
\def\mmat{\mathrm{Mat}}

\def\Pa{\mathrm{Pa}}   
\def\Cob{\mathrm{Cob}}
\def\Cobtwo{\Cob_2}   
\def\Kob{\mathrm{Kob}}
\def\PCobn{\mathrm{PCob}}  
\def\Cobal{\Cob_{\alpha}}  
\def\Cobalp{\Cob_{\alpha}'}
\def\Kobal{\Kob_{\alpha}}   
\def\PCob{\mathrm{PCob}}
\def\PKob{\mathrm{PKob}}
\def\PCobal{\PCob_{\alpha}}
\def\PKobal{\PKob_{\alpha}}
\def\KPobal{\mathrm{PKob}_{\alpha}}
\def\Kar{\mathrm{Kar}}   
\def\delcat{\mathrm{Rep}(S_t)}   
\def\delcatn{\mathrm{Rep}(S_n)}   
\def\delcatk{\mathrm{Rep}(S_k)}   
\def\delcatbart{\underline{\mathrm{Rep}}(S_t)}

\def\vcobal{\mathrm{VCob}_{\alpha}} 
\def\scobal{\mathrm{SCob}_{\alpha}} 
\def\dcobal{\mathrm{DCob}_{\alpha}} 
\def\dcobalb{\mathrm{DCob}_{\alpha}^{\bullet}}
\def\udcobal{\underline{\mathrm{DCob}}_{\,\alpha}}  
\def\cobal{\mathrm{Cob}_{\alpha}} 

\def\vcob{\mathrm{VCob}}
\def\scob{\mathrm{SCob}}
\def\dcob{\mathrm{DCob}}
\def\udcob{\underline{\mathrm{DCob}}}

\def\dmod{\mathrm{-mod}}   
\def\pmod{\mathrm{-pmod}}    

\newcommand{\brak}[1]{\ensuremath{\left\langle #1\right\rangle}}
\newcommand{\oplusop}[1]{{\mathop{\oplus}\limits_{#1}}}
\newcommand{\ang}[1]{\langle #1 \rangle }
\newcommand{\angf}[1]{\langle #1 \rangle }
\newcommand{\bbn}[1]{\mathbb{B}^{#1}}

\newcommand{\pseries}[1]{\kk\llbracket #1 \rrbracket}
\newcommand{\rseries}[1]{R\llbracket #1 \rrbracket}
\newcommand{\ovfi}[1]{\overline{F_{#1}}}
\newcommand{\delcatv}[1]{\mathrm{Rep}(S_{#1})}
\newcommand{\delcatbar}[1]{\underline{\mathrm{Rep}}(S_{#1})}
\newcommand{\undrep}{\underline{\mathrm{Rep}}}
\newcommand{\mfg}{\mathfrak{g}}


\newcommand{\cC}{{\mathcal C}} 
\newcommand{\cA}{{\mathcal A}}
\newcommand{\cZ}{{\mathcal Z}}
\newcommand{\fA}{{\mathfrak A}}
\newcommand{\cD}{{\mathcal D}}
\newcommand{\M}{{\mathcal M}}
\newcommand{\be}{{\bf 1}}
\newcommand{\bl}{{\bf s}}
\newcommand{\BZ}{{\mathbb Z}}
\newcommand{\BN}{{\mathbb N}}
\newcommand{\BQ}{{\mathbb Q}}
\newcommand{\BC}{{\mathbb C}}
\newcommand{\eps}{{\varepsilon}}
\newcommand{\sq}{$\square$}
\newcommand{\bi}{\bar \imath}
\newcommand{\bj}{\bar \jmath}
\newcommand{\Ve}{\mbox{Vec}}
\newcommand{\sVec}{\mbox{sVec}}
\newcommand{\Rep}{\mathrm{Rep}}
\newcommand{\FP}{{\rm FPdim}}
\newcommand{\Fun}{\mbox{Fun}}
\newcommand{\Ver}{\mbox{Ver}}
\newcommand{\Hom}{\mathrm{Hom}}
\newcommand{\Mod}{\mbox{Mod}}
\newcommand{\Bimod}{\mbox{Bimod}}
\newcommand{\Ind}{\mbox{Ind}}
\newcommand{\id}{\mbox{id}}
\newcommand{\inv}{\mbox{inv}}
\newcommand{\ot}{\otimes}
\newcommand{\Id}{\mbox{Id}}
\newcommand{\End}{\mbox{End}}
\newcommand{\iHom}{\underline{\mbox{Hom}}}
\newcommand{\PD}{\mathbb{PD}}
\newcommand{\PS}{\mathbb{PS}}

\def\YK#1{{\color{blue}[YK: #1]}}
\def\MK#1{{\color{red}[MK: #1]}}%
\def\VO#1{{\color{green}[VO: #1]}}%

\maketitle
\tableofcontents

\section{Introduction}

Throughout the paper we work over a field $\kk$, occasionally specializing to a characteristic zero field.

The Deligne category $\delcat$, where $t$ ranges over elements of $\kk$, interpolates between categories of finite-dimensional representations $\Rep(S_n)$ of the symmetric group $S_n$ over $\kk$, for various  $n$~\cite{D1,CO}. It is  a rigid symmetric monoidal Karoubi-closed $\kk$-linear category that depends on a parameter  $t\in \kk$. The Deligne category is equipped with a natural symmetric  trace  form $\tr$ that allows to form the ideal $J_t\in \delcat$ of  negligible morphisms. A morphism $x:a\to b$ is  negligible  if for  any  $y:b\to a$ the trace   $\tr(yx)=0$. The  ideal $J_t$ is non-trivial only when $t=n$ is a non-negative integer, and the quotient  category
\begin{equation}
    \delcatbar{n}\ := \ \delcatn/J_n
\end{equation}
 is equivalent to the  category of symmetric  group  representations in characteristic $0$, with some modifications needed in characteristic $p$~\cite{H}. For the other values of $t$ the ideal $J_t$ is  trivial and $\delcatbar{t}$ is equivalent  to  $\delcat$.

\vspace{0.07in}

As observed by Comes~\cite{C},  there is a functor from the category $\Cobtwo$ of oriented two-dimensional cobordisms between  one-manifolds to the Deligne category. Modifications of this functor, coupled with the universal construction of two-dimensional topological theories~\cite{BHMV,Kh1}, lead to generalizations   of the  Deligne category $\delcat$ and of its quotient $\delcatbar{t}$ by the negligible ideal~\cite{KS}.

Objects in $\Cobtwo$ are  non-negative integers $n\in \Z_+$ and morphisms from $n$  to  $m$ are oriented two-dimensional cobordisms from the union of $n$  circles to the union of $m$ circles, up to rel boundary  diffeomorphisms~\cite{KS}.

\vspace{0.07in}

Working over $\kk$, choose a rational function and its power series expansion   (the series of $\alpha$)
\begin{equation}\label{eq_Z_rat}
    Z_{\alpha}(T) = \frac{P(T)}{Q(T)} =\sum_{n\ge 0}\alpha_n T^n, \
    \alpha_n \in \kk, \  \alpha=(\alpha_0,\alpha_1,\dots) ,
\end{equation}
where polynomials $P(T),Q(T)\in \kk[T]$ are relatively prime and
\begin{equation}\label{eq_NMK}
    N = \deg P(T) , \ M = \deg Q(T), \ K:=\max(N+1,M).
\end{equation}
We may also write $P_{\alpha}(T)$ and $Q_{\alpha}(T)$ in lieu of $P(T)$ and $Q(T)$ to emphasize dependence on $\alpha$, and normalize so that
\begin{equation}\label{eq_Z_rat_9}
    Z_{\alpha}(T) = \frac{P_{\alpha}(T)}{Q_{\alpha}(T)} =\sum_{n\ge 0}\alpha_n T^n, \ \ (P_{\alpha}(T),Q_{\alpha}(T))=1, \ \ Q_{\alpha}(0)=1.
\end{equation}
With this normalization, $P_{\alpha}(T)$ and $Q_{\alpha}(T)$ are uniquely determined by $\alpha$. We may refer to $\alpha$ as a (rational) sequence and to $Z_{\alpha}(T)$ as its associated series. To keep track of $\alpha$, we may also denote
\begin{equation}\label{eq_NMK_alpha}
    N_{\alpha} = \deg P_{\alpha}(T) , \ M_{\alpha} = \deg Q_{\alpha}(T), \ K_{\alpha}:=\max(N_{\alpha}+1,M_{\alpha})
\end{equation}
in place of (\ref{eq_NMK}).

One can "linearize" the category $\Cobtwo$ of two-dimensional cobordisms using the sequence $\alpha$.  To do that, first allow linear combinations of cobordisms with coefficients in $\kk$ and also evaluate a closed connected oriented surface  of genus $g$, when it is a component of a cobordism, to $\alpha_g\in \kk$. This results  in the $\kk$-linear tensor category $\vcobal$, which has the  same  objects  $n\in \Z_+$ as  $\Cobtwo$ (this category is denoted $\Cobalp$ in~\cite{KS}).

In the notation $\vcobal$ letter $V$  stands for  \emph{viewable} or  $\emph{visible}$. A 2D cobordism  $x$ is called \emph{viewable} or \emph{visible} if it has no closed components, that is, any connected component of $x$  has  non-empty boundary.
The above evaluation of closed components allows to reduce a morphism from $n$ to $m$ to a linear combination  of viewable  cobordisms. Thus,
morphisms in $\vcobal$ are  $\kk$-linear combinations of viewable cobordisms, with  the composition of  morphisms given by composition of cobordisms and the above \emph{$\alpha$-evaluation} applied to all closed components of the composition.  Note that  hom spaces in $\vcobal$ are infinite-dimensional, since a component of a cobordism can have any number  of handles.

\vspace{0.07in}

A further reduction in the size of the  category is given by considering the ideal $J_{\alpha}\subset \vcobal$ of negligible morphisms, relative to the trace form $\tral$ associated with $\alpha$ and forming the quotient category
\begin{equation}
    \cobal := \vcobal/J_{\alpha}.
\end{equation}
In this paper we  call $\cobal$ the \emph{gligible quotient} of  $\vcobal$. We choose this  terminology over more cumbersome \emph{non-negligible quotient} and over  \emph{radical quotient}, for the latter may be  somewhat ambiguous.

The trace form is given on a cobordism $y$ from $n$ to $n$ by closing it via $n$ annuli connecting $n$ top and $n$ bottom circles of the boundary  of  $y$ into a closed oriented surface $\widehat{y}$ and  applying $\alpha$,
\begin{equation}
  \tral(y) := \alpha(\widehat{y}).
\end{equation}
It is shown in~\cite{KS} that the hom spaces in $\cobal$ are finite-dimensional over $\kk$ iff the generating function $Z_{\alpha}(T)$ is  rational, see formula (\ref{eq_Z_rat}), although the category $\cobal$ for any sequence $\alpha$.

Notation
\begin{equation}
    A_{\alpha}(n) := \Hom_{\Cobal}(0,n)
\end{equation}
is used in~\cite{Kh1} to denote the  \emph{state space} of $n$ circles in the  theory associated  to $\alpha$. Vector space $A_{\alpha}(n)$ can be  described as the quotient of the $\kk$-vector space with a basis $\{[S]\}_S$ of viewable cobordisms $S$ with $n$ boundary circles modulo skein relations that hold in the evaluation $\alpha$ for any closure of these $n$ circles by a cobordism $T$  on the other size, so that  $TS$ are closed surfaces and evaluations $\alpha(TS)$ make sense as elements of $\kk$. These state spaces are finite-dimensional for rational sequences $\alpha$.

\vspace{0.07in}

Category  $\cobal$  is a tensor $\kk$-linear category with  objects $n\in \Z_+$ and finite-dimensional hom spaces (recall the assumption that $\alpha$ is rational).
One can form the additive Karoubi  closure
\begin{equation}
    \udcobal := \Kar(\cobal^{\oplus})
\end{equation}
by first allowing formal finite direct sums of objects in $\cobal$ and extending morphisms correspondingly to get the finite additive closure $\cobal^{\oplus}$, then adding idempotents to get a Karoubi-closed  category.

Category $\udcobal$ is the  analogue of the gligible quotient of the Deligne category. It is a $\kk$-linear additive idempotent-complete rigid symmetric monoidal category with finite-dimensional hom spaces. It  carries  nondegenerate bilinear pairings   on its hom spaces
\[
\Hom(a,b) \otimes \Hom(b,a) \lra \kk.
\]
This category is the  analogue of the category $\delcatbar{t}$ above given by modding out the  Deligne category $\delcat$ by the ideal of negligible morphisms.

\vspace{0.07in}

To recover the analogue of the Deligne category itself, one needs to insert an intermediate category $\scobal$ (\emph{skein cobordisms}) into  the chain of categories and functors below, in between categories $\vcobal$ and  $\cobal$:
\begin{equation}\label{eq_four_cat}
    \Cobtwo  \lra \vcobal  \lra \scobal \lra \cobal \lra \udcobal.
\end{equation}

In this additional intermediate step, instead of modding out by all negligible  morphisms, one can first form a skein relation (the \emph{handle relation})  quotient  of $\vcobal$ which  reduces a component with $K$ handles ($K$ is given in formula (\ref{eq_NMK}))
to a linear  combination of components with  fewer handles, using the relation
\begin{equation}\label{eq_with_K}
     x^K - b_1 \, x^{K-1}+ b_2 \, x^{K-2}- \ldots + (-1)^M b_M \, x^{K-M} =0,
 \end{equation}
where $x$ denotes addition of a handle to a component, and $x^m$ stands for adding $m$ handles.

Polynomial in the  left hand side of (\ref{eq_with_K}) is the  minimal  polynomial of the handle endomorphism $x$ of  the circle in the category $\cobal$. We  call it \emph{the handle polynomial} of $\alpha$ and denote
\begin{equation}
    \label{eq_with_K_2}
    U_{\alpha}(x) \ := \  x^K - b_1 \, x^{K-1}+ b_2 \, x^{K-2}- \ldots + (-1)^M b_M \, x^{K-M}.
\end{equation}
The handle polynomial is monic. We use $x$  to denote the handle operator, use formal variable $T$ in the  power series, and switch between $x$ and $T$ as convenient throughout the paper.

 Coefficients $b_i$ of the handle polynomial are the  coefficients of  the polynomial in the denominator of (\ref{eq_Z_rat}) in the reverse order:
\begin{eqnarray} \label{eq_K}
  Q(T) & = & 1 - b_1 T +b_2 T^2  + \ldots + (-1)^M b_M T^M, \ b_i \in \kk,
 \end{eqnarray}
 also  see~\cite[Section 2.4]{Kh1}. We have
 \begin{equation}
     U_{\alpha}(x) = x^{K} Q(1/x) = x^{K-M}(x^M - b_1 \, x^{M-1}+ b_2 \, x^{K-2}- \ldots + (-1)^M b_M ).
 \end{equation}

 Recall that denominator $Q(T)$ is normalized to have constant term $1$, so that  $Q(0)=1$. Since $Q(T)$ is a denominator of the power series $Z_{\alpha}(T)$, it has a non-zero constant term, which is rescaled to $1$. Note that  changing $Z_{\alpha}(T)$ to $\lambda  Z_{\alpha}(T)$ for  $\lambda\in\kk^{\ast}$ does not change the above skein relation, but does change the  evaluation and the resulting categories $\vcobal, \cobal$, and $\udcobal$.

 When $P(T)/Q(T)$ in (\ref{eq_Z_rat}) is a \emph{proper} fraction, that is $\deg P(T)< \deg Q(T)$, then $K=M$ and $U_{\alpha}(0)=(-1)^Mb_M \not= 0$.

 We denote by $\scobal$ the quotient of $\vcobal$ by the handle relation (\ref{eq_with_K}).
 This category  is denoted $\PCobal$ in~\cite{KS}.
 The  quotient of $\scobal$ by the ideal of negligible morphisms (the  gligible quotient) is naturally isomorphic to $\cobal$ (isomorphic and not only equivalent, since objects of these categories are non-negative integers).

 We see that $\scobal$ has a place in  (\ref{eq_four_cat}) as an intermediate category between the two categories   in the middle.
 The additive Karoubi envelope of $\scobal$ is denoted by  $\dcobal$. It is the analogue of the  Deligne  category.
 There is a natural equivalence between the category obtained from $\scobal$ by first forming the additive Karoubi closure and  then  modding out by negligible morphisms (two consecutive right arrows then the down arrow in the square below) and  the category obtained from $\scobal$ by first forming  the quotient by negligible morphisms and  then passing  to the additive Karoubi envelope (down arrow followed by two right arrows in the square below).

 We summarize the resulting collection of categories and  functors between them in the  following diagram, with the square commutative.

\begin{equation} \label{eq_seq_cd_1}
\begin{CD}
\Cobtwo  @>>> \vcobal @>>> \scobal  @>>>  \scobal^{\oplus}  @>>> \dcobal  \\
@.    @.   @VVV @VVV   @VVV  \\
 @.     @.  \cobal @>>>  \cobal^{\oplus} @>>> \udcobal
\end{CD}
\end{equation}
The four rightmost  categories are additive, the three categories to the left of them are  $\kk$-linear and pre-additive (category $\Cobtwo$ is neither pre-additive nor $\kk$-linear). All eight categories are rigid symmetric monoidal. The six categories on the  right each have finite-dimensional hom  spaces. The bottom three categories are gligible quotients of the respective categories above them (that is, quotients by the ideals of negligible morphisms), and their hom spaces carry non-degenerate bilinear forms. The table below provides brief summaries for most of these categories.

\begin{center}
    \begin{tabular}{|l|c|}
    \hline
       Notation  &  Category \\
       \hline
        $\Cobtwo$ & Oriented 2D cobordisms \\
        \hline
        $\vcobal$ & Viewable cobordisms; evaluate closed components via $\alpha$ \\
        \hline
        \multirow{2}{4em}{$\scobal$} & "Skein" category; quotient of $\vcobal$ \\
        & by the handle relation (\ref{eq_with_K}) \\
        \hline
        \multirow{3}{4em}{$\cobal$} & "Gligible" quotient of $\scobal$ \\
        & by the kernels of trace forms \\ & (equivalently, by the negligible ideal) \\
        \hline
        \multirow{2}{4em}{$\dcobal$} & Deligne category; additive Karoubi \\
        & completion of the skein category $\scobal$ \\
        \hline
       \multirow{2}{4em}{$\udcobal$}  & Gligible quotient of the Deligne category; \\
       & equivalent to the additive Karoubi completion of $\cobal$ \\
        \hline
    \end{tabular}
\end{center}

Category  $\dcobal$ is the analogue of the Deligne category $\delcat$ and specializes to
it when the sequence $\alpha$ is constant,
\begin{equation}
    \alpha(t)=(t,t,\dots ), \ \  Z_{\alpha(t)}=\frac{t}{1-T}, \ t\in \kk.
\end{equation}

Category $\udcobal$ is the analogue of the quotient $\delcatbar{t}$ of $\delcat$ by  negligible morphisms. It specializes to  $\delcatbar{t}$ when  $\alpha$  is the constant  sequence $\alpha(t)$.

In this paper we study generalized Deligne  categories  $\dcobal$, their quotients $\udcobal$ as well as categories $\scobal$ and $\cobal$ for other rational series $\alpha$. We refer to these categories as \emph{tensor envelopes} of $\alpha$.

\vspace{0.07in}

For particular key rational generating functions $Z_{\alpha}(T)$ we establish or recall the connection between tensor envelopes of $\alpha$ and the known representation categories:
\begin{itemize}
    \item Generating function $\beta/(1-\lambda T)$ relates to the Deligne category of representations of symmetric group  $\Rep(S_t)$, $t=\beta\gamma$, see~\cite{KS} and Section~\ref{subsec_deligne}. For these series $\alpha$ the category $\dcobal$ is equivalent to $\Rep(S_t)$, inducing an equivalence of gligible quotient categories as well, $\udcobal \cong \uRep(S_t)$.
    \item Tensor envelopes for the constant generating function $Z(T)=\beta$, $\beta\in\kk$,  relate to the representation category of the Lie algebra $osp(1|2)$, see Section~\ref{sec_constant_gen} and Theorem~\ref{thm_dcob_C}.
    \item Categories for the linear generating function $\beta_0+\beta_1 T$ relate to the Deligne category $\Rep(O_t)$ for the orthogonal group, also known as the  unoriented Brauer category, and to its gligible quotients,  Section~\ref{subsec_linear}.
\end{itemize}

\vspace{0.07in}

Below is a brief summary, section by section, of the constructions  and results in the paper.

\begin{itemize}
    \item In Section~\ref{sec_general} we discuss basic properties  of tensor envelopes.
    \begin{itemize}
        \item Section~\ref{subsec-scaling} points out that the scaling $Z(T)\lra \lambda^{-1}Z(\lambda  T)$ for an invertible  $\lambda=\mu^2$ does not change the categories we consider.
        \item In Section~\ref{subset_frob_obj} we explain that  any commutative Frobenius algebra object  in a pre-additive  tensor  category gives rise  to a power  series  $\alpha$ with coefficients  in the commutative  ring $\End(\one)$  of endomorphisms  of the unit  object.
        \item In Section~\ref{subsec_universal} we recall the  universal property of $\Cobal$ and   Proposition~\ref{prop_assume}.
        \item Section~\ref{subsec_direct_sums} studies direct sum decompositions of commutative Frobenius algebra objects mirroring partial fraction  decompositions  of their rational generating series.
    \end{itemize}
    \item Section~\ref{sec_abel_real} contains key semisimplicity and abelian realization criteria for the tensor envelopes of $\alpha$, including Theorems~\ref{thm_semi_real} and~\ref{thm_both}. In particular, we classify series $\alpha$ with the semisimple category $\udcobal$.
    \item Section~\ref{sec_end_one} reviews properties of the endomorphism ring of the one-circle object in categories $\scobal$ and $\cobal$.
    \item  Section~\ref{sec_constant_gen} describes the structure of the gligible quotient category $\cobal$ for  the constant function (series $\alpha=(\beta,0,0,\dots)$). Theorem~\ref{thm_catalan} states that the  dimension of the state space $A(n)$ of $n$ circles for  this function is the Catalan number, for $\kk$ of characteristic $0$. A monoidal equivalence between the Karoubi envelope $\udcobal$ of $\cobal$ and a suitable category of finite-dimensional representations of  the Lie superalgebra $osp(1|2)$ is established in Section~\ref{subset_osp}.
    \item  Section~\ref{subsec_deligne} studies Gram determinants of a natural spanning set of surfaces for the function $\beta/(1-\gamma T)$, where tensor envelopes correspond to the Deligne category. These are rank one theories. Determinant computations for various rank two theories are given in Section~\ref{subsec_denom_two}.
    \item Section~\ref{sec_poly_gen} considers the case of a polynomial generating function, beyond the constant function case studied in Section~\ref{sec_constant_gen}. When the function is linear, associated tensor envelopes can be expressed via the unoriented Brauer category and its gligible quotient, due to the presence of a commutative Frobenius object in the Brauer category with a linear generating function, see Section~\ref{subsec_linear}. Section~\ref{subsec_deg_2_3} provides numerical data for the Gram determinants in categories when the generating function is a polynomial of degree two or three. Section~\ref{subsec_any_deg} considers arbitrary degree polynomials. A conjectural basis in the state space of $n$ circles for the theory is proposed there, and some properties of the Gram determinant for that set of vectors is established.
    \item In Section~\ref{sec_frac_genus} we explain how to enrich category $\Cobtwo$ of two-dimensional oriented cobordisms by adding codimension two defects (dots). Presence of the handle cobordism allows one to add relations intertwining the handle cobordism with dot decorations. Going from less general to more general examples, dots may be viewed as fractional handles, elements of a commutative monoid, or elements of a commutative algebra. Theory developed in the rest of this paper should extend to least some of these generalizations.
\end{itemize}

\vspace{0.1in}

{\bf Acknowledgments.} The  authors are grateful to Pavel Etingof for a valuable email  discussion. The authors would like to thank Vera Serganova and Noah Snyder for discussions of the Frobenius algebra described in Section~\ref{subset_osp}.
M.K. was partially supported by the NSF grant  DMS-1807425 while working on this paper.
Y.K. was partially supported by the NSF FRG grant DMS-1564497.
The work of V.~O. was partially supported by the HSE University Basic Research Program, Russian Academic Excellence Project '5-100' and by the NSF grant DMS-1702251.

\section{Properties of  \texorpdfstring{$\alpha$}{a}-theories}  \label{sec_general}

\subsection{Scaling by  invertible elements} \label{subsec-scaling}

Consider a  theory $\alpha$ over $\kk$ with generating function $Z_{\alpha}(T)$ and state spaces $A_{\alpha}(k)$ of $k$ circles. Choose an invertible element  $\mu\in \kk^{\ast}$, denote  $\lambda=\mu^2$, and change the sequence  $\alpha=(\alpha_0,\alpha_1,\dots)$  to
\begin{equation}
    \alpha'=(\lambda^{-1}\alpha_0,\alpha_1,\lambda \alpha_2, \lambda^2 \alpha_3, \dots ),
\end{equation}
that is, $(\alpha')_n = \lambda^{n-1}\alpha_n$.
The generating function for  $\alpha'$ is
\begin{equation}
 Z_{\alpha'}(T) = \lambda^{-1}Z_{\alpha}(\lambda T) =
 \lambda^{-1}\alpha_0+ \alpha_1 T +\lambda \alpha_2 T^2+ \lambda^2 \alpha_3 T^3  + \dots
\end{equation}
Note that $Z_{\alpha}(T)$ and $Z_{\alpha'}(T)$ have  the same linear term $\alpha_1$.

Consider the $\kk$-vector space $\uFr(k)$ with a basis $\{[S]\}_S$ given by surfaces $S$ without closed components and with $\partial S\cong \sqcup_k \SS^1$, one for each diffeomorphism class rel boundary of such surfaces.
Sequence $\alpha$ determines a $\kk$-bilinear symmetric form on $\uFr(k)$ with the pairing $(,)_{\alpha}$ given on generators by
\begin{equation}
    ([S_1],[S_2])_{\alpha}  = \alpha((-S_1)\sqcup S_2)
\end{equation}
and extended by linearity, where  $(-S_1)\sqcup S_2$ is the closed surface given by gluing $S_1$ and $S_2$ along the common boundary. Recall that the state space
\begin{equation}\label{eq_A_alpha}
    A_{\alpha}(k)  \ := \ \uFr(k)/\mathrm{ker}((,)_{\alpha})
\end{equation}
is the quotient of the free module by the kernel of this form.

Alternatively, consider  the bilinear form on $\uFr(k)$ given by $\alpha'$. The quotient of $\uFr(k)$ by the kernel of this form  is the state space for $\alpha'$:
\begin{equation}
    A_{\alpha'}(k)  \ := \ \uFr(k)/\mathrm{ker}((,)_{\alpha'}).
\end{equation}
Introduce the  $\kk$-linear map
\begin{equation} \label{eq_phi_S}
\phi:\uFr(k)\lra \uFr(k), \ \  \phi([S]) = \mu^{-\chi(S)}[S].
\end{equation}
This map scales  $[S]$ by  $\mu^{-\chi(S)}$,  where  $\chi(S)$ is the Euler characteristic of $S$. It  intertwines the bilinear forms on these spaces
\begin{equation}
    ([S_1],[S_2])_{\alpha'} = (\phi([S_1]),\phi([S_2]))_{\alpha}
\end{equation}
and induces an isomorphism of vector spaces
\begin{equation}
    \phi \ : \ A_{\alpha'}(k) \lra A_{\alpha} (k),
\end{equation}
also denoted $\phi$. This  isomorphism intertwines nondegenerate $R$-valued bilinear forms $(,)_{\alpha'}$ and $(,)_{\alpha}$ on these spaces and shows that $\alpha$ and $\alpha'$ define equivalent topological theories as defined in~\cite{Kh1}.

On the level of categories, the scaling map $\phi$ in (\ref{eq_phi_S})  induces $\kk$-linear isomorphisms between the morphism spaces in $\vcobal$ and $\vcob_{\alpha'}$
\begin{equation*}
\Hom_{\vcob_{\alpha'}}(n,m) \stackrel{\cong}{\lra}\Hom_{\vcobal}(n,m)
\end{equation*}
compatible with  the composition in these categories and leading to an isomorphism of categories  $\vcob_{\alpha'}\cong \vcobal$. This isomorphism is compatible with the various quotient and Karoubi envelope categories that follow and leads to isomorphisms or equivalences  of the corresponding categories, including isomorphisms
 $\scob_{\alpha'}\cong \scobal, $ $\mathrm{Cob}_{\alpha'}\cong \cobal$ and equivalences
 $\dcob_{\alpha'}\cong \dcobal$, $\udcob_{\alpha'}\cong \udcobal$.
We see that scaling by $\lambda=\mu^2$, $\mu\in \kk^{\ast}$,  gives isomorphic theories and isomorphic or equivalent associated categories. This scaling changes the handle relation by rescaling the handle.

\vspace{0.1in}

\subsection{Commutative Frobenius  algebra objects in symmetric  monoidal categories}\label{subset_frob_obj}

\quad

Let $\cC$ be a symmetric monoidal category. Let $A=(A,m,\iota)\in \cC$ be a commutative algebra object in $\cC$, i.e. an object $A\in \cC$ equipped with associative and commutative multiplication $m: A\otimes A\to A$ such that $\iota: \be \to A$ satisfies the unit axiom, see e.g~\cite[Sections 7.8.1 and 8.8.1]{EGNO}.
We say that $A$ is a  {\em commutative Frobenius monoid} in $\cC$ if it is equipped with a morphism $\epsilon: A\to \be$ such
that the composition $b: A\otimes A\xrightarrow{m} A\xrightarrow{\epsilon} \be$ is a non-degenerate  pairing, i.e. there exists a morphism $c: \be \to A\otimes A$ such that the morphisms $b$ and $c$ satisfy the axioms
of evaluation and coevaluation maps, see e.g. \cite[2.10.1]{EGNO}. Equivalently, the object $A^*$ exists
and the morphism $A\to A^*$ which is the image of $b$ under the natural isomorphism
$\Hom(A\ot A,\be)\simeq \Hom(A,A^*)$ is an isomorphism.  We will often identify $A$ and $A^*$
using this morphism.
For example we define the comultiplication morphism $\Delta: A\to A\otimes A$ as dual to the multiplication morphism. Clearly $\Delta$ is coassociative.
It is easy to see that $\Delta$ is a morphism of $A\times A^{op}$-objects. One shows that the morphism $c$ equals to the composition of $\iota$ and $\Delta$.

Given a Frobenius monoid $A\in \cC$ and $n\in \BZ_{\ge 0}$ we get a morphism $a_n: \be \xrightarrow{\iota} A\xrightarrow{\Delta_n} A^{\otimes n}\xrightarrow{m_n} A\xrightarrow{\epsilon} \be$
where $\Delta_n: A \to A^{\otimes n}$ is $n-$fold comultiplication and
$m_n: A^{\otimes n}\to A$ is $n-$fold multiplication.  Thus $a_0=\epsilon \, \iota$ is the composition of $\epsilon$ and $\iota$, and
$a_1=\epsilon\, m \, \Delta \, \iota =c \, b$ is the composition of $c$ and $b$, that is the dimension of $A$.

Equivalently, consider the \emph{handle} endomorphism
\begin{equation}
    x \ : \ A\xrightarrow{\Delta} A\otimes A \xrightarrow{m} A,
\end{equation}
which has  a  topological interpretation as a tube with a handle  on it.
Iterating this morphism yields $x^n$, a tube with $n$ handles. The  map $a_n$ can be  written as
\begin{equation}
    a_n  = \epsilon \, x^n \, \iota .
\end{equation}
Elements  $a_n$ are endomorphisms  of the  unit object $\be$ of $\cC$.

We distinguish between the \emph{handle endomorphism} $x$  above and the \emph{handle morphism}. The latter is the morphism $\be\lra  A$ given by  $x\iota$. Handle morphism corresponds to a one-punctured torus, with the puncture circle on the target  on the morphism,  while handle endomorphism corresponds to the  twice-punctured torus, with one  circle  on both the  source and target cobordisms.

From here on we
assume that $\cC$ is a $\kk-$linear symmetric monoidal category and the canonical map $\kk\to \End(\be)$
is an isomorphism. Then $a_n=\alpha_n\id_{\be}$ for some $\alpha_n\in \kk$.
 The sequence $\alpha=(\alpha_n)_{n\in \BZ_{\ge 0}}$ will be
called \emph{$\alpha-$evaluation} or just the  \emph{evaluation} of $A$.

\begin{example} \label{ortho example}
Let $\cC$ be an additive $\kk-$linear symmetric monoidal category and let $V\in \cC$ be an object equipped with a non-degenerate symmetric pairing $V\otimes V\to \be$. We define a $\BZ-$graded commutative Frobenius algebra $A=A(V)$ as follows: $A=A_0\oplus A_1\oplus A_2$ where $A_0=A_1=\be$ and $A_1=V$; $A_0$ is the image of the unit morphism, the multiplication $A_1\otimes A_1\to A_2$ is given by the symmetric pairing, and the linear form $\epsilon : A\to \be$ factors through
the projection $A\to A_0\oplus A_2$ and is nonzero when restricted to $A_2$. It is easy to verify that
the $\alpha-$evaluation of the algebra $A(V)$ has $\alpha_i=0$ for $i>1$; also $\alpha_1=\dim(A)=2+\dim(V)$. Parameter $\alpha_0$ is the composition of $\epsilon$ and the unit morphism and can be chosen to be any element of $\kk$. The generating function is then $Z_{\alpha}(T)=\alpha_0+(2+\dim(V))T$.  Possible dimensions $\dim(V)$ of such objects  $V$ depend on  $\cC$. For instance, when $\cC$ is the category of $\kk$-vector spaces, these dimensions belong to the image of $\Z_+\in \kk$.  When $\cC$ is the unoriented Brauer category $\Rep(O_t) $ with the parameter $t\in\kk$, $\dim (V)=t$ for the standard generator $V$ of $\cC$, see Section~\ref{subsec_linear}, for instance, and references there.
\end{example}

\vspace{0.1in}

\emph{Remark:} One can informally compare this setup with the problem of reconstructing or understanding a system from observable data on it. Here one can imagine that the system consists  of an object  $A\in \cC$, handle  endomorphism $x$ of $A$ (and, more generally, endomorphisms of $A$ associated to arbitrary cobordisms from a circle to itself). Object $A$ is unknown to us, but  we  can  observe  values of closed  cobordisms, which are $\alpha_n$ for a connected genus $n$ cobordism.  Then the universal pairing construction of~\cite{Kh1,KS} in dimension two (and its counterpart~\cite{BHMV} in three dimensions) consists of recovering a minimal model for $X$ and $\cC$ from the closed cobordism data. This toy  example in two dimensions can be compared to more complicated  reconstructions in control theory. We probe category $\cC$ via evaluations of closed cobordisms, which allow us to fully reconstruct it, in the universal pairing setup.

\vspace{0.1in}

\begin{example}\label{example_1}
Given two commutative Frobenius algebra objects $A_1,A_2$ in $\cC$, their sum $A_1\oplus A_2$ is  naturally a  commutative Frobenius algebra in $\cC$. If sequences $\alpha$  and  $\beta$ are  evaluations of $A_1$ and $A_2$, respectively, the  evaluation of $A_1\oplus  A_2$ is the sequence $\alpha+\beta=(\alpha_n+\beta_n)_{n\in \Z_+}.$
\end{example}

\begin{example}\label{example_2}
Hadamard product of power series $\alpha$ and $\beta$ is the series $\alpha\beta$ with $(\alpha\beta)_n=\alpha_n\beta_n$, that is, we multiply the two series term-wise. Hadamard product of rational power series is rational~\cite{LT}.
The tensor product $A_1\otimes A_2$ of commutative Frobenius algebras in $\cC$ is naturally a commutative Frobenius algebra in $\cC$.  The evaluation of $A_1\otimes A_2$ is the  Hadamard product of evaluations of $A_1$ and  $A_2$.

If $\nchar \kk=p$, the  $p$-th tensor power $A^{\otimes p}$ of  a commutative  Frobenius algebra $A$ has  evaluation $\alpha^p$ equal to the application of the Frobenius endomorphism of $\kk$ to each term of $\alpha$.
\end{example}

\vspace{0.1in}

\subsection{Universal property} \label{subsec_universal}

It is well known (see e.g. \cite[Theorem 0.1]{SP}) that the category $\Cobtwo$ has the following universal property: for a symmetric category $\cC$ an evaluation of a tensor functor on the circle object gives an
equivalence of categories
$$\{ \mbox{tensor functors}\; \Cobtwo \to \cC \} \to \{ \mbox{commutative Frobenius algebras in}\; \cC \} .$$
One deduces easily a similar universal property of $\kk\Cobtwo$ where the categories $\cC$ and functors are assumed
to be $\kk-$linear. Likewise,category $\vcobal$ has the following universal property: for an
$\kk-$linear symmetric category $\cC$ an evaluation at the circle object gives an
equivalence of categories:
$$\{ \kk-\mbox{linear tensor functors}\; \vcobal \to \cC \} \to
\left\{
\begin{tabular}{c}
commutative Frobenius algebras in $\cC$ \\   with evaluation $\alpha$
\end{tabular}
\right\}
$$

We pick an inverse equivalence of categories and for a commutative Frobenius algebra $A\in \cC$ we
will denote by $F_A$ the corresponding tensor functor,
\begin{equation} \label{eq_F_A}
    F_A \colon \vcobal \to \cC.
\end{equation}

A sequence $\alpha$ is called \emph{linearly recurrent} or \emph{homogeneously linearly recurrent} if $\alpha_{k+n+1}=a_n\alpha_{k+n}+a_{n-1}\alpha_{k+n-1}\dots + a_1 \alpha_{k+1}$ for all $k\ge N$ for some $N$ and fixed $a_1,\dots, a_n$, see~\cite{EPSW}. In this paper we refer to such $\alpha$ as recurrent sequences.

Assume that the sequence $\alpha$ is  recurrent. Functor $F_A$
factors through the category $\scobal$ if and only if $F_A$ annihilates the handle polynomial in (\ref{eq_with_K_2}).
If the category $\cC$ is Karoubian the functor $F_A$ extends uniquely to the
category $\dcobal$.

We will often use the following result, see \cite[Lemma 2.6]{BEEO}, specialized to $\udcobal$:

\begin{proposition} \label{prop_assume}
Assume that the category $\cC$ is a $\kk$-linear additive Karoubian nondegenerate symmetric monoidal category with finite-dimensional $\hom$ spaces and the functor $F_A$ satisfies the following  properties:

1) Any indecomposable object of $\cC$ is a direct summand of $F_A(n)$ for some object $n$  in $\vcobal$.

2) The functor $F_A$ is full (i.e. surjective on $\Hom$'s).

Then the functor $F_A$ induces an equivalence $\udcobal \simeq \cC$.
\end{proposition}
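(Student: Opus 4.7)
The plan is to construct an induced functor $\bar F_A\colon \udcobal \to \cC$ and then verify it is essentially surjective, full, and faithful. The input data are: $F_A$ is a $\kk$-linear symmetric monoidal functor from $\vcobal$, the target is additive Karoubian nondegenerate with finite-dimensional hom spaces, and conditions (1) and (2) hold.

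First I would descend $F_A$ along the arrows of diagram (\ref{eq_seq_cd_1}). Since $\End_{\cC}(F_A(1))$ is finite-dimensional, the handle endomorphism has a minimal polynomial; because $\alpha_n$ is the trace of its $n$-fold iterate and $\cC$ is nondegenerate, this polynomial must coincide with the handle polynomial $U_{\alpha}$, so $F_A$ factors through $\scobal$. Next, any tensor functor preserves duals and hence preserves traces, so $F_A$ sends negligible morphisms in $\scobal$ to negligible morphisms in $\cC$, which vanish by nondegeneracy; this gives a factorization through $\cobal$. Finally, the universal property of additive Karoubi completion, applied to the additive Karoubian target $\cC$, extends the factorization uniquely to $\bar F_A\colon \udcobal \to \cC$.

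Essential surjectivity follows from (1) together with a finite decomposition argument: for any $X\in \cC$, the finite-dimensional algebra $\End_{\cC}(X)$ admits only finitely many pairwise orthogonal idempotents, and Karoubianness lets us decompose $X$ into finitely many indecomposable summands; each of these is, by (1), a summand of some $F_A(n)$, hence isomorphic to $\bar F_A(x)$ for some $x\in \udcobal$. For fullness, an object of $\udcobal$ has the form $(n,e)$ with $e$ an idempotent in $\End_{\cobal^{\oplus}}(n)$, and morphisms $(n,e)\to (m,e')$ are morphisms $h\colon n\to m$ in $\cobal^{\oplus}$ with $e'he=h$. Given $g\colon \bar F_A(n,e)\to \bar F_A(m,e')$ in $\cC$, assumption (2) supplies a lift $\tilde h\colon n\to m$ in $\vcobal^{\oplus}$; the class of $e'\tilde h e$ in $\cobal^{\oplus}$ then defines a morphism in $\udcobal$ whose image under $\bar F_A$ is $F_A(e')\circ g\circ F_A(e)=g$.

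Faithfulness is the step I expect to be the most delicate, and it is where nondegeneracy of $\cC$ is used a second time. Suppose $f\colon (n,e)\to (m,e')$ in $\udcobal$ satisfies $\bar F_A(f)=0$, and pick a representative $\tilde f\colon n\to m$ in $\vcobal^{\oplus}$. For any $\tilde g\colon m\to n$ in $\vcobal^{\oplus}$ the composition $F_A(\tilde g\tilde f)$ vanishes in $\cC$, so $\tr_{\cC}(F_A(\tilde g\tilde f))=0$; trace preservation yields $\tr_{\vcobal}(\tilde g\tilde f)=0$ for every $\tilde g$, so $\tilde f$ lies in the negligible ideal and thus represents $0$ in $\cobal^{\oplus}$, giving $f=0$ in $\udcobal$. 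The conceptual content of the whole argument is that nondegeneracy of $\cC$ plus trace preservation identifies $\ker(F_A)$ with the negligible ideal of $\vcobal$, which is exactly what one quotients by to reach $\cobal$; (1) then upgrades the resulting faithful full embedding into an equivalence at the level of $\udcobal$.
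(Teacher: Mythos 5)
The paper itself does not prove this proposition: it is stated as a specialization of \cite[Lemma 2.6]{BEEO} and used as a black box. So you are supplying a self-contained argument where the paper has only a citation, and your overall architecture --- factor $F_A$ down the chain $\vcobal\to\scobal\to\cobal\to\udcobal$, then verify essential surjectivity from hypothesis (1), fullness from hypothesis (2), and faithfulness from trace considerations --- is the standard route and matches what the cited lemma encodes.

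There is, however, a recurring gap in the two factoring steps. You claim that (i) the minimal polynomial of $F_A(x)$ ``must coincide with $U_\alpha$'' because $\alpha_n$ gives its power traces and $\cC$ is nondegenerate, and (ii) $F_A$ kills negligibles because tensor functors preserve traces and $\cC$ is nondegenerate. Neither follows from these ingredients alone. Trace preservation gives you $\tr_{\cC}\bigl(F_A(\tilde g)\,F_A(\tilde f)\bigr)=\tr_{\vcobal}(\tilde g\tilde f)=0$ for every $\tilde g$ \emph{in the source}, but to conclude that $F_A(\tilde f)$ is negligible in $\cC$ --- and hence zero by nondegeneracy --- you must pair it against \emph{every} morphism $g\colon F_A(m)\to F_A(n)$ in $\cC$, not just those in the image of $F_A$. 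Without that, $F_A(\tilde f)$ could pair nontrivially with some $g$ outside the image, and nondegeneracy would give you nothing. (One can cook up a non-full $F_A$ into a nondegenerate $\cC$ where $U_\alpha(F_A(x))\ne 0$, so the claim really does fail without an extra input.) The missing input is exactly hypothesis (2): by fullness, every $g$ is of the form $F_A(\tilde g)$, and then the trace computation above closes the loop. So the conclusions of your steps (i) and (ii) are correct under the stated hypotheses, but the stated justification omits the one hypothesis that makes them true; as written, these steps are logically incomplete. Two smaller remarks: you only need $U_\alpha$ to \emph{annihilate} $F_A(x)$ to factor through $\scobal$, not that it is the minimal polynomial (though the stronger statement is also true once fullness is used, since $\ker F_A$ is then exactly the negligible ideal); and the detour through $\scobal$ is unnecessary --- the ideal generated by $U_\alpha(x)$ lies inside the negligible ideal, so once you know $F_A$ kills negligibles you can factor $\vcobal\to\cobal$ directly. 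Finally, your faithfulness step uses trace preservation and the definition of negligibility in the \emph{source}, not a second application of nondegeneracy of $\cC$ as you suggest; that remark is harmless but misattributes the hypothesis being used.
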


Here we say that $\cC$ is \emph{nondegenerate} if any negligible morphism is the zero morphism between some objects.

\subsection{Direct sums decompositions} \label{subsec_direct_sums}
Let $A\in \cC$ be a commutative Frobenius algebra object in a $\kk-$linear symmetric monoidal category $\cC$.
Then the multiplication in $A$ induces a  commutative algebra structure on the vector space $A_1:=\Hom(\be, A)$; this algebra acts on $A$ via left (equivalently, right) multiplications, so we
get a natural injective homomorphism
\begin{equation}\label{eq_phi}
\phi: A_1\to \End_{\cC}(A).
\end{equation}
Let $x_0\in A_1$ be the handle morphism; its image $\phi(x_0)$ is the handle endomorphism of $A$.
Let $A_0\subset A_1$ be the unital subalgebra generated by $x_0$.

We assume that $A_0$ is finite dimensional. Thus $x_0$ is annihilated by a nonzero polynomial. We let $U(T)\in \kk [T]$ be the minimal polynomial of $x_0$, which is  assumed monic. It factors
\begin{equation} \label{eq_a}
U(T)=T^a \underline{U}(T),
\end{equation}
with $\underline{U}(0)\not= 0$ and $a\ge 0$.

We recall that the idempotents $e\in A_0$ are naturally
labeled by factorizations $U(T)=U_1(T)U_2(T)$ such that the factors $U_1(T)$ and $U_2(T)$ are relatively prime.
Namely given such a factorization we can find $a(T), b(T)\in \kk [T]$ such that
$$a(T)U_1(T)+b(T)U_2(T)=1,$$
and then
\begin{equation}\label{eq_e_def}
  e\ =\ a(x_0)U_1(x_0) \ \in A_0
\end{equation}
is an idempotent. Conversely for an idempotent $e\in A_0$ let us choose
a polynomial $s(T)$ such that $e=s(x_0)$; then setting $U_1(T)=\mathrm{gcd}(U(T),s(T))$ and $U_2(T)=\mathrm{gcd}(U(T), 1-s(T))$
we get a factorization as above.

We furthermore assume that the category $\cC$ is Karoubian. Let $e\in A_0$ be an idempotent; then it is easy
to see that the image of $\phi(e)$, see (\ref{eq_phi}), is a Frobenius subalgebra of $A$ in $\cC$; moreover there is a decomposition
$$A=\phi(e)A\oplus \phi(1-e)A$$
of $A$ as a direct sum of its Frobenius subalgebras (direct  sum as objects in $\cC$ and  direct  product as  algebras). Note that the unit  elements of these subalgebras become  idempotents in $A$.

Let us compute $\alpha-$invariants of subalgebras $\phi(e)A$ and $\phi(1-e)A$ in terms of the $\alpha-$invariant of $A$. Recall that the generating function of $A$ can be written as a rational function
\begin{equation}
Z(T)=\frac{P(T)}{Q(T)}
\end{equation}
with $ Q(0)=1$,  where $Q(T)$ is the  polynomial given by
\begin{equation}\label{eq_p_and_P} Q(T)=T^{\, d} \, U(T^{-1}),
\ \ \   d=\deg(U(T)).
\end{equation}
Here $d$ is  the degree of $U(T)$, and $Q(T)$ is the reverse polynomial of $\underline{U}(T)$. The orders of the coefficients of $Q(T)$ and $U(T)$ are reversed. Note that $Q(0)=1$ since $U(T)$ is monic.

\begin{example} Let $U(T)=T^5+9T^4-6T^3$ then  $d=5$, $ \underline{U}(T)=T^2 +  9 T - 6$ and $Q(T)=-6T^2+9T+1$.
\end{example}

In particular, any factorization
\begin{equation}\label{eq_fac_1}
U(T)=U_1(T)U_2(T)
\end{equation}
for $U(T)$ as above into two relatively prime monic polynomials
induces a factorization
\begin{equation}
Q(T)=Q_1(T)Q_2(T),
\end{equation}
where $Q_1(T)$ and $Q_2(T)$ are determined from $U_1(T)$ and $U_2(T)$, respectively, in
the same way as $Q(T)$ is determined by $U(T)$, via relation (\ref{eq_p_and_P}).

Since polynomials $U_1(T)$ and $U_2(T)$ are relatively prime, at most one of them is divisible by $T$.
Thus we can and will assume that $U_2(T)$ is not divisible by $T$. Polynomial $Q(T)$ is divisible by $T$ iff  $a\not=0$ in formula (\ref{eq_a}).

There is  a unique partial fraction decomposition
\begin{equation}
Z(T) = \frac{v_1(T)}{Q_1(T)} + \frac{v_2(T)}{Q_2(T)},
\end{equation}
where $\deg(v_1)<\deg(Q_1)$. Denote the terms on the right hand side by  $Z_1(T)$ and $Z_2(T)$, respectively, and write
$$Z(T)=Z_1(T)+Z_2(T)$$
where $v_1(T)=Z_1(T)Q_1(T)$ and $v_2(T)=Z_2(T)Q_2(T)$ are polynomials and \begin{equation*}
    \deg v_1(T)=\deg (Z_1(T)Q_1(T))<\deg Q_1(T).
\end{equation*}
Thus, $Z_1(T)$ is a proper fraction, but $Z_2(T)$ may not be proper. Recall idempotent $e\in $ defined by (\ref{eq_e_def}).

\begin{proposition} \label{prop_not_above}  In the notations above, the generating functions of commutative Frobenius algebra objects $\phi(e)A$ and $\phi(1-e)A$ are
$Z_1(T)$ and $Z_2(T)$ respectively.
\end{proposition}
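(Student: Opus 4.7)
The plan is to reduce the proposition to three facts: the direct sum decomposition $A = \phi(e) A \oplus \phi(1-e) A$ as commutative Frobenius subalgebras (established just above the statement), the additivity of $\alpha$-evaluations under direct sums (Example~\ref{example_1}), and the uniqueness of the partial fraction decomposition. Combining the first two immediately yields
\[
Z(T) \;=\; Z_{\phi(e)A}(T) + Z_{\phi(1-e)A}(T),
\]
so the task reduces to identifying each summand with the partial-fraction component carrying the correct denominator.

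The key computation is to determine the minimal polynomial of the handle endomorphism $x_0$ restricted to each summand. From the Bezout identity $a(T)U_1(T) + b(T)U_2(T) = 1$ together with $U_1 U_2 = U$ and $U(x_0) = 0$ in $A_0$, one obtains the relations $e\cdot U_2(x_0) = a(x_0)U(x_0) = 0$ and $(1-e)\cdot U_1(x_0) = b(x_0)U(x_0) = 0$ inside $A_0$. This forces the handle restricted to each summand to be annihilated by $U_1$, respectively $U_2$, on the appropriate summand; since the product of these annihilators must equal $U$, the minimal polynomial on all of $A$, these divisibilities are in fact sharp. Applying the reversal construction (\ref{eq_p_and_P}) to each summand, the generating function of each piece is then a rational function whose denominator in lowest terms equals the corresponding $Q_i$. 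Coprimality of $Q_1$ and $Q_2$ combined with uniqueness of the partial fraction decomposition matches each summand to one of $Z_1, Z_2$.

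The main subtlety lies in verifying the degree constraint $\deg v_1 < \deg Q_1$ that singles out the \emph{proper} component $Z_1$ in the paper's convention. This properness must hold for the summand identified with $Z_1$, and it follows from the normalization that $U_2$ has been chosen not to be divisible by $T$: the nilpotent $T^a$ factor of $U$ thereby lies entirely in the other summand, which is precisely the summand whose generating function may be improper and thus absorbs the polynomial tail of $Z(T)$. Carefully tracking this degree inequality in the presence of a nontrivial $T^a$ factor is the main bookkeeping obstacle; everything else amounts to the straightforward comparison of rational functions described above.
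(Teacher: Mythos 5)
Your proposal takes a genuinely different route from the paper's. The paper's proof first observes that the evaluation series $\alpha'_n=\epsilon(s(x_0)x_0^n)$ of $\phi(e)A$ is determined by $\alpha$ and the factorization $U=U_1U_2$ alone, independently of $(\cC,A)$; it then produces an auxiliary Frobenius algebra $A'\boxtimes\be\oplus\be\boxtimes A''$ in an external tensor product category, built so that the two pieces visibly realize the components of the partial fraction decomposition and the minimal polynomials $U_1$, $U_2$, and transfers the answer. Your argument instead attempts a direct computation: additivity of evaluations, the exact minimal polynomial of the handle on each summand, and uniqueness of partial fractions. That strategy is legitimate and more self-contained, and the algebraic steps $eU_2(x_0)=0$, $(1-e)U_1(x_0)=0$, together with the sharpness of these annihilators, are correct.

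The problem is that your second and third paragraphs contradict each other, and the contradiction is never resolved. From $eU_2(x_0)=0$ the handle of $\phi(e)A$ is killed by $U_2$, so the reversal construction gives a proper generating function with denominator $Q_2$ (properness because $U_2(0)\ne 0$); the handle of $\phi(1-e)A$ is killed by $U_1$ and, since the $T^a$ factor of $U$ sits in $U_1$, its generating function has denominator $Q_1$ and may be improper. Under the paper's stated conventions $Z_1=v_1/Q_1$ with $\deg v_1<\deg Q_1$, this matching assigns $\phi(e)A$ to $Z_2$, the opposite of what the proposition claims. Your final paragraph then appeals to properness to conclude $\phi(e)A\mapsto Z_1$ — the intended answer — but that silently requires $Z_1$ to have denominator $Q_2$, flatly inconsistent with the conventions you were just using. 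The source of the confusion is a subscript transposition in the paper itself: the worked example immediately after the proof has $Z_1=\frac{9/4}{1-2T}$, and $1-2T$ is the reversal of $U_2=T-2$, not of $U_1$; likewise the proof of Proposition~\ref{prop_upon} assumes $U_1$ (not $U_2$) is coprime to $T$. A careful direct proof should notice this, fix one consistent labeling, and only then invoke uniqueness of the partial fraction decomposition; as written, your conclusion does not follow from your own computation. One further small gap: you assert each summand's generating function has denominator equal to $Q_i$ \emph{in lowest terms}, which the minimal-polynomial argument does not by itself yield; either derive it from the coprimality of $Q_1,Q_2$ and the known denominator of $Z$, or bypass it by matching the decomposition $Z=p'/Q_2+p''/Q_1$ with $\deg p'<\deg Q_2$ against the unique partial fraction form directly.
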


\begin{proof} Let $'x_0$ and $''x_0$ be the handle endomorphisms of the algebras $\phi(e)A$ and $\phi(1-e)A$, so that $'x_0=ex_0$ and $''x_0=(1-e)x_0$. The  evaluation series $\alpha'$ of $\phi(e)A$ has $n$-the coefficient
\begin{equation}\label{eq_implicit}
\alpha'_n= \epsilon(('x_0)^n)=\epsilon(ex_0^n)=\epsilon(s(x_0)x_0^n),
\end{equation}
where $s(T)$ is a polynomial such that $e=s(x_0)$. This shows
that the evaluation series $\beta$ and $\gamma$  of $\phi(e)A$ and $\phi(1-e)A$ are uniquely determined by
the factorization (\ref{eq_fac_1}) of $U(T)$. The expression (\ref{eq_implicit}) is  somewhat implicit since we don't write down a formula for $s(T)$.

\vspace{0.1in}

Pick commutative Frobenius algebras $A'\in \cC'$ and $A''\in \cC''$ with generating functions
$Z_1(T)$ and $Z_2(T)$, and such that their handle endomorphisms $x_0'$ and $x_0''$ have minimal polynomials $U_1(T)$ and $U_2(T)$ respectively. Here  $\cC'$ and  $\cC''$ are $\kk$-linear symmetric monoidal categories.  Such algebras and categories exists in view of~\cite{KS,Kh1}.

Consider commutative Frobenius algebra
\begin{equation}\label{eq_comm_fr}
A'\boxtimes \be \oplus \be \boxtimes A''\in \cC'\boxtimes \cC'',
\end{equation}
where $\cC'\boxtimes \cC''$ is the external tensor product of $\cC'$ and $\cC''$, see~\cite[Section 2.2]{O}. This is the \emph{naive} tensor product of $\kk$-linear monoidal categories.

A more sophisticated exterior tensor product was defined by Deligne for abelian monoidal categories, subject to additional assumptions, see~\cite{EGNO} and references therein, but it is not used here.

Objects  of the naive tensor product $\cC'\boxtimes \cC''$ are finite direct sums of external tensor products $V'\boxtimes V''$ of objects $V'$ and $V''$ of $\cC'$ and $\cC''$. The tensor product  $\cC'\boxtimes \cC''$ is additive but $\cC'$, $\cC''$ do not have to be additive, only $\kk$-linear.

The generating function
of this algebra is $Z_1(T)+Z_2(T)=Z(T)$ and its handle endomorphism $\tilde x_0$ is $x_0'\oplus x_0''$, where  $x_0'$ and $x_0''$ are handle endomorphisms of $A'$ and $A''$, respectively.
Hence, for any polynomial $a(T)$, we have $a(\tilde x_0)=a(x_0')\oplus a(x_0'')$. Since the polynomials
$U_1(T)$ and $U_2(T)$ are relatively prime, the minimal polynomial of $\tilde x_0$ is $U_1(T)U_2(T)=U(T)$.
Moreover it is clear that the idempotents determined by the factorization $U(T)=U_1(T)U_2(T)$ are precisely
 the unit elements of $A'\boxtimes \be$ and $\be \boxtimes A''$. Thus, the $\alpha-$invariants of the algebras
$A'\boxtimes \be$ and $\be \boxtimes A''$ can be computed via formula (\ref{eq_implicit}) applied to them. The result follows.
\end{proof}

\begin{example} Assume that $\nchar \kk \ne 2$ and the generating function of $A$ is
\begin{equation}
    Z_{\alpha}(T)=\frac{T^3+1}{1-3T+2T^2}.
\end{equation}
Then the handle polynomial  $U(T)=U_{\alpha}(T)=T^2(T^2-3T+2)=T^2(T-1)(T-2)$. Note that $T^2-3T+2$ is the reciprocal if $1-3T+2T^2$. The degree of $U(T)$ equals $K_{\alpha}=\max(\deg P_{\alpha}+1,\deg Q_{\alpha})=\max(3+1,2)=4$, see (\ref{eq_NMK_alpha}).

Consider the factorization (\ref{eq_fac_1}) with
$Q_1(T)=T^2(T-1)$ and $Q_2(T)=T-2$. Then, see formula (\ref{eq_e_def}),  $e=\frac14(x_0^3-x_0^2)$ and
$$Z_1(T)=\frac{9/4}{1-2T},\; \; Z_2(T)=\frac{-T^2/2-T/4-5/4}{1-T}.$$
\end{example}

 Rational series $\beta$ and $\gamma$ give rise to commutative Frobenius objects $A_{\beta}$ and $A_{\gamma}$ in the skein categories $\scob_{\beta}$ and $\scob_{\gamma}$, respectively. Consider the tensor product category
\begin{equation}
\scob_{\beta,\gamma} \ := \ \scob_{\beta}\boxtimes \scob{\gamma}
\end{equation}
with the Frobenius object
\begin{equation}\label{eq_obj_frob}
    A_{\beta,\gamma}\ := \ A_{\beta}\boxtimes \be \oplus \be \boxtimes A_{\gamma},
\end{equation}
see also (\ref{eq_comm_fr}). Let
 \begin{equation}\label{eq_Z_two}
     Z_{\beta}(T)=\frac{P_{\beta}(T)}{Q_{\beta}(T)}, \ \
     Z_{\gamma}(T)=\frac{P_{\gamma}(T)}{Q_{\gamma}(T)}
 \end{equation}
 be the standard presentations of rational series for $\beta$ and $\gamma$, see formulas (\ref{eq_Z_rat_9}), (\ref{eq_K}), with $Q_{\beta}(0)=Q_{\gamma}(0)=1$ and co-prime numerators and denominators in each of the two fractions.
 Polynomials  $U_{\beta}(x)$ and $U_{\gamma}(x)$  describe handle skein relations for series $\beta$ and $\gamma$, respectively. They are reciprocal polynomials of $Q_{\beta}(x)$ and $Q_{\gamma}(x)$, respectively, scaled by suitable powers of $x$ when the fractions are not proper.

 \begin{lemma}
 The handle polynomial of the Frobenius object $A_{\beta,\gamma}$ in $\scob_{\beta,\gamma}$  is
 \begin{equation}\label{eq_def_U_b_g}
 U_{\beta,\gamma}(x):=\mathrm{lcm}(U_{\beta}(x),U_{\gamma}(x)),
 \end{equation}
 the least common  multiple of $U_{\beta}(x)$ and $U_{\gamma}(x)$.
 \end{lemma}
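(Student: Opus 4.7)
The plan is to exploit the direct-sum decomposition $A_{\beta,\gamma} = (A_\beta \boxtimes \be) \oplus (\be \boxtimes A_\gamma)$ into two commutative Frobenius subalgebras in the tensor product category $\scob_\beta \boxtimes \scob_\gamma$. As in Example~\ref{example_1} and the setup surrounding Proposition~\ref{prop_not_above}, for a direct-product Frobenius algebra $B_1 \oplus B_2$ the multiplication vanishes on the mixed summands $B_1 \otimes B_2$ and $B_2 \otimes B_1$, while the comultiplication sends $B_i$ into $B_i \otimes B_i$. Consequently the handle endomorphism $m \circ \Delta$ of $B_1 \oplus B_2$ is the direct sum of the handle endomorphisms of $B_1$ and $B_2$. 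Applying this, the handle endomorphism of $A_{\beta,\gamma}$ is $\tilde x = (x_\beta \boxtimes \id) \oplus (\id \boxtimes x_\gamma)$, where $x_\beta$ and $x_\gamma$ are the handle endomorphisms of $A_\beta$ and $A_\gamma$ in $\scob_\beta$ and $\scob_\gamma$ respectively.

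Next, for any polynomial $P(T) \in \kk[T]$ one has $P(\tilde x) = (P(x_\beta) \boxtimes \id) \oplus (\id \boxtimes P(x_\gamma))$, which vanishes if and only if both $P(x_\beta) = 0$ in $\scob_\beta$ and $P(x_\gamma) = 0$ in $\scob_\gamma$. The remaining step is to identify the ideal of polynomials annihilating $x_\beta$ in $\scob_\beta$ as $(U_\beta)$, and similarly $(U_\gamma)$ for $\gamma$; it then follows that the minimal polynomial of $\tilde x$ generates the intersection $(U_\beta) \cap (U_\gamma) = (\mathrm{lcm}(U_\beta, U_\gamma))$ in $\kk[T]$, yielding $U_{\beta,\gamma}$ as claimed.

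The one delicate point I would verify is that $U_\beta$ is truly minimal (not merely an annihilating polynomial) for $x_\beta$ in $\scob_\beta$. The relation $U_\beta(x_\beta) = 0$ holds by the definition of $\scob_\beta$. For minimality I would invoke the quotient functor $\scob_\beta \twoheadrightarrow \Cob_\beta$: any polynomial annihilating $x_\beta$ in $\scob_\beta$ also annihilates it in the quotient $\Cob_\beta$, and there the minimal polynomial of the handle endomorphism is $U_\beta$ (as recalled in the discussion of (\ref{eq_with_K_2})). Hence any such polynomial must be divisible by $U_\beta$, confirming minimality in $\scob_\beta$ as well. With this in hand, the rest of the argument is a direct unwinding of the Frobenius direct-sum structure and the correspondence between minimal polynomials of block-diagonal operators and least common multiples.
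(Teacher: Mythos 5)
Your proof is correct and follows essentially the same approach as the paper, which just states tersely that the handle endomorphism of $A_{\beta,\gamma}$ decomposes as the direct sum of the handle endomorphisms of $A_\beta\boxtimes\be$ and $\be\boxtimes A_\gamma$. You simply spell out the remaining steps (that the minimal polynomial of a block-diagonal operator is the lcm, and that $U_\beta$ is indeed the minimal annihilator of $x_\beta$ in $\scob_\beta$ via the quotient to $\Cob_\beta$), which is a valid and useful unwinding of the same idea.
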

 \begin{proof} The handle endomorphism of $A_{\beta,\gamma}$ is the sum of handle endomorphisms of its direct summands $A_{\beta}\boxtimes \be$ and $\be\boxtimes A_{\gamma}$.
 \end{proof}

 To understand the handle polynomial for $\beta+\gamma$, we convert the series for $\beta$ and $\gamma$ into sums of  proper fractions and polynomial terms:
 \begin{eqnarray*}
 Z_{\beta}(T) &= & \frac{\overline{P}_{\beta}(T)}{Q_{\beta}(T)} + R_{\beta}(T), \ \ \deg \overline{P}_{\beta}(T) < \deg  Q_{\beta}(T),  \\
 Z_{\gamma}(T) &= & \frac{\overline{P}_{\gamma}(T)}{Q_{\gamma}(T)} + R_{\gamma}(T), \ \ \deg \overline{P}_{\gamma}(T) < \deg  Q_{\gamma}(T).
 \end{eqnarray*}
 The handle polynomials for $\beta$ and $\gamma$ are the reciprocals of $Q_{\beta}(T)$ and $Q_{\gamma}(T)$ multiplied by $T$ to the exponent the degree of $R_{\beta}(T)$ and $R_{\gamma}(T)$, respectively.

From the corresponding decomposition for the series of $\beta+\gamma$,
 \begin{equation}
 Z_{\beta+\gamma}(T) \ = \  Z_{\beta}(T)+ Z_{\gamma}(T) \ = \  \frac{\overline{P}_{\beta+\gamma}(T)}{Q_{\beta+\gamma}(T)} + R_{\beta}(T)+R_{\gamma}(T),  \end{equation}
with the reduced fraction
 \begin{equation}
  \frac{\overline{P}_{\beta+\gamma}(T)}{Q_{\beta+\gamma}(T)}  \ = \  \frac{\overline{P}_{\beta}(T)Q_{\gamma}(T)+\overline{P}_{\gamma}(T)Q_{\beta}(T)}{Q_{\beta}(T)Q_{\gamma}(T)} ,
  \end{equation}
one sees that $Q_{\beta+\gamma}(T)$ is a divisor of $\mathrm{lcm}(Q_{\beta}(T),Q_{\gamma}(T))$ and $R_{\beta+\gamma}(T)=R_{\beta}(T)+R_{\gamma}(T)$.
  Consequently, the handle polynomial $U_{\beta+\gamma}(x)$ is the reciprocal of a divisor of $\mathrm{lcm}(Q_{\beta}(x),Q_{\gamma}(x))$ times a power of $x$ of degree  $\deg(R_{\beta}+R_{\gamma})\le \max(\deg R_{\beta},\deg R_{\gamma})$.

\begin{corollary} The handle  polynomial $U_{\beta+\gamma}(x)$ of $\beta+\gamma$ is a divisor of the polynomial $U_{\beta,\gamma}(x)$ in (\ref{eq_def_U_b_g}).
\end{corollary}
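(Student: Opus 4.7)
The corollary is essentially a direct consequence of the paragraph immediately preceding it, so my proof plan is to package that analysis into a clean argument. The basic reason is that $U_{\beta+\gamma}$ is built from $Q_{\beta+\gamma}$ and the polynomial remainder $R_{\beta+\gamma}$, and both of these are controlled by the corresponding data for $\beta$ and $\gamma$ separately.

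First, I would invoke the lemma to rewrite $U_{\beta,\gamma}(x) = \mathrm{lcm}(U_\beta(x),U_\gamma(x))$. Next, using the decompositions
\[
Z_\beta(T) = \frac{\overline{P}_\beta(T)}{Q_\beta(T)}+R_\beta(T), \quad
Z_\gamma(T) = \frac{\overline{P}_\gamma(T)}{Q_\gamma(T)}+R_\gamma(T)
\]
with proper fractional parts, I would add them to obtain the expression for $Z_{\beta+\gamma}(T)$ already recorded in the excerpt. From this, two facts follow: (i) the reduced denominator $Q_{\beta+\gamma}(T)$ divides $\mathrm{lcm}(Q_\beta(T),Q_\gamma(T))$, since the unreduced sum has this as a common denominator; and (ii) $R_{\beta+\gamma}(T) = R_\beta(T)+R_\gamma(T)$, so $\deg R_{\beta+\gamma} \le \max(\deg R_\beta,\deg R_\gamma)$.

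Then I would translate these two divisibilities into a divisibility of handle polynomials via the reciprocal relation $U(T) = T^{K-M}\,Q^{\mathrm{rev}}(T)$ from formula (\ref{eq_p_and_P}) and the discussion around (\ref{eq_with_K_2}). Because reversing polynomials converts the relation "$A$ divides $B$" (for polynomials with nonzero constant term) into the same relation between reverses, divisibility of the denominators gives divisibility of the $x$-free parts of the handle polynomials. The remaining factor of $x^{\text{something}}$ in each handle polynomial is controlled by the degree of the polynomial remainder, so fact (ii) ensures the $x$-power in $U_{\beta+\gamma}$ is at most the $x$-power in $\mathrm{lcm}(U_\beta,U_\gamma)$. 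Combining these yields $U_{\beta+\gamma}(x)\mid U_{\beta,\gamma}(x)$.

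The only mildly delicate step is the third one: the reciprocal map is not quite multiplicative with respect to gcd/lcm when leading or trailing zeros appear, so I would have to check the bookkeeping for the power of $x$ carefully, separating the proper and non-proper cases ($\deg R = -\infty$ versus $\deg R \ge 0$) if needed. This is the main obstacle, though it is routine. Once the dictionary between "denominator $Q$ times polynomial remainder $R$" and "handle polynomial $U$" is tightly formulated, the corollary drops out.
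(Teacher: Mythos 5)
Your proposal matches the paper's argument exactly: the corollary is stated as an immediate consequence of the paragraph preceding it, which records precisely the two facts you use (divisibility of $Q_{\beta+\gamma}$ into $\mathrm{lcm}(Q_\beta,Q_\gamma)$ and $R_{\beta+\gamma}=R_\beta+R_\gamma$) and then translates them into the handle polynomials via the reciprocal relation and the extra $x$-power governed by the degree of the polynomial part. The bookkeeping point you flag is genuine but routine, since reversing polynomials with nonzero constant term preserves divisibility and commutes with lcm, and the $x$-powers split off cleanly because $x$ is coprime to these reverses.
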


\begin{definition}
A pair $(\beta,\gamma)$ of rational sequences is called \emph{regular} if $U_{\beta+\gamma}(x)=U_{\beta,\gamma}(x)$.
\end{definition}

\begin{prop} $(\beta,\gamma)$ is regular (and
 $U_{\beta,\gamma}(x)=U_{\beta+\gamma}(x)$) iff
 there is a functor
 \begin{equation}\label{eq_want_func}
     F_{\beta,\gamma}^S \ : \ \scob_{\beta+\gamma} \lra \scob_{\beta}\boxtimes \scob{\gamma}
 \end{equation}
 taking the circle object $A_{\beta+\gamma}$ of $\scob_{\beta+\gamma}$  to the object $A_{\beta,\gamma}$, see (\ref{eq_obj_frob}), and the Frobenius structure of $A_{\beta+\gamma}$ to that of $A_{\beta,\gamma}$. In particular, the handle endomorphism of $A_{\beta+\gamma}$ must go to that of $A_{\beta,\gamma}$.
 \end{prop}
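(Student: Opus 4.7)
The plan is to apply the universal property of $\vcob_{\beta+\gamma}$ recalled in Section~\ref{subsec_universal}: tensor functors $\vcob_{\beta+\gamma} \to \mathcal{D}$ to a $\kk$-linear symmetric monoidal category $\mathcal{D}$ correspond to commutative Frobenius algebra objects in $\mathcal{D}$ whose evaluation is the sequence $\beta+\gamma$. The target $\mathcal{D} = \scob_{\beta}\boxtimes \scob_{\gamma}$ is $\kk$-linear symmetric monoidal (additive, by construction of the naive product), so this universal property applies.

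First I would verify that the Frobenius algebra $A_{\beta,\gamma}=A_{\beta}\boxtimes \be \oplus \be \boxtimes A_{\gamma}$ has evaluation equal to $\beta+\gamma$. The evaluations of $A_{\beta}\boxtimes \be$ and $\be \boxtimes A_{\gamma}$ are $\beta$ and $\gamma$ respectively (the evaluation is a closed-surface invariant, unchanged by tensoring with the unit factor), and by Example~\ref{example_1} the evaluation of a direct sum is the sum of evaluations. Thus the universal property produces a canonical $\kk$-linear tensor functor
\[
\widetilde{F}\ :\ \vcob_{\beta+\gamma}\ \lra\ \scob_{\beta}\boxtimes \scob_{\gamma}
\]
sending the circle object (with its Frobenius structure) to $A_{\beta,\gamma}$. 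The question becomes whether $\widetilde{F}$ descends to $\scob_{\beta+\gamma}$; by the discussion just after~\eqref{eq_F_A}, this is equivalent to $\widetilde{F}$ killing the handle polynomial, i.e.\ to the identity $U_{\beta+\gamma}(\tilde x_0)=0$, where $\tilde x_0$ denotes the handle endomorphism of $A_{\beta,\gamma}$.

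By the lemma preceding the definition of regularity, the minimal polynomial of $\tilde x_0$ is exactly $U_{\beta,\gamma}(x)=\mathrm{lcm}(U_{\beta}(x),U_{\gamma}(x))$. Hence $U_{\beta+\gamma}(\tilde x_0)=0$ if and only if $U_{\beta,\gamma}(x)$ divides $U_{\beta+\gamma}(x)$. Combined with the reverse divisibility supplied by the corollary (which is automatic) and the fact that both polynomials are monic, this equality of divisibilities amounts to $U_{\beta+\gamma}(x)=U_{\beta,\gamma}(x)$, which is precisely the regularity hypothesis. This proves the implication from regularity to the existence of the desired factorization $F^S_{\beta,\gamma}$.

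For the converse, suppose a functor as in~\eqref{eq_want_func} exists, sending $A_{\beta+\gamma}$ to $A_{\beta,\gamma}$ compatibly with the Frobenius structures. Since the handle endomorphism of $A_{\beta+\gamma}$ satisfies $U_{\beta+\gamma}(x_0)=0$ in $\scob_{\beta+\gamma}$, applying the functor gives $U_{\beta+\gamma}(\tilde x_0)=0$ in $\scob_{\beta}\boxtimes \scob_{\gamma}$, whence $U_{\beta,\gamma}\mid U_{\beta+\gamma}$; with the corollary this forces $U_{\beta+\gamma}=U_{\beta,\gamma}$. The main (mild) obstacle is making sure that the evaluation computation for the direct-sum Frobenius object $A_{\beta,\gamma}$ is done with respect to the correct Frobenius structure inherited from the summands, so that Example~\ref{example_1} indeed gives $\beta+\gamma$; once that is settled, the rest is a direct application of the universal property together with the lemma and the corollary.
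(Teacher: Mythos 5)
Your proof is correct and takes essentially the same route as the paper's: both reduce the existence of $F^S_{\beta,\gamma}$ to comparing the minimal polynomial $U_{\beta+\gamma}$ of the handle endomorphism of $A_{\beta+\gamma}$ with the minimal polynomial $U_{\beta,\gamma}$ of the handle endomorphism of $A_{\beta,\gamma}$, invoking the preceding lemma and corollary to force equality. Your version is more explicit than the paper's terse sketch, in particular in spelling out the factorization of $\widetilde F$ through $\scob_{\beta+\gamma}$ via the universal property and the precise direction of divisibility ($U_{\beta,\gamma}\mid U_{\beta+\gamma}$) that the descent condition provides.
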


 \begin{proof}
 The handle polynomial of $A_{\beta+\gamma}$ is $U_{\beta+\gamma}(x)$, while that of $A_{\beta,\gamma}$ is $U_{\beta,\gamma}(x)$. For the  functor to exist, one needs $U_{\beta,\gamma}(x_0)=0$,
 where $x_0$ is the handle endomorphism of $A_{\beta+\gamma}$.
 \end{proof}

If $U_{\beta+\gamma}(x)$ is a proper divisor of $U_{\beta,\gamma}(x)$, then the handle endomorphism of $A_{\beta+\gamma}$ satisfies a stronger relation than that of the handle endomorphism of $A_{\beta,\gamma}$, and such a functor cannot be set up.

Note that the category $\scob_{\beta+\gamma}$ in (\ref{eq_want_func}) is not additive, while the target category  is additive. To remedy that, one can first pass to finite additive closures of these categories to get an additive functor
\begin{equation}\label{eq_want_func_add}
     F_{\beta,\gamma}^{\oplus} \ : \ \scob_{\beta+\gamma}^{\oplus} \lra \scob_{\beta}^{\oplus}\boxtimes \scob_{\gamma}^{\oplus}\cong \scob_{\beta}\boxtimes \scob{\gamma}.
 \end{equation}

  \begin{prop}\label{prop_equal}  If
  at least one of the fractions in (\ref{eq_Z_two}) is proper and $Q_{\beta}(T),Q_{\gamma}(T)$ are relatively prime then the pair  $(\beta,\gamma)$ is regular, so that  $U_{\beta+\gamma}(x)=U_{\beta,\gamma}(x)$.
 \end{prop}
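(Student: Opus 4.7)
The plan is to compute $U_{\beta,\gamma}(x)$ and $U_{\beta+\gamma}(x)$ explicitly from the data and verify they coincide. Set $\tilde Q_\beta(T) := T^{M_\beta}Q_\beta(1/T)$ and $\tilde Q_\gamma(T) := T^{M_\gamma}Q_\gamma(1/T)$, the reverse polynomials; since $Q_\beta(0)=Q_\gamma(0)=1$, both are monic with nonzero constant term. Write $a_\beta := K_\beta - M_\beta$ and $a_\gamma := K_\gamma - M_\gamma$; the identity $U_\alpha(x) = x^{K}Q(1/x)$ displayed after (\ref{eq_with_K_2}) gives $U_\beta(x) = x^{a_\beta}\tilde Q_\beta(x)$ and $U_\gamma(x) = x^{a_\gamma}\tilde Q_\gamma(x)$. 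I first verify that $\tilde Q_\beta$ and $\tilde Q_\gamma$ are coprime: a common root would be the reciprocal of a common root of $Q_\beta$ and $Q_\gamma$ (using that $0$ is a root of neither), contradicting $\gcd(Q_\beta,Q_\gamma)=1$. Consequently
$$U_{\beta,\gamma}(x) = \mathrm{lcm}\bigl(x^{a_\beta}\tilde Q_\beta(x),\, x^{a_\gamma}\tilde Q_\gamma(x)\bigr) = x^{\max(a_\beta,a_\gamma)}\,\tilde Q_\beta(x)\,\tilde Q_\gamma(x).$$

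Next I would identify the normalized presentation of $Z_{\beta+\gamma}$. Write
$$Z_\beta(T)+Z_\gamma(T)=\frac{P_\beta Q_\gamma + P_\gamma Q_\beta}{Q_\beta Q_\gamma}.$$
I claim this fraction is already in lowest terms. Indeed, for any root $r$ of $Q_\beta$ we have $Q_\gamma(r)\ne 0$ (by $\gcd(Q_\beta,Q_\gamma)=1$) and $P_\beta(r)\ne 0$ (by $\gcd(P_\beta,Q_\beta)=1$), so the numerator at $r$ equals $P_\beta(r)Q_\gamma(r)\ne 0$; the symmetric argument rules out roots of $Q_\gamma$. Since $(Q_\beta Q_\gamma)(0)=1$, this is the standard presentation (\ref{eq_Z_rat_9}), so $Q_{\beta+\gamma}=Q_\beta Q_\gamma$, $M_{\beta+\gamma}=M_\beta+M_\gamma$, and $\tilde Q_{\beta+\gamma}=\tilde Q_\beta\tilde Q_\gamma$.

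The decisive step, which is where the properness hypothesis enters, is computing $K_{\beta+\gamma}$ and hence $a_{\beta+\gamma}$. By symmetry assume $Z_\beta$ is proper, so $N_\beta<M_\beta$ and $a_\beta=0$. If also $a_\gamma=0$, then both summands in the numerator have degree $<M_\beta+M_\gamma$, so $Z_{\beta+\gamma}$ is proper, $K_{\beta+\gamma}=M_{\beta+\gamma}$, and $a_{\beta+\gamma}=0$. If $a_\gamma>0$, then $\deg(P_\beta Q_\gamma)\le N_\beta+M_\gamma<M_\beta+M_\gamma\le N_\gamma+M_\beta=\deg(P_\gamma Q_\beta)$, so the two leading terms cannot cancel and $N_{\beta+\gamma}=N_\gamma+M_\beta$; this gives $K_{\beta+\gamma}=N_\gamma+M_\beta+1=M_\beta+K_\gamma$ and therefore $a_{\beta+\gamma}=K_\gamma-M_\gamma=a_\gamma$. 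In both cases $a_{\beta+\gamma}=\max(a_\beta,a_\gamma)$, so
$$U_{\beta+\gamma}(x)=x^{a_{\beta+\gamma}}\tilde Q_{\beta+\gamma}(x)=x^{\max(a_\beta,a_\gamma)}\tilde Q_\beta(x)\tilde Q_\gamma(x)=U_{\beta,\gamma}(x),$$
which is the regularity condition. The only delicate point is the computation of $a_{\beta+\gamma}$: without properness of at least one summand, the leading terms of $P_\beta Q_\gamma$ and $P_\gamma Q_\beta$ could cancel, making $N_{\beta+\gamma}$ drop and introducing an extra factor of $x$ in $U_{\beta+\gamma}$; the properness hypothesis is exactly what prevents this cancellation.
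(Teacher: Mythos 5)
Your proof is correct and follows essentially the same route as the paper's: express both $U_{\beta,\gamma}$ and $U_{\beta+\gamma}$ as a power of $x$ times the product of the reversed denominators, with coprimality of $Q_\beta,Q_\gamma$ forcing $Q_{\beta+\gamma}=Q_\beta Q_\gamma$ and properness of one summand pinning down the $x$-exponent as $\max(a_\beta,a_\gamma)$. The paper compresses the final degree bookkeeping into the phrase ``follows by considering partial fraction decompositions'' (relying on $R_{\beta+\gamma}=R_\beta+R_\gamma$ from the paragraph preceding the proposition), whereas you carry out the same arithmetic explicitly on $\deg(P_\beta Q_\gamma+P_\gamma Q_\beta)$.
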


 \begin{proof}
  A fraction $P(T)/Q(T)$ is proper if $\deg P(T)< \deg Q(T)$. This is equivalent to the condition that the handle polynomial $U(x)$ for this rational series is not divisible by $x$, that  is, $U(0)\not= 0$. Proposition follows by considering partial fraction decompositions for series $Z_{\beta}(T)$ and $Z_{\gamma}(T)$.
 Then
 \begin{equation}
 U_{\beta,\gamma}(x)=\mathrm{lcm}(U_{\beta}(x),U_{\gamma}(x))= U_{\beta}(x)U_{\gamma}(x)=U_{\beta+\gamma}(x).
 \end{equation}
 \end{proof}

\begin{remark} The implication in the above proposition goes only one way, as one can see by taking $\beta=\gamma$ when $\nchar \kk\not=2$. The pair $(\beta,\beta)$ is regular then.
\end{remark}

Proposition~\ref{prop_equal} gives a  sufficient condition for the functor $F_{\beta,\gamma}^S$ in  (\ref{eq_want_func}) to exist.

 \vspace{0.1in}

 It will follow from Proposition~\ref{prop_upon} that for any regular  pair $(\beta,\gamma)$ functors $F_{\beta,\gamma}^S$ and $F_{\beta,\gamma}^{\oplus}$ are fully faithful.

 \begin{prop} For a regular $(\beta,\gamma)$, functor $F^S_{\beta,\gamma}$, see  (\ref{eq_want_func}), induces
 a fully faithful functor
 \begin{equation}\label{eq_ff_functor}
     \Cob_{\beta+\gamma} \lra \Cob_{\beta}\boxtimes \Cob{\gamma}
 \end{equation}
 \end{prop}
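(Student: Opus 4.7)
The plan is to descend the full faithfulness of $F^S_{\beta,\gamma}$, to be established in the forthcoming Proposition~\ref{prop_upon}, to the gligible quotients, by verifying that $F^S_{\beta,\gamma}$ preserves the trace forms so that negligibility is detected equally on source and target.

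First I would invoke Proposition~\ref{prop_upon} to conclude that $F^S_{\beta,\gamma}\colon \scob_{\beta+\gamma}\to \scob_\beta\boxtimes\scob_\gamma$ is fully faithful as a $\kk$-linear symmetric monoidal functor. Next I would verify trace preservation. By construction the functor sends the circle object, with its canonical commutative Frobenius structure, to $A_{\beta,\gamma}=A_\beta\boxtimes\be\oplus\be\boxtimes A_\gamma$ of (\ref{eq_obj_frob}), whose $\alpha$-evaluation is the sum $\beta+\gamma$ of the evaluations of the two summands, by Example~\ref{example_1}. The trace of an endomorphism $y\colon n\to n$ in $\scob_\alpha$ is $\alpha(\widehat y)$, where $\widehat y$ is the closure by $n$ annuli. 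Since $F^S_{\beta,\gamma}$ is symmetric monoidal and sends the distinguished Frobenius data (unit, counit, multiplication, comultiplication) to the corresponding data on $A_{\beta,\gamma}$, it commutes with the closure operation. Hence the trace of $F^S_{\beta,\gamma}(y)$ in the target equals $(\beta+\gamma)(\widehat y)$, which matches $\mathrm{tr}_{\beta+\gamma}(y)$.

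Trace preservation combined with fullness yields the descent. A morphism $x\colon n\to m$ in $\scob_{\beta+\gamma}$ is negligible iff $\mathrm{tr}_{\beta+\gamma}(yx)=0$ for every $y\colon m\to n$. By fullness of $F^S_{\beta,\gamma}$, every morphism $F^S_{\beta,\gamma}(m)\to F^S_{\beta,\gamma}(n)$ has the form $F^S_{\beta,\gamma}(y)$, and by the trace identity $\mathrm{tr}(F^S_{\beta,\gamma}(y)\,F^S_{\beta,\gamma}(x))=\mathrm{tr}_{\beta+\gamma}(yx)$. Thus $x$ is negligible iff $F^S_{\beta,\gamma}(x)$ is negligible in the target. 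Consequently the induced functor on gligible quotients is well-defined and faithful, while fullness on the quotient is inherited from fullness of $F^S_{\beta,\gamma}$. Identifying the gligible quotient $(\scob_\beta\boxtimes\scob_\gamma)/J$ with $\Cob_\beta\boxtimes\Cob_\gamma$, using that hom spaces in the naive tensor product factor as tensor products over $\kk$ and the trace form on the product is the tensor product of the individual trace forms, then produces the desired fully faithful functor $\Cob_{\beta+\gamma}\to \Cob_\beta\boxtimes\Cob_\gamma$.

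The step I expect to require the most care is the last identification: checking that the radical of the tensor product of the two trace forms agrees with the natural ideal making $(\scob_\beta\boxtimes\scob_\gamma)/J \cong \Cob_\beta\boxtimes\Cob_\gamma$. Since the sequences $\beta$ and $\gamma$ are rational the relevant hom spaces are finite-dimensional over $\kk$, so this reduces to a standard computation for tensor products of nondegenerate bilinear forms over a field, but it must be spelled out carefully before the descent argument above delivers the stated fully faithful functor.
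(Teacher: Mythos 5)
Your proof is correct, and it rests on the same two ingredients as the paper's: (i) fullness of $F^S_{\beta,\gamma}$ (taken from Proposition~\ref{prop_upon}, forward-referenced exactly as the paper does just before this statement; there is no circularity, since the proof of Proposition~\ref{prop_upon} does not rely on this proposition), and (ii) the fact that the trace pairing on a hom space of the naive external tensor product is the tensor product of the trace pairings of the factors, together with the elementary observation that a tensor product of nondegenerate pairings over a field is nondegenerate. The difference is one of framing rather than substance. The paper composes $F^S_{\beta,\gamma}$ with the quotient $\scob_\beta\boxtimes\scob_\gamma\to\Cob_\beta\boxtimes\Cob_\gamma$, observes that this composite is full, proves directly that $\Cob_\beta\boxtimes\Cob_\gamma$ has no nonzero negligible morphisms, and invokes the standard fact that a full trace-preserving tensor functor into a nondegenerate category factors through the gligible quotient of the source and the induced functor is fully faithful. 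You instead descend $F^S_{\beta,\gamma}$ first to a fully faithful functor $\Cob_{\beta+\gamma}\to(\scob_\beta\boxtimes\scob_\gamma)/J$ and then identify $(\scob_\beta\boxtimes\scob_\gamma)/J$ with $\Cob_\beta\boxtimes\Cob_\gamma$; the radical computation you flag as the delicate step is precisely the content of the paper's nondegeneracy observation, since both amount to the statement that the radical of a tensor product of symmetric bilinear forms on finite-dimensional spaces over a field is $\mathrm{rad}\otimes V_2 + V_1\otimes\mathrm{rad}$. One small economy in the paper's version is that it needs only fullness of $F^S_{\beta,\gamma}$ and not faithfulness, whereas you invoke full faithfulness; your descent argument, however, only uses the fullness half, so this is harmless.
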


 \begin{proof} Observe that the category $\Cob_{\beta}\boxtimes \Cob{\gamma}$ is nondegenerate. Indeed let $X\boxtimes Y$
 and $Z\boxtimes T$ be some objects of $\Cob_{\beta}\boxtimes \Cob{\gamma}$. The trace form $\Hom(X\boxtimes Y, Z\boxtimes T)\times \Hom(Z\boxtimes T, X\boxtimes Y)\to \kk$ is the
 tensor product of the trace forms $\Hom(X,Z)\times \Hom(Z,X)\to \kk$ and $\Hom(Y,T)\times \Hom(T,Y)\to \kk$.
 Since the tensor product of non-degenerate pairings is
 non-degenerate, we see that $\Hom(X\boxtimes Y,Z\boxtimes T)$
 has no nonzero negligible morphisms and the result follows.

 The functor $F^S_{\beta,\gamma}$ induces a full functor
 $\scob_{\beta+\gamma} \lra \Cob_{\beta}\boxtimes \Cob{\gamma}$. Since the category
 $\Cob_{\beta}\boxtimes \Cob{\gamma}$ is nondegenerate,
 this functor factors through $\Cob_{\beta+\gamma}$ and gives rise to the fully faithful functor in (\ref{eq_ff_functor}) .

 \end{proof}

\vspace{0.1in}

Passing to additive Karoubi envelopes results in an equivalence of categories:

\begin{proposition} \label{prop_upon} For a regular pair $(\beta,\gamma)$ and  upon passing to additive Karoubi envelopes, functor $F^S_{\beta,\gamma}$ induces an  equivalence of tensor categories
\begin{equation}
F^D_{\beta,\gamma}\ :\ \dcob_{\beta+\gamma} \simeq \dcob_\beta \boxtimes \dcob_\gamma.
\end{equation}
Further passage to gligible quotients produces a tensor equivalence
\begin{equation}
\underline{F}_{\beta,\gamma}\ :\ \udcob_{\beta+\gamma} \simeq \udcob_\beta \boxtimes \udcob_\gamma.
\end{equation}
\end{proposition}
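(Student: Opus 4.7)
The plan is to invoke Proposition~\ref{prop_assume} with $\cC := \udcob_\beta \boxtimes \udcob_\gamma$ and the Frobenius algebra object $A := A_\beta \boxtimes \be \oplus \be \boxtimes A_\gamma$ of (\ref{eq_obj_frob}), whose $\alpha$-evaluation is $\beta+\gamma$ by Example~\ref{example_1}. The regularity hypothesis and the preceding lemma ensure that the handle endomorphism $x_\beta \oplus x_\gamma$ of $A$ is annihilated by $U_{\beta+\gamma}$; hence the universal functor $F_A\colon \vcob_{\beta+\gamma} \to \cC$ descends to $\scob_{\beta+\gamma}$ and extends to its Karoubi envelope, producing the candidate equivalence $\underline{F}_{\beta,\gamma}$.

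To apply Proposition~\ref{prop_assume} I verify its three hypotheses. First, $\cC$ is $\kk$-linear additive Karoubian rigid symmetric monoidal with finite-dimensional Hom spaces, and is nondegenerate since its trace pairing is the tensor product of nondegenerate trace pairings on the two factor categories, exactly as in the proof of the preceding proposition. Second, the generation condition follows from the binomial expansion
\begin{equation*}
F_A(n)\ =\ A^{\otimes n}\ \cong\ \bigoplus_{i+j=n}\tbinom{n}{i}\,A_\beta^{\otimes i}\boxtimes A_\gamma^{\otimes j},
\end{equation*}
which exhibits each indecomposable $X\boxtimes Y$ of $\cC$ as a summand of $F_A(i+j)$ for appropriate $i,j$. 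Third, fullness of $F_A$ is the main step: the target Hom space $\Hom_{\cC}(A^{\otimes n}, A^{\otimes m})$ decomposes, via the same binomial expansions, into a sum of tensor products $\Hom_{\udcob_\beta}(A_\beta^{\otimes i}, A_\beta^{\otimes j}) \otimes \Hom_{\udcob_\gamma}(A_\gamma^{\otimes (n-i)}, A_\gamma^{\otimes (m-j)})$, and I realize a pure-tensor generator $[S_\beta]\otimes [S_\gamma]$ as the image of the disjoint union $S_\beta \sqcup S_\gamma$ in $\vcob_{\beta+\gamma}$, with boundary circles suitably decorated by morphisms of the circle object that project $A$ onto its two direct summands. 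In the coprime case of Proposition~\ref{prop_equal} these projectors are polynomials in the handle operator via formula (\ref{eq_e_def}) applied to the factorization $U_{\beta+\gamma} = U_\beta U_\gamma$; in the general regular case they arise as disconnected cobordism decorations whose existence is guaranteed by the regularity condition $U_{\beta+\gamma} = U_{\beta,\gamma}$.

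Proposition~\ref{prop_assume} then gives $\underline{F}_{\beta,\gamma}$. The Deligne-level equivalence $F^D_{\beta,\gamma}$ follows by lifting: essential surjectivity is the same binomial generation argument applied at the $\dcob$ level, and fully-faithfulness uses the fact that $F_A$ preserves trace pairings (since $\alpha=\beta+\gamma$ equals the sum of the trace evaluations of the two summands of $A$), so the negligible ideals correspond under $F^D_{\beta,\gamma}$ and the Deligne-level Hom spaces match via the combination of the $\udcob$-level isomorphism and the identification of negligibles.

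The main obstacle is the fullness step in the general regular case. When $U_\beta$ and $U_\gamma$ share common factors, the Chinese Remainder Theorem forbids expressing the projectors onto the two summands of $A$ as polynomials in the single handle endomorphism; one must instead use disconnected cobordism endomorphisms of the circle (cap-cup compositions decorated with handle insertions) living in $\End_{\scob_{\beta+\gamma}}(1)$. The regularity hypothesis $U_{\beta+\gamma} = \mathrm{lcm}(U_\beta, U_\gamma)$ is precisely the condition ensuring these cobordism decorations are consistent with the handle skein relation in $\scob_{\beta+\gamma}$.
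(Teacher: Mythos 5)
Your route is genuinely different from the paper's, and the difference matters. The paper does not verify fullness and invoke Proposition~\ref{prop_assume}; instead it builds an explicit functor in the opposite direction, $F:\dcob_\beta\boxtimes\dcob_\gamma\to\dcob_{\beta+\gamma}$, by using the idempotent $e\in\Hom(\be,A_{\beta+\gamma})$ attached to a coprime factorization $U=U_1U_2$ of the handle polynomial (formula (\ref{eq_e_def})) together with the universal properties of $\scob_\beta$, $\scob_\gamma$ and of the external product, and then observes that both composites fix the generating Frobenius objects and hence are isomorphic to the identity; the $\udcob$-level statement is obtained afterwards by passing to gligible quotients, not the other way around.

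The genuine gap in your proposal is the fullness step ``in the general regular case''. The claim that the projectors onto the two summands of $A$ can be realized by disconnected cobordism decorations in $\End_{\scob_{\beta+\gamma}}(1)$, with regularity guaranteeing consistency, is asserted rather than proved, and in fact it cannot hold: already for the Hom space from $\be$ to the circle object one has $\dim\Hom_{\scob_{\beta+\gamma}}(0,1)=K_{\beta+\gamma}=\deg\,\mathrm{lcm}(U_\beta,U_\gamma)$, while $\dim\Hom_{\cC}(\be,A)=\dim A_\beta(1)+\dim A_\gamma(1)=K_\beta+K_\gamma$, and the same dimensions persist in the gligible quotients; these numbers agree only when $U_\beta$ and $U_\gamma$ are coprime, so whenever they share a factor (for instance the regular pair $(\beta,\beta)$ mentioned in the Remark after Proposition~\ref{prop_equal}) no morphism of $\scob_{\beta+\gamma}$ can hit the extra directions and fullness fails outright. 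Thus your argument only functions in the coprime situation of Proposition~\ref{prop_equal} --- which is also, implicitly, the situation in which the paper's own proof operates, since the idempotent $e$ there requires the coprime factorization $U=U_1U_2$. A second, smaller gap is the ``lifting'' from $\udcob$ to $\dcob$: Proposition~\ref{prop_assume} only controls the gligible quotients, and saying that ``the negligible ideals correspond'' is not an argument --- an isomorphism of the quotients by negligibles gives neither injectivity nor surjectivity on the negligible parts themselves. In the coprime case this can be repaired by a dimension count (using $K_{\beta+\gamma}=K_\beta+K_\gamma$ the spanning sets of cobordisms on both sides match), or, as the paper does, by constructing the quasi-inverse directly at the $\dcob$ level so that the $\udcob$ statement follows by descent rather than being the starting point.
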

Note that the tensor products $\boxtimes$ above are still the  naive tensor products of additive $\kk$-linear tensor  categories.

\begin{proof} The category $\scob_\beta \boxtimes \scob_\gamma$ has a commutative Frobenius algebra $A_{\beta,\gamma}$, see (\ref{eq_obj_frob}), so there is a tensor functor
$\scobal \to \scob_\beta \boxtimes \scob_\gamma$ sending $A_\alpha$ to $A_{\beta+\gamma}$ by the universal property from Section~\ref{subsec_universal}, with $\alpha=\beta+\gamma$.

Let $U(T)$ be the polynomial representing the handle skein relation in the category $\scobal$.
We have a factorization $U(T)=U_1(T)U_2(T)$ corresponding to factorization of the denominator of $Z(T)$
into product of the denominators of $Z_1(T)$ and $Z_2(T)$ (in particular we assume that $U_1(T)$ is not
divisible by $T$). Thus we have a corresponding idempotent $e\in \Hom(\be, A_\alpha)$ and decomposition $A_\alpha =\phi(e)A_\alpha \oplus \phi(1-e)A_\alpha$ where the generating functions
of the algebras $\phi(e)A_\alpha$ and $\phi(1-e)A_\alpha$ are precisely $Z_1(T)$ and $Z_2(T)$.

By the universal property there are tensor functors $\scob_\beta \to \dcobal$ and $\scob_\gamma \to \dcobal$ from the skein  categories to the Deligne category (additive Karoubi closure of the skein category) for $\alpha$ sending $A_\beta$ to $\phi(e)A_\alpha$ and $A_\gamma$ to $\phi(1-e)A_\alpha$.
Thus by the universal property of the external tensor product, see e.g. \cite[2.2]{O}, there is a tensor
functor $\scob_\beta \boxtimes \scob_\gamma \to \dcobal$ sending $A_\beta \boxtimes \be$ to $\phi(e)A_\alpha$ and $\be \boxtimes A_\gamma$ to $\phi(1-e)A_\alpha$.

Passing to the additive Karoubi closure of the source category gives a tensor functor
\[ F  \ : \ \dcob_\beta \boxtimes \dcob_\gamma \lra \dcobal .
\]
The composition of the above tensor functors
\begin{equation*}
\dcobal \stackrel{F^D_{\beta,\gamma}}{\lra} \dcob_\beta \boxtimes \dcob_\gamma \stackrel{F}{\lra} \dcobal
\end{equation*}
sends $A_\alpha$ to itself and thus
is isomorphic to the identity functor. Similarly, the composition
\begin{equation*}
\dcob_\beta \boxtimes \dcob_\gamma \stackrel{F}{\lra} \dcobal \stackrel{F^D_{\beta,\gamma}}{\lra}  \dcob_\beta \boxtimes \dcob_\gamma
\end{equation*}
sends $A_\beta \boxtimes \be$ and
$\be \boxtimes A_\gamma$ to themselves and thus is also isomorphic to the identity functor.

These functors intertwine the Frobenius structures of $A_{\alpha}$ and $A_{\beta,\gamma}$, so the isomorphisms are that of tensor functors.
This completes the proof.
\end{proof}

\begin{remark} A similar argument can be applied in a slightly more general situation where the polynomial relation
$U(x)=0$ is replaced by the polynomial relation $\widetilde U(x)=0$, with $U(t)$ a factor of the polynomial $\widetilde U(t)$. This allows to generalize the skein category $\scobal$ to a category $\widetilde{S}\cobal$ that maps onto $\scobal$. Notice that such a  skein relation is still compatible with evaluation $\alpha$.

Considering these categories $\widetilde{S}\cobal$ with  handle skein relations of a fixed degree $n$ gives a family of tensor categories that depend on $2n$ parameters, that is, the coefficients of the polynomial $\widetilde{U}(T)=T^n+\mathrm{l.o.t}$ and the evaluations $\alpha_0=\alpha(1),\dots, \alpha_{n-1}=\alpha(x^{n-1})$. This is a flat family of tensor categories, in  a suitable sense.
\end{remark}

\vspace{0.1in}

Starting with $Z_{\alpha}(T)$ as in (\ref{eq_Z_rat_9}), let us extract the polynomial term by writing
\begin{equation}
    Z_{\alpha}(T) = \frac{P(T)}{Q(T)}=  \frac{\overline{P}(T)}{Q(T)} + R(T), \ \  R(T)\in \kk[T], \ \  \deg \overline{P}(T) <\deg Q(T),
\end{equation}
so that $\overline{P}(T)/Q(T)$ is a proper fraction.
Factor $Q(T)$ over $\kk$ into
\begin{equation}
    Q(T) = Q_1(T)\dots Q_{\ell}(T),
\end{equation}
where each factor is a power of an irreducible polynomial over $\kk$, the factors are mutually coprime, $(Q_i(T),Q_j(T))=1$ for $i\not=j$ and $Q_i(0)=1$ for all $i$. Each $Q_i(T)$ is a power of an irreducible polynomial over $\kk$, with distinct polynomials for different $i$. Now form the partial fraction decomposition
\begin{equation}\label{eq_pf_decomp}
    Z_{\alpha}(T) = \sum_{i=1}^{\ell} \frac{P_i(T)}{Q_i(T)} + R(T), \ \ \deg P_i(T) < \deg Q_i (T),
\end{equation}
with $Q_i(T)$ and $R(T)$ as above. Denote by $\alpha[i]$ the sequence associated to the rational function $P_i(T)/Q_i(T)$, so that $Z_{\alpha[i]}(T)=P_i(T)/Q_i(T)$,  and by $\alpha[0]$ the sequence of coefficients of $R(T)$, so that $Z_{\alpha[0]}(T)=R(T)$.
There is a functor
\begin{equation}
F_{\alpha}^S\ : \ \scobal \lra \scob_{\alpha[0]} \boxtimes \scob_{\alpha[1]} \boxtimes \dots \boxtimes  \scob_{\alpha[\ell]}  = \boxtimes_{i=0}^{\ell}\, \scob_{\alpha[i]}
\end{equation}
taking the circle object $A_{\alpha}$ to the direct sum of objects  $\be^{\otimes (i-1)}\otimes A_{\alpha[i]}\otimes \be^{\otimes (\ell-i)}$, for $i=0,\dots, \ell$.

This functor induces an additive functor with the same target category  from the additive  closure of the source category:
\begin{equation}
F_{\alpha}^{\oplus}\ : \  \scobal \lra \boxtimes_{i=0}^{\ell}\, \scob^{\oplus}_{\alpha[i]}\cong \boxtimes_{i=0}^{\ell}\, \scob_{\alpha[i]},
\end{equation}
as well as functors
\begin{eqnarray}
\label{eq_eq_FD1}
F_{\alpha}^D & : &  \dcobal\lra \boxtimes_{i=0}^{\ell}\, \dcob_{\alpha[i]} \\
\label{eq_eq_FD2}
\underline{F}_{\,\alpha} & : &  \udcobal\lra \boxtimes_{i=0}^{\ell}\, \udcob_{\alpha[i]}
\end{eqnarray}

\begin{prop} Functors $F_{\alpha}^D$ and $\underline{F}_{\,\alpha}$ are equivalences of categories for any rational $\alpha$ over a field $\kk$.
\end{prop}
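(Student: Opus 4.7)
The plan is to iterate the binary decomposition of Proposition~\ref{prop_upon} via induction on the number $\ell$ of non-polynomial summands in the partial fraction expansion (\ref{eq_pf_decomp}). For $k=0,1,\ldots,\ell$ I set $\beta_k := \alpha[0] + \alpha[1] + \cdots + \alpha[k]$, so that $\beta_\ell = \alpha$, and induct on $k$ to establish a tensor equivalence
\begin{equation*}
\dcob_{\beta_k} \simeq \boxtimes_{i=0}^k \dcob_{\alpha[i]}
\end{equation*}
sending $A_{\beta_k}$ to $\bigoplus_{i=0}^k \be^{\boxtimes i} \boxtimes A_{\alpha[i]} \boxtimes \be^{\boxtimes(k-i)}$ as a commutative Frobenius algebra. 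The base case $k=0$ is the identity.

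For the inductive step I would decompose $\beta_k = \beta_{k-1} + \alpha[k]$ and apply Proposition~\ref{prop_upon} to this pair. The crucial verification is that $(\beta_{k-1}, \alpha[k])$ is regular. By Proposition~\ref{prop_equal} it suffices that at least one of the two generating functions be a proper fraction and that the two denominators in lowest terms be coprime. For $k \ge 1$ the series $Z_{\alpha[k]}(T) = P_k(T)/Q_k(T)$ is proper by construction of the partial fraction decomposition, while $Z_{\beta_{k-1}}(T) = R(T) + \sum_{i=1}^{k-1} P_i(T)/Q_i(T)$ has denominator in lowest terms dividing $Q_1(T) \cdots Q_{k-1}(T)$. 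Since the $Q_i$ are pairwise coprime powers of distinct irreducible polynomials, this divisor is coprime to $Q_k(T)$, so the pair is regular and Proposition~\ref{prop_upon} produces
\begin{equation*}
\dcob_{\beta_k} \simeq \dcob_{\beta_{k-1}} \boxtimes \dcob_{\alpha[k]}.
\end{equation*}
Combining this with the inductive hypothesis and the associativity of the naive tensor product $\boxtimes$ closes the induction.

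To identify the resulting iterated equivalence with $F_\alpha^D$ I would invoke the universal property of $\vcobal$ recalled in Section~\ref{subsec_universal}: any $\kk$-linear symmetric tensor functor out of $\vcobal$ which annihilates the handle polynomial and extends over the additive Karoubi closure is determined up to canonical isomorphism by the image of the circle object as a commutative Frobenius algebra with evaluation $\alpha$. Both the iterated equivalence and $F_\alpha^D$ send $A_\alpha$ to the same Frobenius algebra $\bigoplus_{i=0}^\ell \be^{\boxtimes i} \boxtimes A_{\alpha[i]} \boxtimes \be^{\boxtimes(\ell-i)}$, so they are naturally isomorphic. The statement for $\underline{F}_{\,\alpha}$ follows by applying the second equivalence in Proposition~\ref{prop_upon} at each step of the same induction. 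The main obstacle is the regularity verification, which is exactly what the choice of partial fraction decomposition (with each $Q_i$ a power of a distinct irreducible) is designed to guarantee.
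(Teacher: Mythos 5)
Your proposal is correct and takes the same route as the paper, which simply says the result "follows by iteratively applying the previous proposition." You have supplied the detail the paper leaves implicit: namely that at each inductive step the pair $(\beta_{k-1},\alpha[k])$ is regular, which holds by Proposition~\ref{prop_equal} because $Z_{\alpha[k]}$ is proper and $Q_k$ is coprime to the reduced denominator of $Z_{\beta_{k-1}}$ (a divisor of $Q_1\cdots Q_{k-1}$), and that the universal property from Section~\ref{subsec_universal} identifies the iterated equivalence with $F_\alpha^D$ since both send $A_\alpha$ to the same Frobenius algebra.
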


\begin{proof} This follows by iteratively applying the previous proposition.
\end{proof}

\section{Abelian realizations}\label{sec_abel_real}

Let $\alpha=\{\alpha_i, i\in \BZ_{\ge 0}\}$ be a sequence of elements of $\kk$. We say that a
Frobenius algebra $A$ in a symmetric monoidal category $\cC$ defined over some field extension $L$ of $\kk$ (not necessarily a  finite extension) is a {\em realization} of $\alpha$ if the evaluation
of $A$  is $\alpha$, see~\cite{Kh1}. Category  $\cC$ is  then an $L$-linear category. We say that the realization of $\alpha$ is {\em finite} if the $\Hom$ spaces in
$\cC$ are finite dimensional over $L$. Sequence $\alpha$ is called \emph{recognizable} if it admits a finite  realization.  The following result closely mirrors the one in \cite{Kh1}.

\begin{theorem} \label{finite realization thm}
A sequence $\alpha$ admits a finite realization if and only if it is  recurrent.
\end{theorem}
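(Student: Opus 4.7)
The plan is to prove both implications using tools already introduced in the paper. The backward direction $(\Leftarrow)$ is short and uses the construction of $\cobal$; the forward direction $(\Rightarrow)$ is the content of the theorem and proceeds by extracting a polynomial relation from the handle endomorphism.

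For $(\Leftarrow)$, suppose $\alpha$ is recurrent. By the classical characterization of recurrent sequences, $Z_{\alpha}(T) = \sum_{n \geq 0} \alpha_n T^n$ is a rational function in $\kk[[T]]$. The paper's discussion (following \cite{KS}) then shows that $\cobal$ is a $\kk$-linear symmetric monoidal category with finite-dimensional Hom spaces, and its circle object $1 \in \cobal$ carries a canonical commutative Frobenius algebra structure whose $\alpha$-evaluation is $\alpha$ by construction. Taking $\cC := \cobal$, $L := \kk$, and $A := $ the circle object yields a finite realization.

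For $(\Rightarrow)$, assume $A \in \cC$ is a finite realization of $\alpha$ over some field extension $L/\kk$. Since $\End_{\cC}(A)$ is finite-dimensional over $L$, the sequence $\id_A, x, x^2, \dots$ of powers of the handle endomorphism is linearly dependent, so $x$ satisfies a monic polynomial relation
\begin{equation*}
x^K = c_1 x^{K-1} + c_2 x^{K-2} + \cdots + c_K \id_A, \qquad c_i \in L.
\end{equation*}
Precomposing the identity $x^{K+n} = c_1 x^{K-1+n} + \cdots + c_K x^n$ with $\iota \colon \be \to A$ on the right and postcomposing with $\epsilon \colon A \to \be$ on the left, and using $\epsilon \circ x^m \circ \iota = \alpha_m \, \id_{\be}$ from Section~\ref{subset_frob_obj}, one obtains the $L$-linear recurrence
\begin{equation*}
\alpha_{K+n} = c_1 \alpha_{K-1+n} + c_2 \alpha_{K-2+n} + \cdots + c_K \alpha_n, \qquad n \geq 0.
\end{equation*}

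The final step is to descend the recurrence from $L$ to $\kk$. Consider the $\kk$-vector subspace
\begin{equation*}
W := \{(a_0, \dots, a_K) \in \kk^{K+1} : a_0 \alpha_n + a_1 \alpha_{n+1} + \cdots + a_K \alpha_{n+K} = 0 \text{ for all } n \geq 0\}.
\end{equation*}
Because $\alpha_n \in \kk$, the defining conditions are $\kk$-linear, so $W \otimes_{\kk} L$ is precisely the analogous $L$-subspace of $L^{K+1}$. The previous paragraph exhibits a nonzero vector in $W \otimes_{\kk} L$, forcing $W \neq 0$ and hence producing a genuine recurrence for $\alpha$ with coefficients in $\kk$. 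The only real content of the argument is the observation that Hom-finiteness of $\cC$ forces the handle endomorphism to be algebraic; the rest is formal.
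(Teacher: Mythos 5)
Your proof is correct and follows essentially the same route as the paper: finite-dimensionality of $\End_\cC(A)$ forces a monic polynomial relation on the handle endomorphism $x$, precomposing with $\iota$ and postcomposing with $\epsilon$ turns it into a linear recurrence over $L$, and the recurrence descends to $\kk$; the converse is the construction of $\scobal$ (or, as you use, $\cobal$). The only notable difference is that for the descent step the paper appeals to Hankel determinants, whereas your argument via the $\kk$-subspace $W$ of recurrence coefficients and the base-change invariance of the rank of a linear system is a cleaner, self-contained way to get the same conclusion.
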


\begin{proof} Let $x=m\circ \Delta$ be the handle endomorphism of $A$. Note that $\tr(x^n)=\alpha_{n}\in \kk$ for $n\in \BZ_{\ge 0}$.
If $\Hom_\cC(A,A)$ is finite dimensional over $L$ then there exists a nonzero polynomial
$U(X)\in L[X]$ such that $U(x)=0$. This implies $x^iU(x)=0$ for any $i\in \BZ_{\ge 0}$. Computing the traces
of all terms in this relation we get a recurrent relation with constant coefficients in $L$ satisfied by $\alpha_i$
for $i \gg 0$. Eventual recurrence property can be written as vanishing of suitable  Hankel  determinants, see references in~\cite{Kh1}, and computing a determinant made of $\alpha_i$'s gives the same answer in $L$ and $\kk$.  Thus, the sequence $\alpha$ is  recurrent over $\kk$.

Conversely, assume $\alpha$ is recurrent. Then the category $\cC=\scobal$ or its additive Karoubi closure  $\dcobal$, see diagram (\ref{eq_seq_cd_1}),  with Frobenius object
$A$ given by one circle is a finite realization of $\alpha$.
\end{proof}

We say that a realization of $\alpha$ is {\em abelian} if $\cC$ is a symmetric abelian tensor category in the sense
of \cite[4.1.1]{EGNO} (the hom spaces are finite-dimensional, all objects have finite length, $\End_{\cC}(\be)=L$ and $\cC$ is rigid).

In particular, an abelian realization is finite in the above sense.

\begin{theorem} \label{thm_semi_real}
A sequence $\alpha$ admits an abelian realization if and only if the category $\udcobal$
is semisimple.
\end{theorem}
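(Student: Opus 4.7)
The plan is straightforward in the $(\Leftarrow)$ direction: if $\udcobal$ is semisimple, I would simply observe that $\udcobal$ itself qualifies as an abelian realization of $\alpha$ with $L = \kk$. Indeed it is $\kk$-linear, rigid, symmetric monoidal, Karoubi-closed, has finite-dimensional hom spaces and $\End(\be) = \kk$ (the empty viewable cobordism spans $\End_{\vcobal}(0)$ and this identification persists through all the quotients and Karoubi envelopes of diagram~(\ref{eq_seq_cd_1})), and semisimplicity together with idempotent completeness makes it an abelian category in which every object has finite length. The circle object is a commutative Frobenius algebra realizing $\alpha$ by construction.

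For the harder direction $(\Rightarrow)$, suppose $(\cC, A)$ is an abelian realization over $L$. The first step is to pass to the semisimplification $\underline{\cC} := \cC / \mathcal{N}$, where $\mathcal{N}$ is the tensor ideal of negligible morphisms. By the standard Andr\'e--Kahn construction, $\underline{\cC}$ is a semisimple $L$-linear rigid symmetric tensor category with nondegenerate trace form, and the image $\bar A := \pi(A)$ under the projection $\pi\colon \cC \to \underline{\cC}$ is a commutative Frobenius algebra in $\underline{\cC}$ with the same $\alpha$-evaluation, since $\pi$ preserves the categorical trace.

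Applying the universal property of $\vcobal$ from Section~\ref{subsec_universal} to $\bar A$ produces a tensor functor $\bar F\colon \vcobal \otimes_\kk L \to \underline{\cC}$. The next step is to show that $\bar F$ descends through diagram~(\ref{eq_seq_cd_1}) to a tensor functor $\tilde F\colon \udcobal \otimes_\kk L \to \underline{\cC}_{\bar A}$, where $\underline{\cC}_{\bar A}$ denotes the Karoubi-closed tensor subcategory of $\underline{\cC}$ generated by $\bar A$. Descent through $\scobal$ requires the handle relation $U_\alpha(\bar x) = 0$ in $\underline{\cC}$; this follows from the fact that $U_\alpha$ is the minimal shift-annihilator of the sequence $(\alpha_n)$ together with the identification of $U_\alpha$ with the minimal polynomial of the Casimir $\bar c = \bar x(1_{\bar A})$ inside $\bar A$ (the divisibility $U_\alpha \mid \mathrm{minpoly}(\bar c)$ being forced by the trace annihilation identity $\epsilon(\bar c^{j}\,U_\alpha(\bar c))=0$ for $j\ge 0$, and the reverse divisibility by the nondegeneracy of the Frobenius pairing restricted to $L[\bar c]$ in the semisimple setting). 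Descent to the gligible quotient then follows from the nondegeneracy of the trace on $\underline{\cC}_{\bar A}$.

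Finally I would invoke Proposition~\ref{prop_assume}: essential surjectivity of $\tilde F$ onto $\underline{\cC}_{\bar A}$ is automatic from the construction, while fullness follows from the trace-preservation of $\tilde F$ combined with the nondegeneracy of the trace pairings on both $\udcobal \otimes_\kk L$ and $\underline{\cC}_{\bar A}$ (a trace-preserving functor between nondegenerate Karoubi-closed categories whose sources each have the same generating Frobenius object is forced to be fully faithful). This yields an equivalence $\udcobal \otimes_\kk L \simeq \underline{\cC}_{\bar A}$, and the right-hand side is semisimple as a Karoubi-closed full subcategory of the semisimple $\underline{\cC}$. Semisimplicity then descends from $\udcobal \otimes_\kk L$ to $\udcobal$ under the faithfully flat base change $\kk \to L$. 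The hard part of this argument is the combination of Step (b) --- pinning down that the minimal polynomial of the Casimir in an arbitrary abelian realization equals $U_\alpha$ --- and the fullness verification for Proposition~\ref{prop_assume}; both rest on the specific structure of the commutative Frobenius object and nondegeneracy of the trace after semisimplification.
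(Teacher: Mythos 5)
Your $(\Leftarrow)$ direction coincides with the paper's one-line argument. The $(\Rightarrow)$ direction has a genuine gap at the step invoking Proposition~\ref{prop_assume}, which requires \emph{fullness} of $\tilde F$. Your justification --- that a trace-preserving functor between nondegenerate Karoubi-closed categories sharing a generating Frobenius object is automatically fully faithful --- is not a valid principle. Trace-preservation with a nondegenerate source gives faithfulness (if $\tilde F(f)=0$ then $\tr(gf)=\tr(\tilde F(g)\tilde F(f))=0$ for all $g$, so $f$ is negligible and hence zero), but there is no abstract reason the image of $\Hom_{\udcobal\otimes L}(m,n)$ should exhaust $\Hom_{\underline\cC}(\bar A^{\otimes m},\bar A^{\otimes n})$; the latter may contain morphisms not expressible via cobordisms. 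The equivalence you aim for is in fact false in general: take $\nchar\kk=0$, $Z_\alpha(T)=\alpha_0+5T$, $\cC=\Ve$, and $A=\kk^2\oplus V$ with $\dim V=3$ as in Example~\ref{ortho example}. Then $\underline{\cC}=\Ve$ has no nonzero negligibles and $\underline{\cC}_{\bar A}=\Ve$ (since $V$ already generates $\Ve$), while by Theorem~\ref{linear quot} one has $\udcobal\simeq\uRep(O_3)$, the representation category of $O(3)$, which is certainly not $\Ve$.

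The detour is also unnecessary. That $\cC/\mathcal N$ is semisimple when $\cC$ is abelian rigid with finite-dimensional homs is already the content of the Andr\'e--Kahn theorem~\cite{AK} you invoke at the outset; the same theorem, applied to the tensor functor from $\dcobal\otimes_\kk L$ (or, to avoid verifying the handle relation in $\cC$, from the additive Karoubi closure of $\vcobal\otimes_\kk L$) to the abelian rigid category $\cC$, immediately gives that the gligible quotient of the source is semisimple. That quotient is $\udcobal\otimes_\kk L$, and semisimplicity descends along $\kk\to L$. This single application of~\cite{AK} is exactly the paper's proof of $(\Rightarrow)$; constructing an equivalence via Proposition~\ref{prop_assume} is neither possible in general nor needed.
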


\begin{proof} Assume that the category $\udcobal$ is semisimple. Then the object $A\in \udcobal$ gives
an abelian realization of $\alpha$.
Conversely, the existence of an abelian realization implies that the quotient of $\dcobal$ by the negligible morphisms is semisimple, see~\cite[Theorem 1]{AK}.
\end{proof}

\begin{remark} \label{remark_field_ext}
(i) The above theorem shows that if a sequence $\alpha$ admits an abelian realization
over a field extension $L\supset \kk$ then it also admits an abelian realization over $\kk$.

(ii) Assume that a sequence $\alpha$ admits an abelian realization over field $\kk$ and let $L\supset \kk$ be a finite separable extension of $\kk$. Then
the sequence $\alpha$ admits an abelian realization over $L$. This follows from the construction of scalars extension of a tensor category, see \cite[5.3]{D2}. It is not clear, though, what can happen when $L\supset \kk$ is inseparable.
\end{remark}

The following result gives necessary conditions for a sequence $\alpha$ to admit an abelian realizations
in terms of its generating function $Z(T)=Z_{\alpha}(T)$, see (\ref{eq_Z_rat}).
Below, in Theorem~\ref{thm_sufficient}, it is shown that these conditions are also sufficient.

\begin{theorem} \label{necess_thm}
Assume that a sequence $\alpha$ admits an abelian realization. Then

(1) The generating function $Z(T)$ is rational, so $Z(T)=\frac{P(T)}{Q(T)}$ where $P(T), Q(T)\in \kk [T]$
are relatively prime.

(2) The denominator $Q(T)$ is separable, i.e., it has no multiple roots in an algebraic closure of $\kk$.

(3) $\deg P(T)\le \deg Q(T)+1$.

(4) Assume that {\em $\nchar \kk =p>0$}. Then all the residues of the form $Z(T)\frac{dT}{T^2}$ (computed over the algebraic closure $\overline{\kk}$)  lie in the prime subfield
${\mathbb F}_p\subset \kk$.
\end{theorem}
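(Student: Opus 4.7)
The plan is to reduce all four conditions to a single partial fraction formula for $Z(T)$, obtained by analyzing the spectrum of the handle endomorphism $x = m \circ \Delta$ acting on the Frobenius object $A \in \cC$ of an abelian realization of $\alpha$. Condition (1) is immediate from Theorem~\ref{finite realization thm}: an abelian realization is finite by assumption, so $\alpha$ is recurrent and hence $Z(T)$ is rational.

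The crucial input for (2), (3), (4) is the Frobenius identity
\[
\alpha_{n+1} \ = \ \tr_\cC(x^n), \qquad n \geq 0,
\]
where $\tr_\cC$ is the categorical trace on $A$. This follows from the standard expression $\tr_\cC(f) = \epsilon \circ m \circ (f \otimes \id) \circ \Delta \circ \iota$ for the trace of an endomorphism of a Frobenius object, combined with the identity $m \circ (x^n \otimes \id) \circ \Delta = x^{n+1}$, which we prove by induction using the bimodule property $(x \otimes \id) \circ \Delta = \Delta \circ x$ of the comultiplication. We then extend scalars to $\overline{\kk}$ and decompose
\[
A \otimes_\kk \overline{\kk} \ = \ \bigoplus_\lambda A_{(\lambda)}
\]
in $\cC \otimes_\kk \overline{\kk}$ as a finite direct sum of generalized $\lambda$-eigenspaces of $x$ (finite because $x$ is annihilated by a nonzero polynomial). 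On each summand $x - \lambda\,\id$ is nilpotent, and since the categorical trace is additive on short exact sequences and vanishes on nilpotent endomorphisms of finite-length objects,
\[
\tr_\cC\!\bigl(x^n|_{A_{(\lambda)}}\bigr) \ = \ \lambda^n m_\lambda, \qquad m_\lambda \ := \ \dim_{\cC \otimes \overline{\kk}} A_{(\lambda)}.
\]

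Summing over $\lambda$ then yields the partial fraction expansion
\[
Z(T) \ = \ \Bigl(\alpha_0 - \sum_{\lambda \neq 0} \tfrac{m_\lambda}{\lambda}\Bigr) \ + \ m_0 T \ + \ \sum_{\lambda \neq 0} \frac{m_\lambda/\lambda}{1 - \lambda T}
\]
over $\overline{\kk}$, from which (2) and (3) can be read off directly: the denominator $Q(T)$ is (up to a scalar) the product of the distinct linear factors $1 - \lambda T$, hence separable, and the polynomial part has degree at most one, so $\deg P \leq \deg Q + 1$. For (4), the residues of $Z(T)\,dT/T^2$ computed from the same expansion are $\alpha_1 = \dim A$ at $T = 0$ and $-m_\lambda = -\dim A_{(\lambda)}$ at $T = 1/\lambda$; both are categorical dimensions of objects in a symmetric tensor category over $\overline{\kk}$ in characteristic $p$.

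The main obstacle is then to invoke the theorem (in the spirit of Etingof--Ostrik on symmetric tensor categories in positive characteristic) that every categorical dimension in such a category lies in the prime subfield $\mathbb{F}_p \subset \overline{\kk}$. This $\mathbb{F}_p$-dimensions statement is the one genuinely deep input in the argument; the remaining steps are direct computations with the Frobenius structure, once the trace identity $\alpha_{n+1} = \tr_\cC(x^n)$ and the generalized eigenspace decomposition are in place.
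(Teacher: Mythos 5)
Your trace identity $\tr_\cC(x^n)=\alpha_{n+1}$ and the resulting partial-fraction expansion are correct, and in fact that expansion is essentially the endpoint the paper reaches (compare equation~(\ref{eq_pf_decomp_3})). The difference is that you obtain it in one step by extending scalars to $\overline{\kk}$, whereas the paper proceeds more cautiously, and the caution turns out to be necessary. The gap in your argument is exactly the issue the authors flag in Remark~\ref{remark_field_ext}(ii): in positive characteristic it is not known whether the scalar extension $\cC\otimes_\kk L$ of a symmetric abelian tensor category is again abelian when $L/\kk$ is \emph{inseparable}. You need $\cC\otimes_\kk\overline{\kk}$ to carry an abelian tensor structure in order to invoke both the vanishing of traces of nilpotent endomorphisms (\cite[Cor.~3.6]{D1}, \cite[Prop.~4.7.5]{EGNO}) and the $\mathbb{F}_p$-dimension theorem of Etingof--Harman--Ostrik; but $\overline{\kk}/\kk$ is inseparable precisely when $\kk$ is imperfect, which is the only case where condition (2) says something beyond square-freeness. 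The reasoning is therefore circular: once you know $Q$ is separable you may pass to a finite \emph{separable} splitting field, where base change is justified by \cite[5.3]{D2}, but your proof of separability itself relies on the base change.

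The paper breaks this circularity by first proving that $Q$ is square-free entirely over $\kk$: if $p_0$ denotes the radical of the minimal polynomial of $x$ in $\kk[T]$, then $x^ip_0(x)$ is nilpotent in $\cC$, so $\tr(x^ip_0(x))=0$, giving a recurrence with square-free characteristic polynomial. They then upgrade square-free to separable by a Frobenius-twist argument that also stays over $\kk$: the tensor power $A^{\otimes p}$ gives an abelian realization of $\mathrm{Fr}(\alpha)$, and an inseparable irreducible factor of $Q$ would force $\mathrm{Fr}(Q)$ to fail square-freeness, contradiction. Only \emph{after} (2) is in place do they pass to a finite separable extension for (4), using the idempotent decomposition of Section~\ref{subsec_direct_sums} and \cite[Lemma~2.2]{EHO}. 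To repair your proof you should either restrict the eigenspace decomposition to what exists over $\kk$ (coprime factorization of $p_0$, giving only square-freeness) and supply the Frobenius-twist step for separability, or else prove directly that base change to $\overline{\kk}$ preserves the abelian tensor structure here. A small additional omission: you list residues at $T=0$ and $T=1/\lambda$ but not at $T=\infty$; in your notation that residue is $-m_0$, and it is covered by the same $\mathbb{F}_p$-dimension input, as the paper handles via the Residue Theorem.
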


Note that conditions (2) and (3) say that the form $Z(T)\frac{dT}{T^2}$ has no poles of order $\ge 2$ (including at
$T=\infty$) except, possibly, the point $T=0$.

\begin{proof}
Statement (1) is implied by Theorem \ref{finite realization thm} as any abelian realization is finite.

To prove (2) let us consider the morphism $x=m\circ \Delta \in \End_\cC(A)$ as in the proof
of Theorem \ref{finite realization thm}. Let $p\in \kk [X]$ be a nonzero polynomial such that $p(x)=0$
(this polynomial exists since the $\Hom$ spaces in the category $\cC$ are finite dimensional). Let
$p_0$ be the product of all irreducible factors of $p$, each appearing with multiplicity 1. Then for any
$i\in \BZ_{\ge 0}$ the endomorphism
$x^ip_0(x)$ is a nilpotent element of $\End_\cC(A)$ as some power of $p_0$ is divisible by $p$.
Thus we have $\tr(x^ip_0(x))=0$ which gives a linear recurrent relation with constant coefficients
for $\alpha_n$ with $n$ sufficiently large. This relation implies that the generating function $Z(T)$ can
be written as a fraction with denominator $p_0(T)$. Thus we proved that the factorization of
$Q(T)$ into irreducible factors is square free.

A  related property is that in a rigid abelian $\kk$-linear tensor category the trace of a nilpotent endomorphism  is  zero, see e.g. \cite[Corollaire 3.6]{D1}.

\vspace{0.1in}

We still have to show that each irreducible factor of $Q(T)$ is separable in the case $\nchar \kk =p>0$. Observe that the tensor power $A^{\otimes p}\in \cC$ is a commutative Frobenius algebra object that gives an abelian realization
of the sequence $\mathrm{Fr}(\alpha)=\{ \alpha_0^p, \alpha_1^p, \ldots \}$, see Example~\ref{example_2}. The generating function
of the sequence Fr$(\alpha)$ is Fr$(Z(T))$ where Fr$(Z(T))$ is obtained from $Z(T)$ by applying
the Frobenius endomorphism $\lambda \mapsto \lambda^p$ to all the coefficients of $Z(T)$.
Now for the sake of contradiction assume that one of the
irreducible factors of $Q(T)$ is not separable. We recall that a nonseparable irreducible polynomial
is of the form $r(T)=\sum_{i=0}^kc_iT^{pi}$. Thus, one of the factors of Fr$(Q(T))$ is
Fr$(r(T))=\sum_{i=0}^kc_i^pT^{pi}=(\sum_{i=0}^kc_iT^{i})^p$ and Fr$(Q(T))$ is not square free.
This is a contradiction (note that the polynomials Fr$(P(T))$ and Fr$(Q(T))$ are relatively prime).
Thus (2) is proved.

Recall that the relation $\tr(x^ip_0(x))=0$ holds for any $i\in \BZ_{\ge 0}$, where the polynomial
$p_0=p_0(T)$ is square free. In particular the multiplicity of factor $T$ in $p_0(T)$ is $\le 1$. It follows
that the sequence $\alpha_2, \alpha_3, \ldots$ satisfies a linear recurrent relation with constant
coefficients, which implies (3).

Let us prove (4).
We can assume that the denominator $Q(T)=\prod_{i=1}^r(1-\gamma_iT)$ for distinct nonzero constants $\gamma_i\in L$, where $L$ is a finite separable field extension of $\kk$.
Let $Z(T)=\frac{\beta_i}{1-\gamma_iT}+Z'$ where $\beta_i\in L$ and $Z'$ has no poles at $T={\gamma_i}^{-1}$, $1\le i\le r$. Let us consider an abelian realization of $\alpha$
over $L$, see Remark \ref{remark_field_ext}(2). For a suitable idempotent $e$ the algebra $\phi(e)A$ will have the generating function $\frac{\beta_i}{1-\gamma_iT}$, see Section \ref{subsec_direct_sums}.
Thus $\beta_i\gamma_i=\dim(\phi(e)A)$ must be an element of the prime subfield
${\mathbb F}_p\subset L$, see \cite[Lemma 2.2]{EHO}. Observe that $-\beta_i\gamma_i$ is precisely the residue of the 1-form $Z(T)\frac{dT}{T^2}$ at
$T={\gamma_i}^{-1}$. Thus the statement (4) is proved for all finite nonzero poles of $Z(T)$. The residue at $T=0$ is $\alpha_1=\dim(A)$, and we can
apply \cite[Lemma 2.2]{EHO} again. Finally in the remaining case $T=\infty$ we use the Residue Theorem, which  holds in characteristic  $p$ as well~\cite[Corollary 2.5.4]{G}.
\end{proof}

\begin{example}
Sequence $\alpha =(1,2,3,4,5,\ldots)$ describing the function $Z(T)=1/(1-T)^2$  does not admit an abelian realization over any field, see condition (2) of the above theorem. More explicitly, the handle endomorphism $x$ satisfies
$(x-1)^2=0$ in $\Cobal$ for this $\alpha$. However,  $\tr(x-1)=\alpha_2-\alpha_1=3-2=1\ne 0$. This is a contradiction:
in any abelian category trace of a  nilpotent endomorphism is zero~\cite[Proposition~4.7.5]{EGNO}.
\end{example}

If $\kk$ is algebraically closed,  decomposition (\ref{eq_pf_decomp}) can be refined to
\begin{equation}\label{eq_pf_decomp_2}
    Z_{\alpha}(T) = \sum_{i=1}^{\ell} \frac{P_i(T)}{(1-\gamma_i T)^{m_i}} + R(T), \ \ \deg P_i(T) < m_i,
\end{equation}
with distinct $\gamma_1,\dots, \gamma_{\ell}\in \kk$ and polynomial $R(T)$. Conditions (2)-(4) of Theorem~\ref{necess_thm} translate to
\begin{itemize}
\item $m_i =1$ for all $i$, $1\le i\le \ell$.
\item $\deg R(T)\le 1$, that is, the polynomial $R(T)$ is at most linear, $R(T)=r_0+ r_1 T$, and $r_1\in \mathbb{F}_p$ if $\nchar \kk=p$.
\item Due to $m_i=1$ we restrict to a constant polynomial $P_i(T)=p_i\in\kk$, simplifying the residue to
\begin{equation}
\mathrm{res}_{\gamma^{-1}_i}\left(\frac{p_i dT}{(1-\gamma_i T)T^2}\right) =- p_i\gamma_i.
\end{equation}
Thus, condition (4) can be rewritten as $p_i\gamma_i\in {\mathbb F}_p$.
\end{itemize}

Summarizing, over an algebraically closed $\kk$, a rational function $Z(T)$ admits an abelian realization iff
\begin{equation}\label{eq_pf_decomp_3}
    Z_{\alpha}(T) = \sum_{i=1}^{\ell} \frac{p_i}{1-\gamma_i T} + r_0+r_1T, \ \ p_i\gamma_i\in {\mathbb F}_p, \  r_1\in {\mathbb F}_p,
\end{equation}
for distinct $\gamma_1,\dots, \gamma_{\ell}$.

In characteristic $0$, one can  use Remark~\ref{remark_field_ext}(i) to pass from $\kk$ to its algebraic closure. In characteristic $p$, irreducible inseparable factors in the denominator also constitute an obstruction to existence of an abelian realization, by condition (2) of Theorem~\ref{necess_thm}.
\begin{theorem} \label{thm_sufficient}
Assume that a sequence $\alpha$ satisfies  conditions (1)--(4) from Theorem
\ref{necess_thm}. Then $\alpha$ admits an abelian realization.
\end{theorem}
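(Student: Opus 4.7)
The plan is to use Theorem \ref{thm_semi_real} to translate sufficiency into semisimplicity of $\udcobal$, and then apply the partial fraction decomposition together with the equivalence
\[
\underline{F}_{\,\alpha}\colon \udcobal \;\simeq\; \boxtimes_{i=0}^{\ell}\,\udcob_{\alpha[i]}
\]
from the Proposition immediately preceding the theorem in order to reduce to elementary summands that can be realized in well-known abelian tensor categories.

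First I would pass to the algebraic closure: by Remark \ref{remark_field_ext}(i), any abelian realization constructed over $\overline{\kk}$ descends to $\kk$, and over $\overline{\kk}$ conditions (1)--(4) are equivalent to the normal form (\ref{eq_pf_decomp_3})
\[
Z_{\alpha}(T) \;=\; \sum_{i=1}^{\ell}\frac{p_i}{1-\gamma_i T}+r_0+r_1 T,
\]
with distinct nonzero $\gamma_i$, and, in characteristic $p$, with $p_i\gamma_i\in\mathbb{F}_p$ and $r_1\in\mathbb{F}_p$. Applying $\underline{F}_{\,\alpha}$, I would then observe that a naive $\overline{\kk}$-linear Karoubian $\boxtimes$ of semisimple categories with finite-dimensional hom spaces is again semisimple: indecomposable objects are precisely external tensor products of indecomposables, and their endomorphism algebras are $\overline{\kk}\otimes_{\overline{\kk}}\overline{\kk}=\overline{\kk}$, hence division. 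Thus it is enough to exhibit an abelian realization for each summand $\alpha[i]$ separately.

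For a rank one summand $p_i/(1-\gamma_i T)$, Section \ref{subsec_deligne} identifies $\dcob_{\alpha[i]}$ with the Deligne category $\Rep(S_t)$ at $t=p_i\gamma_i$, so that $\udcob_{\alpha[i]}\simeq \underline{\Rep}(S_t)$. In characteristic zero this quotient is always semisimple (Comes--Ostrik), and in characteristic $p$ it is semisimple precisely when $t\in\mathbb{F}_p$, which is exactly condition (4) in the rank one case. For the polynomial remainder $r_0+r_1 T$ I would use Example \ref{ortho example}: the commutative Frobenius algebra $A(V)\in\Rep(O_t)$ with $t=r_1-2$ has generating function $r_0+r_1 T$ for a suitable choice of counit, and $\underline{\Rep}(O_t)$ is semisimple under the analogous $\mathbb{F}_p$ condition on $t$, equivalent to $r_1\in\mathbb{F}_p$. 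The degenerate subcase $r_1=0$ is alternatively handled by the $osp(1|2)$ construction of Section \ref{sec_constant_gen} and Theorem \ref{thm_dcob_C}.

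The main obstacle is the positive characteristic input to the third step: one has to invoke the semisimplicity of the gligible quotients $\underline{\Rep}(S_t)$ and $\underline{\Rep}(O_t)$ for $t\in\mathbb{F}_p$, relying on Etingof--Harman--Ostrik type results, and, on the orthogonal side, match the Frobenius object produced by Example \ref{ortho example} with the linear generating function discussed in Section \ref{subsec_linear}. In characteristic zero the whole argument collapses to classical Deligne category theory once the partial fraction decomposition and the equivalence $\underline{F}_{\,\alpha}$ have been applied.
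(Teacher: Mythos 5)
Your plan is structurally the same as the paper's: pass to the algebraic closure via Remark~\ref{remark_field_ext}(i), decompose $Z_\alpha$ into rank-one and (at most) linear summands, realize each summand, and recombine using the semisimplicity of a naive exterior tensor product. The paper's Lemma immediately after the theorem does the recombination by explicitly forming $A'\boxtimes\be\oplus\be\boxtimes A''$ in $\cC'\boxtimes\cC''$, rather than invoking the equivalence $\underline{F}_\alpha$, but this is a presentational difference, not a substantive one; both rest on the same observation that $\boxtimes$ of semisimple $\overline\kk$-linear categories is semisimple. In characteristic zero your argument then goes through exactly as you say, by Deligne's results on $\Rep(S_t)$ and $\Rep(O_t)$.

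The genuine gap is in positive characteristic, and it is the one you yourself flag as the ``main obstacle'' but then wave away with ``Etingof--Harman--Ostrik type results.'' For the polynomial summand $r_0+r_1T$ with $r_1\in\mathbb F_p$, you propose to realize it inside $\underline\Rep(O_{r_1-2})$ and invoke semisimplicity of this gligible quotient. But that semisimplicity is not an available prior result: the paper itself, in Section~\ref{subsec_linear}, explicitly describes the identification of $\udcobal$ with a tilting-module quotient for $\beta_1$ in the prime subfield as something ``we expect,'' not something known. Worse, trying to deduce it from Theorem~\ref{linear quot} together with Theorem~\ref{thm_semi_real} is circular, since Theorem~\ref{thm_semi_real} feeds into the very statement you are proving. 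A parallel (milder) issue arises for your rank-one char-$p$ case via $\underline\Rep(S_t)$: even granting $t=n<p$, you still need the comparison $\underline\Rep(S_t)\simeq\Rep(S_n)$ in characteristic $p$, which requires an appeal to \cite{H} rather than being immediate. The paper's proof sidesteps all of this with elementary constructions directly in $\mathrm{Vec}_\kk$: for the linear summand it takes $A(V)$ from Example~\ref{ortho example} with $V$ a vector space of dimension $\equiv\alpha_1-2\pmod p$; for the rank-one summand with $\beta\gamma=n\in\mathbb F_p\setminus\{0\}$ it takes a direct sum of $n$ copies of the one-dimensional Frobenius algebra $\kk$ with a rescaled counit. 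These are manifestly semisimple abelian realizations, and supplying them is exactly the missing ingredient that your proposal lacks. (Also note that your alternative for $r_1=0$ via the $osp(1|2)$ construction only applies in characteristic zero, since Theorem~\ref{thm_dcob_C} is stated there; it does not rescue the char-$p$ case.)
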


\begin{proof} By Remark \ref{remark_field_ext} (i) we can and will assume that the field $\kk$ is algebraically closed.

We start by giving abelian realizations for some special sequences.

(1) Assume $Z(T)=\alpha_0+\alpha_1T$ and $\nchar \kk =p>0$ with $\alpha_1\in {\mathbb F}_p$.
Then we can choose $\cC =\mbox{Vec}_\kk$ and use Example \ref{ortho example} with vector space $V$ of suitable dimension.

(2) Assume $Z(T)=\alpha_0+\alpha_1T$ and $\nchar \kk =0$. Again we use Example \ref{ortho example}; however in all cases when $\dim(V)\not \in \BZ_{\ge 0}$ we use the abelian specialization of the  Deligne category
$\cC =\Rep(O_t)$ (see e.g.~\cite[9]{D1}) with $t=\dim(V)$.

(3) Assume $Z(T)=\frac{\beta}{1-\gamma T}$ with $\beta \gamma =1$.
We choose $\cC =\mbox{Vec}_\kk$ and $A=\kk$ such that $\epsilon(1)=\beta$.

(4) Assume $Z(T)=\frac{\beta}{1-\gamma T}$ and $\nchar \kk =p>0$, with
$\beta \gamma \in {\mathbb F}_p\setminus \{ 0\}$. We choose $\cC =\mbox{Vec}_\kk$ and take $A$
to be a direct sum of several copies of the algebra from (3).

(5) Assume $Z(T)=\frac{\beta}{1-\gamma T}$ and $\nchar \kk =0$, with
$t=\beta \gamma \ne 0$. We take $A$ to be the standard Frobenius algebra in the semisimple quotient
of the Deligne category $\cC =\Rep(S_t)$, see~\cite[Th\'eor\`emes 2.18, 6.2]{D1}.

Any sequence $\alpha$ satisfying the conditions (1)--(4) from Theorem
\ref{necess_thm} is a sum of sequences considered in (1), (2), (4), (5) above. Thus the following
result completes the proof of the Theorem.
\end{proof}

The last two theorems together are equivalent to the following result.

\begin{theorem} \label{thm_both}
A sequence $\alpha$ over a field $\kk$ admits an abelian realization if and only if it satisfies  conditions (1)--(4) in Theorem
\ref{necess_thm}.
\end{theorem}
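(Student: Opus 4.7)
The statement is the logical conjunction of the two immediately preceding theorems, so the plan is to simply assemble them. The forward direction, namely that an abelian realization forces conditions (1)--(4) on the generating function $Z_\alpha(T)$, is precisely Theorem \ref{necess_thm}. The reverse direction, namely that any rational $Z_\alpha(T)$ satisfying (1)--(4) comes from a commutative Frobenius algebra in a symmetric abelian tensor category, is precisely Theorem \ref{thm_sufficient}. So the entire proof of Theorem \ref{thm_both} is a one-line appeal to these two results.

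If I were writing this out, I would first state: ``The ``only if'' direction is Theorem \ref{necess_thm}, and the ``if'' direction is Theorem \ref{thm_sufficient}.'' For clarity to the reader, I might briefly recall the mechanism of each. For necessity, one uses the handle endomorphism $x = m\circ\Delta$ of $A$: finite-dimensionality of $\End_\cC(A)$ gives a minimal polynomial, vanishing of traces of nilpotent endomorphisms in a rigid abelian $\kk$-linear tensor category forces the denominator to be separable, and the residue constraint in characteristic $p$ reduces via the idempotent decomposition of Section~\ref{subsec_direct_sums} to the known fact that categorical dimensions in abelian tensor categories over characteristic $p$ lie in $\mathbb{F}_p$. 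For sufficiency, one uses the partial fraction decomposition~(\ref{eq_pf_decomp_3}), realizes each summand separately in $\mathrm{Vec}_\kk$ or in an appropriate abelian specialization of $\mathrm{Rep}(O_t)$ or $\mathrm{Rep}(S_t)$, and assembles these into a direct sum of commutative Frobenius algebra objects in the external tensor product of the chosen abelian categories, invoking Example~\ref{example_1} to see that additivity of $\alpha$-evaluations matches addition of generating functions.

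There is essentially no obstacle, since both halves have been proved. The only subtlety worth flagging is that the abelian realization produced in the sufficiency direction lives in a (naive) external tensor product of several abelian tensor categories, which is not a priori abelian; however, one may either pass to a Deligne tensor product when available, or more simply observe that Theorem~\ref{thm_semi_real} characterizes abelian realizability in terms of semisimplicity of $\udcobal$, and by the equivalence (\ref{eq_eq_FD2}) this semisimplicity is inherited componentwise from the pieces of the partial fraction decomposition. Thus the proof reduces to the single sentence combining Theorems~\ref{necess_thm} and~\ref{thm_sufficient}.
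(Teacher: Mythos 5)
Your proposal is correct and matches the paper's own treatment: the paper states explicitly that Theorem \ref{thm_both} is just the conjunction of Theorem \ref{necess_thm} and Theorem \ref{thm_sufficient}, and the subtlety you flag (that the naive external tensor product used in the sufficiency construction must be checked to be abelian) is resolved by the paper in the Lemma immediately following Theorem \ref{thm_both}, which observes via Theorem \ref{thm_semi_real} that the relevant factor categories are semisimple, so that $\cC'\boxtimes\cC''$ is semisimple and hence abelian. Your alternative resolution via the equivalence (\ref{eq_eq_FD2}) and semisimplicity of $\udcobal$ is logically equivalent to the paper's argument.
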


\begin{lemma} Assume that sequences $\alpha'$ and $\alpha''$ admit abelian realizations over an algebraically closed field $\kk$. Then the sequence $\alpha'+\alpha''$ also admits an abelian realization
over $\kk$.
\end{lemma}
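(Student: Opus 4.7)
The plan is to merge the two given abelian realizations into one by embedding both into a common ambient abelian tensor category via Deligne's refined exterior tensor product of abelian symmetric tensor categories, and then forming the direct sum of the two resulting Frobenius algebras.

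Write $A' \in \cC'$ and $A'' \in \cC''$ for the given abelian realizations of $\alpha'$ and $\alpha''$, so that $\cC'$ and $\cC''$ are symmetric abelian rigid $\kk$-linear tensor categories with $\End(\be)=\kk$, finite-dimensional Hom-spaces, and objects of finite length. The first step is to form the refined Deligne tensor product $\cC := \cC' \boxtimes \cC''$ of abelian tensor categories, as opposed to the naive one recalled in Section~\ref{subsec_direct_sums}; under the present hypotheses, as discussed in \cite{EGNO,D2}, $\cC$ is again a symmetric abelian $\kk$-linear tensor category with the same axioms, and comes equipped with canonical tensor embeddings $X \mapsto X \boxtimes \be$ and $Y \mapsto \be \boxtimes Y$ of the two factors.

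The second step is to transport the Frobenius structures on $A'$ and $A''$ along these embeddings. This produces commutative Frobenius algebra objects $B' := A' \boxtimes \be$ and $B'' := \be \boxtimes A''$ in $\cC$, whose handle endomorphisms are $x_{A'} \boxtimes \id_\be$ and $\id_\be \boxtimes x_{A''}$ respectively. Because the embeddings are $\kk$-linear tensor functors and preserve the handle construction, the evaluations $(\epsilon x^n \iota)$ of $B'$ and $B''$ are precisely $\alpha'$ and $\alpha''$. Applying Example~\ref{example_1} inside $\cC$, the direct sum
\[
B \ := \ (A' \boxtimes \be) \ \oplus \ (\be \boxtimes A'')
\]
carries a canonical commutative Frobenius algebra structure whose evaluation is $\alpha' + \alpha''$. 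This $B \in \cC$ is the desired abelian realization.

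The main technical obstacle is the appeal to Deligne's refined exterior tensor product: one must verify that $\cC' \boxtimes \cC''$ genuinely exists as an abelian tensor category with the listed properties and that the inclusions of the two factors are faithful tensor embeddings. For the building-block categories $\mathrm{Vec}_\kk$, $\Rep(O_t)$, and $\Rep(S_t)$ that arise in the proof of Theorem~\ref{thm_sufficient}, this is standard; in general, the construction under the finiteness hypotheses baked into the definition of an abelian realization is precisely the one developed in the cited references, so no new category-theoretic input beyond what has already been invoked in Section~\ref{subsec_direct_sums} is required.
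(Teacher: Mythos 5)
Your proposal differs from the paper's proof in a way that introduces a genuine gap. The paper first invokes Theorem~\ref{thm_semi_real} (the direction that rests on~\cite{AK}) to \emph{upgrade} the hypothesis: if $\alpha'$ and $\alpha''$ admit abelian realizations, then $\udcob_{\alpha'}$ and $\udcob_{\alpha''}$ are \emph{semisimple}, so one may take $\cC'$ and $\cC''$ to be semisimple from the start. Over an algebraically closed field, the \emph{naive} exterior tensor product $\cC'\boxtimes\cC''$ of two semisimple symmetric tensor categories is manifestly semisimple (simple objects are $X_i\boxtimes Y_j$ with $\End=\kk$) and therefore abelian, so the direct sum Frobenius algebra does the job with no further input. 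You, by contrast, skip the semisimplification step and instead form the \emph{refined Deligne tensor product} of the two given abelian categories. This is where the gap lies: the references you cite do not establish that the Deligne tensor product of two locally finite but not necessarily finite, not necessarily semisimple, abelian symmetric tensor categories (in the sense of~\cite[4.1.1]{EGNO}) is again such a category. In particular, rigidity of $\cC'\boxtimes\cC''$ is not automatic -- objects $X\boxtimes Y$ are rigid, but a general object of the Deligne tensor product is only a subquotient of sums of these, and rigidity does not descend to subquotients. The result in \cite[Proposition~4.6.1]{EGNO} covers only \emph{finite} (multi)tensor categories, and \cite{D2} addresses semisimplicity in characteristic $p$ rather than rigidity of the Deligne product in general. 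Your final paragraph acknowledges this obstacle but then asserts it away; that is precisely the nontrivial content you would need to supply. The paper's detour through semisimplicity is chosen exactly to sidestep this difficulty -- once both factors are semisimple, the naive and Deligne products agree and everything in sight is rigid and abelian for elementary reasons. If you want to keep your more direct route, you would need to either restrict to cases where the Deligne tensor product is known to behave (e.g., finite or super-Tannakian categories) or supply a rigidity argument, neither of which is shorter than quoting Theorem~\ref{thm_semi_real} as the paper does.
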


\begin{proof} By Theorem~\ref{thm_semi_real} there are semisimple categories $\cC'$ and $\cC''$ with Frobenius algebras $A'\in \cC'$ and $A''\in \cC''$ giving the realizations of $\alpha'$ and $\alpha''$. Then the algebra
\begin{equation*}
A'\boxtimes \be \oplus \be \boxtimes A''\in \cC'\boxtimes \cC''
\end{equation*}
gives a realization of $\alpha$ in a semisimple (and hence abelian) category $\cC'\boxtimes \cC''$.
\end{proof}

\begin{remark} The proof of Theorem \ref{thm_sufficient} shows that in the case of algebraically closed field $\kk$ of positive characteristic the sequence $\alpha$ admits an abelian realization if and only if it admits a realization with $\cC =\mbox{Vec}_\kk$.
\end{remark}

\begin{example}
(a) Let $\alpha =(1,1,2,3,5,8,\ldots)$ be the Fibonacci sequence, with the generating function $Z(T)=1/(1-T-T^2)$. Then $\alpha$ admits
an abelian realization in characteristic zero.
It admits an abelian realization in positive characteristic $p$ if $p\ne 5$ and $5$ is
a quadratic residue modulo $p$, i.e., $p=2$ or $p\equiv \pm 1 (\mathrm{mod}\ 5)$. Indeed, in characteristic $5$ the denominator is $(1+2T)^2$, hence has a multiple root and is not separable. If $5$ is not a quadratic residue modulo $p$, the differential form residue of  condition $(4)$ in either of the two roots of the denominator does not lie in the prime subfield.

(b) Let $\beta =(-1,2,1,3,4,7,11,\ldots)$ be the (shifted) Lucas sequence. It satisfies the Fibonacci relation $\beta_{n+2}=\beta_{n+1}+\beta_n$ for $n\ge 0$ but with  a
different initial condition. The generating
function $Z(T)=\frac{\phi^{-1}}{1-\phi T}+\frac{\bar \phi^{-1}}{1-\bar \phi T}$ where $\phi=\frac{1+\sqrt{5}}2$
is the golden ratio and $\bar \phi=\frac{1-\sqrt{5}}2$  its
Galois conjugate. Both residues of the one-form in
Theorem \ref{necess_thm} (4) equal 1 in this case.
Thus, $\beta$ admits an abelian realization in any characteristic.
\end{example}

\vspace{0.1in}

\section{Endomorphisms of object 1} \label{sec_end_one}

\subsection{Algebras  \texorpdfstring{$B_{S}$}{B S} and  \texorpdfstring{$B$}{B}.}
$\quad$
\vspace{0.1in}

Define the \emph{rank} $K$ of a rational theory $\alpha$ by  formula (\ref{eq_NMK}). Rank  is the maximum of the degree of the numerator $P(T)$ of $Z(T)$ plus one and the  degree of the denominator $Q(T)$. For a theory of rank $K$, elements $1,x,\dots, x^{K-1}$ of  $A_{\alpha}(1)$ are linearly independent and there is a  linear relation (\ref{eq_with_K}), reproduced below
\begin{equation}\label{eq_with_K_5}
    U_{\alpha}(x):= x^K - b_1 \, x^{K-1}+ b_2 \, x^{K-2}- \ldots + (-1)^M b_M \, x^{K-M} =0,
 \end{equation}
 where $b_i$'s are the coefficients of the  denominator $Q(T)$, see (\ref{eq_K}), normalized so that $Q(0)=1$, and  $p_{\alpha}(x)=x^K Q(x^{-1})$.

 Recall that category  $\Cobal$ is the quotient of the skein category $\scobal$ by the ideal of negligible morphisms. In the category $\scobal$ we evaluate closed components via $\alpha$ and reduce $K$ handles on  a connected component via (\ref{eq_with_K_2}). In $\Cobal$ we further mod out by all negligible morphisms.

 Consider the  endomorphism algebras
 \begin{eqnarray}
     B_{S} & := & \End_{\scobal}(1)\cong \Hom_{\scobal}(0,2), \\
     B & = & \End_{\Cobal}(1)\cong  A_{\alpha}(2).
 \end{eqnarray}
Both algebras are commutative unital $\kk$-algebras, under the pants cobordism multiplication. Isomorphisms on the right are  those of $\kk$-vector spaces, given by
moving the bottom circle of a $(1,1)$-cobordism to the top. Algebra $B$ is the quotient of $B_{S}$ by the two-sided ideal $J_{neg}$ of negligible endomorphisms,
\begin{equation}\label{eq_B_quotient}
    B \cong B_{S}/J_{neg}.
\end{equation}

Elements $u, x$ in Figure~\ref{fig4_1} generate the algebra $B_{S}$.

\begin{figure}[!htb]
\begin{center}
\begin{tikzpicture}
[scale=0.4]
\draw [yscale=0.3] (0,0) circle(1);
\draw [dashed, yscale=0.3] (1,-8) arc(0:180:1);
\draw [yscale=0.3] (-1,-8) arc(180:360:1);
\draw (-1,0) to [out=270, in=180] (0,-1) to [out=0, in=270] (1,0);
\draw (-1,-2.4) to [out=90, in=180] (0,-1.4) to [out=0, in=90] (1,-2.4);
\node at (-2,-1.2) {$u=$};
\end{tikzpicture}
$ \ \ \ $
\begin{tikzpicture}
[scale=0.4]
\draw [yscale=0.3] (0,0) circle(1);
\draw [dashed, yscale=0.3] (1,-8) arc(0:180:1);
\draw [yscale=0.3] (-1,-8) arc(180:360:1);
\draw (-1,0) -- (-1,-2.4);
\draw (1,0) -- (1,-2.4);
\draw [fill=black] (0,-1.2) circle(0.05);
\node at (-2,-1.2) {$x=$};
\node at (2,-1.2) {$:=$};
\draw [yscale=0.3] (5,0) circle(1);
\draw [dashed, yscale=0.3] (6,-8) arc(0:180:1);
\draw [yscale=0.3] (4,-8) arc(180:360:1);
\draw (4,0) to[out=270, in=90] (3,-1.2) to [out=270, in=90] (4,-2.4);
\draw (6,0) to[out=270, in=90] (7,-1.2) to [out=270, in=90] (6,-2.4);
\draw (4.,-1.2) to [out=290, in=250] (6.,-1.2);
\draw (4.3,-1.5) to [out=70, in=110] (5.7,-1.5);
\end{tikzpicture}
\caption{\label{fig4_1} Generators $u$ and $x$ of $B_{S}$.}
\end{center}
\end{figure}
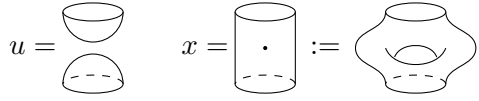

It is easy to write down a basis in each hom
space  of the category $\scobal$, see~\cite{KS}. A basis in $B=\End_{\scobal}(1)$ is given by the set of tube cobordisms with at most $K-1$ dots on them and the cup-cap  cobordisms $u$ decorated by at most $K-1$ dots on each connected component.

\begin{prop}  Elements
\begin{equation}
    x^n, \ 0\le n<K, \ \  x^{n}u x^{k}, \  0\le n,k < K
\end{equation}
constitute  a basis of $B_{S}$, and $\dim(B_S)=K^2+K.$
\end{prop}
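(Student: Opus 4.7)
The plan is to decompose $\Hom_{\vcobal}(1,1)$ into two subspaces labeled by the connectivity pattern of the underlying surface and then show that the handle skein relation acts independently on each summand. First I would classify viewable $(1,1)$-cobordisms up to rel-boundary diffeomorphism. Since a viewable cobordism has no closed components, every connected component must touch at least one boundary circle, so either the top and bottom circles lie in the same component or in different ones. In the first case the surface is a tube of some genus $g\ge 0$, represented by $x^g$. In the second case it is a disjoint union of two one-holed surfaces of genera $a$ (capping the top) and $b$ (capping the bottom), represented by $x^a u x^b = x^a\iota\,\epsilon\,x^b$. Hence as $\kk$-vector spaces,
\[
\Hom_{\vcobal}(1,1) \;=\; V_{\mathrm{tube}} \oplus V_{\mathrm{cc}},
\]
with $V_{\mathrm{tube}}=\bigoplus_{g\ge 0}\kk\,x^g$ and $V_{\mathrm{cc}}=\bigoplus_{a,b\ge 0}\kk\,x^a u x^b$, each having the indicated diffeomorphism classes as basis.

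Next I would invoke the locality of the handle skein relation. Passing from $\vcobal$ to $\scobal$ replaces $K$ handles on a single fixed connected component by the lower-handle combination given by (\ref{eq_with_K_5}) on the same component. This substitution preserves both the number of connected components and the partition of boundary circles among them, so it respects the splitting above. Consequently
\[
B_{S} \;\cong\; V_{\mathrm{tube}}/R_{\mathrm{tube}} \;\oplus\; V_{\mathrm{cc}}/R_{\mathrm{cc}},
\]
where $R_{\mathrm{tube}}$ and $R_{\mathrm{cc}}$ are the subspaces of handle-skein relations on the respective summands.

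For the tube summand, $V_{\mathrm{tube}}$ is a free $\kk[x]$-module of rank one generated by $x^0$, and $R_{\mathrm{tube}}$ is exactly the ideal $(U_{\alpha}(x))$. Since $U_{\alpha}$ is monic of degree $K$, the quotient is $\kk[x]/(U_{\alpha})$, of dimension $K$ with basis $1,x,\dots,x^{K-1}$. For the cup-cap summand, let $x_L$ and $x_R$ denote the operators adding a handle to the top and bottom disk respectively; $V_{\mathrm{cc}}$ is the free $\kk[x_L,x_R]$-module on $u$, and the handle relation applied separately to each of the two components yields both $U_{\alpha}(x_L)=0$ and $U_{\alpha}(x_R)=0$. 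The quotient is therefore
\[
\kk[x_L,x_R]/(U_{\alpha}(x_L),U_{\alpha}(x_R)) \;\cong\; \kk[x_L]/(U_{\alpha})\otimes_{\kk}\kk[x_R]/(U_{\alpha}),
\]
of dimension $K^2$ with basis $\{x^n u x^k : 0 \le n,k < K\}$ (identifying $x^n u x^k$ with $x_L^n x_R^k \cdot u$). Summing yields $\dim B_{S} = K + K^2$ and the stated basis.

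The main thing to be careful about is the direct-sum decomposition of $B_{S}$, namely that the handle skein relation never identifies a tube cobordism with a cup-cap cobordism. Once one observes that the relation is a purely local substitution on one connected component and hence preserves the component-count grading, the argument is transparent, and the rest reduces to a standard polynomial-quotient dimension count.
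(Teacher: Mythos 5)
Your proof is correct and takes the same approach the paper indicates by deferring to~\cite{KS}: split $\Hom_{\vcobal}(1,1)$ according to whether the two boundary circles lie on the same component (tube) or on different components (cup-cap), observe that the handle skein relation acts locally on a single connected component and therefore preserves this splitting, and compute each summand as a quotient of a polynomial ring by the monic degree-$K$ polynomial $U_{\alpha}$. The one step you leave implicit is that when the distinguished middle circle in a composite $g\circ(\id\otimes U_{\alpha}(x)\otimes\id)\circ f$ ends up on a \emph{closed} component after gluing, the whole term vanishes under $\alpha$-evaluation (precisely because $\alpha$ satisfies the linear recurrence encoded by $U_{\alpha}$), which is what guarantees the skein ideal meets $\End(1)$ only in the local relations $R_{\mathrm{tube}}\oplus R_{\mathrm{cc}}$ you describe.
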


\begin{prop} \label{AP_def_rel}
The following is a  set  of defining relations in $B_{S}$ on generators $u,x$:
\begin{eqnarray}
    u x^n u & = & \alpha_n u, \ \ n\ge 0, \\
    U_{\alpha}(x) & = & 0.
\end{eqnarray}
\end{prop}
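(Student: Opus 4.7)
The plan is to realize $B_S$ as the quotient of an abstract algebra on $u,x$ by the stated relations and then argue that this presentation is exact via a dimension count against the basis produced in the preceding proposition. Let $\tilde B$ denote the associative $\kk$-algebra on generators $u,x$ subject to the given relations. Since $u$ and $x$ already generate $B_S$, once the relations are verified in $B_S$ there will be a surjective algebra homomorphism $\pi\colon\tilde B\twoheadrightarrow B_S$, and the proof reduces to showing $\dim_\kk\tilde B\le K^2+K=\dim_\kk B_S$.

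First I would check the relations in $B_S$. The relation $U_\alpha(x)=0$ is automatic, since $\scobal$ is \emph{defined} as the quotient of $\vcobal$ by the handle skein relation (\ref{eq_with_K_5}). For $ux^nu$, I would proceed topologically: the composite cobordism is visibly a disjoint union of two surfaces, namely a closed connected genus-$n$ surface (the middle tube $x^n$ capped on top and bottom by the two interior disks of the two $u$'s) and a viewable $(1,1)$-cobordism equal to $u$ itself (the two outer disks, one capping the overall input circle and one capping the overall output). The $\alpha$-evaluation of the closed genus-$n$ component is $\alpha_n$, so $ux^nu=\alpha_n u$ already in $\vcobal$, and hence in $B_S$.

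Second I would prove the spanning statement in $\tilde B$. Any monomial in $u,x$ can be normalized to the form $x^{a_0}\,u\,x^{a_1}\,u\cdots u\,x^{a_k}$ with $a_i\ge 0$. Whenever $k\ge 2$, the relation $u\,x^{a_i}\,u=\alpha_{a_i}u$ replaces an interior block by a scalar times $u$, strictly decreasing the number of factors of $u$ and merging the adjacent $x$-powers. Iterating reduces every monomial to one of the shapes $x^a$ or $x^aux^b$; applying $U_\alpha(x)=0$ then cuts each exponent down into $[0,K)$. The resulting $K+K^2$ vectors $\{x^n:0\le n<K\}\cup\{x^aux^b:0\le a,b<K\}$ span $\tilde B$ and, by the preceding proposition, map to a basis of $B_S$ under $\pi$, so $\pi$ must be an isomorphism.

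The only delicate point I foresee is that the proposition lists the relations $ux^nu=\alpha_nu$ for \emph{all} $n\ge 0$, which at first sight is an infinite family but is redundant beyond $n<K$: for $n\ge K$ the identity follows by expanding $x^n$ modulo $U_\alpha(x)$ and combining with the relations for small $n$, with consistency guaranteed by the linear recursion that $(\alpha_n)$ satisfies with characteristic polynomial $Q_\alpha$ via (\ref{eq_K}). This redundancy does not obstruct the dimension count above, but is worth pointing out since it is what makes the spanning argument in $\tilde B$ genuinely effective with only finitely many independent relations in play.
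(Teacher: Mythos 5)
The paper states Proposition~\ref{AP_def_rel} without proof, so there is no argument in the source to compare against; your proof is correct and fills that gap cleanly. The topological verification is right: in $ux^nu$ the interior disks of the two copies of $u$ cap both ends of the $n$-handled tube to form a closed genus-$n$ component, which evaluates to $\alpha_n$ in $\scobal$, while the remaining two disks form exactly $u$, giving $ux^nu=\alpha_n u$. The rewriting step in $\tilde B$ — each application of $ux^{a_i}u=\alpha_{a_i}u$ strictly drops the number of $u$'s, then $U_\alpha(x)=0$ bounds the surviving exponents — shows $\tilde B$ is spanned by the $K+K^2$ listed vectors, and since the surjection $\pi\colon\tilde B\twoheadrightarrow B_S$ sends them to the basis of $B_S$ from the preceding proposition, $\pi$ is an isomorphism. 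Your redundancy remark is also accurate: for $n\ge K$ one has $x^n\equiv\sum_{j=1}^M(-1)^{j+1}b_jx^{n-j}\pmod{U_\alpha(x)}$, and since $Q(T)Z_\alpha(T)=P(T)$ forces $\alpha_n=\sum_{j=1}^M(-1)^{j+1}b_j\alpha_{n-j}$ for all $n\ge N+1$ (hence for all $n\ge K=\max(N+1,M)$), the relations $ux^nu=\alpha_nu$ with $n\ge K$ follow by induction from those with $n<K$ together with $U_\alpha(x)=0$.
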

Defining relations of the first type are  shown in Figure~\ref{fig4_2}.

Algebra $B_{S}$ has the bar anti-involution $a\longmapsto \overline{a}$ given by the identity  on the
generators,  $\overline{x}=x,\overline{u}=u$, and   $\overline{x^n u  x^k}=x^k u  x^n$.

The trace form on $B_{S}$ is defined by closing up  a $(1,1)$-cobordism and  evaluating it via $\alpha$. The  trace is given in the above basis by
\begin{equation*}
    \tr(x^n) = \alpha_{n+1},\ \  \tr(x^n u x^k ) = \alpha_{n+k}.
\end{equation*}

\begin{figure}[ht]
\begin{center}
\begin{tikzpicture}
[scale=0.5]
\draw (0,0) circle(1);
\node [scale=1] at (-3,0) {$ux^n u = $};
\draw[yscale=0.3] (-1,0) arc (180:360:1);
\draw[yscale=0.3, dashed] (-1,0) arc (180:0:1);
\draw [fill=black] (-0.15,0.6) circle(0.05);
\node at (0.3,0.6) {$n$};
\draw [yscale=0.3] (0,7) circle(1);
\draw (-1,2.1) to [out=270, in=180]  (0,1.3) to [out=0, in=270] (1,2.1);
\draw [dashed, yscale=0.3] (1,-7) arc(0:180:1);
\draw [yscale=0.3] (-1,-7) arc(180:360:1);
\draw (-1,-2.1) to [out=90, in=180]  (0,-1.3) to [out=0, in=90] (1,-2.1);
\node at (2.5,0) {$=\alpha_n$};

\draw [yscale=0.3] (4.5,4) circle(1);
\draw (3.5,1.2) to [out=270, in=180]  (4.5,0.4) to [out=0, in=270] (5.5,1.2);
\draw [dashed, yscale=0.3] (5.5,-4) arc(0:180:1);
\draw [yscale=0.3] (3.5,-4) arc(180:360:1);
\draw (3.5,-1.2) to [out=90, in=180]  (4.5,-0.4) to [out=0, in=90] (5.5,-1.2);
\node at (7,0) {$=\alpha_n u$};
\end{tikzpicture}
\caption{\label{fig4_2} One of the defining relations in $B_{S}$.}
\end{center}
\end{figure}
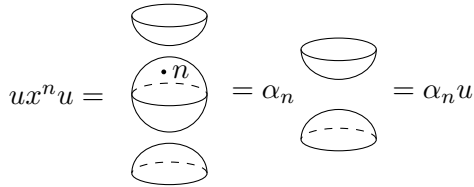

Recall the  hom  space
\begin{equation}\label{q_hom_many}
A(1)=\Hom_{\cobal}(0,1)\cong \Hom_{\scobal}(0,1) \cong \kk[x]/(U_{\alpha}(x))
\end{equation}
of dimension $K$,
with  the commutative algebra structure  given by the pants cobordism.

Denote  by $B(1)\subset B_{S}$ the subalgebra of $B_{S}$ generated  by the  handle endomorphism.
There is an algebra  isomorphism
\begin{equation}
    B(1) \cong A(1)
\end{equation}
given by taking the  handle endomorphism in $B(1)$  to the corresponding handle element of $A(1)$, see Figure~\ref{fig4_3}. The skein relation on powers of the  handle holds on  linear combinations of powers  of handle on a disk as  well as  on an annulus. For this reason  the algebras are isomorphic. The  geometric definitions of multiplications in the two algebras are slightly  different: in $A(1)$ it is given by the pants  cobordism, while in $B(1)$ and $B_{S}$ it is the composition of $(1,1)$-cobordisms.

\begin{figure}[ht]
\begin{center}
\begin{tikzpicture}
\node at (0,0) {$x=$};
\draw [yscale=0.3] (1,1.5) circle(0.25);
\draw (0.75,0.45) to [out=240, in=180] (1,-0.525) to [out=0, in=300] (1.25,0.45);
\draw (0.80,0) to [out=290, in=250] (1.20,0);
\draw (0.9,-0.05) to [out=60, in=120] (1.1,-0.05);
\node at (2.2,0) {$\in A(1)$};
\end{tikzpicture}
\begin{tikzpicture}
\node at (0,0) {$x=$};
\draw [yscale=0.3] (1,1.5) circle(0.25);
\draw [yscale=0.3, dashed]  (1.25,-1.5) arc(0:180:0.25);
\draw [yscale=0.3]  (1.25,-1.5) arc(0:-180:0.25);
\draw (0.75,0.45) to [out=240, in=120] (0.75,-0.45);
\draw (1.25,0.45) to [out=300, in=60] (1.25,-0.45);
\draw (0.80,0) to [out=290, in=250] (1.20,0);
\draw (0.9,-0.05) to [out=60, in=120] (1.1,-0.05);
\node at (2.6,0) {$\in B(1) \subset B_{S}$};
\end{tikzpicture}
\caption{\label{fig4_3} Element $x$ of $A(1)$ on the left  is  a one-holed torus (or a handle with one hole). Element $x$ of $B(1)\subset  B_{S}$ on the right is a  two-holed  torus (a handle with  two  holes). Both have  the same defining relation $U_{\alpha}(x)=0$ in  $A(1)$ and $B(1)$, respectively. Left $x$  is obtained from the right  $x$ by capping  off the bottom circle with a disk.}
\end{center}
\end{figure}

Relatedly, we maintain a slight abuse of notation, also shown  in Figure~\ref{fig4_3}, where $x$ is used to denote handle cobordisms with either one  or two boundary  components, respectively, generating algebras $A(1)$ and $B(1)\subset B_{S}$.

The two-sided ideal $(u)$ of $B_{S}$ is
\begin{equation}
(u) = BuB = B(1)u B(1)\cong B(1)\otimes_{\kk}B(1)^{op},
\end{equation}
where the second isomorphism is that of $B(1)$-bimodules.
It is spanned by cobordisms with two connected viewable components, and  there  is  a  split exact sequence of this 2-sided ideal of $B_{S}$ and the quotient  ring
\begin{equation}
    0 \longrightarrow B(1)u  B(1) \longrightarrow  B_{S} \myrightleftarrows{\rule{0.5cm}{0cm}}  B(1) \lra 0 .
\end{equation}
The quotient by the 2-sided ideal is spanned by powers  of  $x$  and  is naturally isomorphic to the  subring $B(1)$ spanning by cobordisms with one connected component, via the inclusion $B(1)\subset  B_{S}$, which is  a section  of the  surjection above.

\vspace{0.1in}

Recall that $B$ is the quotient of $B_{S}$ given by (\ref{eq_B_quotient}).
$B_{S}$ acts on the space $A(1)$ by left multiplication by cobordisms, see
isomorphisms (\ref{q_hom_many}) for equivalent descriptions  of that  space.  The action takes a cobordism from $0$ to $1$ (that can be assumed to be connected and of genus less than $K$) and composes with a cobordism from $1$ to $1$. Closed components that may result are removed via $\alpha$-evaluation and the genus is reduced to at most $K-1$. The action factors through that of  $B$, since negligible endomorphisms act by $0$.

Passing to gligible quotients results in a short exact sequence
\begin{equation} \label{eq_phi_4}
    0 \longrightarrow B(1)u  B(1) \stackrel{\phi}{\longrightarrow} B \lra  B/\mathrm{im}(\phi) \lra 0 ,
\end{equation}
of a two-sided ideal, algebra, and the quotient algebra. Map $\phi$ is injective, and the quotient $B/\mathrm{im}(\phi)$ is trivial iff $\alpha$ is multiplicative, that is, if the product map  $A(1)\otimes A(1)\lra A(2)$, which corresponds to $\phi$, is an isomorphism, also see~\cite{Kh1}.


\subsection{Examples}
$\quad$
\vspace{0.1in}

{\it  Example: $K=1$.} In  this  case a handle on a component reduces to  a multiple of the component without the handle and $Z(T)$ is either the constant function, $Z(T)=\alpha_0$, or
\begin{equation}
Z(T)=\frac{\alpha_0}{1-\gamma T}= \alpha_0+\alpha_0\gamma T +\alpha_0\gamma^2 T^2+\dots
\end{equation}
The ring $B_{S}$ has a basis $\{1,u\}$ with $u^2=\alpha_0 u$. The trace on $B_{S}$ is
$\tr(1)=\alpha_0\gamma, \tr(u)=\alpha_0$.

The only possible  functions in this case  are $Z_{\alpha}=\alpha_0, \alpha_0\not=0$  and $Z_{\alpha}=\frac{\alpha_0}{1-\gamma T}, \alpha_0,\gamma\not= 0. $

The quotient map $B_S\lra B$ is an isomorphism iff the Gram matrix
\begin{equation}\begin{pmatrix} \alpha^2_0 & \alpha_0 \\
\alpha_0 & \alpha_0\gamma \end{pmatrix}
\end{equation}
of the basis $\{1,u\}$ of $B_S$ is nondegenerate. It has determinant $\alpha_0^2(\alpha_0 \gamma-1)$. We see that the quotient map $B_S\lra B$ is an isomorphism (and the theory is not multiplicative) iff $\gamma\not=\alpha_0^{-1}$.

\vspace{0.1in}

{\it Example: Linear function.} Let $Z(T)=\beta_0+\beta_1 T$, $\beta_1\not=0$. Then $B_S$ has a spanning set $\{1,x,u,xu,ux\}$, which is a basis iff $\beta_1\not=2$, see Section~\ref{subsec_linear} and~\cite{Kh1}. Let us assume the latter case. The multiplication rules in $B_S$ follow from the relations
\begin{equation*}
uxu = \beta_1 u, \ \ xux = \beta_1 x, \ \
  u^2 = \beta_0 u, \ \ x^2=0 ,
 \end{equation*}
and the quotient of $B_S$ by the 2-sided ideal $BuB$ is one-dimensional. The action of $B$ on the left ideal $Bu=\kk u \oplus \kk xu \cong B(1) $ surjects $B$ onto the matrix algebra $\mmat_2(\kk)$ and leads to the direct product decomposition
\begin{equation}
    B \cong \mmat_2(\kk) \times \kk.
\end{equation}
It is given explicitly as follows. Let
$z=xu + ux - \beta_0 x-\beta_1$. Then $z^2=-\beta_1 z$ and $-\beta_1^{-1}z$ is a central idempotent splitting off $\kk$ from $B$. The complementary factor is given by the (non-unital) homomorphism $\mmat_2(\kk) \lra B$,
\begin{eqnarray*}
   & &  \begin{pmatrix}
      1 & 0 \\ 0 & 0
    \end{pmatrix} \lra \beta_1^{-1} ux , \ \
    \begin{pmatrix}
      0 & 1 \\ 0 & 0
    \end{pmatrix} \lra \beta_1^{-1}(u - \beta_0\beta_1^{-1} ux) , \\
    & & \begin{pmatrix}
      0 & 0 \\ 1 & 0
    \end{pmatrix} \lra x , \ \
    \begin{pmatrix}
      0 & 0 \\ 0 & 1
    \end{pmatrix} \lra \beta_1^{-1}(xu - \beta_0 x) .
\end{eqnarray*}

On the other hand, if $\beta_1=2$, the map $\phi$ in (\ref{eq_phi_4}) is an isomorphism, the theory is multiplicative ($A(2)\cong A(1)^{\otimes 2}$), and $B$ is isomorphic to the matrix algebra of size $2$ over $\kk$, see~\cite{Kh1}.

%
%

\section{Constant generating function \texorpdfstring{$\beta$}{b}}\label{sec_constant_gen}


\subsection{State spaces, partitions, and Catalan numbers}
\label{subsec_catalan}

Consider the  evaluation corresponding to the series
\begin{equation}\label{eq_const_gf}
    Z(T)=\beta, \ \beta\in \kk^{\ast},
\end{equation}
which  is just  the constant function, so the associated sequence  $\alpha=(\beta,0,0,\dots )$. Scaling invariance explained in Section~\ref{subsec-scaling} allows us to set  $\beta=1$ without "changing" any categories. We keep $\beta$ arbitrary, but this is just a matter of preference.

\begin{figure}[ht]
\begin{center}
\includegraphics[scale=1.0]{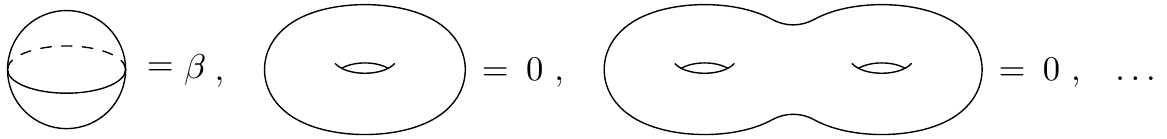}
\caption{\label{fig_constant_1} Constant series  evaluations.}
\end{center}
\end{figure}

For $\alpha$ describing a constant function, see Figure~\ref{fig_constant_1},
any closed surface $S$ which has a component of genus greater than zero evaluates to 0. Otherwise, $S$
(which is then necessarily the disjoint union of 2-spheres) evaluates to
\[
S \mapsto \alpha(S)= \beta^{\ \dim H_0(S, \kk)},
\]
that  is, $\beta$ to the power the number of components of $S$.

Define the genus of a connected surface with  boundary as the genus of a surface obtained by attaching 2-disks to all boundary components.

Consider the space state of $n$ circles $A(n)=A_{\alpha}(n)$ in this theory $\alpha$ as discussed in~\cite[Section 2.7]{Kh1}.  For an arbitrary $\alpha$ the state space is defined by the formula (\ref{eq_A_alpha}), also see~\cite{Kh1}. Recall from the latter reference that the circles are ordered and numbered by $1,2,...,n$; their union is denoted $\sqcup_n  \SS^1$.
All surfaces with a component of genus greater than zero are in the kernel of the bilinear form, so that  $A(n)$ is spanned  by diffeomorphism classes of
viewable surfaces $S$, with each component of genus 0, and the boundary diffeomorphic  to the disjoint union  of $n$ circles.

Such surfaces can be canonically identified with partitions of the set $\{1,2,...,n\}$.
Denote the set of partitions by $D(n)$.  Cardinality of $D(n)$ is known as the Bell number $B_n$ and it has the following generating function:
\[
\sum_{n\geq 0} \frac{B_n}{n!} t^n = \exp(\exp(t)-1)  .
\]
To a partition $\lambda\in D(n)$ there is associated  a viewable surface  $S_{\lambda}$ as above, with  each component of genus $0$. Recall that  by  \emph{viewable surface} we mean  a  surface without closed components.

Consider the vector space $\kk^{D(n)}$ with a basis of vectors $v_{\lambda}$, over all  partitions $\lambda\in  D(n)$.
Form the linear map
\begin{equation}\label{eq_DA_map}
    \kk^{D(n)} \lra A(n), \ \ v_{\lambda} \mapsto [S_{\lambda}]
\end{equation}
into the state space of $n$  circles which takes basis vectors to corresponding surfaces $S_{\lambda}$. This map is surjective, as follows from the discussion above.

For $n\leq 3$ the map (\ref{eq_DA_map}) is
an isomorphism, that is, the  induced bilinear form is nondegenerate on $\kk^{D(n)}$, see~\cite[Section 2.7]{Kh1}.

However, starting from $n=4$ map (\ref{eq_DA_map}) has a nontrivial kernel, and the dimension of the state space (if $\nchar \kk=0$) is equal to the Catalan number
\begin{equation}
c_n = \frac{1}{2n+1}\binom{2n}{n}. \label{catalan}
\end{equation}
\begin{center}
\begin{tabular}{ |c|c|c|c|c|c|c|c|c|c }
 \hline
 $n$ & 0 & 1 & 2 & 3 & 4 & 5 & 6 & 7 \\
 \hline
$B_n$ & 1 & 1 & 2 & 5 & 15 & 52 & 203 & 877  \\
 \hline
 $c_n=\dim A(n)$ & 1 & 1 & 2 & 5 & 14 & 42 & 132 & 429  \\
 \hline
\end{tabular}
\end{center}

\begin{theorem} \label{thm_catalan}
Over a field $\kk$ of characteristic zero
 the  state space $A(n)$ for the theory with the constant generating function  (\ref{eq_const_gf}) has dimension equal to the Catalan number $c_n$:
\[
\dim A(n) = c_n .
\]
It has a basis $\PS^n$ of crossingless surfaces, as described below in Section~\ref{subsec_plane}.
\end{theorem}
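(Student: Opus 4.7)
The plan is to exhibit a set $\PS^n$ of $c_n$ crossingless surfaces, prove it spans $A(n)$, and prove its linear independence. Place the $n$ circles on the boundary of a fixed disk $D$ in cyclic order and define $\PS^n$ to consist of the viewable genus-$0$ surfaces $S_\lambda$ whose associated set partition $\lambda$ of $\{1,\dots,n\}$ is \emph{non-crossing}, meaning no $i<j<k<l$ have $\{i,k\}$ and $\{j,l\}$ in distinct blocks. Equivalently, $\PS^n$ consists of viewable surfaces embeddable in $D\times[0,1]$ with no two components linked; the classical count gives $|\PS^n| = c_n$.

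For spanning, since the $\alpha$-evaluation annihilates every positive-genus closed component, $A(n)$ is already spanned by the classes $[S_\lambda]$ with $\lambda\in D(n)$. I would first establish a local uncrossing relation on four circles: a direct computation on $\kk^{D(4)}$ (where $B_4=15$ while $c_4=14$) shows that the Gram pairing has rank at most $14$ and that the unique crossing partition $\{1,3\}\{2,4\}$ can be expressed, modulo the kernel of the Gram form, as a specific linear combination of the $14$ non-crossing four-strand partitions. Any crossing in $\lambda\in D(n)$ is localized to four circles, so applying this identity locally (leaving the remaining $n-4$ circles untouched) rewrites $[S_\lambda]$ as a combination of surfaces with strictly fewer crossings. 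Iterating proves that $\PS^n$ spans $A(n)$.

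For linear independence, I would compute the $c_n\times c_n$ Gram matrix $G^{\PS^n}$ under $(-,-)_\alpha$ and show its determinant is nonzero. For non-crossing $\lambda,\mu$ the closed surface $(-S_\lambda)\sqcup S_\mu$ decomposes along the bipartite ``block-circle'' graph of $(\lambda,\mu)$; a component built on $v$ blocks and $e$ edges has genus $e-v+1$, so the entry equals $\beta^{c(\lambda,\mu)}$ when this graph is a forest and vanishes otherwise. Exploiting the recursive structure of non-crossing partitions (splitting along the block containing $n$, yielding $c_n = \sum_{k=0}^{n-1} c_k c_{n-1-k}$), I would decompose $G^{\PS^n}$ into blocks indexed by this splitting and compute its determinant by induction on $n$, expecting the answer to be a nonzero monomial in $\beta$.

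The main obstacle is the nondegeneracy of $G^{\PS^n}$: certain diagonal entries vanish (for instance, the single-block partition $\{1,\dots,n\}$ with $n\geq 3$ self-pairs to zero since the gluing is a closed surface of genus $n-1$), so nondegeneracy relies on genuine off-diagonal cancellations, and the induction must be set up to witness them. An alternative route, once available, is to defer linear independence to Section~\ref{subset_osp}: the equivalence $\udcobal\simeq\mathrm{Rep}(\mathrm{osp}(1|2))$ established there identifies $A(n)$ with an invariant subspace whose dimension is classically $c_n$, matching the upper bound provided by the spanning step.
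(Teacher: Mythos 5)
Your spanning step is essentially the paper's: the eight-term relation (\ref{eq_eight_els}) lies in the kernel of the pairing on $\kk^{D(4)}$ with the crossing element $y_{13}y_{24}$ occurring with nonzero coefficient, and since kernels of the universal pairing form a tensor ideal the relation can be applied locally, giving the spanning set $\PS^n$ and $\dim A(n)\le c_n$ over any field. The genuine gap is the lower bound, i.e.\ nondegeneracy of the $c_n\times c_n$ Gram matrix $D_n(\beta)$ of crossingless surfaces: you do not prove it, you only propose an induction along the Catalan recursion and acknowledge yourself that the vanishing diagonal entries force off-diagonal cancellations you have not exhibited. This is not a routine fix: the form has no block-triangular structure compatible with the splitting along the block containing $n$, and your expected answer, a monomial $\pm\beta^{N}$, is false over $\Z$ --- by the paper's computation $\det D_n(\beta)$ equals a power of $\beta$ times a product of values $\mathsf{U}_{2h}(0)$ and derivatives $\mathsf{U}'_{2h+1}(0)$ of Chebyshev polynomials of the second kind, a nonzero integer with nontrivial prime factors, which is exactly why $\dim A(n)$ can drop in positive characteristic. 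The paper obtains nondegeneracy by recognizing $D_n(\beta)$ as the degeneration $\lim_{y_1\to 0}y_1^{-1}M(y_1,\beta)$ of the meander matrix with entries $y_1^{h_1(a,b)}y_2^{h_2(a,b)}$, invoking the Chebyshev product formula for $\det M(y_1,y_2)$ from \cite{KKKo} (cf.\ \cite{DF}), and verifying through a binomial identity that the order of vanishing of that product at $y_1=0$ is exactly $|\bbn{n}|=c_n$, so the limit is a nonzero scalar times a power of $\beta$ in characteristic zero. Some input of this kind is the missing core of your argument.

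Your fallback --- deferring linear independence to the $osp(1|2)$ equivalence of Section~\ref{subset_osp} --- is circular as the paper stands: Theorem~\ref{thm_dcob_C} is proved by checking the hypotheses of Proposition~\ref{prop_assume}, and fullness of $F_A$ on $\Hom(0,n)$ is deduced there precisely from $\mathrm{rk}\, D_n(\beta)=c_n$, i.e.\ from the determinant computation you are trying to avoid. Without fullness, the functor to $\Rep(osp(1|2))$ only gives $\dim A(n)\le\dim\Hom_{\cC}(\be,A^{\otimes n})=c_n$, which you already have from spanning, since $A(n)$ is a priori only a subquotient of the invariant space. To make that route independent you would need a separate argument that $F_A$ is full, equivalently that the $c_n$ images of the crossingless surfaces are linearly independent in $\Hom_{osp(1|2)}(\be,A^{\otimes n})$, and your proposal does not supply one.
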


One proof of  this theorem is given starting here and through Section~\ref{subsec_meander}. Another proof is contained  in  Section~\ref{subset_osp}, via a connection to representations of  $\mathrm{osp}(1|2)$.

\vspace{0.1in}

Recall notations from~\cite{Kh1},  where  $y_{ij}$ denotes a surface that consists  of a tube connecting circles $i$ and $j$ and $n-2$ disks  that cap off the remaining circles.  More generally, for $1\le i_1<\dots i_r \le n$
denote by $y_J=y_{i_1,i_2,\dots, i_r}$ the surface  that consists of a 2-sphere  with $r$  holes bounding circles $i_1,i_2,\dots, i_r$ and
$n-r$ disks capping off the remaining  $n-r$ circles. Here $J=\{i_1,\dots, i_r\}$.  Figure~\ref{fig_3_3} shows examples of these surfaces for $n=3$ and Figure~\ref{fig_3_4} shows examples for larger $n$.  Note  also that  $A(n)$ is naturally a commutative associative unital algebra under the  multiplication given by composing two diagrams via the disjoint union of $n$ pants cobordisms.

\begin{figure}[ht]
\begin{center}
\includegraphics[scale=1.0]{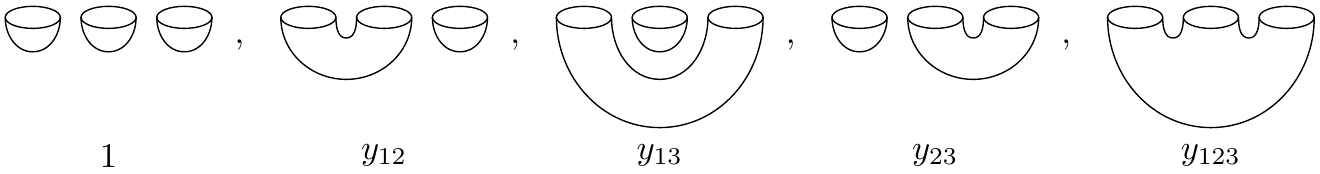}
\caption{A spanning set (in fact, a basis) of $A(3)$: the unit element, tubes $y_{ij}$, and connected surface $y_{123}=y_{12}y_{13}$.}
\label{fig_3_3}
\end{center}
\end{figure}

\begin{figure}[ht]
\begin{center}
\includegraphics[scale=1.0]{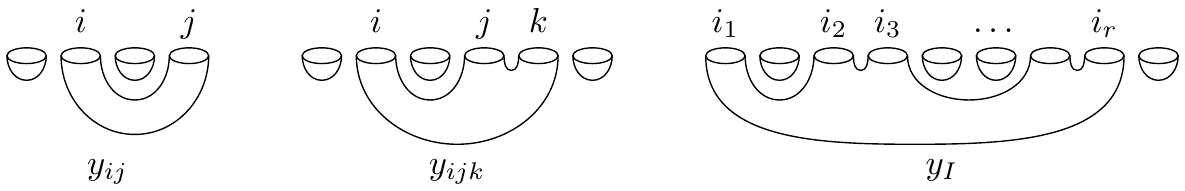}
\caption{Examples of surfaces $y_J$.}
\label{fig_3_4}
\end{center}
\end{figure}

Fifteen elements of this spanning set for $A(4)$ can be separated into five types, as follows and see Figure~\ref{fig_3_5}:
\begin{enumerate}
    \item Unit cobordism $1$.
    \item Six cobordisms $y_{ij}$, $i<j$.
    \item Three cobordisms $y_{ij}y_{kl}$ with $i,j,k,l$ distinct: $y_{12}y_{34},y_{13}y_{24},y_{14}y_{23},$ each a disjoint union of two tubes.
    \item Four cobordisms $y_{ijk}$, each a union of a 3-holed sphere and a disk.
    \item Cobordism $y_{1234}$, which is a 4-holed sphere.
\end{enumerate}

\begin{figure}[ht]
\begin{center}
\includegraphics[scale=1.0]{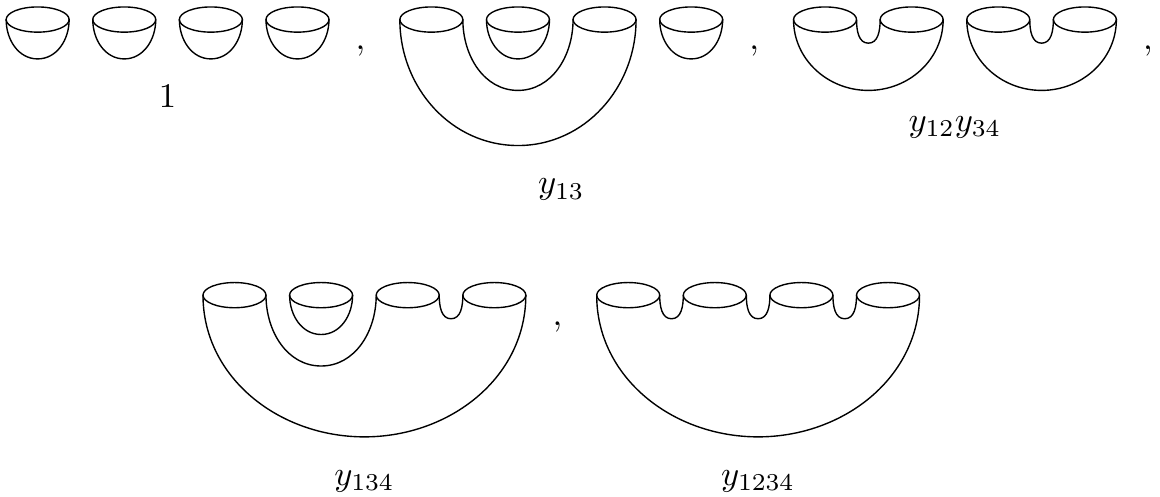}
\caption{Elements $1, y_{1234}$ and examples of elements of types $y_{ij},$ $y_{ij}y_{kl}$, $y_{ijk}$ in $A(4)$.}
\label{fig_3_5}
\end{center}
\end{figure}

Recall from~\cite[Section 2.7]{Kh1} that  there is an $S_4$-invariant skein relation on the eight vectors of the  last three types:
\begin{equation} \label{eq_eight_els}
 (y_{12}y_{34}+y_{13}y_{24}+y_{14}y_{23})-(y_{123}+y_{124}+y_{134}+y_{234}) +\beta y_{1234} = 0
\end{equation}
Among these eight vectors, only $y_{13}y_{24}$ has a diagram with an "intersection" of its surfaces, see Figure~\ref{fig_two_tubes}.  In  fact, the remaining fourteen elements of this  spanning set  all have  "planar" diagrams without overlapping components, see examples in  Figure~\ref{fig_3_5}.

\begin{figure}[!htb]
\begin{center}
\begin{tikzpicture}
[scale=0.07]

\draw [fill=white] (20,0) to [in=180, out=270] (45,-30) to [out=0, in=270] (70,0) to (60,0) to [out=270, in=0] (45,-20) to [out=180, in=270] (30,0) -- cycle;

\draw [fill=white] (0,0) to [in=180, out=270] (25,-30) to [out=0, in=270] (50,0) to (40,0) to [out=270, in=0] (25,-20) to [out=180, in=270] (10,0) -- cycle;

\foreach \i in {0,20,40,60}
{
\draw [fill=white, yscale=0.3] (\i+5,0) circle (5);
}

\node at (35,-40) {$y_{13} y_{24}$};

\end{tikzpicture}
\caption{\label{fig_two_tubes} Two tubes in the diagram of $y_{13}y_{24}$ overlap.}
\end{center}
\end{figure}
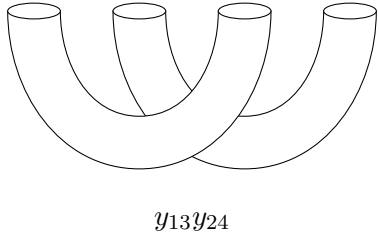

Consequently, element $y_{13}y_{24}$ of the spanning set for $A(4)$ can be written  as a linear  combination  of "crossingless" cobordisms.
Informally, we call a surface with $n$ boundary circles \emph{crossingless} if it can be drawn such that the components do not interlace.

\vspace{0.1in}


\subsection{Planar partitions, crossingless matchings, and crossingless surfaces}
\label{subsec_plane}
\quad
\vspace{0.1in}

{\it Planar partitions and crossingless matchings.}
Recall that $\bbn{n}$ denotes the set of crossingless matchings of $2n$ points on a horizontal line. It has cardinality  $c_n$, the $n$-th Catalan number, see  formula (\ref{catalan}).

Consider the set $\PD^{n}$ of planar partitions of an $n$-element set. These are decompositions of $\{1,\dots, n\}$ into non-empty  subsets such that the configuration  of  these  subsets can be drawn  in the lower half-plane  by connecting points in each subset by arcs  and without arcs  from  different  subsets intersecting. Equivalently, there should exist no quadruple of  numbers $1 < i_1 < i_2 < i_3 < i_4\le  n$ with  $i_1,i_3$ in one  subset and  $i_2,i_4$ in another.

Given a planar  partition $\lambda\in\PD^n$, it can be depicted by connecting points in  each  $m$-element subset (these points lies on the horizontal line with $n$  marked points $p_1,\dots, p_n$) by  $m$ arcs to a central point somewhere in the lower half-plane. In this configuration there are $n$ non-intersecting arcs connecting $n$ points on the horizontal line to $k$ points in the lower half-plane, where $k$ is the number  of sets in the planar partition. Figure~\ref{fig_planar_p} shows an example of the  diagram for the  planar partition
 $(\{1,4,6\},\{2,3\},\{5\})$ in $\PD^6$.

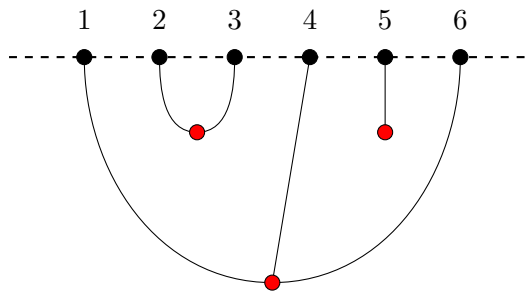
\begin{figure}[!htb]
\begin{center}
\hspace*{-2mm}
\begin{tikzpicture}
[xscale=1, yscale=1]

\foreach \i in {1,2,3,4,5,6}
{
\draw [fill=black]  (\i,0) circle (0.1);
\node at  (\i,0.5) {\i};
}

\draw (1,0) to [out=270, in=180] (3.5,-3) to [out=0, in=270] (6,0);
\draw (4,0) to (3.5,-3);
\draw (2,0) to [out=270, in=180] (2.5,-1) to [out=0, in=270] (3,0);
\draw (5,0) to (5,-1);
\draw [fill=red]  (2.5,-1) circle (0.1);
\draw [fill=red]  (3.5,-3) circle (0.1);
\draw [fill=red]  (5,-1) circle (0.1);

\draw [dashed, thick] (0,0) to (7,0);

\end{tikzpicture}
\caption{Diagram of the planar partition $(\{1,4,6\},\{2,3\},\{5\})$ in $\PD^6$. Points in the lower half-plane where the arcs end are shown in red.}
\label{fig_planar_p}
\end{center}
\end{figure}

To a planar partition $\lambda\in \PD^n$, also called a \emph{non-crossing partition}, we assign a crossingless  matching $\Phi_0(\lambda)\in \bbn{n}$ as follows. Take a planar diagram of $\lambda$ and form a standard retract closed neighbourhood in $\R^2_-$ of the configuration of $n$ arcs and $k$ inner points. This neighbourhood $N(\lambda)$ consists of $k$ connected components. Each component deformation retracts onto the  corresponding tree of the diagram of $\lambda$. The intersection of $N(\lambda)$ with the horizontal line  $\R$ consists of $n$  closed intervals, one for each point $p_1,\dots, p_n$.

The boundary of these intervals constitute $2n$ points $p_1',\dots, p_{2n}'$, with points $p_{2i-1}',p_{2i}'$ being  the boundaries of the  interval that contains  the  point  $p_i$. Boundary of  $N(\lambda)$ consists of  these $n$ intervals  together with $n$  arcs  that lie in  the lower half-plane and constitute a  crossingless matching of points $p_1',\dots, p_{2n}'$. Denote this matching by $\Phi_0(\lambda)$.
This  map
\begin{equation}
    \Phi_0 \ :  \ \PD^n \lra \bbn{n}
\end{equation}
is a bijection between planar  partitions and crossingless matchings.

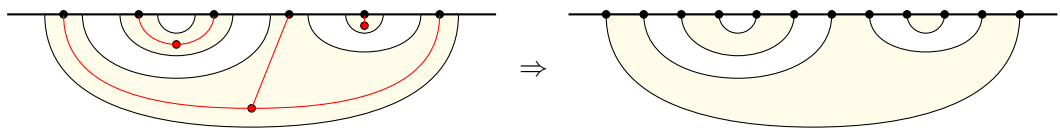
\begin{figure}[!htb]
\begin{center}
\hspace*{-2mm}
\begin{tikzpicture}
[xscale=0.5, yscale=0.5]

\draw [fill=yellow!10] (0,0) to [in=180, out=270] (5.5,-3) to [out=0, in=270] (11,0) to (10,0) to [out=270, in=0] (8.5,-1) to [out=180, in=270] (7,0) to (6,0) to [out=270, in=0] (3.5,-1.7) to [out=180, in=270] (1,0) --cycle ;

\draw [fill=yellow!10] (2,0) to [out=270, in=180] (3.5,-1.1) to [out=0, in=270] (5,0) to (4,0) to [out=270, in=0] (3.5,-0.5) to [out=180, in=270] (3,0) --cycle;

\draw [fill=yellow!10] (8,0) to [out=270, in=180] (8.5,-0.5) to [out=0, in=270] (9,0);

\foreach \i in {}
{
\draw [fill=black] (\i,0) circle (0.1);
}

\draw [fill=red] (5.5,-2.5) circle (0.1);

\draw [fill=red] (3.5,-0.8) circle (0.1);

\draw [fill=red] (8.5,-0.3) circle (0.1);

\draw [fill=black] (0.5,0) circle (0.1);

\draw [fill=black] (2.5,0) circle (0.1);

\draw [fill=black] (4.5,0) circle (0.1);

\draw [fill=black] (6.5,0) circle (0.1);

\draw [fill=black] (8.5,0) circle (0.1);

\draw [fill=black] (10.5,0) circle (0.1);

\draw [red] (0.5,0) to [in=180, out=270] (5.5,-2.5) to [out=0, in=270] (10.5,0) ;

\draw [red] (2.5,0) to [out=270, in=180] (3.5,-0.8) to [out=0, in=270] (4.5,0);

\draw [red] (5.5,-2.5) to (6.5,0);

\draw [red] (8.5,-0.3) to (8.5,0);

\draw [thick] (-1,0) -- (12,0);

\node at (13,-1.5) {$\Rightarrow$};

\end{tikzpicture}
\begin{tikzpicture}
[xscale=0.5, yscale=0.5]

\draw [fill=yellow!10] (0,0) to [in=180, out=270] (5.5,-3) to [out=0, in=270] (11,0) to (10,0) to [out=270, in=0] (8.5,-1) to [out=180, in=270] (7,0) to (6,0) to [out=270, in=0] (3.5,-1.7) to [out=180, in=270] (1,0) --cycle ;

\draw [fill=yellow!10] (2,0) to [out=270, in=180] (3.5,-1.1) to [out=0, in=270] (5,0) to (4,0) to [out=270, in=0] (3.5,-0.5) to [out=180, in=270] (3,0) --cycle;

\draw [fill=yellow!10] (8,0) to [out=270, in=180] (8.5,-0.5) to [out=0, in=270] (9,0);

\foreach \i in {0,1,2,3,4,5,6,7,8,9,10,11}
{
\draw [fill=black] (\i,0) circle (0.1);
}

\draw [thick] (-1,0) -- (12,0);

\end{tikzpicture}

\caption{Left: non-crossing partition $\lambda$ as in Figure~\ref{fig_planar_p}, here shown in red, and its regular neighbourhood $N(\lambda)$, shaded in yellow. Right: Arcs on the boundary  of $N(\lambda)$ constitute a crossingless matching $\Phi_0(\lambda)$.   }
\label{fig_map_phi_0}
\end{center}
\end{figure}

\begin{figure}[!htb]
\begin{center}
\hspace*{-2mm}
\begin{tikzpicture}
[xscale=0.7, yscale=0.7]

\draw [fill=yellow!10] (0,0) to [in=180, out=270] (5.5,-3) to [out=0, in=270] (11,0) to (10,0) to [out=270, in=0] (8.5,-1) to [out=180, in=270] (7,0) to (6,0) to [out=270, in=0] (3.5,-1.7) to [out=180, in=270] (1,0) --cycle ;

\draw [fill=yellow!10] (2,0) to [out=270, in=180] (3.5,-1.1) to [out=0, in=270] (5,0) to (4,0) to [out=270, in=0] (3.5,-0.5) to [out=180, in=270] (3,0) --cycle;

\draw [fill=yellow!10] (8,0) to [out=270, in=180] (8.5,-0.5) to [out=0, in=270] (9,0);

\foreach \i in {0,1,2,3,4,5,6,7,8,9,10,11}
{
\draw [fill=black] (\i,0) circle (0.1);
}

\draw [thick] (-1,0) -- (12,0);

\node at (-0.5,-0.25) {1};
\node at (0.5,-0.25) {2};
\node at (1.5,-0.25) {1};
\node at (2.5,-0.25) {2};
\node at (3.5,-0.25) {1};
\node at (4.5,-0.25) {2};
\node at (5.5,-0.25) {1};
\node at (6.5,-0.25) {2};
\node at (7.5,-0.25) {1};
\node at (8.5,-0.25) {2};
\node at (9.5,-0.25) {1};
\node at (10.5,-0.25) {2};
\node at (11.5,-0.25) {1};

\end{tikzpicture}
\caption{Checkerboard coloring in the complement  of a crossingless matching}
\label{fig_checker}
\end{center}
\end{figure}
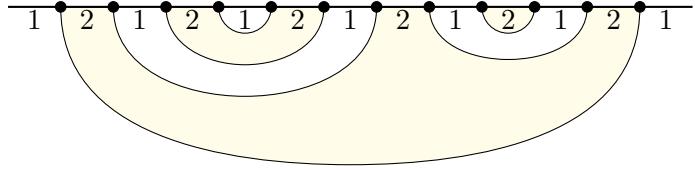

An example of the bijection $\Phi_0$ is depicted in Figure~\ref{fig_map_phi_0}.
To construct the inverse bijection $\Phi_0^{-1}$, start with a crossingless  matching  $b$. The matching decomposes
 the lower half-plane $\R^2_-$ into $n+1$ connected regions. Label these regions by  colors   $1$ and $2$  in a checkerboard fashion so that  the outer region (the unique unbounded region) is labelled  $1$, see Figure~\ref{fig_checker}.  Each region colored  $2$  has a boundary that is a union  of  horizontal intervals and inner arcs in $\R^2_-$.
 Given a region that contains $m$ horizontal intervals, choose a point   inside each interval, a point $u$ inside the region, and connect the $m$ points to the point $u$ by $m$ non-intersecting arcs. The region deformation retracts onto the union of these arcs. Taking these unions over all regions of $b$  colored $2$ gives a diagram of planar partition. This is the planar  partition $\Phi_0^{-1}(b)$.

\vspace{0.1in}

{\it Crossingless surfaces and crossingless matchings.}
For an accurate definition, position $n$ circles, each of diameter $1$, on the $xy$-plane  $\R^2$ so that their  centers are located on the $x$-axis, at points with the $x$-coordinate $2,4,\dots, 2n$, respectively,  and denote  this collection of circles  $C_n$. The circles intersect the $x$-axis at  $2n$ points $2\pm \frac{1}{2},\dots, 2n\pm \frac{1}{2}$.

Place  $\R^2$ in $\R^3$ in the  usual way, by adding the third coordinate $z$  and taking $\R^2$ to be the  plane $z=0$. This plane splits $\R^3$ into  half-spaces $\R^3_+$ and $\R^3_-$. We consider  surfaces $S$ properly  embedded in $\R^3_-$ with the boundary $C_n$. Form  the  intersection $S\cap \R^2_{xz}$ of  $S$ with the $xz$-coordinate plane. Upon a slight deformation  of $S$ while keeping  its boundary on  the  $xy$-plane fixed we can assume that the  intersection $S\cap \R^2_{xz}$ is a one-manifold  which is  the  union of circles  and $n$  arcs with boundary the above $2n$ points on the $x$-axis: $2\pm \frac{1}{2}, 4\pm \frac{1}{2},\dots, 2n\pm \frac{1}{2}$.

These $n$ arcs in the lower half-plane $\R^2_{xz,-}$ constitute a crossingless matching $b\in \bbn{n}$  of $2n$  points.

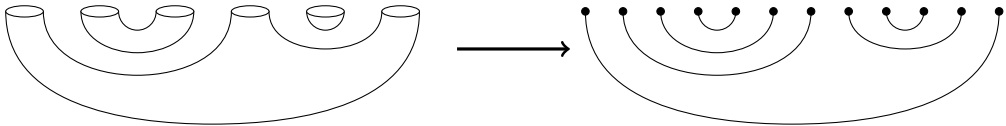
\begin{figure}[!htb]
\begin{center}
\hspace*{-2mm}
\begin{tikzpicture}
[scale=0.5]

\draw [fill=white] (0,0) to [in=180, out=270] (5.5,-3) to [out=0, in=270] (11,0) to (10,0) to [out=270, in=0] (8.5,-1) to [out=180, in=270] (7,0) to (6,0) to [out=270, in=0] (3.5,-1.7) to [out=180, in=270] (1,0) --cycle ;

\draw [fill=white] (2,0) to [out=270, in=180] (3.5,-1.1) to [out=0, in=270] (5,0) to (4,0) to [out=270, in=0] (3.5,-0.5) to [out=180, in=270] (3,0) --cycle;

\draw [fill=white] (8,0) to [out=270, in=180] (8.5,-0.5) to [out=0, in=270] (9,0);

\foreach \i in {0,2,4,6,8,10}
{
\draw [fill=white, yscale=0.3] (\i+0.5,0) circle (0.5);
}

\draw [->, very thick] (12,-1) -- (15,-1);

\end{tikzpicture}
\begin{tikzpicture}
[scale=0.5]

\draw [fill=white] (0,0) to [in=180, out=270] (5.5,-3) to [out=0, in=270] (11,0);
\draw (10,0) to [out=270, in=0] (8.5,-1) to [out=180, in=270] (7,0);
\draw (6,0) to [out=270, in=0] (3.5,-1.7) to [out=180, in=270] (1,0)  ;

\draw [fill=white] (2,0) to [out=270, in=180] (3.5,-1.1) to [out=0, in=270] (5,0);
\draw (4,0) to [out=270, in=0] (3.5,-0.5) to [out=180, in=270] (3,0) ;

\draw [fill=white] (8,0) to [out=270, in=180] (8.5,-0.5) to [out=0, in=270] (9,0);

\foreach \i in {0,1,2,3,4,5,6,7,8,9,10,11}
{
\draw [fill=black] (\i,0) circle (0.1);
}

\end{tikzpicture}
\caption{\label{fig_surf}  Bijection between (isotopy classes of) crossingless surfaces and matchings}
\end{center}
\end{figure}

Vice versa,  to a crossingless matching  $b\in \bbn{n}$ we can associate a 2-manifold $S(b)$. First, color the regions of the lower half-plane $\R^2_-=\R^2_{xz,-}$ which contains the matching by colors   $1$ and $2$  in a checkerboard fashion so that  the outer  color  is $1$. Regions of  $\R^2_-$ colored 2 are bounded. Each one  has a boundary that is a union  of  horizontal intervals and inner arcs in $\R^2_-$. Horizontal intervals  connect points $2i\pm \frac{1}{2}$ for various $i$, $1\le i \le   n$. To associate a surface $S(b)$ to $b$ we  thicken each region $V$ colored $2$ into a 3-dimensional region in $\R^3_-=\R^2_-\times \R$. One way to do that is by  forming  $V\times [0,1]\subset \R^3_-$ and  then  smoothing its corner arcs to get a region bounding  a  smooth surface in $\R^3_-$. Each horizontal interval $[2i-\frac{1}{2},2i+\frac{1}{2}]$ in $V$ first gets multiplied by $I=[0,1]$ and then smoothed  out to a circle of diameter one. In this way $n$ pairs of consecutive points on the boundary of  the matching $b$ turn  into $n$ circles on the plane $\R^2=\partial  \R^3_-$. Each region $V$ of $b$ colored  $2$ turns  into  a 3-dimensional region that bounds  the union of  disks, each of diameter $1$, in $\R^2$ and a surface $S(V)$ in $\R^3_-$. Now to $b$ assign the union $S(b)$ of surfaces $S(V)$ over all regions $V$ labelled $2$.
The boundary of $S(b)$ consists of the union $C_n$ of  $n$ circles.

We refer to $S(b)$ as a \emph{crossingless surface} associated to the matching $b$. Denote by $\PS^n$ the set of crossingless surfaces associated to matchings $b\in \bbn{n}$ and by $\Phi_1$ the corresponding bijection
\begin{equation}\label{eq_psn}
    \Phi_1 \ :  \ \bbn{n} \lra \PS^n .
\end{equation}

This assignment  is inverse to the map depicted in Figure~\ref{fig_surf}.

Isotopy classes of surfaces $S(b)=\Psi_1(b)$ that result from this construction are in a bijection with planar partitions of $n$.
These surfaces are determined by their intersection with the lower half-plane $\R^2_-\subset  \R^3_-$. This intersection is a crossingless matching $b$.

Composing the two bijections above results in the bijection $\Phi$ from the set $\PD^n$ of non-crossing partitions to the set of crossingless surfaces:
\begin{equation}
    \Phi :  \ \PD^n \stackrel{\Phi_0}{\lra} \bbn{n} \stackrel{\Phi_1}{\lra} \PS^n
\end{equation}

Any oriented surface $S$ with a component of genus greater than $0$ and $\partial S \cong \sqcup_n \SS^1$ evaluates to the zero vector, $[S]=0$ in $A(n)$ for our series $\alpha$. Relation (\ref{eq_eight_els}) allows to reduce $y_{13}y_{24}$, which can be viewed as a crossing, to a linear combination of crossingless diagrams. Inductive application of this relation shows that $A(n)$ is spanned by vectors $[S(b)]$ associated to crossingless surfaces $S(b)$, for $b\in \bbn{n}$.
In particular,
\begin{equation}\label{eq_ineq}
\dim A(n) \leq c_n
\end{equation}
for any field $\kk$ and $\beta\in \kk^{\ast}$.

\vspace{0.1in}

\subsection{Meander determinants and size of \texorpdfstring{$A(n)$}{A(n)}}
\label{subsec_meander}
\quad
\vspace{0.1in}

To prove the opposite inequality to (\ref{eq_ineq}) when $\kk$ has zero characteristic, it is enough to show that  the bilinear form is nondegenerate on the subspace with a basis  $\{[S(b)]\}$, $b\in \bbn{n}$.

It turns out that the matrix of  this bilinear form in  the spanning set of crossingless matchings is the same as one of the auxiliary matrices appearing in \cite{KKKo}, namely, the matrix  for  the deformed {\it meander determinant}.

Consider two matchings $a,b\in \bbn{n}$, their surfaces  $S(a),S(b)$ and the closed surface  $S(a,b):=\overline{S}(b)S(a)$ given by  reflecting the surface $S(b)$ about the horizontal plane $\R^2$ and composing with $S(a)$ along the common $n$ circles.

To matchings $a,b $ there is also  associated a  collection $\overline{b}a$ of circles in the  plane, which has a  unique checkerboard coloring of its connected components (regions) by $\{1,2\}$ with the outer component colored $1$. Define $h_1(a,b), h_2(a,b)$ as the number of connected components of colors 1 and 2 respectively.

\begin{figure}[!htb]
\begin{tikzpicture}
\draw [dashed] (0,0) -- (9,0);
\foreach \i in {1,2,3,4,5,6,7,8}
{
\draw [fill=black] (\i,0) circle (0.05) ;
}

\draw (1,0) to[out=270, in=180] (1.5,-1) to[out=0, in=270] (2,0) to[out=90, in=180] (3.5,2) to[out=0, in=90] (5,0) to [out=270, in=180] (6.5,-2) to [out=0, in=270] (8,0) to [out=90,in=0] (7.5,1) to [out=180,in=90] (7,0) to[out=270, in=0] (6.5,-1) to[out=180, in=270] (6,0) to[out=90, in=0] (3.5,3) to[out=180, in=90] (1,0) ;

\draw (3,0) to[out=270, in=180] (3.5,-1) to [out=0, in=270] (4,0) to [out=90, in=0] (3.5,1) to [out=180, in=90] cycle;

\node at (3.5,0.5) {$2$};
\node at (3.5,2.5) {$2$};
\node at (6.5,1.5) {$1$};

\node at (11,0.5) {$h_2(a,b)=2$};
\node at (11,1.5) {$h_1(a,b)=1$};

\node at (8.5,-1.5) {$a$};

\node at (8.5,1.5) {$\overline b$};

\end{tikzpicture}
\caption{Diagram $\overline{b}a$ with $h_1(a,b)=1$. Note that regions colored $2$ have no "holes", that is, each  one is homeomorphic to a disk.}
\end{figure}
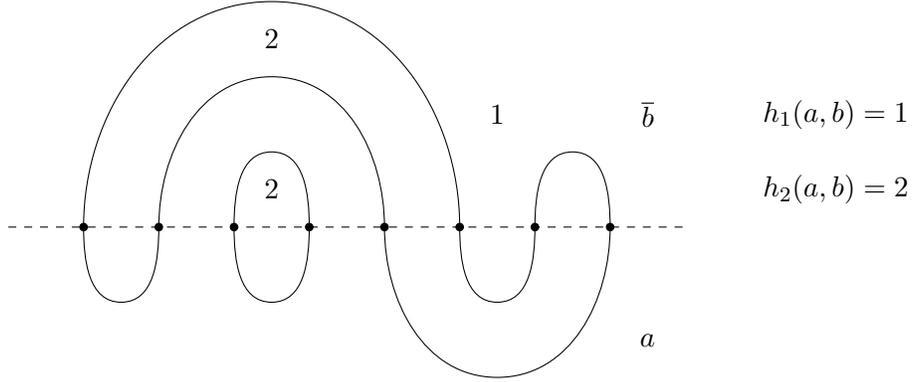

A component of the closed surface $S(a,b)$ has genus 0 if and only if
the corresponding region of the planar diagram $\overline{b}a$ colored  $2$ is a disk.
All  components  of $S(a,b)$ have genus $0$ iff no region of $\overline{b}a$ of color $2$ contains a region of color $1$ inside. Equivalently, all  color $2$ regions are non-nested disks and there is a unique region of color $1$ (the outer region). This  happens  iff $h_1(a,b)=1$.

The number of color 2 components is  $h_2(a,b)$. The bilinear form on $A(n)$ for the constant series $Z_{\alpha}(T)=\beta$ in the spanning set of crossingless surfaces $[S(b)]$ is given by
\begin{equation}\label{eq_matrix}
    ([S(a)],[S(b)]) \ = \ \delta_{1,h_1(a,b)} \beta^{h_2(a,b)},  \ \ a,b\in \bbn{n},
\end{equation}
where
\[
\delta_{i,j} = \begin{cases}
1, & i=j\\
0, & i\neq j.
\end{cases}
\]
Denote by $D_n(\beta)$  this matrix of size $\bbn{n}\times \bbn{n}$.

In \cite{KKKo} the authors study the determinant of $\bbn{n}\times \bbn{n}$ matrix $M(y_1,y_2)$ with the  $(a,b)$-entry
\[
y_1^{h_1(a,b)} y_2^{h_2(a,b)},
\]
where $y_1,y_2$  are formal variables, and show that it can be expressed in terms of Chebyshev polynomials of the second kind. Namely, let
$${\sf M}_n(y)  =
\prod_{h=1}^n { {\sf U}_h(y)^{c_{n,h}-c_{n,h+1}} } \, , \ \  \mathrm{where}
$$
\[
c_{n,h} = \binom{2n}{n-h}-\binom{2n}{n-h-1}, \ \ {\sf U}_h(2 \cos \theta) = \frac{\sin(h+1)\theta}{\sin \theta}.
\]
Chebyshev polynomial ${\sf U}_h(y)$  of the second kind is a polynomial of degree $h$ in $y$  with integer coefficients.
Then (see \cite{KKKo})
\begin{equation}
    \det(M(y_1,y_2)) =  (y_1 y_2)^{{|\bbn n|}/{2}}\cdot {\sf M}_n(\sqrt{y_1 y_2}).
\end{equation}
For any $a,b$
$$ h_1(a,b), h_2(a,b) \geq 1, $$
so that
\begin{equation}
    D_n(\beta) =  \lim_{y_1\to 0} \frac{1}{y_1} M(y_1,\beta).
\end{equation}
We need to prove that the following limit is nonzero, when the ground field has characteristic $0$:
\begin{equation}
\lim_{y_1 \to 0}  \frac{\det(M(y_1,\beta))}{y_1^{|\bbn n|}},
\end{equation}
so that  the determinant below does not vanish:
\begin{equation}
    \det (D_n(\beta)) = \lim_{y_1\to 0} \left(\frac{\beta}{y_1}\right)^{|\bbn{n}|/2} {\sf M}_n(\sqrt{y_1 \beta}) .
\end{equation}

At the point $y=0$ even-indexed Chebyshev polynomials ${\sf U}_{2h}(y)$ do  not vanish, while odd-indexed polynomials ${\sf U}_{2h+1}(y)$ have simple poles, when  $\nchar \kk =0$:
\[
y \to 0 \Rightarrow {\sf U}_{2h}(y) \sim 1, \ \ {\sf U}_{2h+1}(y) \sim y.
\]
The order of vanishing of $M_n(y)$ is
\[
\sum_{i=1, odd}^n (c_{n,i}-c_{n,i+1})= \binom{2n}{n-1}-2\binom{2n}{n-2}+2\binom{2n}{n-3} + ... + (-1)^n 2 \binom{2n}{0}.
\]
\begin{lemma}
\[
\binom{2n}{n} = 2 \sum_{i=1}^n (-1)^{i-1} \binom{2n}{n-i},
\]
\end{lemma}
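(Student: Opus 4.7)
The plan is to derive this identity directly from the binomial expansion of $(1-1)^{2n}=0$ combined with the symmetry $\binom{2n}{n-i}=\binom{2n}{n+i}$. This is a purely elementary identity, so I do not expect any real obstacle; the main care needed is keeping track of signs.

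Concretely, I would start from
\[
0=(1-1)^{2n}=\sum_{k=0}^{2n}(-1)^k\binom{2n}{k},
\]
and re-index by writing $k=n-i$, with $i$ running from $-n$ to $n$. The central term $i=0$ contributes $(-1)^n\binom{2n}{n}$. For $i\neq 0$, terms pair up as $i$ and $-i$, and since $(-1)^{n-i}=(-1)^{n+i}$ and $\binom{2n}{n-i}=\binom{2n}{n+i}$, each pair contributes $2(-1)^{n-i}\binom{2n}{n-i}$. Thus
\[
0=(-1)^n\binom{2n}{n}+2\sum_{i=1}^{n}(-1)^{n-i}\binom{2n}{n-i}.
\]

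Dividing by $(-1)^n$ and rearranging gives
\[
\binom{2n}{n}=-2\sum_{i=1}^{n}(-1)^{-i}\binom{2n}{n-i}=2\sum_{i=1}^{n}(-1)^{i-1}\binom{2n}{n-i},
\]
which is the desired identity. The final step is just noting that $(-1)^{-i}=(-1)^i$, so the overall sign flip turns $(-1)^i$ into $(-1)^{i-1}$. No deeper combinatorial input (e.g.\ Catalan or ballot-type interpretations) is needed, although one could alternatively prove it by observing that $\sum_{k<n}(-1)^k\binom{2n}{k}$ and $\sum_{k>n}(-1)^k\binom{2n}{k}$ are equal by the $k\mapsto 2n-k$ symmetry, which immediately yields the same conclusion.
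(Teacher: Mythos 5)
Your proof is correct and is essentially the same argument the paper gives: the paper groups the expansion of $(1+t)^{2n}$ symmetrically about the central term and sets $t=-1$, which is exactly your pairing of $k=n-i$ with $k=n+i$ in the alternating sum $\sum_k(-1)^k\binom{2n}{k}=0$.
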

\begin{proof}
The lemma follows from the identity
\[
(1+t)^{2n} = \binom{2n}{n} t^n + \sum_{i=1}^n \binom{2n}{n-k} \left(t^{n-k}+t^{n+k}\right),
\]
evaluated at $t=-1$.

It follows from the lemma that the order of vanishing can be rewritten as
\[
\binom{2n}{n}-\binom{2n}{n-1} ,
\]
which equals the Catalan number $c_n$.

\end{proof}

This completes  the proof of Theorem~\ref{thm_catalan}. Note that,  over a field $\kk$  of  characteristic $p$,
$\dim A(n)=c_n$ if $p$ does not divide the corrresponding  product  of the values of even Chebyshev polynomials $\mathsf{U}_{2h}(0)$ for $h\leq n/2$  and derivatives of odd Chebyshev polynomials $\mathsf{U}'_{2h+1}(0)$ for $h \leq (n-1)/2$.
Otherwise, $\dim A(n) < c_n$.

$\square$
\vspace{0.2in}

\subsection{Graded case and the Narayana numbers}\label{subset_narayana}
\quad
\vspace{0.1in}

\subsubsection{Non-crossing partitions and Narayana numbers}
Consider points $1,2,...,n$ placed in this  order on a circle. A partition of them into a disjoint union
\[
\{1,2,...,n\} = \{i_1,...,i_t\} \bigsqcup \{j_1,...,j_s\} \bigsqcup ...
\]
is called non-crossing (or planar, see earlier) if the parts avoid interlaps, when drawn via trees in the disk. Instead of the circle and the disk it bounds one can use the $x$-axis and the lower half-plane, see earlier.

The number of non-crossing partitions of $n$ points on a circle with exactly $k$ parts, $1\le k\le n$, is called the Narayana number \cite{P}:
\[
N(n,k) = \frac{1}{n}\binom{n}{k}\binom{n}{k-1}.
\]
Narayana numbers provide a distinguished refinement of Catalan numbers
\[
c_n = \sum_{k=1}^n N(n,k)
\]
and have the following generating function
\[
\sum_{n\ge 0,1\le k\le n} N(n,k)z^n t^{k-1} = \frac{
1-z(t+1)-\sqrt{1-2z(t+1)+z^2(t-1)^2}
}{
2tz
}.
\]

\subsubsection{Graded dimensions of $A(n)$}
The vector space $A(n)$ is spanned by diffeomorphism classes of viewable surfaces (elements of $\PS^n$, see formula (\ref{eq_psn}) with the boundary diffeomorphic to the disjoint union of $n$ disks and each component of genus $0$. Assume that $\nchar \kk=0$. Then surfaces in $\PS^n$ constitute a basis of $A(n)$ and carry a natural degree. The degree of $[S]\in A(n)$, for $S\in \PS^n$, is
\begin{equation}
    \deg[S] \ = \ n -\chi(S),
\end{equation}
where $\chi(S)$ is the Euler characteristic of $S$. In this way $A(n)$ becomes a $2\Z_+$-graded vector space and even  a $2\Z_+$-graded commutative associative algebra. The unit element of $A(n)$, viewed as an algebra, is given by the union of $n$ disks and has degree $0$.  Other basis elements $[S]$ of $A(n)$ have positive even degrees.

This can be written via the action of the grading operator
\[
q^{n-\chi}: A(n) \to A(n) \otimes \kk[q^{\pm 1}], \ \ \
q^{n-\chi} [ S  ] = q^{n-\chi({S})} [ S ] .
\]
 It is natural to consider the trace of $q^{n-\chi}$ as a Laurent polynomial in $q$. Since the homogeneous summands have non-negative gradings, the trace of $q^{n-\chi}$ is a genuine polynomial in $q^2$ rather than a Laurent polynomial. This polynomial  depends on $n$ and describes the graded dimension of $A(n)$ with the above grading.

\begin{proposition} If $\nchar \kk=0$,
the space $A(n)$ is naturally graded. Dimensions of its homogeneous components are
the coefficients of the polynomial $\tr_{A(n)} q^{n-\chi}$ are Narayana numbers:
\[
\tr_{A(n)} q^{n-\chi} = \sum_{k=0}^{n-1} N(n,n-k) q^{2k}.
\]
\end{proposition}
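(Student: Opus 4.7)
My plan is to compute the degree of each basis element of $A(n)$ in terms of the corresponding non-crossing partition, and then use the known enumeration of non-crossing partitions by number of blocks.

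By Theorem~\ref{thm_catalan} and its proof via the bijection $\Phi : \PD^n \to \PS^n$, the set $\{[S(b)]\}_{b \in \bbn{n}}$ is a basis of $A(n)$ when $\nchar \kk = 0$. The first step is to describe the topology of $S(b)$ in terms of the associated non-crossing partition $\lambda = \Phi_0^{-1}(b) \in \PD^n$. From the construction of $S(b)$ via the checkerboard coloring (Figure~\ref{fig_surf}), the connected components of $S(b)$ are in bijection with the blocks of $\lambda$: a block of size $m_i$ produces a connected component which is a $2$-sphere with $m_i$ open disks removed (i.e., a sphere with $m_i$ boundary circles). Hence if $\lambda$ has $k$ blocks of sizes $m_1, \ldots, m_k$ with $\sum m_i = n$, then
\[
\chi(S(b)) \ = \ \sum_{i=1}^{k} (2 - m_i) \ = \ 2k - n,
\]
so $n - \chi(S(b)) = 2(n-k)$. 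In particular the grading is well-defined, takes values in $2\Z_+$, and depends only on the number of blocks of $\lambda$.

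For the second step, I would note that the number of non-crossing partitions of $\{1,\ldots,n\}$ into exactly $k$ blocks is the Narayana number $N(n,k) = \tfrac{1}{n}\binom{n}{k}\binom{n}{k-1}$, which is the defining combinatorial property of Narayana numbers. Substituting the grading computation above into the trace gives
\[
\tr_{A(n)} q^{n-\chi} \ = \ \sum_{b \in \bbn{n}} q^{n-\chi(S(b))} \ = \ \sum_{k=1}^{n} N(n,k)\, q^{2(n-k)},
\]
and reindexing $j = n - k$ yields the stated formula $\sum_{k=0}^{n-1} N(n,n-k)\, q^{2k}$.

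The only point requiring care is the topological identification of the components of $S(b)$ with the blocks of $\lambda$, and I would verify this by tracing through the construction: under the bijection $\Phi_0$, the regions of the lower half-plane colored $2$ in the complement of $b$ are in bijection with the blocks of $\lambda$, and each such region (a planar region bounded by some of the $n$ intervals on the $x$-axis and arcs of $b$) thickens to a genus-zero component of $S(b)$ with one boundary circle for each interval in its boundary, i.e., one for each element of the corresponding block. This is essentially the content already established in Section~\ref{subsec_plane}, so no new work is required beyond assembling the pieces. The only potential subtlety is confirming that every component of $S(b)$ has genus zero, which is automatic because $S(b)$ is constructed as a thickening of a planar region in $\R^2_-$.
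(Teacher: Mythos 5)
Your proposal is correct and follows essentially the same route as the paper: identify the basis of $A(n)$ with crossingless surfaces (equivalently non-crossing partitions via $\Phi$), compute $n-\chi(S)=2(n-k)$ for a surface with $k$ genus-zero components, and invoke the fact that Narayana numbers count non-crossing partitions by number of blocks. The extra care you take in tracing the component/block correspondence and confirming genus zero is a reasonable elaboration of what the paper states more tersely, but it is not a different argument.
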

\begin{proof}
Crossingless surfaces correspond to non-crossing partitions.
The connected components of a crossingless surface provide a partition of the set of the boundary components. From the discussion in earlier subsections, crossingless surfaces form a basis of the set $A(n)$.
A crossingless surface $S\in\PS^n$ with $k$ components gives a decomposition
\[
n = i_1 + ... + i_k.
\]
The Euler characteristic of a component with $i$ boundary circles is $2-i$.
Thus,
\[
n-\chi(S) = n - \sum_{j=1}^k (2-i_j) = 2(n-k).
\]
\end{proof}

\begin{remark} Commutative algebra $A(n)$ is naturally a Frobenius algebra, via the trace map of capping off a surface $S$ by $n$ disks and evaluating the resulting closed surface. The trace map does not respect the  grading. To make the trace map homogeneous, one can make $\beta$ a formal variable with $\deg(\beta)=-2$ and change from a field $\kk$ to a polynomial ring $\kk[\beta]$. The resulting pairing on $A(n)$, defined over $\kk[\beta]$, is not perfect, though.
\end{remark}

\vspace{0.1in}

\subsection{A commutative Frobenius algebra in \texorpdfstring{$\Rep(osp(1|2))$}{Rep(osp(1,2)}}\label{subset_osp}
\quad
\vspace{0.1in}

In this Section we give an alternative derivation of Theorem~\ref{thm_catalan} and  relate the category $\udcobal$ for the constant series $\alpha$ with the representation category of Lie superalgebra $osp(1|2)$.

\vspace{0.1in}

 Start with the category $\cC'$ of finite-dimensional representations of  $osp(1|2)$, viewed  as a Lie superalgebra over a field $\kk$ of characteristic $0$, see~\cite{ES,B1,B2,KRe,Z} and~\cite[Theorem  A.3]{EO}. An object of $\cC'$ is a  $\Z/2$-graded representation of $U(osp(1|2))$. The  defining representation $V\cong \kk^{1|2}$ generates a Karoubi-closed tensor subcategory $\cC$ of $\cC'$. One can think of  $\cC$ as "one-half" of the category $\cC'$.
 An  irreducible object of $\cC'$ is either isomorphic too an irreducible object of $\cC$ or to such an object  tensored with the odd one-dimensional representation of $osp(1|2)$. Both  $\cC$ and  $\cC'$ are semisimple $\kk$-linear categories.

 Category $\cC$ is similar  to the category of representations of the Lie group $SO(3)$. Namely, it has one irreducible representation $V_{2n}$ in each odd dimension $2n+1$, $n=0,1,\dots$, just like  $SO(3)$, and the tensor  product decomposition
 \begin{equation*}
 V_{2n}\otimes V_{2m} \cong V_{2|n-m|}\oplus V_{2|n-m|+2}\oplus \dots \oplus  V_{2(n+m)}
 \end{equation*}
 has the same multiplicities as for the corresponding representations of $SO(3)$. In particular, taking $W=V_0\oplus V_2$, the dimension of the space of invariants  $\Hom_{\cC}(V_0,W^{\otimes n})$  equals
 to the corresponding multiplicity  for representations of $SO(3)$. In the latter case, the analogue of $W$ is the  four-dimensional representation $\widetilde{W}$ of $SO(3)$ isomorphic, as a representation of $sl(2)$, to $\widetilde{V}_1\otimes \widetilde{V}_1\cong \widetilde{V}_0\oplus \widetilde{V}_2$, where $\widetilde{V}_1$ is the fundamental representation of $sl(2)$, and $\widetilde{V}_n$ is the irreducible representation of $sl(2)$ of dimension $n+1$. Multiplicities for these representations are the same in the categories of $SO(3)$ and $sl(2)$  representations. The identity representations $\be$ in these  categories are isomorphic to $V_0$ and $\widetilde{V}_0$, respectively. One obtains that
 \begin{equation}\label{eq_to_cat}
     \dim \Hom_{\cC}(\be,W^{\otimes n}) =\dim \Hom_{SO(3)}(\be,\widetilde{W}^{\otimes n})= \dim \Hom_{sl(2)}(\be, \widetilde{V}_1^{\otimes 2n}) = c_n,
 \end{equation}
 where $c_n$ is the $n$-th Catalan number.

Let $E$ be a 2-dimensional $\kk$-vector space with the basis $\{a,b\}$. Consider the exterior algebra
$$A=\Lambda^{\ast} E,$$
thus, $a^2=b^2=0$ and $ab=-ba$ in $A$. Algebra $A$ has the following super-derivations:
$$x=(ab+1)\partial_a,\; y=(ab+1)\partial_b.$$
These super-derivations act on the left in the basis $\{1,a,b,ab+1\}$ of $A$ as follows:
$$
\begin{array}{|c|c|c|c|c|c|}
\hline
& x & y & [x,x]=2x^2&[x,y]=xy+yx &  [y,y]=2y^2\\ \hline
1&0&0&0&0&0\\
a&ab+1&0&2b&-a&0\\
b&0&ab+1&0&b&-2a\\
ab+1&b&-a&0&0&0\\
\hline
\end{array}
$$
 Thus, $x$ and $y$ generate an action of the Lie super-algebra $osp(1|2)$ on $A$. As an $osp(1|2)$-module,
 $A\cong \be \oplus  V_2$, where $\be$ is the trivial module spanned by $1\in A$  and $V_2\cong  \kk^{1|2}$ is irreducible of dimension $(1|2)$ spanned by $a,b,ab+1$.
 In  particular, $A$ is an object of $\cC$.

 The multiplication $A\otimes  A\lra  A$ is  a map of  $osp(1|2)$-modules, since $osp(1|2)$ acts by super-derivations. The unit map $\iota:\be\lra A$ is an $osp(1|2)$-module map as well.
 Being the exterior algebra,  the  algebra  $A$ is a commutative algebra object in the category of super-vector spaces and, consequently,  in the category $\cC$.

 The algebra object
 $A\in \cC$ is Frobenius with respect to the linear form $A\rightarrow \be$ sending $1\in A$  to a non-zero constant $\beta\in\be$
 and $a,b,ab+1$ to zero.

 Assume $\beta =1$. Then the Frobenius comultiplication $A\to A\ot A$ is given by
 \begin{equation}\label{eq_4_lines}
 \begin{array}{l}
 1\mapsto 1\ot ab-a\ot b+b\ot a+ab\ot (1+ab)\\
 a\mapsto a\ot ab+ab\ot a\\
 b\mapsto b\ot ab+ab\ot a\\
 ab\mapsto ab\ot ab
 \end{array}
 \end{equation}
 Composing with the  multiplication,  we  see  that the composition $m\Delta:A\lra  A$, which is the handle endomorphism, is the zero map.
One computes immediately that $\alpha_0=\beta=1$, $\alpha_i =0$ for $i>0$, so that the  generating function for $A$ is $Z(T)=1$.
For  arbitrary invertible $\beta$ one should insert $\beta^{-1}$  after  the  arrow in each line in  the map (\ref{eq_4_lines}) above. Then  $\alpha_0=\beta$ and $\alpha_i=0$ for $i>0$, with $Z(T)=\beta$.

\vspace{0.1in}

Consider the skein category $\scobal$ for the  constant series $Z_{\alpha}(T)=\beta$, its gligible quotient $\Cobal$, and  the  Karoubi envelope $\udcobal$ of the latter.
The  skein category has  the  relations that the  handle  is  equal  to $0$  while the  $2$-sphere evaluates to $\beta$.

There is  a  functor $F_A:\scobal\lra \cC$ taking  the circle object $1$ to $A$ as explained in Section~\ref{subsec_universal}, see (\ref{eq_F_A}).  To apply Proposition~\ref{prop_assume}, note that $\cC$ is semisimple and  that any object of $\cC$ is  a direct summand of $A^{\otimes n}$ for some $n$. To show that $F_A$ is surjective on homomorphisms, it suffices to  check that  the maps
\begin{equation*}
\Hom(n,m)\stackrel{F_A}{\lra} \Hom_{\cC}(A^{\otimes n},A^{\otimes m})
\end{equation*}
induced by $F_A$ are surjective. Furthermore, by duality,
it is  enough to establish surjectivity of  maps
\begin{equation} \label{eq_map_FA}
\Hom(0,n)\stackrel{F_A}{\lra} \Hom_{\cC}(\be,A^{\otimes n}).
\end{equation}
The image $F_A(\Hom(0,n))$  is a subspace of dimension   bounded from below by the rank  of  the matrix $D_n(\beta)$ from Section~\ref{subsec_meander}, see (\ref{eq_matrix}),
\begin{equation*}
    \dim(F_A(\Hom(0,n))) \ \ge \ \mathrm{rk}(D_n(\beta)).
\end{equation*}
We proved in that section that $D_n(\beta)$ has nonzero determinant over a characteristic zero field and has rank equal to the  Catalan number $c_n$. Therefore,
\begin{equation}\label{eq_dim_ge}
    \dim(F_A(\Hom(0,n))) \ \ge \ c_n.
\end{equation}
From equation (\ref{eq_to_cat}) we know that dimension of $\Hom_{\cC}(\be,A^{\otimes n})$ is also  $c_n$. Consequently,
the inequality in (\ref{eq_dim_ge}) is equality and the map (\ref{eq_map_FA}) is an isomorphism. We conclude that $F_A$ is surjective on homomorphisms. By Proposition~\ref{prop_assume} it induces an equivalence from the Karoubi completion of the negligible quotient $\Cobal$ of $\scobal$ to $\cC$.

\vspace{0.1in}

\begin{theorem} \label{thm_dcob_C}
Over a field $\kk$ of characteristic zero,
the category  $\udcobal$ is equivalent, as a symmetric monoidal category, to the above category $\cC$ of representations of $osp(1|2)$:
\begin{equation}
    \udcobal \cong \cC.
\end{equation}
\end{theorem}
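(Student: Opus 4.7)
The plan is to apply Proposition~\ref{prop_assume} to the functor $F_A \colon \vcobal \to \cC$ induced via the universal property by the commutative Frobenius algebra $A = \Lambda^* E \in \cC$ constructed above. First I would observe that $F_A$ factors through $\scobal$ because the handle endomorphism $m \circ \Delta$ of $A$ vanishes identically (so the handle skein relation $U_\alpha(x) = x = 0$ corresponding to the constant series holds on $A$), and that $\cC$ is a $\kk$-linear, additive, Karoubian, rigid symmetric monoidal category with finite-dimensional hom spaces; semisimplicity, hence non-degeneracy, is standard for the subcategory of $\Rep(osp(1|2))$ generated by the defining representation.

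Next I would check the two remaining hypotheses of Proposition~\ref{prop_assume}. For condition (1), note that $A \cong \be \oplus V_2$ as an $osp(1|2)$-module, and from the Clebsch--Gordan-type decomposition $V_2 \otimes V_{2(k-1)} \supseteq V_{2k}$ an easy induction on $k$ shows that every irreducible $V_{2k}$ is a direct summand of $A^{\otimes k} = F_A(k)$; these exhaust the indecomposables of $\cC$ by semisimplicity.

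The main obstacle is condition (2), fullness of $F_A$. Using rigidity in both $\scobal$ and $\cC$, I reduce to checking surjectivity of
\[
\Hom_{\scobal}(0,n) \xrightarrow{\ F_A\ } \Hom_{\cC}(\be, A^{\otimes n})
\]
for every $n \ge 0$. The target has dimension $c_n$ by the comparison of $osp(1|2)$-multiplicities with $sl(2)$-multiplicities recorded in (\ref{eq_to_cat}). For the source, the crossingless surfaces $\{[S(b)]\}_{b \in \bbn{n}}$ give a spanning set of size $c_n$ whose Gram matrix under the trace pairing is the meander matrix $D_n(\beta)$ of (\ref{eq_matrix}). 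The deformed meander determinant computation of Section~\ref{subsec_meander} shows $\det D_n(\beta) \ne 0$ in characteristic zero, so this matrix has full rank $c_n$. Hence $\dim F_A(\Hom_{\scobal}(0,n)) \ge c_n$, and combined with the upper bound from the target dimension the map is an isomorphism, in particular surjective.

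With all hypotheses of Proposition~\ref{prop_assume} verified, the functor $F_A$ induces a symmetric monoidal equivalence $\udcobal \simeq \cC$, which is the claim. The genuinely new input is the coincidence of the two independently computed dimensions $c_n$, one from the meander determinant and one from $osp(1|2)$-representation theory; everything else is formal bookkeeping with the universal property and Proposition~\ref{prop_assume}.
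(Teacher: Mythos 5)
Your proposal follows the paper's proof essentially verbatim: the same use of the universal-property functor $F_A$, the same reduction via duality to surjectivity on $\Hom(0,n)\to\Hom_\cC(\be,A^{\otimes n})$, the same two-sided count of $c_n$ via the meander determinant $D_n(\beta)$ on one side and the comparison (\ref{eq_to_cat}) with $sl(2)$-multiplicities on the other, and the same final invocation of Proposition~\ref{prop_assume}. The only difference is that you spell out condition (1) of Proposition~\ref{prop_assume} via the Clebsch--Gordan decomposition where the paper just asserts it; this is a harmless elaboration.
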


\vspace{0.1in}

This gives an alternative proof that $\dim A(n)=c_n$ over a characteristic zero field, which is part of  Theorem~\ref{thm_catalan} .

\begin{remark} (Vera Serganova)
 Let $G$ be a supergroup and  $\mfg=\mfg(G)=\mfg_0\oplus \mfg_1$, where $\mfg_1$ is the odd part of the corresponding Lie superalgebra. Assume that the top exterior power  $\Lambda^{top}\mfg_1=\Lambda^{\dim(\mfg_1)}\mfg_1$ is the trivial representation $V_0$ of $\mfg_0$.
Then the  ring of functions $A=\mathrm{Spec}(G/G_0)$ on $G/G_0$ is a super-commutative Frobenius algebra isomorphic to the exterior algebra $\Lambda^{\ast}\mfg_1$.  $A$ is a commutative Frobenius algebra object in the category of  $\mfg$-modules (the category of super vector spaces with an action of $G$).

As a $G$-module, $A$ is isomorphic to the induced representation
$\mathrm{Ind}^{\mfg}_{\mfg_0} (V_0)$,
\begin{equation}
    A \cong \Lambda^{\ast}\mfg_1 \cong \mathrm{Ind}^{\mfg}_{\mfg_0} (V_0).
\end{equation}
The $G-$invariant trace map
$\epsilon: A\lra \kk$ comes from the
Frobenius
reciprocity via the identity map of $\mfg_0$-modules from the trivial representation  $V_0$ to itself.

 For our case of $G =OSp(1|2)$ and $G_0\cong SL(2)$ this gives an alternative construction of the commutative Frobenius algebra as considered in this section.
\end{remark}

\section{Gram determinants for theories of rank one and two}\label{sec_generating}

Recall that the rank $K=\dim A(1)$ of the theory with $Z(T)=P(T)/Q(T)$ is $\max(\deg P +1,\deg Q)$.

\subsection{Generating function  \texorpdfstring{${\beta}/{(1-\gamma T)}$}{b/(1-gT)} and  the Deligne category}\label{subsec_deligne}

Generating function of a rank one theory has the form
\begin{equation}\label{eq_factor_1}
    Z_{\alpha}(T) = \frac{\beta}{1-\gamma T}  = \beta + \beta\gamma T + \beta \gamma^2 T^2 + \dots
\end{equation}
with $\beta\in \kk^{\ast}$ and $\gamma\in\kk$. When $\gamma=0$, the generating function is constant and the theory is studied in Section~\ref{sec_constant_gen}.

Assume now $\gamma\not= 0$.
Rescaling $T\longmapsto \lambda T$ and $Z(T)\longmapsto \lambda^{-1}Z(\lambda T)$ by invertible $\lambda$ leads to isomorphic  theories, see Section~\ref{subsec-scaling}. Rescaling $T$  to $\gamma^{-1}T$ and $Z(T)$ to $\gamma Z(\gamma^{-1}T)$ reduces the theory to that for the generating function
\begin{equation}
    Z(T) = \frac{\beta\gamma}{1 - T}
\end{equation}
and allows us to restrict to the case $\gamma=1$ and generating function
\begin{equation}\label{eq_beta_Z}
    Z(T) = \frac{\beta}{1- T}=\beta+\beta T + \beta T^2 + \dots  ,
\end{equation}
with the handle relation in this case
shown in Figure~\ref{fig_gen_beta_Z} (note  that the rescaling above changes the handle relation,  in general, by rescaling $x$).

\begin{figure}[!htb]
    \centering
    \begin{tikzpicture}[scale=0.6]
    \draw (0,0) to[out=250, in=180] (1,-2) to [out=0, in=290] (2,0);
    \draw [yscale=0.3] (1,0) circle(1);
    \draw (0.5,-1) to[out=290, in=250] (1.5,-1);
    \draw (0.6,-1.1) to[out=60, in=120] (1.4,-1.1);
    \node[scale=1] at (3.5,-1) {$ \ \ =  \beta$};
    \draw (5,0) to[out=250, in=180] (6,-2) to [out=0, in=290] (7,0);
    \draw [yscale=0.3] (6,0) circle(1);
    \end{tikzpicture}
    \caption{\label{fig_gen_beta_Z} Handle relation for the generating function in (\ref{eq_beta_Z}).}
\end{figure}
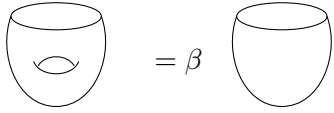

The skein category $\scobal$ for this sequence $\alpha=(\beta,\beta,\beta,\dots)$ is equivalent to the  partition  category $\Pa_{\beta}$, the  category  $\dcobal$ to the Deligne category $\delcatv{\beta}$, and the gligible quotient  $\udcobal$ is equivalent to  the gligible quotient  $\delcatbar{\beta}$.

Going back to the rational function in (\ref{eq_factor_1}) with $\gamma \not=0$, we  obtain three equivalences of  categories and  a commutative diagram
\begin{equation} \label{eq_short_cd_3}
\begin{CD}
\scobal  @>>> \dcobal @>>> \udcobal  \\
 @V{\cong}VV      @V{\cong}VV  @V{\cong}VV  \\
  \Pa_{\beta\gamma} @>>> \delcatv{\beta\gamma} @>>> \delcatbar{\beta\gamma}
\end{CD}
\end{equation}
where in the bottom row appear  the partition category, the Deligne category, and its gligible quotient, respectively, going from left to right, for the  parameter  $t=\beta\gamma$, and $\alpha=(\beta,\beta\gamma,\beta\gamma^2,\dots)$.

When $\nchar \kk=0$ and $\beta\gamma\notin \Z_+\subset \kk$,  the negligible ideal is  zero, the quotient does not change the category, and  there are equivalences
\begin{equation}
    \dcobal\cong \udcobal \cong \delcatbar{\beta\gamma}\cong \delcatv{\beta\gamma},
\end{equation}
between the four categories in the middle and on the right of the diagram (\ref{eq_short_cd_3}), see~\cite[Th\'eor\`eme 2.8]{D1}.

When $\beta\gamma=n$ and $\nchar \kk=0$, the category $\udcobal$ is equivalent to the category of finite-dimensional representations of the symmetric group $S_n$ over $\kk$. If $\nchar \kk=p$, one should replace $S_n$ by $S_m$, where $m$ is the remainder after the division of $n$ by $p$.

\vspace{0.1in}

For the generating function in (\ref{eq_factor_1}) the handle relation is obtained from that in Figure~\ref{fig_gen_beta_Z} by replacing $\beta$ by $\gamma$.

The $n$-circle state space $A_{\alpha}(n)$ of that theory is spanned  by visible cobordisms with  every connected  component of genus zero. Diffeomorphism classes of these cobordisms are in a bijection with set-theoretic partitions of $n$. Properties of the Deligne category imply  that  for $\gamma\not=0$ and   $\beta\gamma$ not the image of an integer in $\kk$ the set of  these genus zero surfaces is a basis of $A_{\alpha}(n)$, so that in this case
\begin{equation}\label{eq_dim_B}
    \dim A_{\alpha}(n) \ = \ B_n,
\end{equation}
where $B_n$ is the Bell number, see Section~\ref{subsec_catalan}.
Recall from Section~\ref{sec_constant_gen} that for $\gamma=0$ genus zero surfaces span $A_{\alpha}(n)$ and constitute a basis of the latter when $\nchar \kk=0$ and $\beta\not=0$, leading to the formula (\ref{eq_dim_B}) in this case as well.

\vspace{0.1in}

{\it Bilinear form data.}
We next compute the bilinear  form on this  spanning set, keeping $\beta,\gamma$ as formal variables, for small  values of $n$.
Table~\ref{table_det_lin} below describes determinants of Gram matrices $G_n$ of size $B_n\times B_n$,  with rows and columns enumerated by set partitions $\lambda$ and  $\mu$ of $n$. To each set partition $\lambda$  we assign a surface $S(\lambda)$ that matches $n$ boundary circles into genus zero connected  components  via  parts of the partition.  These surfaces span $A_{\alpha}(n)$ for all values of $\beta,\gamma\in \kk$ and  constitute a basis in the generic case.

Two surfaces $S(\lambda),S(\mu)$ share  the common boundary of $n$  circles and can be glued into a closed surface $\overline{S(\mu)}S(\lambda)$. Evaluating this surface via the generating function $Z_{\alpha}(T)$ in (\ref{eq_factor_1}) gives a monomial  in $\beta$ and $\gamma$ that we put as the $(\lambda,\mu)$-entry of  the matrix $G_n$ and then compute its determinant.

\begin{table}[!htb]
$$
\begin{array}{|c|c|c|}
\hline
n & B_n & \det \\
\hline
1&1&\beta\\
2&2& {\beta}^{2} \left( \beta\,\gamma-1 \right) \\
3 & 5 & {\beta}^{5}  \left( \beta\,\gamma-1
 \right) ^{4}  \left( \beta\,\gamma-2 \right) \\
 4 & 15 & {\beta}^{15} \, \gamma \left( \beta\,\gamma-1 \right) ^{14}  \left( \beta\,
\gamma-2 \right) ^{7} \left( \beta\,\gamma-3 \right)   \\
5 & 52 & {\beta}^{52}{\gamma}^{10} \left( \beta\,\gamma-1 \right) ^{51} \left(
\beta\,\gamma-2 \right) ^{36} \left( \beta\,\gamma-3 \right) ^{11}
 \left( \beta\,\gamma-4 \right)
\\
6 & 203 & {\beta}^{203}{\gamma}^{73} \left( \beta\,\gamma-1 \right) ^{202}
 \left( \beta\,\gamma-2 \right) ^{171} \left( \beta\,\gamma-3 \right)
^{81} \left( \beta\,\gamma-4 \right) ^{16} \left( \beta\,\gamma-5
 \right)
 \\
7 & 877 & {\beta}^{877}{\gamma}^{490} \left( \beta\,\gamma-1 \right) ^{876}
 \left( \beta\,\gamma-2 \right) ^{813} \left( \beta\,\gamma-3 \right)
^{512} \left( \beta\,\gamma-4 \right) ^{162} \left( \beta\,\gamma-5
 \right) ^{22} \left( \beta\,\gamma-6 \right)
\\
\hline
\end{array}
$$
\caption{\label{table_det_lin} Determinants of the bilinear form on $A(n)$ for the generating function $Z(T)=\frac{\beta}{1-\gamma T}$.}
\end{table}

For instance, for $n=2$ there are two  partitions, and the Gram matrix is $\begin{pmatrix} \beta^2 & \beta \\ \beta & \beta\gamma \end{pmatrix}$, with  the determinant $\beta^2(\beta\gamma-1)$.

\vspace{0.1in}

{\it Exponents of $\gamma$.}
The  main surprising  feature of the above table is given by terms that  are powers of $\gamma$.
Nonzero powers of $\gamma$ in the above determinant for $\beta/(1-\gamma T)$ start on line four. Adding zero as the power of $\gamma$ on line three gets us the sequence $(0,1, 10, 73,490)$  matching the Sloan sequence A200580, see \url{https://oeis.org/A200580}. It relates to the supercharacters of the finite group  of unipotent upper-triangular matrices over the 2-element field~\cite{BT,DS} and can also be expressed as the combination $-2 B_{n+2} + (n+4)B_{n+1}$ of two Bell numbers.

\vspace{0.1in}

{\it Exponents of determinant factors $\beta\gamma-k$.}
The exponents of factors of $\det \ G_n$ are related to representation theory of the symmetric group.
Namely, for $n$ shown in the table, the difference between the Bell  number $B_n$ (the dimension of $A(n)$ in the generic case) and the exponent $b_{k,n}$ of $\beta\gamma-k$ in $\det \  G_n$, for $k\ge 1$, is equal to the dimension of invariants in
$V^{\otimes n}$,
where $V$ is the natural $k$-dimensional representation of $S_k$:
\begin{equation}\label{eq_B_b}
    B_n - b_{k,n} = \dim \left(V^{\otimes n}\right)^\textrm{inv}.
\end{equation}
We expect this pattern to hold for all $n$, see Figure~\ref{fig_predict}.

The dimension of invariants can be calculated via  characters. Conjugacy classes in $S_k$ are parametrized by cycle types of permutations, which are in a bijective correspondence with Young diagrams with $k$ boxes. The diagram $\lambda = (\lambda_1, \lambda_2, ..., \lambda_{l(\lambda)})$ corresponds to the conjugacy  class of permutations with cycle lengths given by $\lambda$. The trace of such permutation on $V$ equals to $n-l(\lambda)$. Hence, the multiplicity of the trivial representation in $V^{\otimes n}$ is
\[
\dim \left(V^{\otimes n}\right)^\textrm{inv} =  \sum_{|\lambda| = k} \frac{1}{z_\lambda} \left(m_1(\lambda)\right)^n,
\]
where
\[
\lambda = (1^{m_1(\lambda)} 2^{m_2(\lambda)}...).
\]
For $k=1$ representation $V$ of $S_1$ is trivial, and (\ref{eq_B_b}) specializes to
\[
B_n - b_{1,n} = \dim (V^{\otimes n})^\mathrm{inv} = 1, \ \ n>0.
\]

\begin{small}
\begin{table}[!htb]
\begin{tabular}{|c|c|c|c|c|c|c|c|c|c||c|c|c|}
\hline
factor & Group & $A(0)$ & $A(1)$ & $A(2)$ & $A(3)$ & $A(4)$ & $A(5)$ & $A(6)$ & $A(7)$ & $A(8)$ & $A(9)$ & $A(10)$\\
\hline

 $\beta \gamma-1$ & $S_1$ &      0 &  0 &  1 &  4 &  14 &  51 &  202 &  876 &  4139 &  21146 &  115974 \\
  $\beta \gamma-2$ & $S_2$ &      0 &  0 &  0 &  1 &  7 &  36 &  171 &  813 &  4012 &  20891 &  115463 \\
  $\beta \gamma-3$ & $S_3$ &      0 &  0 &  0 &  0 &  1 &  11 &  81 &  512 &  3046 &  17866 &  106133 \\
  $\beta \gamma-4$ & $S_4$ &       0 &  0 &  0 &  0 &  0 &  1 &  16 &  162 &  1345 &  10096 &  72028 \\
  $\beta \gamma-5$ & $S_5$ &         0 &  0 &  0 &  0 &  0 &  0 &  1 &  22 &  295 &  3145 &  29503 \\
    $\beta \gamma-6$ & $S_6$ &         0 &  0 &  0 &  0 &  0 &  0 &  0 &  1 &  29 &  499 &  6676 \\
   $\beta \gamma-7$ & $S_7$ &            0 &  0 &  0 &  0 &  0 &  0 &  0 &  0 &  1 &  37 &  796 \\
   $\beta \gamma-8$ & $S_8$ &             0 &  0 &  0 &  0 &  0 &  0 &  0 &  0 &  0 &  1 &  46 \\
   $\beta \gamma-9$ & $S_9$ &             0 &  0 &  0 &  0 &  0 &  0 &  0 &  0 &  0 &  0 &  1 \\
   $\beta \gamma-10$ & $S_{10}$ &             0 &  0 &  0 &  0 &  0 &  0 &  0 &  0 &  0 &  0 &  0
\\
     \hline
\end{tabular}
\caption{\label{fig_predict} Prediction for the exponents of linear factors are given in the last three columns, for $n=8,9,10$. }
\end{table}
\end{small}

{\it Exponents of $\beta$.} Denote by $b_{0,n}$ the exponent of $\beta$ in $\det\, G_n$, see Table~\ref{table_det_lin}.
The sequence $(1,2,5,15,52,203,877)$ of exponents of $\beta$ in that table is the Bell numbers sequence A000110, see \url{https://oeis.org/A000110}, and we expect this pattern to hold for all $n$, so that $b_{0,n}=B_n$.
Furthermore,
\[
b_{0,n} - b_{1,n} =
1, \ \  n>0.
\]
which matches the data in Table~\ref{table_det_lin} (difference in exponents of $\beta$ and $\beta\gamma-1$ is $1$), so we expect
\[
b_{0,n} = b_{1,n}+1 = B_n, \ \ n>0.
\]

\subsection{Gram determinants for rank two theories }
\label{subsec_denom_two}

\quad

{\bf 1.} Consider the generating function
\begin{equation}\label{eq_gfunc_2}
    Z(T) = \frac{\beta}{(1-\gamma T)^2}, \ \ \beta,\gamma\in \kk^{\ast}.
\end{equation}
This theory has $K=2$ and the handle relation $(x-\gamma)^2=0$. Sequence $\alpha$ for this theory has no abelian realizations.

The space $A(n)$ has a spanning set consisting of viewable surfaces with $\sqcup_n\SS^1$ as the boundary and each component of genus at most one. Elements of the spanning set are set partitions of $n$ carrying labels $0,1$ (the genus of a component), and their count is the generalized (two-colored) Bell number $B_n^{(2)}$, see~\cite{Macd,Stan}, with the generating exponential function
\begin{equation}
    \sum_{n\ge 0} B_n^{(2)}\frac{t^n}{n!} =
   \exp\left(2(\exp(x)-1)\right).
\end{equation}
The first few values of $B_n^{(2)}$ are listed in  Tables~\ref{table_dim_1} and~\ref{fig_table_linear}.

\begin{table}
\begin{tabular}{|c|c|c|}
\hline
  $n$   &  $B^{(2)}_n$ & $\det$ \\
  \hline
  1   &  2 & $-\beta^2$\\
  2 & 6 & $-\beta^{10} \gamma^{12}$ \\
  3 & 22 & $-\beta^{50} \gamma^{66}$ \\
  4 & 94 & $-\beta^{266} \gamma^{376}$ \\
  5 & 454 & $-\beta^{1522} \gamma^{2270}$ \\
  \hline
\end{tabular} \caption{\label{table_dim_1} Two-colored Bell numbers and Gram determinants for the function $Z(T)=\beta/(1-\gamma T)^2$. }
\end{table}

Values of the determinant of the Gram matrix for this spanning set, from computer computations, are shown in Table~\ref{table_dim_1}. For each $n\le 5$ the determinant is the negative product of powers of $\beta$ and $\gamma$.

\vspace{0.1in}

The following guess works for the
powers of $\beta$ in the table: it is the total number of components of all elements of the spanning set, that is, of two-colored partitions of $n$.
More explicitly, the exponent of $\beta$ matches the sequence $B_{n+1}^{(2)}-2B_n^{(2)}$.

Consider a sequence with the terms given by half of the power of $\beta$ in the $n$-th Gram determinant plus $B^{(2)}_n$, see Table~\ref{table_dim_1}.
The sequence of exponents has the form $(1,3,11,47,227,1215,\dots)$, with the first few terms matching the sequence A035009 in the Sloan encyclopedia,
see \url{https://oeis.org/A035009}. Multiplying terms of the latter sequence by $2$ recovers the sequence $(B_n^{(2)})_n$, with the index shifted by one.

The degree of $\gamma$ in the table matches the product of numbers in the first two columns of the table, that is, $n B_n^{(2)}$.

\vspace{0.1in}

{\bf 2.} Table~\ref{table_dim_2} below shows the Gram determinants of the same spanning set of cobordisms with components of genus at most one for the generating function
\begin{equation}
Z(T)=\frac{\beta_0+\beta_1 T}{(1-\gamma T)^2}
\end{equation}
that deforms the function in (\ref{eq_gfunc_2}) without changing the parameter $K=2=\dim A(1)$  of the theory, that is, its rank.

\begin{table}[!htb]
\begin{tabular}{|c|c|c|}
\hline
  $n$   &  $\dim$ & $\det$ \\
  \hline
  1   &  2 & $ -\left(\beta_0 \gamma +\beta_1\right){}^2$\\
  2 & 6 & $-\gamma ^2 \left(\beta_0 \gamma +\beta_1\right){}^{10}$ \\
  3 & 22 & $-\gamma ^{16} \left(\beta_0 \gamma +\beta_1\right){}^{50}$ \\
  4 & 94 & $-\gamma ^{110} \left(\beta_0 \gamma +\beta_1\right){}^{266}$ \\
  5 & 454 & $-\gamma ^{748} \left(\beta_0 \gamma +\beta_1\right){}^{1522}$ \\
  \hline
\end{tabular}
 \caption{\label{table_dim_2} Dimensions and determinants for the function $Z(T)=(\beta_0+\beta_1 T)/(1-\gamma T)^2$. The difference with the previous table is $\beta_0\gamma+\beta_1$ taking place of $\beta$. }
\end{table}

\vspace{0.1in}

{\bf 3.} Gram determinants for the generating function
\begin{equation}
Z(T) = \frac{\beta}{(1-T)(1-\gamma T)}
\end{equation}
are given in Table~\ref{table_dim_3}.

\begin{table}[!htb]
\begin{tabular}{|c|c|}
\hline
  $n$   &  $\det$ \\
  \hline
   1  & $-\gamma\,{\beta}^{2}$ \\
   \hline
   2 & $-{\gamma}^{4}{\beta}^{8} \left( \gamma+\beta-1 \right)  \left( \beta\,
{\gamma}^{2}-\gamma+1 \right)$\\
\hline
3 & $-{\gamma}^{17}{\beta}^{34} \left( \gamma+\beta-1 \right) ^{7} \left(
\beta\,{\gamma}^{2}-\gamma+1 \right) ^{7} \left( \beta+2\,\gamma-2
 \right)  \left( \beta\,{\gamma}^{2}-2\,\gamma+2 \right)
$\\
\hline
4 & $\begin{array} {r@{}l@{}} -{\gamma}^{80}{\beta}^{158} \left( \gamma+\beta-1 \right) ^{42}
 \left( \beta\,{\gamma}^{2}-\gamma+1 \right) ^{42} \left( \beta+2\,
\gamma-2 \right) ^{11} \left( \beta\,{\gamma}^{2}-2\,\gamma+2 \right)
^{11} \cdot \\
\cdot \left( \beta+3\,
\gamma-3 \right) \left( \beta\,{\gamma}^{2}-3\,\gamma+3 \right)   \end{array}
$\\
\hline
5 & $\begin{array} {r@{}l@{}} -{\gamma}^{417}{\beta}^{804} \left( \gamma+\beta-1 \right) ^{251} \left( \beta\,{\gamma}^{2}-\gamma+1 \right) ^{
251} \left( \beta+2\,
\gamma-2 \right) ^{91}  \left( \beta\,{\gamma}^{2}-2\,\gamma+2 \right) ^{91}   \cdot \\ \left( \beta+3\,\gamma-3 \right) ^{16}
\left( \beta\,{\gamma}^{2}
-3\,\gamma+3 \right) ^{16} \left( \beta+4\,\gamma-4 \right) \left( \beta\,{\gamma}^{2}-4\,\gamma+4
 \right)
 \end{array}$
\\
\hline
\end{tabular}
\caption{\label{table_dim_3} Determinants of the bilinear form on $A(n)$ for the generating function $Z(T)=\frac{\beta}{(1-T)(1-\gamma T)}$.}
\end{table}

\vspace{0.1in}

{\bf 4.} Consider the most general generating function, over an algebraically closed $\kk$, for a theory of rank two ($K=2$) :
\begin{equation}
Z(T)=\frac{\beta_0+\beta_1 T}{(1-\gamma_1 T)(1-\gamma_2 T)}.
\end{equation}
Its partial  fraction decomposition is given by
\[
Z(T) =  \frac{\beta_0\gamma_1+\beta_1}{\gamma_1-\gamma_2} \frac{1}{1-\gamma_1 T} +  \frac{\beta_0\gamma_2+\beta_1}{\gamma_2-\gamma_1}\frac{1}{1-\gamma_2 T}.
\]
Table~\ref{table_dim_4} shows values of the Gram determinant for the same spanning set of $A(n)$.

\begin{table}[!htb]
\begin{tabular}{|c|c|c|}
\hline
  $n$   &  $\dim$ & $\det$ \\
  \hline
  1   &  2 & $-\left(\beta_0 \gamma_1+\beta_1\right) \left(\beta_0 \gamma_2+\beta_1\right)$ \\
  \hline
  2 & 6 & $-\left(\beta_0 \gamma_1+\beta_1\right){}^4 \left(\beta_0 \gamma_1^2+\beta_1 \gamma
   _1-\gamma_1+\gamma_2\right) \left(\beta_0 \gamma_2+\beta_1\right){}^4 \left(\beta_0
   \gamma_2^2+\beta_1 \gamma_2-\gamma_2+\gamma_1\right)$ \\
   \hline
  3 & 22 & $ \begin{array} {r@{}l@{}} -\left(\beta_0 \gamma_1+\beta_1\right){}^{17} \left(\beta_0 \gamma_1^2+\beta_1 \gamma
   _1-\gamma_1+\gamma_2\right){}^7 \left(\beta_0 \gamma_1^2+\beta_1 \gamma_1-2 \gamma_1+2
   \gamma_2\right) \\
   \left(\beta_0 \gamma_2+\beta_1\right){}^{17}
   \left(\beta_0 \gamma_2^2+\beta_1
   \gamma_2-\gamma_2+\gamma_1\right){}^7
   \left(\beta_0 \gamma
   _2^2+\beta_1 \gamma_2-2 \gamma_2+2 \gamma_1\right)  \end{array}$ \\
   \hline
  4 & 94 & $ \begin{array} {r@{}l@{}} -\gamma_1 \gamma_2 \left(\beta_0 \gamma_1+\beta_1\right){}^{79} \left(\beta_0 \gamma
   _1^2+\beta_1 \gamma_1-\gamma_1+\gamma_2\right){}^{42} \left(\beta_0 \gamma_1^2+\beta_1
   \gamma_1-2 \gamma_1+2 \gamma_2\right){}^{11}
   \\
   \left(\beta_0 \gamma_1^2+\beta_1 \gamma
   _1-3 \gamma_1+3 \gamma_2\right)
   \left(\beta_0 \gamma_2+\beta_1\right){}^{79}  \left(\beta_0 \gamma_2^2+\beta
   _1 \gamma_2-\gamma_2+\gamma_1\right){}^{42}
   \\
   \left(\beta_0 \gamma
   _2^2+\beta_1 \gamma_2-2 \gamma_2+2 \gamma_1\right){}^{11} \left(\beta
   _0 \gamma_2^2+\beta_1 \gamma_2-3 \gamma_2+3 \gamma_1\right) \end{array}$ \\
   \hline
  5 & 454 & $ \begin{array} {r@{}l@{}} -\gamma_1^{15} \gamma_2^{15} \left(\beta_0 \gamma_1+\beta_1\right){}^{402} \left(\beta_0
   \gamma_1^2+\beta_1 \gamma_1-\gamma_1+\gamma_2\right){}^{251}
   \left(\beta_0 \gamma
   _1^2+\beta_1 \gamma_1-2 \gamma_1+2 \gamma_2\right){}^{91} \\
   \left(\beta_0 \gamma_1^2+\beta
   _1 \gamma_1-3 \gamma_1+3 \gamma_2\right){}^{16}
   \left(\beta_0 \gamma_1^2+\beta_1 \gamma
   _1-4 \gamma_1+4 \gamma_2\right)
   \\
   \left(\beta_0 \gamma_2+\beta_1\right){}^{402}   \left(\beta_0 \gamma_2^2+\beta_1 \gamma
   _2-\gamma_2+\gamma_1\right){}^{251} \left(\beta_0 \gamma_2^2+\beta
   _1 \gamma_2-2 \gamma_2+2 \gamma_1\right){}^{91}
   \\
   \left(\beta_0 \gamma
   _2^2+\beta_1 \gamma_2-3 \gamma_2+3 \gamma_1\right){}^{16}
   \left(\beta
   _0 \gamma_2^2+\beta_1 \gamma_2-4 \gamma_2+4 \gamma_1\right)
  \end{array}$ \\
  \hline
\end{tabular}
 \caption{\label{table_dim_4} Dimensions and determinants for the function $Z(T)=(\beta_0+\beta_1 T)/((1-\gamma_1 T)(1-\gamma_2 T))$.  }
\end{table}

 Powers of $\beta_0\gamma_i+\beta_1$, $i=1,2$ are given by the sequence $(1,4,17,79,402)$, which matches the beginning of the Sloan sequence $A289924$, see  \url{https://oeis.org/A289924}.
 The differences between total dimensions, shown in the middle column, and these exponents, for the simplest factors $\beta_0\gamma_i+\beta_1$,  give the sequence $(1, 2, 5, 15, 52)$, matching  Bell numbers.

 For the factor $\left(\beta_0 \gamma_1^2+\beta_1 \gamma_1-\gamma_1+\gamma_2\right)$
 and its image under the index transposition $\gamma_1\leftrightarrow \gamma_2$  the differences are given by the sequence $(2, 5, 15, 52, 203)$ matching  the shifted sequence of Bell numbers.

 For the factor $\left(\beta_0 \gamma_1^2+\beta_1 \gamma_1-2\gamma_1+2\gamma_2\right)$ and its  transposition under $\gamma_1\leftrightarrow \gamma_2$,
 the sequence of differences is $(2, 6, 21, 83, 363)$, which can be represented as $\frac{1}{2}(B_{n+2}-B_{n+1}+B_{n})$.

We expect that various patterns observed above hold for all $n$.

\section{Polynomial generating functions}
\label{sec_poly_gen}

In this section the categories $\udcobal$ are investigated for polynomial generating functions $Z(T)=Z_{\alpha}(T)$, i.e. $\alpha_i=0$
for $i\gg 0$. The case of the constant function $Z(T)=\beta$ was studied
in Section~\ref{sec_constant_gen}, so we will assume that the degree of the polynomial $Z(T)$ is at least one. There are two main cases:
\begin{itemize}
    \item Function $Z(T)$ is linear. Its tensor envelopes are closely related to the unoriented Brauer category $\Rep(O_t)$.
    \item Function $Z(T)$ has degree at least two. Its series $\alpha$  has no abelian realizations, see condition (3) in Theorem~\ref{necess_thm}.  Experimental data, discussed below, indicates that $\dim A_{\alpha}(n)$ depends only on the degree of the polynomial $Z(T)$.
\end{itemize}

\vspace{0.1in}

Note that a theory of degree $K$ has a skein relation that reduces the $K$-th power of a handle to a linear combination of lower degree powers. Consequently, the state space $A(n)$ has a spanning set given by partitions with an integer between $0$ and $K-1$ (inclusive) assigned to each component.

Generalized Bell numbers $B_n^{(k)}$ count set partition of $n$ together with an assignment of an integer between $0$ and  $k-1$ (inclusive) to each part of  the partition~\cite{Macd, Stan}. Elements of the latter set are in a bijection with  diffeomorphism classes of viewable surfaces with $n$ fixed  boundary components and at most $k-1$ handles on each component.

For a given $k$, generalized Bell numbers have the following exponential generating function:
\begin{equation}
\exp(k(\exp(t) - 1)) = \sum_{n=0}^{\infty} B_n^{(k)} \frac{t^n}{n!}.
\end{equation}

When $Z(T)$ is a polynomial $P(T)$, rank $K$ of the theory is $1+\deg P$.

\subsection{Linear generating function and the unoriented Brauer category}\label{subsec_linear}

Here we consider the case
\begin{equation}
    Z(T)  = \beta_0  + \beta_1 T
\end{equation}
of a linear generating function, with $\beta_0,\beta_1\in \kk$ and $\beta_1\not=0$. Evaluations of connected surfaces for this  $\alpha$ are shown  in Figure~\ref{fig_6_1}. Alternatively, one can treat this theory as  defined  over a ring that  contains  $\kk[\beta_0,\beta_1]$, in which case $\beta_1$ may not   be invertible.

\begin{figure}[ht]
\begin{center}
\begin{tikzpicture}
\draw (0,0) circle(1);
\draw [yscale=0.3] (-1,0) arc[radius=1, start angle=180, end angle=360];
\draw [yscale=0.3, dashed] (-1,0) arc[radius=1, start angle=180, end angle=0];
\node at (1.5,0) {$=\beta_0$};
\draw [yscale=0.7] (4,0) circle(1);
\draw (3.5,0) to [out=290, in=250] (4.5,0);
\draw (3.7,-0.1) to [out=70, in=110] (4.3,-0.1);
\node at (6.,0) {$=\beta_1 \neq 0$};
\draw (8,0) to [out=90, in=180] (8.5,0.4) to [out=0, in=180] (9,0.3) to [out=0, in=180] (9.5,0.4) to [out=0, in=90] (10,0) to [out=270, in=0] (9.5,-0.4) to [out=180, in=0] (9,-0.3) to [out=180, in=0] (8.5,-0.4) to [out=180, in=270] (8,0);
\draw (8.2,0) to [out=290, in=250] (8.7,0);
\draw (8.3,-0.1) to [out=70, in=110] (8.6,-0.1);
\draw (9.2,0) to [out=290, in=250] (9.7,0);
\draw (9.3,-0.1) to [out=70, in=110] (9.6,-0.1);
\node at (10.5,0) {$=0$};
\node at (9,-1) {$g\geq 2$};
\end{tikzpicture}
\caption{Evaluation is zero  beyond  genus one, $\beta_1\not= 0$.}
\label{fig_6_1}
\end{center}
\end{figure}
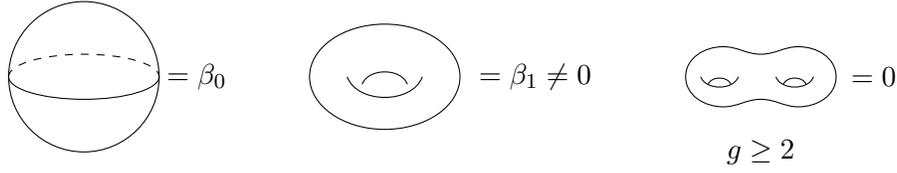

Scaling by  $\lambda=\mu^2$  as in Section~\ref{subsec-scaling}  changes $Z(T)$ to $ \lambda^{-1}\beta_0+\beta_1 T.$ Consequently, if every invertible element  of $\kk$ is  a square, we can reduce  to one of the two  cases:
\begin{equation*}
    (1) \ \  Z_0(T)=\beta_1 T ,\ \ \ \ \ \
    (2) \ \  Z_1(T) = 1 +\beta_1 T.
\end{equation*}

{\it  Skein relations.}
For now, consider the general case, with both  $\beta_0,\beta_1\in \kk$,  $\beta_1\not= 0$. Recall that  a dot on a connected component is a shorthand  for  a handle. Due to the   particular evaluation we are considering, two or more dots on  a component  evaluate the entire  diagram  to  zero, see  Figure~\ref{fig_two_more}.

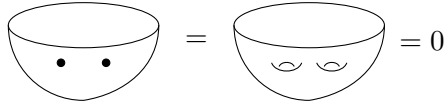
\begin{figure}[ht]
\begin{center}
\begin{tikzpicture}
\draw [yscale=0.3] (1,0) circle(1);
\draw (0,0) to [out=270, in=180] (1,-1) to [out=9, in=270] (2,0);
\draw [fill=black] (0.7,-0.5) circle(0.05);
\draw [fill=black] (1.3,-0.5) circle(0.05);
\node at (2.5,-0.2) {$=$};
\draw [yscale=0.3] (4,0) circle(1);
\draw (3,0) to [out=270, in=180] (4,-1) to [out=9, in=270] (5,0);
\draw (3.5,-0.5) to [out=290, in=250] (3.9,-0.5);
\draw (3.6,-0.55) to [out=70, in=110] (3.8,-0.55);
\draw (4.1,-0.5) to [out=290, in=250] (4.5,-0.5);
\draw (4.2,-0.55) to [out=70, in=110] (4.4,-0.55);
\node at (5.5,-0.2) {$=0$};
\end{tikzpicture}
\caption{\label{fig_two_more} Two or more  dots  on  a connected component  evaluate to zero, so for a spanning set one can reduce to each connected component carrying at most  one dot. Handles in diagrams  can be converted to dots for convenience.}
\end{center}
\end{figure}

The  state space of  two circles $A(2)=A_{\alpha}(2)$ for  this theory was  considered in~\cite[Section 6.2]{Kh1}. It has a spanning set of six vectors, shown in Figure~\ref{fig_6_2_1}: a pair of disks, each  with no dots or  a single dot, and a tube, either dotless or with a dot.
\begin{figure}[ht]
\begin{center}
\includegraphics[scale=0.85]{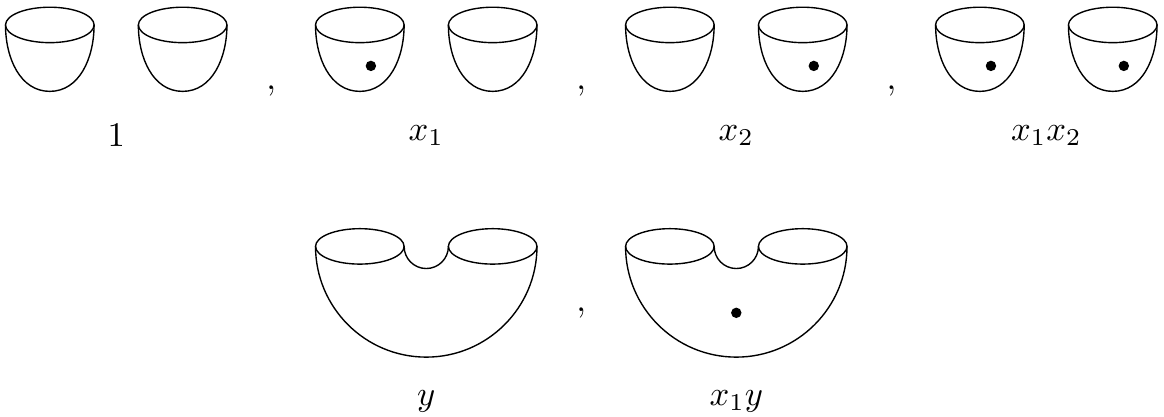}
\caption{ Six vectors that span $A_{\alpha}(2)$. Notation $x_i$ denotes a dot on the $i$-th cup, while $y$ stands for the tube.}
\label{fig_6_2_1}
\end{center}
\end{figure}

One  skein  relation on these  six vectors, shown in Figure~\ref{fig_6_3}, holds for  all values of  the parameters and allows to exclude the vector $x_1x_2$ from the list. To verify this relation observe that the pairing of the both sides with any vector from Figure~\ref{fig_6_2_1} except for the vector 1 is
zero; and it is easy to see that the pairings of both sides with vector 1
are $\beta_1^2$. Thus the difference of the left hand side and the right hand side is a negligible morphism, so it vanishes in categories $\cobal$ and  $\udcobal$.
\begin{figure}[ht]

\begin{center}
\begin{tikzpicture}
[scale=0.7]
\draw [yscale=0.3] (0,0) circle(0.3);
\draw (-0.3,0) to [out=270, in=180] (0,-1) to [out=0, in=270] (0.3,0);
\draw [fill=black] (0,-0.6) circle(0.05);
\draw [yscale=0.3] (2.5,0) circle(0.3);
\draw (2.2,0) to [out=270, in=180] (2.5,-1) to [out=0, in=270] (2.8,0);
\draw [fill=black] (2.5,-0.6) circle(0.05);
\node at (4,-0.5) {$=\beta_1$};
\draw [yscale=0.3] (5,0) circle(0.3);
\draw [yscale=0.3] (7,0) circle(0.3);
\draw (4.7,0) to [out=270, in=180] (6,-1) to [out=0, in=270] (7.3,0);
\draw (5.3,0) to [out=270, in=180] (6,-0.5) to [out=0, in=270] (6.7,0);
\draw [fill=black] (6,-0.7) circle(0.05);
\node at (8,-0.6) {$ \ \ \ \ \ \ \ \ \  \ \ \ \ \ \Longleftrightarrow \ \ \ \ $};
\end{tikzpicture}
\begin{tikzpicture}
[scale=0.7]
\draw [yscale=0.3] (5,0) circle(0.3);
\draw (4.7,0) to [out=270, in=180] (5,-1) to [out=0, in=270] (5.3,0);
\draw [fill=black] (5,-0.6) circle(0.05);
\draw [yscale=0.3] (7.5,0) circle(0.3);
\draw (7.2,0) to [out=270, in=180] (7.5,-1) to [out=0, in=270] (7.8,0);
\draw [fill=black] (7.5,-0.6) circle(0.05);
\node at (3.5,-0.5) {$=\beta_1^{-1}$};
\draw [yscale=0.3] (0,0) circle(0.3);
\draw [yscale=0.3] (2,0) circle(0.3);
\draw (-0.3,0) to [out=270, in=180] (1,-1) to [out=0, in=270] (2.3,0);
\draw (0.3,0) to [out=270, in=180] (1,-0.5) to [out=0, in=270] (1.7,0);
\draw [fill=black] (1,-0.7) circle(0.05);
\end{tikzpicture}
\caption{\label{fig_6_3} Skein relation on diagrams with two boundary circles. One  can either exclude two cups with dots or  exclude a tube with dots. If not  working over a field, only transformation  on the left may be allowed.}
\end{center}
\end{figure}

The  Gram determinant for the remaining five vectors is $\beta_1^6(\beta_1-2)$. In  particular, if $\beta_1\not=2$, these five vectors constitute a basis of  $A(2)$  and $\dim A(2) = 5.$

The case $\beta_1=2$  gives a multiplicative theory discussed at the  end  of~\cite[Section 2.5]{Kh1}. Note that when $\beta_1=2$, we have  $\nchar\kk\not=2$ since $\beta_1\not=0$.

Let us now assume  that  $\beta_1\not= 2$.
Figure~\ref{fig_6_3} relation allows to reduce a component of genus one with more than one boundary circle to components of genus one with just  one boundary circle each.

Consider the state space  $A(3)=A_{\alpha}(3)$ for three circles. These reductions give a spanning set of cobordisms with each component of genus at most one (equivalently, of genus $0$ with at most one dot), and genus one components with only one boundary circle each. Furthermore, a direct computation shows that a genus zero component with three boundary circles is a linear combination of other cobordisms from this spanning set, see Figure~\ref{fig_6_4}.

\begin{figure}[ht]
\begin{center}
\begin{tikzpicture}
[scale=0.4]
\draw [yscale=0.3] (0,0) circle(1);
\draw [yscale=0.3] (3,0) circle(1);
\draw [yscale=0.3] (6,0) circle(1);
\draw (-1,0) to [out=270, in=180] (3,-2) to [out=0, in=270] (7,0);
\draw (1,0) to [out=270, in=180] (1.5,-1) to [out=0, in=270] (2,0);
\draw (4,0) to [out=270, in=180] (4.5,-1) to [out=0, in=270] (5,0);

\node[black, scale=1] at (9,-1) {$=\beta_0 \beta_1^{-3}$};

\draw [yscale=0.3] (12,0) circle(1);
\draw (11,0) to [out=270, in=180] (12,-2) to [out=0, in=270] (13,0);
\draw [fill=black] (12,-1) circle(0.05);
\draw [yscale=0.3] (15,0) circle(1);
\draw (14,0) to [out=270, in=180] (15,-2) to [out=0, in=270] (16,0);
\draw [fill=black] (15,-1) circle(0.05);
\draw [yscale=0.3] (18,0) circle(1);
\draw (17,0) to [out=270, in=180] (18,-2) to [out=0, in=270] (19,0);
\draw [fill=black] (18,-1) circle(0.05);

\node[black, scale=1] at (20.5,-1) {$-\beta_1^{-2}$};
\draw [thick] (22.5,0.5) to [out=250, in=110] (22.5,-2.5);
\draw [yscale=0.3] (24,0) circle(1);
\draw (23,0) to [out=270, in=180] (24,-2) to [out=0, in=270] (25,0);
\draw [fill=black] (24,-1) circle(0.05);
\draw [yscale=0.3] (27,0) circle(1);
\draw (26,0) to [out=270, in=180] (27,-2) to [out=0, in=270] (28,0);
\draw [fill=black] (27,-1) circle(0.05);
\draw [yscale=0.3] (30,0) circle(1);
\draw (29,0) to [out=270, in=180] (30,-2) to [out=0, in=270] (31,0);
\node at (32,-1) {$+$};
\draw[->, thick] (33,-0.3) arc(150:-150:0.8);
\draw [thick] (35,0.5) to [out=290, in=70] (35,-2.5);
\node [red] at (29,-3) {3 terms};
\node[black] at (37,-1) {$+$};

\node[black] at (12.5,-6) {$+$};

\node[black] at (14.,-6) {$\beta_1^{-1}$};

\draw [thick] (15.5,-4.5) to [out=250, in=110] (15.5,-7.5);
\draw [yscale=0.3] (17,-16.67) circle(1);
\draw [yscale=0.3] (20,-16.67) circle(1);
\draw (16,-5) to [out=270, in=180] (18.5,-7) to [out=0, in=270] (21,-5);
\draw (18,-5) to [out=270, in=180] (18.5,-6) to [out=0, in=270] (19,-5);
\draw [yscale=0.3] (23,-16.67) circle(1);
\draw (22,-5) to [out=270, in=180] (23,-7) to [out=0, in=270] (24,-5);
\node at (25,-6) {$+$};
\draw[->, thick] (26,-5.3) arc(150:-150:0.8);
\draw [thick] (28,-4.5) to [out=290, in=70] (28,-7.5);
\draw [fill=black] (23,-6) circle(0.05);
\node [red] at (22,-8) {3 terms};

\end{tikzpicture}
\caption{\label{fig_6_4} Sphere with 3 holes as a linear combination of seven terms. The expression on the RHS is invariant under the permutation action of $S_3$ and terms permuted under the action are grouped together into sets of three. Terms in the first group of 3 differ in dot placement. }
\end{center}
\end{figure}

Relation in Figure~\ref{fig_6_4} implies the relation in Figure~\ref{fig_6_3} by capping off one of the three boundary circles with a one-holed torus. Inductive application of these relations, together with the one in Figure~\ref{fig_two_more}, allows to reduce any connected component with three or more boundary circles to a linear combination of surfaces where
\begin{itemize}
\item each component has at most one boundary circle,
\item all components have genus zero or one,
\item  each genus one component bounds one circle.
\end{itemize}

\begin{prop}\label{span1}
The space $A(n)$ has a spanning set $\mathscr{A}(n)$ that consists of viewable cobordisms  with $n$ boundary circles, with each connected component having one or two boundary circles, all genus one components with one boundary circle and no components of genus two or higher. The cardinality $a_n$ of the above set $\mathscr{A}(n)$ of cobordisms satisfies the following recurrent relation
\begin{equation} \label{eq_recurrent_rel}
a_n = 2a_{n-1}+(n-1)a_{n-2},
\end{equation}
and has the following generating function
\[
\sum_{n=0}^\infty \frac{a_n}{n!} t^n = \exp\left(
2t+\frac{t^2}{2}
\right)
\]
\end{prop}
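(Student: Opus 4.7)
The proof naturally divides into two largely independent parts: showing that $\mathscr{A}(n)$ spans $A(n)$, and counting its elements.

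For the spanning claim, the plan is to take an arbitrary viewable cobordism representing a class in $A(n)$ and reduce it, modulo negligible morphisms and the skein relations already derived, to a linear combination of cobordisms in $\mathscr{A}(n)$. First, Figure~\ref{fig_two_more} forces every component carrying two or more handles (equivalently, two or more dots) to vanish, so we may assume each connected component has genus $0$ or $1$. Next, the three-holed-sphere identity of Figure~\ref{fig_6_4} expresses a genus-zero component bounding three circles as a linear combination of cobordisms whose components bound at most two circles each. An induction on the number of boundary circles of a fixed component, using this identity together with cap-offs by one-holed tori to generate variants for components of higher boundary count, reduces any connected component with $\geq 3$ boundary circles to terms supported on components with $\leq 2$ boundary circles. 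Finally, the skein relation of Figure~\ref{fig_6_3} (valid because $\beta_1 \neq 0$) eliminates all genus-one components bounding two circles, i.e.\ dotted tubes. What remains is precisely a linear combination of elements of $\mathscr{A}(n)$.

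For the cardinality, the plan is a direct enumeration. Each element of $\mathscr{A}(n)$ is a disjoint union of components of exactly three types: an undotted cup (genus $0$, one boundary circle), a dotted cup (genus $1$, one boundary circle), or a tube (genus $0$, two boundary circles). Grouping elements of $\mathscr{A}(n)$ by the number $k$ of tube components gives
\begin{equation*}
a_n \;=\; \sum_{k=0}^{\lfloor n/2 \rfloor} \binom{n}{2k}\,\frac{(2k)!}{2^{k} k!}\,2^{\,n-2k},
\end{equation*}
the three factors counting respectively the choice of which circles are paired into tubes, the pairing itself, and the dotted/undotted decoration on each remaining singleton cup. The recurrence in (\ref{eq_recurrent_rel}) is then immediate from a bijective argument on the role of the $n$-th circle: if it belongs to a cup, there are $2 a_{n-1}$ completions, and if it is paired into a tube with one of the remaining $n-1$ circles, there are $(n-1) a_{n-2}$ completions. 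With $a_0 = 1$, this determines the sequence.

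Passing from the recurrence to the generating function is routine: setting $f(t) = \sum_{n \geq 0} a_n t^n/n!$, the recurrence translates into the first-order linear ODE $f'(t) = (2+t) f(t)$, which integrates with initial condition $f(0)=1$ to $f(t) = \exp\!\left(2t + t^{2}/2\right)$. The main obstacle I anticipate is the inductive step in the spanning argument, where one must ensure that iteratively applying the Figure~\ref{fig_6_4} relation to components bounding $\geq 3$ circles truly produces only terms whose components satisfy the three allowed genus/boundary profiles, and in particular does not reintroduce dotted tubes that have not yet been eliminated; careful bookkeeping of how handles and dots migrate under cap-offs, followed by a final sweep using Figure~\ref{fig_6_3}, should settle this.
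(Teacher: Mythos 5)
Your proposal is correct and follows essentially the same route as the paper's proof: reduce components of genus $\ge 2$ via Figure~\ref{fig_two_more}, trade dotted tubes for pairs of dotted cups via Figure~\ref{fig_6_3}, and collapse genus-zero components with $\ge 3$ boundary circles via Figure~\ref{fig_6_4} and its inductive extensions; then count $\mathscr{A}(n)$ by distinguishing the role of one fixed boundary circle to get the recurrence, and solve the resulting first-order ODE for the exponential generating function. The closed-form sum $a_n=\sum_k\binom{n}{2k}(2k-1)!!\,2^{n-2k}$ is a correct addition not in the paper, though not needed once you have the recurrence. One small imprecision: you say the higher-boundary reductions are obtained by ``cap-offs by one-holed tori,'' but capping off \emph{lowers} the boundary count (the paper uses it to derive the two-circle relation of Figure~\ref{fig_6_3} from the three-circle relation of Figure~\ref{fig_6_4}); what one actually needs for $k\ge 4$ boundary circles is the opposite direction, e.g.\ writing $y_{1\cdots k}=y_{1\cdots (k-1)}\cdot y_{k-1,k}$ in the algebra $A(n)$ and substituting the known relation for the $(k-1)$-holed factor, then sweeping up the freshly created dotted tubes with Figure~\ref{fig_6_3}, exactly the bookkeeping concern you flagged at the end. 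The paper is itself terse at precisely this step, so this is a clarification rather than a gap.
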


\begin{proof}
Using the relations in Figures~\ref{fig_6_3} and
~\ref{fig_6_4} we can express any cobordism as a linear combination of the elements of $\mathscr{A}(n)$. Let us show that the relation
\eqref{eq_recurrent_rel} holds. Consider the first boundary circle
of a cobordism from $\mathscr{A}(n)$. If it is the only
boundary circle of its connected component $C$  then the cobordism is a union
of this component of genus zero or one and an element of $\mathscr{A}(n-1)$, giving $2a_{n-1}$ possibilities.
Otherwise, $C$ has genus zero
and two boundary circles. There are $n-1$ options for the second
circle, giving $(n-1)a_{n-2}$ possibilities in this case and proving
\eqref{eq_recurrent_rel}. The derivation of the generating
function from the recurrence relation is standard.
\end{proof}

\begin{corollary} \label{ineq cor}
 $\dim A(n)\le a_n$ for $a_n$ as above, for any field $\kk$ and $\beta_0\in \kk,\beta_1\in \kk^{\ast}$.
\end{corollary}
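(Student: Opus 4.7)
The plan is essentially a one-line deduction from the preceding Proposition~\ref{span1}. Since $\mathscr{A}(n)$ has been constructed there as an explicit spanning set of $A(n)$, the only thing needed is the standard fact that the dimension of a vector space is at most the cardinality of any spanning set. Thus I would simply write that, by Proposition~\ref{span1}, every element of $A(n)$ is a $\kk$-linear combination of the $a_n$ cobordisms in $\mathscr{A}(n)$, so $\dim_{\kk} A(n)\le |\mathscr{A}(n)| = a_n$.

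The only subtlety worth flagging is the uniformity in the field $\kk$ and in the parameters $\beta_0\in\kk$, $\beta_1\in \kk^{\ast}$: one should check that the reduction steps used to prove Proposition~\ref{span1} (the relations in Figures~\ref{fig_two_more}, \ref{fig_6_3} and~\ref{fig_6_4}) are valid under these minimal hypotheses. The relation in Figure~\ref{fig_two_more} is the evaluation $\alpha_g=0$ for $g\ge 2$ and requires no invertibility; the relation in Figure~\ref{fig_6_3} was derived assuming $\beta_1\not=2$, but the version that is actually needed for the spanning argument is the direction that eliminates the two-disks-with-dots configuration in favour of a dotted tube, which only requires $\beta_1\in\kk^{\ast}$ (one divides by $\beta_1$, not by $\beta_1-2$); the relation in Figure~\ref{fig_6_4} uses only $\beta_1^{-1}$ and $\beta_1^{-2},\beta_1^{-3}$, again just $\beta_1\in\kk^{\ast}$. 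So all reductions are available whenever $\beta_1\in \kk^{\ast}$, matching the hypothesis of the corollary.

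There is really no obstacle here: the substantive content is entirely in Proposition~\ref{span1}, and the corollary is just the resulting numerical consequence together with the recurrence $a_n=2a_{n-1}+(n-1)a_{n-2}$ and generating function $\exp(2t+t^2/2)$ already recorded there. I would therefore present the proof in a single sentence, perhaps noting explicitly that the bound holds over arbitrary $\kk$ (including characteristic $p$) because the spanning argument never requires division by anything beyond a power of $\beta_1$.
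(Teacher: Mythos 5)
Your proof is correct and coincides with the paper's (implicit) argument: the corollary is exactly the observation that a spanning set of cardinality $a_n$ bounds $\dim_{\kk}A(n)$, and your check that the reductions in Figures~\ref{fig_two_more}, \ref{fig_6_3}, \ref{fig_6_4} only ever require inverting $\beta_1$ (never $\beta_1-2$) rightly justifies the extra generality of the hypothesis. One small slip in your discussion of Figure~\ref{fig_6_3}: what the spanning argument actually needs is the direction that eliminates the \emph{dotted tube} (two boundary circles, genus one, which is not in $\mathscr{A}(n)$) in favour of two dotted disks (which are in $\mathscr{A}(n)$), and this is the direction that divides by $\beta_1$; the opposite direction you describe requires no division at all. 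Either way the conclusion stands, since the required reduction only uses $\beta_1\in\kk^{\ast}$.
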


\begin{remark}
The sequence $(a_n)_{n\ge 0}$ is the Sloan sequence A005425, see~\url{https://oeis.org/A005425},
\[ (a_0,a_1,a_2,\dots ) =
(1,1,2,5,14,43,142,499,1850,7193,...). \]
\end{remark}

Numerical data for $\dim A(n)$ for generic values of $\beta_0,\beta_1$ and
the Gram determinant for the spanning set $\mathscr{A}(n)$ with
$n\le 7$ is given in Table~\ref{fig_table_linear}.
The third column shows $\dim A(n)$ for generic values of $\beta_0,\beta_1$. The last column
shows the values of the  Gram determinant for the set of vectors in the above spanning set $\mathscr{A}(n)$. Observe that the determinants do not depend on $\beta_0$ and can vanish only when $\beta_1$ is in the image of $\Z$ in $\kk$. Note that non-vanishing of the determinants implies that $\dim A(n)=a_n$ for $n\le 7$ and generic $\beta$'s. We are going to show that the same is true
for any $n$.

\begin{table}[!htb]
$$
\begin{array}{|c|c|c|c|}
\hline
n & B_n^{(2)} & \dim A(n) & \det \\
\hline
1& 2 & 2 &-\beta_1^2\\
2 & 6 & 5 & \left(\beta_1-2\right) \beta_1^8  \\
3 & 22 & 14 & -\left(\beta_1-2\right){}^6 \beta_1^{30} \\
4 & 94 & 43 & \left(\beta_1-3\right){}^2 \left(\beta_1-2\right){}^{27} \beta_1^{113} \\
5 & 454 & 142 & -\left(\beta_1-3\right){}^{20} \left(\beta_1-2\right){}^{110} \beta_1^{440}
\\
6 & 2430 & 499 & \left(\beta_1-4\right){}^5 \left(\beta_1-3\right){}^{134} \left(\beta_1-2\right){}^{435}
   \beta_1^{1774} \left(\beta_1+2\right)
\\
7 & 14214 & 1850 & -\left(\beta_1-4\right){}^{70} \left(\beta_1-3\right){}^{756} \left(\beta
   _1-2\right){}^{1722} \beta_1^{7406} \left(\beta_1+2\right){}^{14}
\\
\hline
\end{array}
$$
\caption{\label{fig_table_linear} Determinants of the bilinear form on $A(n)$ for the generating function $Z(T)=\beta_0 + \beta_1 T$. Notice the appearance of the term $\beta_1+2$ in the last two lines.}
\end{table}

Consider the Deligne orthogonal category $\Rep(O_t)$, $t\in \kk$, see e.g. \cite[Section 9]{D1}. Let $V\in \Rep(O_t)$ be the generating object corresponding to one element set in \cite[Definition 9.2]{D1}. By definition, we have
\begin{eqnarray*}
\inv\left( V^{\otimes 2n} \right) & := & \ \dim \Hom_{\Rep(O_t)}(\be,V^{\otimes 2n}) =(2n-1)!!, \\
\inv\left( V^{\otimes 2n+1} \right) & = &
\dim \Hom_{\Rep(O_t)}(\be,V^{\otimes {2n+1}}) =0,
\end{eqnarray*}
where $(2n-1)!!=(2n-1)\dots 3 \cdot 1$ is the odd factorial.

\begin{prop} \label{prop 7.3}
$a_n$ is the  dimension of invariants of the $n$-th tensor power of the object $\be\oplus \be \oplus V\in \Rep(O_t)$,
\begin{equation}
    a_n \ = \ \dim \mathrm{Hom}_{\Rep(O_t)}(\be,(\be^{2}\oplus V)^{\otimes    n}).
\end{equation}
\end{prop}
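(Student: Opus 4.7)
The plan is to compute $\dim\Hom_{\Rep(O_t)}(\be,(\be^{\oplus 2}\oplus V)^{\otimes n})$ by expanding the tensor power as a direct sum of $V^{\otimes k}$'s and then checking that the resulting formula matches the generating function for $a_n$ given in Proposition~\ref{span1}.

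First, I would use distributivity of tensor product over direct sum in the additive Karoubian symmetric monoidal category $\Rep(O_t)$ to write
\begin{equation*}
(\be^{\oplus 2}\oplus V)^{\otimes n}\ \cong\ \bigoplus_{k=0}^{n}\binom{n}{k}\,2^{\,n-k}\,V^{\otimes k},
\end{equation*}
where the multinomial-style coefficients count the choices at each tensor slot: for a fixed $k$-element subset of slots occupied by $V$ (there are $\binom{n}{k}$ such), each of the remaining $n-k$ slots may be filled by either copy of $\be$, contributing $2^{n-k}$ further choices.

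Next, I would apply $\dim\Hom_{\Rep(O_t)}(\be,-)$ termwise and invoke the identities recalled just above the proposition,
\begin{equation*}
\inv(V^{\otimes 2j})=(2j-1)!!,\qquad \inv(V^{\otimes 2j+1})=0.
\end{equation*}
This gives
\begin{equation*}
\dim\Hom_{\Rep(O_t)}\bigl(\be,(\be^{\oplus 2}\oplus V)^{\otimes n}\bigr)\ =\ \sum_{j\ge 0}\binom{n}{2j}\,2^{\,n-2j}\,(2j-1)!!.
\end{equation*}

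Finally, I would match this with the exponential generating function from Proposition~\ref{span1}:
\begin{equation*}
\sum_{n\ge 0}\frac{a_n}{n!}\,t^n\ =\ \exp\!\left(2t+\tfrac{t^2}{2}\right)\ =\ \exp(2t)\cdot\exp\!\left(\tfrac{t^2}{2}\right).
\end{equation*}
Since $\exp(2t)=\sum_{m\ge 0}\tfrac{2^m}{m!}t^m$ and $\exp(t^2/2)=\sum_{j\ge 0}\tfrac{(2j-1)!!}{(2j)!}t^{2j}$, multiplying the two series and extracting the coefficient of $t^n/n!$ produces exactly $\sum_{j\ge 0}\binom{n}{2j}2^{n-2j}(2j-1)!!$. (As a sanity check: for $n=2,3,4$ this yields $5,14,43$, matching Table~\ref{fig_table_linear}.) Alternatively, one could differentiate the generating function to recover the recursion $a_n=2a_{n-1}+(n-1)a_{n-2}$ and verify directly that the sum satisfies it. There is no real obstacle here; the proof is essentially a bookkeeping match between the binomial expansion of $W^{\otimes n}$ in $\Rep(O_t)$ and the factorization $\exp(2t+t^2/2)=\exp(2t)\exp(t^2/2)$.
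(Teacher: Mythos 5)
Your proof is correct and follows essentially the same route as the paper's: both compute the exponential generating function of $\dim\Hom_{\Rep(O_t)}(\be,(\be^{2}\oplus V)^{\otimes n})$ using $\inv(V^{\otimes 2j})=(2j-1)!!$ and $\inv(V^{\otimes 2j+1})=0$, and match it with $\exp\!\left(2t+\tfrac{t^2}{2}\right)$. The paper just packages the binomial bookkeeping more compactly via the formal exponential $\exp(Vu)=\sum_{n\ge 0}\frac{u^n}{n!}V^{\otimes n}$ in the representation ring, whereas you expand the binomials explicitly; the content is identical.
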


\begin{proof}
Let us compute the exponential generating function of dimensions of invariants
\begin{multline}
\sum_{n\geq 0} \dim \left(\inv \left((\be^{2}\oplus V)^{\otimes    n}\right)\right)  \frac{u^n}{n!} = \dim (\inv \exp(2u+Vu) )
= \dim (\inv \exp(2u)) \exp(Vu) = \\
 = \exp(2u) \sum_{n\geq 0} \frac{u^n}{n!} \dim (\inv \  V^{\otimes n}) = \exp(2u) \sum_{n \geq 0} \frac{u^{2n}}{{2n}!} (2n-1)!! = \exp\left(2u+\frac{u^2}{2}\right).
\end{multline}
Here for a $G$-representation $V$ we denote by
\[
\exp(V u) = \sum_{n\geq 0} \frac{u^n}{n!} V^{\otimes n} \in K(G)[[u]],
\]
where $K(G)$ is the representation ring (tensored with $\Q$) of the group $G$.
\end{proof}

Proposition \ref{prop 7.3} motivates the following construction.
Let $A\in \Rep(O_t)$ be the commutative Frobenius algebra obtained from a
symmetrically self-dual object $V\in \Rep(O_t)$ by the construction in Example
\ref{ortho example} in Section~\ref{subset_frob_obj}. The generating function of the $\alpha-$invariant of algebra $A$ is $\alpha_0+\alpha_1T$ where $\alpha_0$ can be chosen arbitrarily and $\alpha_1=t+2$. It is also easy to see that the
skein relation $x^2=0$ holds for the handle morphism of the algebra $A$.
Thus by the universal property
from Section \ref{subsec_universal} there is a symmetric tensor functor
$F_\alpha: \dcobal \to \Rep(O_{\beta_1-2})$ sending the circle object to $A=\be^{2}\oplus V$ (we assume here that $\beta_1\ne 0$ since the skein
relation is different in the case $\beta_1=0$).
The following simple result is crucial:

\begin{proposition} \label{full prop}
Assume $\beta_1\ne 0$. Then the functor
$F_\alpha$ is full and essentially surjective.
\end{proposition}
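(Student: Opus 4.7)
The plan is to verify the two hypotheses of Proposition~\ref{prop_assume}. Essential surjectivity is immediate: the decomposition $A = \be \oplus \be \oplus V$ exhibits $V$ as a direct summand of $A = F_\alpha(1)$ in $\Rep(O_t)$, so $V^{\otimes n}$ is a direct summand of $A^{\otimes n} = F_\alpha(n)$ for every $n$. Since every indecomposable object of $\Rep(O_t)$ is a direct summand of some tensor power of $V$, all of them belong to the essential image of $F_\alpha$.

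For fullness, the rigidity of both categories combined with the fact that $F_\alpha$ preserves duals reduces the problem to surjectivity of
\[
F_\alpha : \Hom_{\dcobal}(0, n) \lra \Hom_{\Rep(O_t)}(\be, A^{\otimes n})
\]
for every $n \ge 0$. By Corollary~\ref{ineq cor}, the source has dimension at most $a_n := |\mathscr{A}(n)|$, while Proposition~\ref{prop 7.3} identifies the target as $a_n$-dimensional. Thus fullness is equivalent to linear independence of the vectors $\{F_\alpha(S): S \in \mathscr{A}(n)\}$.

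To establish independence, I would expand the target using the decomposition $A = A_0 \oplus A_2 \oplus V$ with $A_0 = A_2 = \be$, obtaining a canonical basis indexed by pairs $(f, \pi)$, where $f:[n]\to\{0,2,*\}$ and $\pi$ is a perfect matching of $f^{-1}(*)$ (the Brauer basis of $\Hom(\be, V^{\otimes |f^{-1}(*)|})$). An element $S \in \mathscr{A}(n)$ is determined by a matching $M$ of a set $T \subseteq [n]$ of tubed boundary circles together with a dot set $D \subseteq [n] \setminus T$. A short Frobenius-algebra calculation on $A$ (using Example~\ref{ortho example} and dualising the basis $\{1, v_i, w\}$ of $A$ under $\epsilon\circ m$) gives $x\iota = \beta_1 w$ and $\Delta(1) = 1\otimes w + w\otimes 1 - \beta_0\, w\otimes w + \sum_i v_i\otimes v_i$. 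Expanding $F_\alpha(S)$ in the Brauer basis, every appearing basis element $(f, \pi)$ has $f^{-1}(*) \subseteq T$; the terms with $f^{-1}(*) = T$ force $\pi = M$, and their sole representative $(f_S, \pi_S)$ assigns $A_2$ to positions in $D$ and $A_0$ to the remaining ones, with coefficient $\beta_1^{|D|} \ne 0$ (here the assumption $\beta_1 \ne 0$ is used crucially).

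The map $S \mapsto (f_S, \pi_S)$ is a bijection $\mathscr{A}(n) \to \{(f,\pi)\}$ (a direct cardinality check gives $a_n$ on both sides), and filtering by $|f^{-1}(*)|$ renders the transition matrix block-triangular with invertible (in fact diagonal) diagonal blocks, proving linear independence. The main obstacle is the bookkeeping in this last step: the explicit computation of $\Delta$ and $x\iota$ on $A$ and the verification that the leading term of each $F_\alpha(S)$ is as claimed. As a by-product, $\dim A(n) = a_n$, so Corollary~\ref{ineq cor} becomes an equality and $\mathscr{A}(n)$ is a genuine basis of $A(n)$.
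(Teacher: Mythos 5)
Your argument for fullness is correct and takes a genuinely different route from the paper's. The paper observes that $\Hom_{\Rep(O_t)}(\be,A^{\otimes n})$ is generated, under tensoring and permutations, by $\Hom(\be,A)$ together with the single coevaluation $\be\to V\otimes V\hookrightarrow A\otimes A$; this reduces fullness to the cases $n=1,2$, which are then handled by comparing the image dimension against rows $1$ and $2$ of Table~\ref{fig_table_linear} --- with a separate explicit computation when $\beta_1=2$, since the Gram determinant in row $2$ vanishes there. Your triangularity computation works uniformly in $n$ and in $\beta_1\neq 0$, avoids the $\beta_1=2$ case split entirely, and makes the role of the hypothesis $\beta_1\neq 0$ transparent. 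The Frobenius computation of $\Delta(1)$ and $x\iota=\beta_1 w$, the observation that $V$-tensor factors in $F_\alpha(S)$ can only arise from tube components, and the identification of the filtration-leading term with $(f_S,\pi_S)$ at coefficient $\beta_1^{|D|}$ are all correct.

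Two corrections. First, Corollary~\ref{ineq cor} bounds $\dim A(n)=\dim\Hom_{\Cobal}(0,n)$, not $\dim\Hom_{\dcobal}(0,n)$, which is the actual source of your map and has the larger dimension $B_n^{(2)}$ (negligibles are not yet removed in $\dcobal$). This does not affect the argument, since you only use the direction ``the span of $\{F_\alpha(S):S\in\mathscr{A}(n)\}$ already has the full target dimension $a_n$, hence $F_\alpha$ is surjective on $\Hom(0,n)$,'' which is valid regardless of the source's dimension; but the claimed \emph{equivalence} between fullness and linear independence is not justified as stated. Second, the concluding by-product $\dim A(n)=a_n$ is false in general. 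You prove that $F_\alpha$ is injective on $\mathrm{span}\,\mathscr{A}(n)\subset\Hom_{\dcobal}(0,n)$, but $A(n)$ is the quotient by negligibles, and $\ker F_\alpha$ is \emph{contained in}, not equal to, the negligible ideal: $F_\alpha$ preserves trace forms, so it sends negligibles to negligibles, but $\Rep(O_t)$ may itself have nonzero negligible morphisms. Concretely, for $\beta_1=2$ and $\nchar\kk\neq 2$ the Gram determinant in row $2$ of Table~\ref{fig_table_linear} vanishes and $\dim A(2)=4<5=a_2$: the negligible vector in $\mathrm{span}\,\mathscr{A}(2)$ maps under $F_\alpha$ to a nonzero negligible morphism in $\Rep(O_0)$. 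The equality $\dim A(n)=a_n$ holds precisely when $\Rep(O_{\beta_1-2})$ is nondegenerate (e.g.\ $\beta_1\notin\Z$ in characteristic zero), as the paper records after Theorem~\ref{linear quot}.
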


\begin{proof} By definition, the image of the functor $F_\alpha$ contains
$A=\be^2\oplus V$. This implies the essential surjectivity, as any object
of $\Rep(O_t)$ is a direct summand of a direct sum of tensor powers of $V$.

Let us show that the functor $F_\alpha$ is full.
Since the object $A$ is self-dual it is sufficient to show that
any morphism from $\Hom(\be,A^{\otimes n})$ is in the image of the functor $F_\alpha$. Using the decomposition
$$A^{\otimes n}=(\be^2\oplus V)^{\otimes n}=\bigoplus_{S\subset [1,\ldots ,n]}
\bigotimes_{i=1}^nX_i^S$$
where $X_i^S=\be^2$ if $i\in S$ and $X_i^S=V$ if $i\not \in S$, we see that
the space $\Hom(\be,A^{\otimes n})$ is spanned by the tensor products
of morphisms from $\Hom(\be,A)$ and the pairing $\be \to V\otimes V\to A\otimes A$. Thus it is sufficient to check that $F_\alpha$ is surjective on
$\Hom(\be,A^{\otimes n})$ for $n=1,2$. This is clear for $n=1$ since
the space $\Hom(\be,A)$ is two dimensional and the image of the functor
$F_\alpha$ is at least two dimensional (by the first row of table ~\ref{fig_table_linear}), as $F_\alpha$ does not annihilate
non-negligible morphisms. The same argument (based on the second row of table ~\ref{fig_table_linear}) works for $n=2$ provided that $\beta_1\ne 2$.
Finally in the case $n=2$, $\beta_1=2$ and $\nchar \kk \ne 2$ one verifies
by an explicit computation that the unique up to scaling negligible morphism in $\Hom(\be,A^{\otimes 2})$ is not annihilated by $F_\alpha$.
\end{proof}

\begin{remark} One verifies that the functor $F_\alpha$ annihilates
relations in Figures ~\ref{fig_6_3} and ~\ref{fig_6_4}. Let $\dcobalb$
be the quotient of $\dcobal$ by these relations. It is clear that the inequality
from Corollary \ref{ineq cor} holds in the category $\dcobalb$. Thus
Proposition~\ref{full prop} implies that
the functor $\dcobalb \to \Rep(O_{\beta_1-2})$ is also faithful.
Hence, there is an equivalence of tensor categories
$\dcobalb \simeq \Rep(O_{\beta_1-2})$.
\end{remark}

Combining Propositions~\ref{full prop} and~\ref{prop_assume} results
the following:

\begin{theorem} \label{linear quot}
Assume $Z_{\alpha}=\beta_0+\beta_1 T$ with $\beta_1\ne 0$.
The functor $F_\alpha$ induces an equivalence of tensor categories
$\udcobal \simeq \uRep (O_{\beta_1-2})$, where
$\uRep(O_{\beta_1-2})$ is the gligible quotient of the Deligne category $\Rep(O_{\beta_1-2})$.
\end{theorem}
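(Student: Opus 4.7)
The plan is to deduce the theorem from Proposition~\ref{prop_assume} applied to the composition of $F_\alpha$ with the gligible quotient functor. The principal ingredients — essential surjectivity and fullness of $F_\alpha$ at the level of $\Rep(O_{\beta_1-2})$ — have already been established in Proposition~\ref{full prop}, so what remains is largely a routine passage to the gligible quotient.

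First, I would form the composed tensor functor
\[
\overline{F}_\alpha\ :\ \vcobal \lra \dcobal \xrightarrow{F_\alpha} \Rep(O_{\beta_1-2}) \stackrel{\pi}{\lra} \uRep(O_{\beta_1-2}),
\]
where $\pi$ is the gligible quotient functor. The target $\uRep(O_{\beta_1-2})$ is $\kk$-linear, additive, Karoubian, rigid symmetric monoidal, and has finite-dimensional hom spaces (all properties inherited from $\Rep(O_{\beta_1-2})$), and it is nondegenerate by the very definition of the gligible quotient. These are precisely the hypotheses imposed on the target in Proposition~\ref{prop_assume}, so it suffices to verify the two conditions (1) and (2) of that proposition for $\overline{F}_\alpha$.

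Fullness of $\overline{F}_\alpha$ (condition (2)) is immediate: $F_\alpha$ is full by Proposition~\ref{full prop}, and $\pi$ is surjective on hom spaces by construction, so the composition is full as well. For condition (1), I would argue that every object of $\uRep(O_{\beta_1-2})$, and therefore every indecomposable object, is a direct summand of $\overline{F}_\alpha(m)$ for some $m\in\Z_+$. Indeed, $\pi$ is the identity on objects, so every object of the target is $\pi(X)$ for some $X\in\Rep(O_{\beta_1-2})$; by the construction of the Deligne orthogonal category, $X$ is a direct summand of a tensor power $V^{\otimes m}$ of the standard generator $V$. Since $V$ is a direct summand of $\be^2\oplus V=F_\alpha(1)$, the object $V^{\otimes m}$ is a direct summand of $F_\alpha(m)=(\be^2\oplus V)^{\otimes m}$, and therefore $\pi(X)$ is a direct summand of $\overline{F}_\alpha(m)$.

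With both hypotheses checked, Proposition~\ref{prop_assume} yields the desired tensor equivalence $\udcobal\simeq \uRep(O_{\beta_1-2})$. The main obstacle in the argument is conceptual rather than technical: one must be careful about how indecomposables behave under the gligible quotient, but this is sidestepped by working with all objects at once (rather than attempting to lift indecomposable idempotents from $\uRep$ back to $\Rep$), using that $\pi$ is the identity on objects and that every object of $\Rep(O_{\beta_1-2})$ is generated by tensor powers of $V$.
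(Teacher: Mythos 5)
Your proof takes the same route as the paper, which simply cites Propositions~\ref{full prop} and~\ref{prop_assume}; you have correctly spelled out the verification of the two hypotheses of Proposition~\ref{prop_assume} for the composite functor $\pi\circ F_\alpha$. One small slip in the verification of condition (1): an arbitrary object $X$ of $\Rep(O_{\beta_1-2})$ is a direct summand of a \emph{finite direct sum} $\bigoplus_i V^{\otimes m_i}$, not in general of a single tensor power $V^{\otimes m}$ — for instance $\be\oplus V$ is a summand of no $V^{\otimes m}$, by parity of the Brauer diagrams. Since condition (1) only concerns indecomposable objects, the fix is immediate: if $Y\in\uRep(O_{\beta_1-2})$ is indecomposable and $Y=\pi(X)$, then (because $\pi$ kills only morphisms, not objects) $Y\cong\pi(X')$ for some indecomposable direct summand $X'$ of $X$, and by Krull--Schmidt such $X'$ is a summand of some single $V^{\otimes m}$, hence of $F_\alpha(m)=(\be^2\oplus V)^{\otimes m}$, and so $Y$ is a summand of $\overline F_\alpha(m)$. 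With that adjustment the argument is exactly the paper's.
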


Here is a special case. Assume that $\nchar \kk=0$. By a theorem
of H.~Wenzl (see e.g. \cite[Th\'eor\`em 9.7]{D1}) we have $\Rep(O_{\beta_1-2})=
\uRep(O_{\beta_1-2})$ when $\beta_1\not \in \Z$.
It follows from Proposition \ref{prop 7.3} that in this case $\dim A(n)=a_n$, and the set $\mathscr{A}(n)$ is linearly independent. We have the following implications for
the determinant $\det_n$ of the bilinear form on $A(n)$:

\begin{proposition}
The polynomial $\det_n$ is nonzero and depends only on
$\beta_1$ (and not on $\beta_0$); moreover its irreducible factors are of the form $\beta_1-s$, $s\in \Z$.
\end{proposition}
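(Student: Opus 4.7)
The plan is to establish the three assertions -- $\beta_0$-independence, nonvanishing, and linearity of the irreducible factors -- in sequence, using the scaling invariance of Section~\ref{subsec-scaling} for the first, and Theorem~\ref{linear quot} together with Wenzl's semisimplicity theorem for the other two. Throughout, each entry of the Gram matrix of $\mathscr{A}(n)$ equals $\beta_0^a\beta_1^b$ for some nonnegative integers $a,b$ (the number of sphere, respectively torus, components in the resulting closed surface), so $\det_n\in\Z[\beta_0,\beta_1]$.

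First, for the independence of $\beta_0$, I will use the scaling isomorphism of Section~\ref{subsec-scaling}. Choosing $\lambda=\mu^2\in(\kk^*)^2$, the map $[S]\mapsto\mu^{-\chi(S)}[S]$ identifies the theory $\alpha$ with the theory $\alpha'$ having $Z_{\alpha'}=\lambda^{-1}\beta_0+\beta_1T$, and intertwines the bilinear forms according to $([S],[S'])_{\alpha'}=\mu^{-\chi(S)-\chi(S')}([S],[S'])_{\alpha}$. Taking the determinant of the Gram matrix on $\mathscr{A}(n)$ and factoring $\mu^{-\chi(S)}$ out of each row and column yields
\[
\det_n(\lambda^{-1}\beta_0,\beta_1)=\lambda^{-\sigma}\det_n(\beta_0,\beta_1),\qquad \sigma:=\sum_{S\in\mathscr{A}(n)}\chi(S).
\]
The key combinatorial point is $\sigma=0$. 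Indeed, components in $\mathscr{A}(n)$ are of three types -- genus zero with one boundary circle, genus zero with two boundary circles, and genus one with one boundary circle -- with Euler characteristics $1,0,-1$. Grouping the elements of $\mathscr{A}(n)$ by their underlying partial matching on $\{1,\ldots,n\}$ with $k$ singletons and summing the contribution of the $2^k$ handle/no-handle labelings of the singletons produces $\sum_{j=0}^{k}\binom{k}{j}(k-2j)=0$. Hence $\sigma=0$, the identity above becomes $\det_n(\lambda^{-1}\beta_0,\beta_1)=\det_n(\beta_0,\beta_1)$ for all $\lambda\in(\kk^*)^2$, and since $(\kk^*)^2$ is infinite in characteristic zero, comparing coefficients forces every monomial of $\det_n$ to have $\beta_0$-degree zero, so $\det_n\in\Z[\beta_1]$.

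Second, for nonvanishing and the shape of the zero set, I invoke Theorem~\ref{linear quot}, which yields an equivalence $\udcobal\simeq\uRep(O_{\beta_1-2})$ sending the circle object to $A=\be^2\oplus V$, so that $A(n)\cong\Hom_{\uRep(O_{\beta_1-2})}(\be,A^{\otimes n})$. Wenzl's theorem (\cite[Th\'eor\`eme 9.7]{D1}) states that $\Rep(O_t)$ is semisimple for $t\notin\Z$, in which case $\uRep(O_t)=\Rep(O_t)$; combined with Proposition~\ref{prop 7.3}, this gives $\dim A(n)=a_n=|\mathscr{A}(n)|$ for $\beta_1\notin\Z$. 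By Proposition~\ref{span1} the set $\mathscr{A}(n)$ always spans $A(n)$, so for such $\beta_1$ it is a basis and the Gram determinant is nonzero. Hence $\det_n\in\Z[\beta_1]$ is a nonzero polynomial whose zeros in $\kk$ all lie in $\Z$.

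Finally, since the integer polynomial $\det_n$ is the same regardless of the characteristic-zero base field, I may view it in $\C[\beta_1]$, where the above reasoning shows every complex root lies in $\Z$. By the fundamental theorem of algebra, $\det_n=c\prod_i(\beta_1-s_i)^{m_i}$ with $c\in\Z$ and $s_i\in\Z$, and this factorization persists after base change to $\kk$, giving the claim on irreducible factors. The only genuine obstacle in this plan is the computation $\sigma=0$ (a routine binomial identity combined with the classification of components of $\mathscr{A}(n)$); the remaining steps are immediate consequences of Theorem~\ref{linear quot}, Proposition~\ref{prop 7.3}, and Wenzl's theorem established earlier in the paper.
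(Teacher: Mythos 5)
Your proof is correct, and the parts concerning nonvanishing and the integrality of the roots follow the paper's argument (Theorem~\ref{linear quot}, Proposition~\ref{prop 7.3}, Wenzl's theorem). Where you genuinely diverge is in establishing $\beta_0$-independence. The paper obtains all three conclusions at once from a single algebraic-geometric observation: since $\det_n\in\Z[\beta_0,\beta_1]$ vanishes only when $\beta_1\in\Z$, its zero locus is contained in the union of lines $\{\beta_1=s\}$, $s\in\Z$; each irreducible component of the zero locus is an irreducible plane curve contained in that union and hence equals one of those lines, so every irreducible factor is $\beta_1-s$ and in particular $\det_n$ is $\beta_0$-free. You instead derive $\beta_0$-independence directly from the rescaling isomorphism of Section~\ref{subsec-scaling}, combined with the combinatorial fact $\sigma=\sum_{S\in\mathscr{A}(n)}\chi(S)=0$. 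This is a nice structural observation: because each nonzero Gram entry $([S_1],[S_2])$ has $\beta_0$-degree $\tfrac12(\chi(S_1)+\chi(S_2))$, the determinant is $\beta_0$-homogeneous of total degree $\sigma$, which your binomial identity $\sum_j\binom{k}{j}(k-2j)=0$ shows vanishes; so $\det_n\in\Z[\beta_1]$ holds unconditionally, before one even invokes the representation-theoretic input. The paper's route is shorter and folds all conclusions into one step; yours is a bit longer but makes the $\beta_0$-independence transparent at the level of the Gram matrix itself, and does not need the ``elementary algebraic geometry'' appealed to in the paper. Both are valid.
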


\begin{proof} It is clear that $\det_n$
is a polynomial in variables $\beta_0$ and
$\beta_1$ with integer coefficients. As explained above this polynomial can vanish only when $\beta_1\in \Z$, so that $\det_n$ does not depend on $\beta_0$,  by elementary algebraic geometry.
\end{proof}

In the case $\nchar \kk=0$ and $t\in \Z$, the gligible quotients $\uRep(O_{t})$ are computed in \cite[Th\'eor\`em 9.6]{D1}.
Recall that
$$\uRep(O_{t})\cong\Rep(G,\varepsilon),$$
where $G$ is one of the super groups $O(n)$ (if $t=n\ge 0$), $Sp(2m)$
(if $t=-2m$ is negative and even), $OSp(1,2m)$ (if $t=1-2m$ is negative and odd)
and $\varepsilon \in G$ is a suitable involution. Thus we get the following
examples illustrating Theorem \ref{linear quot}:

\begin{example}\label{linear Z example}
($\nchar \kk=0$)

(1) Assume $Z_\alpha =\beta_0+2T$. Then $\udcobal$ is
the category $Vec$ and the circle object corresponds to the Frobenius algebra
$\kk [x]/(x^2)$ with $\epsilon(1)=\beta_0$ and $\epsilon(x)=1$.
Note that in this case $\dim A(n)=2^n$.

(2) Assume $Z_\alpha =\beta_0+3T$. Then $\udcobal$ is
the category $\Rep(\Z/2)$ and the circle object corresponds to the Frobenius algebra
$A=\kk [x]/(x^3)$ with $\epsilon(1)=\beta_0$, $\epsilon(x)=0$, $\epsilon(x^2)=1$,
and the group $\Z/2$ acting on $A$ via $x\mapsto -x$.
Thus the character of the $\Z/2-$representation $A$ takes values 3 and 1 on the elements $0, 1\in \Z/2$, and $\dim A(n)=\frac{3^n+1}2$.

(3) Assume $Z_\alpha =\beta_0-2T$. Then $\udcobal$ is
the category $\Rep(Sp(4))$ (with the modified commutativity constraint), and the circle object corresponds to the Frobenius algebra
$H^\ast(\Sigma_2,\kk)$, where $\Sigma_2$ is a oriented closed connected surface of genus two. Here $Sp(4)$ acts trivially on $H^{even}(\Sigma_2,\kk)$
and via the natural representation on $H^1(\Sigma_2,\kk)$. The commutativity
constraint in $\Rep(Sp(4),\varepsilon)$ is modified in a way making the natural
representation into an odd vector space, so the algebra $H^\ast(\Sigma_2,\kk)$ is commutative in the category $\Rep(Sp(4),\varepsilon)$.
\end{example}

We will see later (Proposition \ref{poly lead prop}) that the leading coefficient of the polynomial $\det_n$ is $\pm 1$. It follows
that $\det_n$ is nonzero even if $\nchar \kk >0$; moreover its
roots lie in the prime subfield of $\kk$. Thus there is
an equivalence $\udcobal \cong \Rep(O_{\beta_1-2})$ provided that $\beta_1$
is not an element of the prime subfield.

\begin{corollary}
Assume $\nchar \kk>0$ and $t$ is not in the prime subfield of $\kk$. Then the category $\Rep(O_t)$ is non-degenerate (i.e.
has no nonzero negligible morphisms) and non-semisimple.
\end{corollary}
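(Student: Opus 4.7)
The plan is to deduce both claims from the equivalence
\[
\udcobal \;\simeq\; \Rep(O_{t})
\]
obtained in the paragraph immediately preceding the corollary, applied to the linear series $\alpha=(\beta_0,\beta_1,0,0,\dots)$ with $\beta_1 := t+2$ and an arbitrary $\beta_0\in\kk$. The hypothesis $t\notin \mathbb{F}_p$ implies $\beta_1 \notin \mathbb{F}_p$ (since $2 \in \mathbb{F}_p$), so Proposition \ref{poly lead prop} applies to give $\det_n\neq 0$ for every $n$. This upgrades Theorem \ref{linear quot} from $\uRep(O_t)$ to $\Rep(O_t)$ itself, producing the equivalence above.

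Non-degeneracy is then immediate: by construction $\udcobal$ is a gligible quotient, hence has no nonzero negligible morphisms, and this property is transported across the tensor equivalence $F_\alpha$ to $\Rep(O_t)$.

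For non-semisimplicity I would argue by contradiction using the abelian realization criteria of Section \ref{sec_abel_real}. If $\Rep(O_t)$ were semisimple, the equivalence would make $\udcobal$ semisimple, and Theorem \ref{thm_semi_real} would furnish an abelian realization of the sequence $\alpha$. Theorem \ref{necess_thm}(4) then forces every residue of the differential
\[
Z_{\alpha}(T)\,\frac{dT}{T^{2}} \;=\; \Bigl(\frac{\beta_0}{T^{2}}+\frac{\beta_1}{T}\Bigr)\,dT
\]
over $\overline{\kk}$ to lie in $\mathbb{F}_p$. A direct computation (or the Residue Theorem, since these are the only poles) yields residues $\beta_1$ at $T=0$ and $-\beta_1$ at $T=\infty$; but $\beta_1 = t+2 \notin \mathbb{F}_p$ by hypothesis, contradicting this.

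The only nontrivial input is the promotion of Theorem \ref{linear quot} to an equivalence with $\Rep(O_t)$ itself (rather than its gligible quotient) in positive characteristic, which is exactly what Proposition \ref{poly lead prop} supplies via nonvanishing of $\det_n$. Once that is in hand, both statements reduce to invoking the gligible quotient construction and the residue criterion, with no further work required.
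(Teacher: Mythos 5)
Your proof is correct and follows the paper's implicit argument: the paragraph preceding the corollary already establishes $\udcobal \cong \Rep(O_{t})$ for $\beta_1=t+2$ outside the prime subfield (via Proposition~\ref{poly lead prop} and the integrality of the roots of $\det_n$), non-degeneracy is then inherited from the gligible quotient $\udcobal$, and non-semisimplicity is exactly what the abelian-realization criteria of Section~\ref{sec_abel_real} rule out, since the residue of $Z_\alpha(T)\,dT/T^2$ at $T=0$ equals $\beta_1\notin\mathbb{F}_p$, violating condition (4) of Theorem~\ref{necess_thm}. Your write-up spells out the residue computation that the paper leaves implicit, but the route is the same.
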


In the case $\nchar \kk>0$ and $\beta_1$ is in the prime subfield
we expect that the categories $\udcobal$ are equivalent to the
fusion categories associated with super groups $(G,\varepsilon)$
as above (i.e. gligible quotients of suitable tilting modules
categories). In particular, the categories $\udcobal$ should
have finitely many simple objects up to isomorphism.

We discuss now the multiplicities of the roots of polynomials
$\det_n$. Here are some patterns that can be observed in
Table \ref{fig_table_linear}:
\begin{itemize}
    \item The differences $\dim A(n)-u_n$, where  $u_n$ is the exponent of $\beta_1-2$, are given by powers of two: $(2,4,8,16,32,64)$.
    \item The differences $\dim A(n) - w_n$, where $w_n$ is the exponent of $\beta_1-3$, are given by $(2,5,14,41,122,365)$. These exponents match the sequence $(3^n+1)/2.$
\end{itemize}

Comparing this patterns with Example \ref{linear Z example} (1) and (2) we arrive at the following

\begin{conjecture}\label{lin conj}
Let $s\ne 0$ be an integer.
The exponent of the factor $\beta_1-s$ in the
polynomial $\det_n$ is given by
$$a_n-\dim A_{\alpha(s)} (n)$$
where $\alpha(s)=(\beta_0,s,0,0,\dots)$, so that the generating function $Z_{\alpha(s)}(T)=\beta_0+sT$ (for
arbitrary $\beta_0$ and $\nchar \kk=0$).
\end{conjecture}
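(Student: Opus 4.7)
Denote by $e_s$ the multiplicity of $\beta_1 - s$ in $\det_n$, viewed as a polynomial in $\beta_1$ alone (as justified by the preceding proposition). The plan is to establish the two inequalities $e_s \ge a_n - \dim A_{\alpha(s)}(n)$ and $e_s \le a_n - \dim A_{\alpha(s)}(n)$ separately.

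For the lower bound, observe that the derivations of the relations in Figures~\ref{fig_6_3} and~\ref{fig_6_4}, used to reduce an arbitrary cobordism to a linear combination of elements of $\mathscr{A}(n)$, require only the invertibility of $\beta_1$; they remain valid upon specialization $\beta_1 \mapsto s$ for any integer $s \ne 0$. Hence $\mathscr{A}(n)$ still spans $A_{\alpha(s)}(n)$, the rank of the specialized Gram matrix $G_n(s)$ equals $\dim A_{\alpha(s)}(n)$, and the standard fact that the multiplicity of a linear factor in a determinant is at least the corank of its specialization yields
\[
e_s \ \ge\ a_n - \rk\, G_n(s) \ =\ a_n - \dim A_{\alpha(s)}(n).
\]

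The upper bound is the heart of the matter. One must show that, over the localization $\kk[\beta_1]_{(\beta_1 - s)}$, every non-unit elementary divisor of $G_n$ equals $(\beta_1 - s)$ to the first power; equivalently, the Smith normal form of $G_n$ develops no higher-order degeneration at $\beta_1 = s$. Via Theorem~\ref{linear quot}, the kernel of $G_n(s)$ is identified with the space of negligible morphisms inside $\Hom_{\Rep(O_{s-2})}(\be, A^{\otimes n})$, where $A = \be^2 \oplus V$. The first strategy is direct: for each negligible vector, construct an explicit lift to $\mathscr{A}(n) \otimes_{\kk} \kk[\beta_1]_{(\beta_1 - s)}$ whose pairing with every element of $\mathscr{A}(n)$ vanishes to order exactly one at $\beta_1 = s$, using the known block decomposition and Jordan--H\"older data of indecomposable objects in the Deligne orthogonal category at integer parameter.

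A cleaner alternative would be a global degree count: compute $\deg_{\beta_1} \det_n$ directly (each entry of $G_n$ is a monomial $\beta_0^p \beta_1^q$ where $q$ is the number of genus-one components of the corresponding closed surface, or $0$ when some component has genus $\ge 2$, so the degree admits a combinatorial bound via $\mathscr{A}(n)$), then compute the right-hand total $\sum_{s \ne 0,\, s \in \Z}\!\bigl(a_n - \dim A_{\alpha(s)}(n)\bigr)$ using \cite[Th\'eor\`eme 9.6]{D1} to identify each gligible quotient $\uRep(O_{s-2})$ with the representation category of $O(n)$, $Sp(2m)$ or $OSp(1, 2m)$, and compare. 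Combined with the lower bound at each $s$, equality of the two totals would force $e_s$ to attain its lower bound at every integer $s$. The principal obstacle common to both strategies is precisely this first-order-vanishing statement: it encodes a flatness of the family of Deligne orthogonal categories over $\mathbb{A}^1_{\beta_1}$, and while plausible, appears to require a nontrivial input beyond the mere existence of the equivalences provided by Theorem~\ref{linear quot}.
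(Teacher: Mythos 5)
This statement is a conjecture in the paper; the authors do not prove it. What the paper does establish (in the remark immediately following the conjecture) is the lower bound: since $\mathscr{A}(n)$ spans $A_{\alpha(s)}(n)$ for any $s\ne 0$, the Gram matrix $G_n$ specialized at $\beta_1=s$ has rank $\dim A_{\alpha(s)}(n)$, and the standard argument (citing Kac--Raina) gives $e_s\ge a_n-\dim A_{\alpha(s)}(n)$. Your lower-bound paragraph is precisely this argument, and it is correct.

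Your discussion of the upper bound is honest --- you flag it as ``the heart of the matter'' and end by saying it ``appears to require a nontrivial input'' --- which is the right assessment, since the paper itself leaves the matching upper bound open. That said, two caveats on your proposed strategies are worth noting. First, the ``direct'' strategy of constructing first-order lifts of negligible vectors using the Jordan--H\"older structure of indecomposables in $\Rep(O_{s-2})$ is essentially a restatement of what would need to be proved (a flatness/no-higher-degeneration statement over $\mathbb{A}^1_{\beta_1}$), not a reduction of it to something known; the equivalence $\udcobal\simeq\uRep(O_{\beta_1-2})$ from Theorem~\ref{linear quot} is a statement at each fixed parameter, and does not by itself control how the null space varies in a neighbourhood of $\beta_1=s$. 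Second, the global degree-counting strategy as you stated it is missing the contribution of the factor $\beta_1^{\,e_0}$: the tables show $e_0>0$, and the conjecture under discussion says nothing about $e_0$. One would need to also assume Conjecture~\ref{lin conj2} (which predicts $e_0=2na_{n-1}+a_n-c_{n+1}$) before the identity $\deg_{\beta_1}\det_n = d_n$ from Corollary~\ref{lin deg cor} can close the gap; and the corank lower bound at $s=0$ is not even available in the same form, since the reductions of Figures~\ref{fig_6_3} and~\ref{fig_6_4} use $\beta_1^{-1}$. So the degree count serves only as a consistency check of both conjectures jointly, which is exactly the role the paper assigns it.

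In summary: what you actually prove (the inequality $e_s\ge a_n-\dim A_{\alpha(s)}(n)$) coincides with the paper's remark and is correct; the upper bound remains open, as you correctly suspect, and the two routes you sketch toward it are not yet proofs.
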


\begin{remark}
By definition, the bilinear form on the space $\kk \mathscr{A}(n)$ has a null space of
dimension $a_n-\dim A_{\alpha(s)} (n)$. Thus, a standard
argument (see e.g. \cite[Lemma 8.4]{KacR}) implies that the exponent
of the factor $\beta_1-s$ in $\det_n$ is greater or equal to
$a_n-\dim A_{\alpha(s)} (n)$.
\end{remark}

Note that, according to Theorem \ref{linear quot}, the dimensions
$\dim A_{\alpha(s)} (n)$ are given by the dimensions of invariants
of the (super) groups $G=O(k), Sp(2k), OSp(1|2k)$ in the representation
$(\be^2\oplus V)^{\otimes n}$ where $V$ is the defining representation of $G$ and $s=k+2,$ $2-2k,$ $3-2k$, respectively.
We tabulated the exponents predicted by Conjecture \ref{lin conj} in Table \ref{pred table}. Here are some observations
about Tables \ref{pred table} and \ref{fig_table_linear}:

\begin{itemize}
    \item The exponents for $\beta_1-1$ and $\beta_1-5$
 coincide. The same applies to the exponents for $\beta_1+1$ and $\beta_1-7$ and to the exponents for $\beta_1+3$ and $\beta_1-9$ etc. This is explained by the coincidence of the multiplicities for tensor products for $OSp(1,2k)$ and
$O(2k+1)$, see~\cite{RS}.
\item The irreducible factors of $\det_{2n}$ and $\det_{2n+1}$
coincide for any $n\ge 0$.
\item The irreducible factors which appear in $\det_{2n}$ and do not appear at $\det_{2n-1}$ are

$\beta_1-n-1$ (for $n\ge 1$), $\beta_1+2n-4$ (for $n\ge 3$),
and $\beta_1+n-5$ (for even $n\ge 4$).
\end{itemize}

Conjecture \ref{lin conj} does not predict the exponent of the factor
$\beta_1$ in $\det_n$. We propose the following

\begin{conjecture}\label{lin conj2}
The exponent of the factor $\beta_1$ in $\det_n$ is given by
$$2na_{n-1}+a_n-c_{n+1}$$
where $c_n$ is the Catalan number.
\end{conjecture}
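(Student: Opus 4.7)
The plan is to split $v_0(\det_n)$ into a combinatorial factor of $2na_{n-1}$, obtained by rescaling dotted cobordisms, and a representation-theoretic factor of $a_n-c_{n+1}$, obtained as the $(t+2)$-adic order of the orthogonal Brauer Gram determinant at the special value $t=-2=(\beta_1-2)|_{\beta_1=0}$. Stratify $\mathscr{A}(n) = \bigsqcup_k \mathscr{A}(n)_k$ by the number $k$ of dotted disks, and let $V_k := \kk[\beta_0,\beta_1]\mathscr{A}(n)_k$. Counting pairs (surface, marked dotted singleton) in two ways gives
\[
\sum_k k\,|\mathscr{A}(n)_k| \;=\; n\,a_{n-1},
\]
since such a pair amounts to a choice of $i\in[n]$ together with a structure in $\mathscr{A}(n-1)$ on $[n]\setminus\{i\}$. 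For $S\in\mathscr{A}(n)_k,\ T\in\mathscr{A}(n)_l$, the closed surface $\overline T\cup_\partial S$ carries $k+l$ dots distributed over its components; since $\alpha_i=0$ for $i\ge 2$, nonvanishing of $(S,T)$ forces each dot to sit alone on an otherwise planar component, each contributing exactly one factor of $\beta_1$, so $(S,T)\in \beta_1^{k+l}\kk[\beta_0,\beta_1]$. The rescaled lattice $\widetilde V := \bigoplus_k \beta_1^{-k}V_k$ therefore carries a $\kk[\beta_0,\beta_1]$-valued form, and the diagonal change of basis yields
\[
v_0(\det_n) \;=\; 2na_{n-1} + v_{\beta_1}(\det\widetilde G_n).
\]

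To compute $v_{\beta_1}(\det\widetilde G_n)$, apply the full essentially-surjective functor $F_\alpha\colon\vcobal\to\Rep(O_{\beta_1-2})$ of Proposition~\ref{full prop}, with $A=\be^{2}\oplus V$ and $t=\beta_1-2$. A direct computation in the Frobenius algebra of Example~\ref{ortho example} shows that the handle endomorphism equals $x=(t+2)\,y$ for $y\in A_2$, so a dotted disk at position $i$ is sent to $\beta_1\cdot(\iota_2)_i$, whereas undotted disks and tubes are sent to the $\beta_1$-independent morphisms $\iota$ and $\mathrm{coev}_A = \iota\otimes\iota_2 + \iota_2\otimes\iota + \mathrm{coev}_V$. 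Writing the $a_n\times a_n$ transition matrix from $\mathscr{A}(n)$ to a natural weight basis of $\Hom_{\Rep(O_t)}(\be,A^{\otimes n})$ as $M=\mathrm{diag}(\beta_1^{k_i})\widetilde M$ with $\widetilde M\in\mathrm{Mat}_{a_n}(\{0,1\})$ independent of $\beta_1$, the identity $G_n = MH(t)M^T$ yields $\det\widetilde G_n = (\det\widetilde M)^2\det H(t)$, where $H(t)$ is the trace-form Gram matrix on $\Hom(\be,A^{\otimes n})$. Both $\mathscr{A}(n)$ and the weight basis have cardinality $a_n$ and $F_\alpha$ is generically faithful, so $\det\widetilde M\in\kk^\ast$ is a nonzero constant and contributes nothing to $v_{\beta_1}$.

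Decomposing $\Hom(\be,A^{\otimes n})$ orthogonally by the subset $S\subseteq[n]$ of $V$-positions (with $|S|$ even) and the $\{A_0,A_2\}$-labeling of the complement yields a block factorization
\[
\det H(t) \;=\; \pm\!\!\prod_{\substack{0\le k\le n\\ k\text{ even}}}\!\!\bigl(\det B_k(t)\bigr)^{\binom{n}{k}\,2^{n-k}}\!,
\]
where $B_k(t)$ is the orthogonal Brauer Gram matrix on the $(k-1)!!$ perfect matchings of $[k]$. The equivalence $\uRep(O_{-2})\simeq\Rep(SL_2,\varepsilon)$ of Example~\ref{linear Z example}(3) identifies $\dim\Hom_{\uRep(O_{-2})}(\be,V^{\otimes 2m}) = c_m$ (invariants of the $2m$-th tensor power of the fundamental $SL_2$-representation), so the rank drop of $B_{2m}(t)$ at $t=-2$ equals $(2m-1)!!-c_m$. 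Granting the tameness claim that this rank drop coincides with the $(t+2)$-adic valuation of $\det B_{2m}(t)$, and invoking the Catalan identity
\[
c_{n+1} \;=\; \binom{2n}{n}-\binom{2n}{n-2} \;=\; \sum_{m\ge 0}\binom{n}{2m}2^{n-2m}c_m
\]
(which equals $\dim\Hom_{SL_2}\!\bigl(\be,(\be^{2}\oplus V)^{\otimes n}\bigr)$, computed from the weight expansion of the character $(q^{1/2}+q^{-1/2})^{2n}$ of the tensor power), one obtains
\[
v_{\beta_1}(\det\widetilde G_n) \;=\; \sum_m \binom{n}{2m}2^{n-2m}\bigl((2m-1)!!-c_m\bigr) \;=\; a_n - c_{n+1},
\]
and combining with the first step yields $v_0(\det_n) = 2na_{n-1} + a_n - c_{n+1}$, as conjectured.

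The main obstacle is the tameness claim $\mathrm{mult}_{t=-2}\bigl(\det B_{2m}(t)\bigr) = (2m-1)!!-c_m$. The inequality $\ge$ is automatic from the rank drop of $B_{2m}(-2)$, but equality requires ruling out invariant factors of $(t+2)$-adic valuation $\ge 2$ in the Smith normal form of $B_{2m}(t)$ over $\kk[t]_{(t+2)}$. This should follow from the cellular (Graham--Lehrer) structure of the Brauer algebra, in which $\det B_k(t)$ factors as a product of Specht-cell determinants, combined with the Hanlon--Wales formula showing that $(t+2)$ appears with multiplicity one in each relevant cell factor at $t=-2$; analogous tameness statements are known for the Temperley--Lieb and cyclotomic Hecke algebras at roots of unity, and the same cellular methods should transfer here. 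A secondary technical point is the non-vanishing of the constant $\det\widetilde M\in\Z$ (needed in positive characteristic), which is accessible by an explicit triangularization of $\widetilde M$ with respect to the natural partial order on $\mathscr{A}(n)$ refining the dot count.
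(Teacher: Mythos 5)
This statement is a conjecture in the paper; the paper offers no proof, only the numerical evidence of Tables~\ref{fig_table_linear} and~\ref{pred table} and the observation that $na_{n-1}$ counts genus-one components of $\mathscr{A}(n)$. Your proposal is an original attempt and its architecture is correct up to one essential unproven input. The diagonal-rescaling step works: the pairing $(S,T)$ is divisible by $\beta_1^{k_S+k_T}$ (any two dots landing on the same component of the glued surface force genus $\ge 2$, killing the evaluation), and the bijection between pairs (surface, marked dot) and $[n]\times\mathscr{A}(n-1)$ gives $\sum_k k|\mathscr{A}(n)_k|=na_{n-1}$. The reduction $\widetilde G_n=\widetilde M^T H(t)\widetilde M$ with $\widetilde M$ independent of $\beta_1$, the Kronecker-block factorization $\det H(t)=\pm\prod_{k\text{ even}}(\det B_k(t))^{\binom{n}{k}2^{n-k}}$, the equality $\dim\Hom_{\uRep(O_{-2})}(\be,V^{\otimes 2m})=c_m$ via $\uRep(O_{-2})\simeq\Rep(Sp(2),\varepsilon)$, and the identity $\sum_m\binom{n}{2m}2^{n-2m}c_m=c_{n+1}$ all check out. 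Two small slips: $\widetilde M$ is not $\{0,1\}$-valued, since $\mathrm{coev}_A=\iota\otimes\iota_2+\iota_2\otimes\iota-\alpha_0\,\iota_2\otimes\iota_2+\mathrm{coev}_V$ carries a coefficient $-\beta_0$ that you dropped (though only $\beta_1$-independence is needed); and $\det\widetilde M=\pm1$ follows more cleanly by comparing leading terms against Proposition~\ref{poly lead prop} and $\det B_{2m}(t)=t^{m(2m-1)!!}+\cdots$, using that $\det H(t)$ and $\det_n$ are both $\beta_0$-independent.

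The essential gap, as you flag, is the tameness claim $v_{t+2}(\det B_{2m}(t))=(2m-1)!!-c_m$. The rank drop of $B_{2m}(-2)$ gives only the inequality $\ge$; equality requires that every invariant factor of $B_{2m}(t)$ over $\kk[t]_{(t+2)}$ have $(t+2)$-adic valuation at most one, and nothing in the proposal establishes this. The route you sketch is also not yet in working order: $B_{2m}(t)$ is the Gram form on the single cell module $\Delta(\emptyset)$ of the Brauer category, not on the regular representation, so a cellular factorization of $\det B_{2m}(t)$ into per-partition factors (and the resulting multiplicity-one claim at $t=-2$) must be derived, for instance by restriction/induction along the Brauer tower in the spirit of the Temperley--Lieb meander determinant, rather than simply cited. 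That is a substantive piece of Brauer-algebra representation theory, and until it is supplied the argument proves only $v_{\beta_1}(\det_n)\ge 2na_{n-1}+a_n-c_{n+1}$, not the conjectured equality.
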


It would be interesting to find a categorical interpretation of this conjecture.
The numerical data for it are given in the last row
of Table \ref{pred table}. Term $na_{n-1}$ in the above conjecture is the total number of connected components of genus one in the set of cobordisms $\mathscr{A}(n)$.

\vspace{0.1in}

\begin{tiny}
\begin{table}[!htb]
\begin{tabular}{|c|c|c|c|c|c|c|c|c||c|c|c|c|c|}
\hline
factor & Group  & $A(1)$ & $A(2)$ & $A(3)$ & $A(4)$ & $A(5)$ & $A(6)$ & $A(7)$ & $A(8)$ & $A(9)$ & $A(10)$ & $A(11)$ & $A(12)$ \\
\hline

$\beta_1-2$ & $O(0)$ & 0& 1& 6& 27& 110& 435& 1722& 6937& 28674& 122085& 536030& 2426259 \\
$\beta_1-3$ & $O(1)$ & 0& 0& 0& 2& 20& 134& 756& 3912& 19344& 93584& 449504& 2164634 \\

$\beta_1-4$ & $O(2)$ &   0 &  0 &  0 &  0 &  0 &  5 &  70 &  630 &  4620 &  30219 &  184338 &  1076229 \\

$\beta_1-5$ & $O(3)$ &    0 &  0 &  0 &  0 &  0 &  0 &  0 &  14 &  252 &  2862 &  26004 &  207350\\

$\beta_1-6$ & $O(4)$ &   0& 0& 0& 0& 0& 0& 0& 0& 0& 42& 924& 12705\\

$\beta_1-7$ & $O(5)$  &  0 &  0 &  0 &  0 &  0 &  0 &  0 &  0 &  0 &  0 &  0 &  132\\

$\beta_1-8$ & $O(6)$ & 0& 0& 0& 0& 0& 0& 0& 0& 0& 0& 0& 0 \\
\hline
     $\beta_1+2$ & $Sp(4)$   &  0 &  0 &  0 &  0 &  0 &  1 &  14 &  133 &  1050 &  7491 &  50226 &  323796 \\
     $\beta_1+4$ & $Sp(6)$    &  0 &  0 &  0 &  0 &  0 &  0 &  0 &  1 &  18 &  216 &  2112 &  18370 \\
     $\beta_1+6$ & $Sp(8)$  &  0 &  0 &  0 &  0 &  0 &  0 &  0 &  0 &  0 &  1 &  22 &  319 \\

    $\beta_1+8$ & $Sp(10)$  &  0 &  0 &  0 &  0 &  0 &  0 &  0 &  0 &  0 &  0 &  0 &  1 \\
     \hline
     $\beta_1-1$ & $OSp(1,2)$   &  0 &  0 &  0 &  0 &  0 &  0 &  0 &  14 &  252 &  2862 &  26004 &  207350\\
     $\beta_1+1$ & $OSp(1,4)$    &  0 &  0 &  0 &  0 &  0 &  0 &  0 &  0 &  0 &  0 &  0 &  132\\
     $\beta_1+3$ & $OSp(1,6)$ &   0 &  0 &  0 &  0 &  0 &  0 &  0 &  0 &  0 &  0 &  0 &  0 \\
     \hline
     $\beta_1$ & ?  &  2 &  8 &  30 &  113 &  440 &  1774 &  7406 &  31931 &  141864 &  648043 &  3038464 &  14601327 \\
     \hline
\end{tabular}
\caption{\label{pred table}Prediction for the exponents of linear factors. Column for $A(0)$ is not shown, it contains zeros only. For a naive interpolation, the question mark in the bottom row on the left may be replaced by $Sp(2)$, but  Conjecture~\ref{lin conj2} gives more complicated rules for the entries of this row than Conjecture~\ref{lin conj} (if extended to $s=0$) that should governs the other rows of the table. }
\end{table}
\end{tiny}

Table \ref{fig_table_linear} suggests that the leading coefficient
of the polynomial $\det_n$ is $(-1)^n$. Using this together with
Conjectures \ref{lin conj} and \ref{lin conj2} we can predict
the polynomials $\det_n$. For example, the prediction for $\det_8$ is
$$(\beta_1-5)^{14}(\beta_1-4)^{630}(\beta_1-3)^{3912}(\beta_1-2)^{6937}(\beta_1-1)^{14}\beta_1^{31931}(\beta_1+2)^{133}(\beta_1+4).$$
One verifies that the degree of this polynomial agrees with
Corollary \ref{lin deg cor} below.

\vspace{0.1in}

\vspace{0.1in}

\subsection{Polynomials of degree two and three}
\label{subsec_deg_2_3}
Consider a polynomial generating function of degree  two,
\begin{equation}
Z(T) = \beta_0  + \beta_1 T + \beta_2 T^2
\end{equation}
The dimension of $A(n)$ is bounded from above by $B_n^{(3)}$, since all surfaces with a component of genera at least 3 are in the kernel of the bilinear form.
However the computation shows that the actual dimension is strictly less than $B_n^{(3)}$ starting from $n=2$, see the data in Table~\ref{fig_table_T2} for the quadratic $Z(T)$.

\begin{table}
\begin{tabular}{|c|c|c|c|}
\hline
  $n$   & $B_n^{(3)}$ & $\dim$ & $\det$ \\
  \hline
  0  &  1  & 1 & $1$\\
  1   &  3 & 3& $-\beta_2^{3}$\\
  2 & 12 & 11& $-\beta_2^{20}$ \\
  3 & 57 & 46 & $\beta_2^{118}$\\
  4 & 309 & 213 & $\beta_2^{696}$ \\
  5 & 1866 & 1073 & $-\beta_2^{4225}$\\
  \hline
\end{tabular}
\caption{\label{fig_table_T2} Computation of  dimensions and the determinant for $Z(T)=\beta_0 + \beta_1 T + \beta_2 T^2$.}
\end{table}

Table~\ref{fig_table_T3} shows the determinants for a generic polynomial of degree three.

\begin{table}
\begin{tabular}{|c|c|c|c|}
\hline
  $n$   & $B_n^{(4)}$ & $\dim$ & $\det$\\
  \hline
  0  &  1  & 1 & 1 \\
  1   &  4 & 4 & $\beta_3^{4}$\\
  2 & 20 & 19 & $-\beta_3^{35}$\\
  3 & 116 & 102 & $-\beta_3^{266}$\\
  4 & 756 & 604 & $\beta_3^{2007}$\\
  5 & 5428 & 3884 & $\beta_3^{15540}$\\
  \hline
\end{tabular}
\caption{\label{fig_table_T3} Computation of  dimensions and the determinant for $Z(T)=\beta_0 + \beta_1 T + \beta_2 T^2+\beta_3 T^3$.}
\end{table}

\vspace{0.1in}

\subsection{Polynomials of arbitrary degree}
\label{subsec_any_deg}

Now consider the case of an arbitrary polynomial generating function:
\[
Z = \beta_0+\beta_1 T + ... + \beta_m T^m, \ \ m \geq 1.
\]
Let $\mathscr{A}^m(n)$ be the set of viewable cobordisms with $n$ boundary circles such that for each component $S$ of genus $g$ with $\ell$ boundary circles the following inequality holds:
\begin{equation}\label{eq_inequality}
g+\ell \leq m+1.
\end{equation}

Note that $\mathscr{A}^1(n)$ is precisely the set $\mathscr{A}(n)$ from Proposition~\ref{span1}.
Let us consider the matrix of the bilinear form on the space
$A(n)$ computed at the elements of the set $\mathscr{A}^m(n)$,
and let $\det_n^{(m)}$ denote its determinant.
It is clear that $\det_n^{(m)}$ is a polynomial in variables
$\beta_0, \beta_1, \ldots, \beta_m$. In the next Proposition
we are going to compute the leading term of this polynomial.
Let $d_n^{(m)}$ be the total number of connected components of all elements of
the set $\mathscr{A}^m(n)$.

\begin{proposition}\label{poly lead prop}
The polynomial $\det_n^{(m)}$ is of the form
\[
\pm (\beta_m)^{d_n^{(m)}}+\mbox{lower terms}
\]
where each lower term monomial has either
less than $d_n^{(m)}$ factors or precisely
$d_n^{(m)}$ factors but involves some
$\beta_i$ with $i<m$.
\end{proposition}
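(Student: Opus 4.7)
The plan is to exhibit a canonical involution $\tau\colon \mathscr{A}^m(n)\to \mathscr{A}^m(n)$ and show that the term $\sigma=\tau$ in the Leibniz expansion of $\det_n^{(m)}$ is the unique one producing a pure $\beta_m$-monomial of total degree $d_n^{(m)}$. For $S\in \mathscr{A}^m(n)$, define $\tau(S)$ by leaving the distribution of the $n$ boundary circles among the connected components unchanged, but replacing each component of genus $g$ with $\ell$ boundary circles by a component with the same boundary support and genus $g'=m+1-g-\ell$. The bound $g+\ell\le m+1$ gives $g'\ge 0$, while $g'+\ell = m+1-g\le m+1$ shows $\tau(S)\in \mathscr{A}^m(n)$; the map is clearly an involution.

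A direct computation yields $M_{S,\tau(S)}=\beta_m^{|S|}$ exactly, where $|S|$ is the number of components of $S$. Indeed, $S$ and $\tau(S)$ induce the same set partition $\pi_S$ of $\{1,\dots,n\}$, so $\overline{S}\cup\tau(S)$ splits into precisely $|S|$ closed components, each assembled from a pair with genera $g,g'$ sharing $\ell$ circles, hence of genus $g+g'+\ell-1 = m$. Consequently the permutation $\tau$ contributes $\mathrm{sgn}(\tau)\,\beta_m^{d_n^{(m)}}$ to $\det_n^{(m)}$.

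For any other permutation $\sigma$, I would bound $\prod_S M_{S,\sigma(S)}$ by working in the partition lattice. The key combinatorial identity is $|\overline{S}\cup S'| = |\pi_S\vee \pi_{S'}|$ for any $S,S'\in\mathscr{A}^m(n)$, where $\vee$ is the join in the partition lattice; the viewability hypothesis ensures every component of the gluing meets the common boundary and hence corresponds to a single block of the join. Thus the total $\beta$-degree of $M_{S,\sigma(S)}$ equals $|\pi_S\vee \pi_{\sigma(S)}|\le |S|$, with equality iff $\pi_{\sigma(S)}$ refines $\pi_S$. Summing over $S$ and using $\sum_S|\sigma(S)|=\sum_S|S|$, the total degree of $\prod_S M_{S,\sigma(S)}$ is at most $d_n^{(m)}$, with equality iff $\pi_{\sigma(S)}=\pi_S$ for every $S$.

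Finally, assume $\sigma$ is partition-preserving in this sense. Then for each component $C$ of $S$ with boundary support $B$ and genus $g$, the corresponding component $C'$ of $\sigma(S)$ with the same boundary $B$ and some genus $g'$ contributes the factor $\beta_{g+g'+|B|-1}$ to $M_{S,\sigma(S)}$ (with the convention $\beta_i=0$ for $i>m$); this factor equals $\beta_m$ iff $g' = m+1-g-|B|$, which is precisely the defining condition of $\tau$. Thus $\sigma=\tau$ is the unique permutation yielding a pure $\beta_m^{d_n^{(m)}}$ monomial: partition-preserving $\sigma\ne\tau$ give monomials of the same total degree $d_n^{(m)}$ but necessarily involving some $\beta_i$ with $i<m$, while non-partition-preserving $\sigma$ give monomials of total degree strictly less than $d_n^{(m)}$. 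This yields the claimed leading coefficient $\pm 1 = \mathrm{sgn}(\tau)$. The main technical input is the combinatorial identity $|\overline{S}\cup S'| = |\pi_S\vee \pi_{S'}|$ together with the extremal analysis in the partition lattice; once this is in place, the rest is a direct bookkeeping of genera.
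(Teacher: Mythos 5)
Your proof is correct and follows essentially the same route as the paper's: the same involution (called $\pi_0$ there), the same genus computation $g+g'+\ell-1=m$, and the same bound on the number of $\beta$-factors via comparison of the component partitions (your lattice formulation $|\overline{S}\cup S'|=|\pi_S\vee\pi_{S'}|$ is a cleaner statement of what the paper asserts more informally). The one small departure is how the refinement condition is upgraded to equality of partitions: you use the bijection-counting argument $\sum_S|\sigma(S)|=\sum_S|S|$, whereas the paper uses the finite order of the permutation $\pi$; both are valid and of comparable length.
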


\begin{proof}
The expansion of the determinant $\det_n^{(m)}$ is a sum
over all permutations $\pi$ of the set
$\mathscr{A}^m(n)$ of terms
\[
t_\pi=\pm \prod_{a\in \mathscr{A}^m(n)}b_{a,\pi(a)}
\]
where $b_{a,\pi(a)}$ is the pairing of $a$
and $\pi(a)$, hence some monomial in $\beta_i$'s. The number of factors in the monomial $b_{a,\pi(a)}$ is precisely the number of connected components of the surface
obtained from $a$ and $\pi(a)$ by gluing along the boundary. Thus it is clear that the number of factors is less or equal to the number of connected components of $a$. Moreover, we have equality only if the partition of the boundary circles determined by the connected components of $\pi(a)$ is a refinement of the partition determined by $a$.

Thus, the total number of factors
in $t_\pi$ is less or equal than the total
number of connected components of all elements $a\in \mathscr{A}^m(n)$, and
every monomial in the polynomial $\det_n^{(m)}$ has $\le d_n^{(m)}$ factors.
The term $t_\pi$ has precisely $d_n^{(m)}$ factors if and only if the permutation
$\pi$ has the following property:

(*) for any $a$ the partition of the boundary circles determined by the connected components of $\pi(a)$ is a refinement of the partition determined by $a$.

Note that
there exists $r>0$ such that $\pi^r(a)=a$. Thus the condition (*) is equivalent to the following property:

(**) for any $a$ the partition of the boundary circles determined by the connected components of $\pi(a)$ coincides the partition determined by $a$.

Now let $\pi_0$ be the following permutation:

$\pi_0(a)$ is obtained from $a$ by replacing each connected component of genus $g$ with $l$ boundary circles by the connected component of genus $g'=m+1-g-l$ with the same
boundary circles. This transformation preserves inequality (\ref{eq_inequality}) and defines an involution $\pi_0$ on $\mathscr{A}^m(n)$.

Then every connected component of the closed surface $\overline{a}\pi_0(a)$ given by gluing
$a$ and $\pi_0(a)$ along the boundary has genus $g+g'+l-1=m$, and  the term $t_{\pi_0}=\pm (\beta_m)^{d_n^{(m)}}$. It is also
clear that for any other $\pi$ satisfying (**) the term $t_\pi$ will be either zero (if one of the components of $\overline{a}\pi_0(a)$ has genus greater than $m$) or will involve $\beta_i$ with $i<m$ (if one of the components of the gluing has genus $<m$).
This completes the proof of the proposition.
\end{proof}

\begin{remark} The sign of the leading term
is the sign of the permutation $\pi_0$; since $\pi_0$ is an involution, the sign can be computed from the number of fixed points.
\end{remark}

\begin{corollary}
 The set $\mathscr{A}^m(n)$ is linearly independent in $A(n)$, for generic values of $\beta_i$'s.
\end{corollary}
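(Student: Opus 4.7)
The plan is to deduce the corollary as an immediate consequence of Proposition~\ref{poly lead prop}. Recall that $\det_n^{(m)}$ is, by definition, the determinant of the Gram matrix of the bilinear form $(,)_{\alpha}$ evaluated on pairs of elements of $\mathscr{A}^m(n)$. If this determinant, viewed as an element of the polynomial ring $\kk[\beta_0,\beta_1,\ldots,\beta_m]$, is nonzero, then it is nonzero for generic values of the $\beta_i$, and non-vanishing of the Gram determinant immediately forces linear independence of $\mathscr{A}^m(n)$ inside $A(n)$ for those generic values.

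The key observation to record is that Proposition~\ref{poly lead prop} exhibits $\pm(\beta_m)^{d_n^{(m)}}$ as the leading monomial of $\det_n^{(m)}$, with all other monomials being strictly smaller in the sense made precise there (either fewer factors, or the same number of factors but involving some $\beta_i$ with $i<m$). In particular the coefficient $\pm 1$ of this leading monomial cannot be cancelled by any other contribution, so $\det_n^{(m)}$ is not the zero polynomial in $\kk[\beta_0,\ldots,\beta_m]$.

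The proof proposal is therefore one short sentence: the polynomial $\det_n^{(m)}$ is nonzero by Proposition~\ref{poly lead prop}, hence the Gram matrix of $\mathscr{A}^m(n)$ is invertible for generic $\beta_i$, and consequently the vectors $\{[S] : S \in \mathscr{A}^m(n)\}$ are linearly independent in $A(n)$ for such generic parameter values. No further obstacle is anticipated, since all the combinatorial work has been absorbed into the preceding proposition; the corollary is really a linear-algebra restatement of non-vanishing of the leading term.
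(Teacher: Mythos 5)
Your argument is exactly the intended one: the paper states this corollary immediately after Proposition~\ref{poly lead prop} without a separate proof precisely because, as you observe, the nonzero leading term $\pm\beta_m^{d_n^{(m)}}$ shows $\det_n^{(m)}$ is a nonzero polynomial, and a nonvanishing Gram determinant forces linear independence. Correct, and the same approach as the paper.
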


Using the standard methods one computes the exponential generating functions for the sizes of the sets $\mathscr{A}^m(n)$ and for the sequence $d_n^{(m)}$:
\[
\sum_{n\geq 0} \frac{|\mathscr{A}^m(n)|}{n!} t^n = \exp\left(
\sum_{g=0}^m \frac{t^{m+1-g}}{(m+1-g)!}(g+1)
\right),
\]
\[
\sum_{n\geq 0} \frac{d^{(m)}_n}{n!} t^n =\left(
\sum_{g=0}^m \frac{t^{m+1-g}}{(m+1-g)!}(g+1)
\right) \exp\left(
\sum_{g=0}^m \frac{t^{m+1-g}}{(m+1-g)!}(g+1)
\right).
\]

In particular, for $m=1$ we get

\begin{corollary}\label{lin deg cor}
 The degree $d_n=d_n^{(1)}$ of the polynomial $\det_n=\det^{(1)}_n$ satisfies
 \[
 \sum_{n\geq 0} \frac{d_n}{n!} t^n =\left(2t+\frac{t^2}2\right)\exp\left(2t+\frac{t^2}2\right).
 \]
 Equivalently, $d_n=\frac12n(a_n+2a_{n-1})$.
\end{corollary}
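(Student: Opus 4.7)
The plan is to obtain both assertions by specializing and then simplifying material already in place. For the exponential generating function identity, I would specialize the general formula
$$\sum_{n\geq 0}\frac{d_n^{(m)}}{n!}\,t^n \;=\; \left(\sum_{g=0}^{m}\frac{t^{m+1-g}}{(m+1-g)!}(g+1)\right)\exp\left(\sum_{g=0}^{m}\frac{t^{m+1-g}}{(m+1-g)!}(g+1)\right)$$
displayed just before the corollary to the case $m=1$. The inner sum then runs only over $g\in\{0,1\}$ and collapses to $\tfrac{t^{2}}{2!}\cdot 1 + \tfrac{t}{1!}\cdot 2 = 2t + \tfrac{t^{2}}{2}$, which gives the first identity directly.

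For the equivalent two-term form $d_n = \tfrac{n}{2}(a_n + 2a_{n-1})$, I would set $A(t) := \exp\!\bigl(2t + \tfrac{t^{2}}{2}\bigr) = \sum_{n\geq 0}\tfrac{a_n}{n!}\,t^n$, which is the EGF from Proposition~\ref{span1}, and extract the coefficient of $t^n/n!$ from $\bigl(2t + \tfrac{t^{2}}{2}\bigr)A(t)$. This yields the three-term expression
$$d_n \;=\; 2n\,a_{n-1} + \binom{n}{2}\,a_{n-2}.$$
To convert this into the two-term form in the statement, I would apply the recurrence $a_n = 2a_{n-1} + (n-1)a_{n-2}$ of Proposition~\ref{span1} in the form $\tfrac{n}{2}a_n = n\,a_{n-1} + \binom{n}{2}a_{n-2}$ and substitute, so that $d_n = n\,a_{n-1} + \tfrac{n}{2}a_n = \tfrac{n}{2}(a_n + 2a_{n-1})$, as required.

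There is no genuine obstacle: both halves reduce to elementary bookkeeping on top of the combinatorial generating function and recurrence already established in the surrounding paragraphs, and the small-$n$ edge cases are handled by the conventions $a_{-1}=0$ and $a_0=1$. The only point worth noting is the mild mismatch between the three-term coefficient extraction and the two-term closed form in the statement, which is resolved immediately by the two-term recurrence for $a_n$.
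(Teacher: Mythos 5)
Your proposal is correct and follows essentially the same route as the paper, which simply states ``In particular, for $m=1$ we get'' after displaying the general EGF for $d_n^{(m)}$. The only additional content you supply, rightly, is the coefficient extraction $d_n=2n\,a_{n-1}+\binom{n}{2}a_{n-2}$ followed by the substitution of the recurrence $a_n=2a_{n-1}+(n-1)a_{n-2}$ to reach the stated two-term form; this bookkeeping is exactly what the word ``equivalently'' in the corollary is hiding, and your treatment of it (including the conventions $a_{-1}=0$, $a_0=1$) is sound.
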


\begin{conjecture}\label{poly conj}
$\mathscr{A}^m(n)$ spans $A(n)$.
\end{conjecture}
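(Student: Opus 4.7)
The plan is to prove spanning by induction on a complexity measure on cobordisms, reducing any cobordism outside $\mathscr{A}^m(n)$ to a linear combination of those in $\mathscr{A}^m(n)$ using two types of skein relations: the handle relation $U_\alpha(x)=x^{m+1}=0$, which kills any component of genus exceeding $m$, and a ``tube-cutting'' relation coming from the Frobenius algebra structure on $A=A_\alpha(1)\cong \kk[x]/(x^{m+1})$ (with Frobenius form $\epsilon(x^i)=\beta_i$). For $S\in A(n)$, I define its complexity as the pair $(\kappa(S),d(S))$ ordered lexicographically, where $\kappa(S)=\max_{C}(g_C+\ell_C)$ over components $C$ of $S$ with genus $g_C$ and boundary count $\ell_C$, and $d(S)$ counts the components achieving this maximum. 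When $\kappa(S)\le m+1$, $S\in\mathscr{A}^m(n)$ and we are done; otherwise the aim is to rewrite $[S]$ as a sum of classes of strictly smaller complexity.

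The tube-cutting relation is obtained as follows. Pick a separating simple closed curve $\gamma$ inside a component $C$ of maximal complexity, splitting $C$ into pieces of types $(g_1,\ell_1+1)$ and $(g_2,\ell_2+1)$ (the extra boundary being $\gamma$), where $g_1+g_2=g_C$ and $\ell_1+\ell_2=\ell_C$. Let $G=(\beta_{i+j})_{0\le i,j\le m}$ be the Hankel pairing matrix of $A$ in the basis $\{1,x,\dots,x^m\}$. The Frobenius gluing identity in the gligible quotient gives, in $A(n)$,
\[
[S] \; = \; \sum_{i,j=0}^{m} (G^{-1})_{ij}\,[S'_{i,j}],
\]
where $S'_{i,j}$ is obtained from $S$ by replacing $C$ with $\Sigma_{g_1+i,\ell_1}\sqcup \Sigma_{g_2+j,\ell_2}$ and leaving the other components untouched, and terms with $g_1+i>m$ or $g_2+j>m$ vanish by the handle relation. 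The inverse Hankel matrix $G^{-1}$ has its entries supported near the anti-diagonal $i+j=m$ (exactly on it when $\beta_0=\cdots=\beta_{m-1}=0$, $\beta_m\neq 0$), which is the mechanism by which one can hope to control the complexity of the surviving terms.

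The inductive step then seeks to choose $\gamma$ so that every nonzero $S'_{i,j}$ has strictly smaller complexity than $S$. For the prototypical case $C=\Sigma_{0,m+2}$ (so $\kappa(S)=m+2$), cutting off a sub-sphere carrying two of the $m+2$ boundary circles gives $(g_1,\ell_1)=(0,2)$ and $(g_2,\ell_2)=(0,m)$; the dominant anti-diagonal contributions then produce two components each of complexity $\le m+1$. The main obstacle I anticipate is that certain extreme or subleading terms, for instance $(i,j)=(m,0)$ which places all $m$ handles on the small $\Sigma_{m,2}$ piece, can have the same $\kappa$ as $S$, so the induction requires either a finer complexity measure (adding lexicographic refinements counting components of given $(g,\ell)$), or an iteration of the cutting combined with additional relations generalizing those of Figures~\ref{fig_6_3} and~\ref{fig_6_4} from the $m=1$ case. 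An alternative route I would pursue in parallel is to identify $\udcobal$ with the representation category of some super-group $(G,\varepsilon)$ whose generating object is the Frobenius algebra $\kk[x]/(x^{m+1})$ equipped with its canonical nilpotent operator, generalizing Theorem~\ref{linear quot}; spanning would then follow from a character-theoretic dimension count matching the exponential generating function for $|\mathscr{A}^m(n)|$ given in Section~\ref{subsec_any_deg}, in the spirit of Proposition~\ref{prop 7.3}.
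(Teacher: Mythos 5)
The statement you are addressing is a conjecture that the paper does \emph{not} prove in general; the paper establishes it only for $m\le 2$ in the final theorem of Section~\ref{subsec_any_deg}, with the $m=1$ case coming from the relations in Figures~\ref{fig_6_3} and~\ref{fig_6_4}, and the $m=2$ case from the computer-found relations in Figures~\ref{eq_rel_3_holes_2}--\ref{eq_rel_2_holes_2}. A valid general argument would therefore go well beyond the paper.

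Unfortunately your central step, the ``tube-cutting'' identity $[S]=\sum_{i,j}(G^{-1})_{ij}[S'_{i,j}]$, is exactly the classical neck-cutting relation, and it is false in $\Cobal$ for polynomial theories. Neck-cutting is equivalent to the theory being multiplicative, i.e.\ to the product map $A(1)\otimes A(1)\to A(2)$ being an isomorphism (see the discussion surrounding~(\ref{eq_phi_4})), and for $\deg Z=m\ge 1$ the generic polynomial theory is not multiplicative: $\dim A(1)^{\otimes 2}=(m+1)^2$ while $\dim A(2)=(m+1)^2+m$ ($4$ vs.\ $5$ for $m=1$, $9$ vs.\ $11$ for $m=2$). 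A concrete sanity check: apply your identity to the tube $y_{12}\in A(2)$ cut along its waist into two disks, and pair both sides against the reflected tube $\overline{y_{12}}$. The left side is $\alpha(\text{torus})=\beta_1$, while the right side is $\sum_{i,j}(G^{-1})_{ij}\beta_{i+j}=\mathrm{tr}(G^{-1}G)=m+1$, so the relation would force $\beta_1=m+1$, which is generically false. Since the relation you invoke fails, the induction never gets off the ground, independently of the complexity-monotonicity difficulties you yourself flag. The relations that actually carry out the reduction in the paper (Figures~\ref{fig_6_3}, \ref{fig_6_4}, \ref{eq_rel_3_holes_2}--\ref{eq_rel_2_holes_2}) involve dots in an essential, non-Frobenius-copairing way and have no known closed-form generalization to arbitrary $m$; producing such a family \emph{is} the open content of the conjecture. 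Your second route, matching $\udcobal$ with a semisimple category of super-group representations as in Proposition~\ref{prop 7.3} and Theorem~\ref{linear quot}, is also blocked for $m\ge 2$: the polynomial $Z$ then violates condition~(3) of Theorem~\ref{necess_thm} (here $\deg P=m>1=\deg Q+1$), so by Theorem~\ref{thm_semi_real} the category $\udcobal$ is not semisimple and cannot be matched with such a representation category.
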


\begin{proposition}
Assume that Conjecture \ref{poly conj} holds for some $m>1$.
Then
\[
\det_n^{(m)}=
\pm (\beta_m)^{d_n^{(m)}}.
\]
Thus $\mathscr{A}^m(n)$ is a basis of $A(n)$ for any $\beta_0,
\beta_1,\ldots,\beta_m$ with $\beta_m\ne 0$ and in any characteristic.
\end{proposition}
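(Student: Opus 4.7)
\emph{Proof plan.} The strategy combines Proposition~\ref{poly lead prop} with the rescaling invariance of Section~\ref{subsec-scaling}: together these pin down $\det_n^{(m)}$ as a monomial in $\beta_m$, and the basis statement then follows at once from the spanning hypothesis.

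The rescaling $\beta_i\mapsto \lambda^{i-1}\beta_i$ of Section~\ref{subsec-scaling} induces an isomorphism of theories, and a direct computation on each generator $S\in\mathscr{A}^m(n)$ shows that it acts on the Gram matrix by conjugation with the diagonal matrix whose $S$-th entry is $\mu^{-\chi(S)}$ (with $\lambda=\mu^2$). Taking determinants yields the identity
\begin{equation*}
\det_n^{(m)}(\lambda^{-1}\beta_0,\,\beta_1,\,\lambda\beta_2,\,\ldots,\,\lambda^{m-1}\beta_m)\;=\;\lambda^{-\Sigma}\,\det_n^{(m)}(\beta_0,\ldots,\beta_m),\qquad \Sigma:=\sum_{S\in\mathscr{A}^m(n)}\chi(S).
\end{equation*}
Comparing coefficients of $\prod_i\beta_i^{a_i}$ on the two sides forces every monomial appearing in $\det_n^{(m)}$ to satisfy the weight relation $-a_0+a_2+2a_3+\cdots+(m-1)a_m=-\Sigma$.

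Next I would apply Proposition~\ref{poly lead prop} twice: its leading term $\pm\beta_m^{d_n^{(m)}}$ pins down $-\Sigma=(m-1)d_n^{(m)}$, and the same proposition gives $\sum_i a_i\le d_n^{(m)}$ for every monomial of $\det_n^{(m)}$. Rewriting the weight relation as
\begin{equation*}
\sum_i (m-i)\,a_i\;=\;(m-1)\Bigl(\sum_i a_i - d_n^{(m)}\Bigr)
\end{equation*}
and using $m-i\ge 0$ together with $m>1$ forces both sides to vanish, so $\sum_i a_i=d_n^{(m)}$ and $a_i=0$ for $i<m$, hence $a_m=d_n^{(m)}$. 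Thus $\det_n^{(m)}$ is $\beta_m^{d_n^{(m)}}$ times a constant, and Proposition~\ref{poly lead prop} forces this constant to be $\pm 1$. The basis statement is then immediate: when $\beta_m\ne 0$ the Gram matrix is invertible, so $\mathscr{A}^m(n)$ is linearly independent in $A(n)$; combined with Conjecture~\ref{poly conj} this gives a basis in any characteristic.

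The hypothesis $m>1$ enters only at the optimization step: for $m=1$ the weight relation is vacuous in $a_1$, consistent with Table~\ref{fig_table_linear} showing that $\det_n^{(1)}$ is genuinely not a monomial in $\beta_1$. I do not anticipate any deep obstacle in this plan; the one thing to verify carefully is that the scaling identity is a polynomial (rather than merely rational) identity in $\beta_i$ and $\lambda$, which is automatic from the explicit formula for the Gram entries as monomials in the $\beta_{g_j}$'s.
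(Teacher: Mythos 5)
Your proposal is correct and follows essentially the same route as the paper: combine the rescaling $\beta_i\mapsto\lambda^{i-1}\beta_i$ from Section~\ref{subsec-scaling} with the leading-term analysis of Proposition~\ref{poly lead prop} to force $\det_n^{(m)}$ to be a single monomial in $\beta_m$. Your version is a bit tighter in one spot: the paper loosely invokes invariance of the zero set of $\det_n^{(m)}$, whereas you derive the exact semi-invariance $\det_n^{(m)}(\lambda^{-1}\beta_0,\ldots,\lambda^{m-1}\beta_m)=\lambda^{-\Sigma}\det_n^{(m)}(\beta_0,\ldots,\beta_m)$ directly from the diagonal change of basis $\phi([S])=\mu^{-\chi(S)}[S]$, which is the statement actually needed for the weight count to go through.
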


\begin{proof} The set of zeroes of $\det_n^{(m)}$ should be
invariant under the scaling $(\beta_0,\beta_1,\beta_2,\ldots,\beta_m)\mapsto (\lambda^{-1}\beta_0,\beta_1,\lambda \beta_2\ldots,\lambda^{m-1}\beta_m)$(see section \ref{subsec-scaling}). Now the result follows from
Proposition \ref{poly lead prop} since the leading term
is multiplied by $(\lambda)^{d_n^{(m)}(m-1)}$ under the scaling
and the potential lower terms are multiplied by lower power
of $\lambda$.
\end{proof}

\begin{remark}
The argument above does not work for $m=1$ since $\beta_1$
does not change under the scaling. However it gives an alternative proof to the known fact that $\det_n=\det_n^{(1)}$ does not depend on $\beta_0$.
\end{remark}

\begin{theorem}
The conjecture \ref{poly conj} holds for $m\leq 2$.
\end{theorem}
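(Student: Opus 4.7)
The case $m=1$ is Proposition~\ref{span1}, so I focus on $m=2$. The plan is first to observe that the skein relation $x^3=0$ reduces every viewable cobordism to one whose components all have genus at most $2$, and then to exhibit explicit negligible relations in the state spaces $A(\ell)$ that further reduce each ``bad'' component shape $(g,\ell)$ with $g+\ell\ge 4$ (namely $(2,\ell)$ for $\ell\ge 2$, $(1,\ell)$ for $\ell\ge 3$, and $(0,\ell)$ for $\ell\ge 4$) to a $\kk$-linear combination of $\mathscr{A}^2(\ell)$-elements. Since negligible morphisms form a tensor ideal in the rigid symmetric monoidal category $\cobal$, any such relation in $A(\ell)$ lifts verbatim to a relation in $A(n)$ for $n\ge \ell$ by tensoring with the identity on the remaining $n-\ell$ circles, so iterating these replacements exhausts every bad component.

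The first and cleanest relation I would prove is, for each $\ell\ge 2$, that the connected genus-$2$ surface with $\ell$ boundary circles equals $\alpha_2^{-(\ell-1)}$ times the disjoint union of $\ell$ copies of the disk with two handles, as an equality in $A(\ell)$. The verification is a direct pairing computation: for any viewable $S\in A(\ell)$ whose components all have genus at most $2$, an Euler-characteristic bookkeeping shows that both sides pair with $S$ to give zero unless $S$ is the cobordism consisting of $\ell$ plain capping disks, in which case the left side pairs to $\alpha_2$ and the disjoint $\ell$-tuple of two-handled disks pairs to $\alpha_2^{\ell}$, so the scalar $\alpha_2^{-(\ell-1)}$ is exactly right; nondegeneracy of the trace form on $\cobal$ then upgrades this agreement of pairings to equality in $A(\ell)$. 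The same strategy handles the $(1,\ell)$ and $(0,\ell)$ shapes: one catalogues the finitely many $S_\ell$-orbits of elements against which the bad component pairs nontrivially and matches them by an explicit $S_\ell$-symmetric ansatz in $\mathscr{A}^2(\ell)$, solving a small triangular linear system for its coefficients in terms of $\alpha_0,\alpha_1,\alpha_2$ and $\ell$.

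The main obstacle will be confirming that the chosen ansatz for the $(1,\ell)$ and (especially) $(0,\ell)$ cases pairs correctly not only with the ``primary'' orbits that determine the coefficients but also with every other $S_\ell$-orbit of the genus-$\le 2$ spanning set of $A(\ell)$ --- pants cobordisms, one-handle tubes, elements with multiple higher-handle decorations, and so on. Fortunately, $\alpha_n=0$ for $n\ge 3$ forces the pairing between two handled disks to vanish as soon as their total handle count is at least $3$, and analogous vanishings apply to pairings of multi-handled disks against tubes and pants; this forces most potential cross-terms to be automatically zero, so the verification reduces to a finite and essentially mechanical case check. Once these three families of relations are in place, iterated application via the tensor-ideal property reduces every cobordism in $A(n)$ to a $\kk$-linear combination of $\mathscr{A}^2(n)$, establishing spanning and completing the proof.
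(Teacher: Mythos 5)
Your genus-$2$ reduction is correct and nicely self-contained: the class of the connected genus-$2$ surface with $\ell$ boundary circles does equal $\alpha_2^{1-\ell}$ times the class of $\ell$ disjoint two-handled disks in $A(\ell)$, by exactly the Euler-characteristic pairing argument you describe, since both sides pair to zero against any test surface with all components of genus $\le 2$ unless that test surface consists of $\ell$ plain disks. But the shapes $(1,\ell)$ and $(0,\ell)$ are where the real work lies, and your proposal does not supply it. You assert that an $S_\ell$-symmetric ansatz in $\mathscr{A}^2(\ell)$ exists with coefficients determined by ``a small triangular linear system,'' and that all cross-pairings vanish automatically, but none of this is established and it is far from obvious: already for $(0,\ell)$ there are nine $S_\ell$-orbit types of test surfaces against which the $\ell$-holed sphere pairs nontrivially, and the paper's own explicit relation even at $\ell=4$ (Figure~\ref{eq_rel_4_holes_2}) has $46$ terms with coefficients such as $3\beta_1^2-\beta_0\beta_2$ and was found by a Sage kernel computation. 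Nothing in your argument shows that such relations exist for all $\ell$, let alone in a form admitting a uniform description, so the central claim remains unproven.

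The paper avoids this difficulty rather than solving it head on: it proves one relation in $A(4)$ expressing the four-holed sphere in terms of $\mathscr{A}^2(4)$, specializes it to the $(1,3)$ and $(2,2)$ cases by capping boundary circles with one-holed tori, adds the $(3,1)$ relation $x^3=0$, and then reduces an arbitrary bad component with $g+\ell\ge 5$ by writing it as a composition of a lower-complexity piece with pants and inducting; each such composition may reintroduce bad components, but only of complexity $g+\ell=4$, which the base relations handle. To complete your argument you would need either to carry out the $(0,\ell)$ and $(1,\ell)$ ansatz in full for all $\ell$ --- which looks like a genuinely hard combinatorial project, not the mechanical check you suggest --- or, more economically, to import the paper's composition-and-induction step and then only establish the finitely many base relations at $g+\ell=4$.
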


\begin{proof}
For $m=1$ it was established earlier.
To prove it for $m=2$, let us first introduce the following notation.

The symmetric group $S_n$ acts on $A(n)$ via the permutation cobordisms that permute $n$ circles. Suppose given a cobordism $y$ which is stabilized by a parabolic subgroup $S_{\lambda}\subset S_n$, for a decomposition $\lambda=(\lambda_1,\dots, \lambda_k)$ of $n$, so that
$\sigma y =y$ for $y\in S_{\lambda}$. To $y$ and $\lambda$ assign the element $\sum_{\lambda}y$ of $A(n)$ given by
\begin{equation} \label{eq_orbit}
\sum_{\lambda}y \ := \ \sum_{\sigma \in S_n/S_{\lambda}}\sigma y.
\end{equation}
That is, pick a representative $\tau$ in each coset $S_n/S_{\lambda}$, form $\tau y$ and sum over cosets.

\begin{figure}[!htbp]
\begin{center}
$ \ \ \ \ \ \ \ \ \ \ $ $ \ \ \ \ \ \ \ $
\begin{tikzpicture}[scale=0.5]
\node at (2,-1) {$1$};
\node[scale=1] at (-1.5,-1) {$\beta_2^3$};
\draw[thick][yscale=0.3] (0,0) circle(0.5);
\draw[thick,yscale=0.3] (2,0) circle(0.5);
\draw[thick,yscale=0.3] (4,0) circle(0.5);
\draw[thick] (0.5,0) to [out=270, in=270] (1.5,0);
\draw[thick] (2.5,0) to [out=270, in=270] (3.5,0);
\draw[thick] (-0.5,0) to [out=270, in=180] (2,-2) to[out=0, in=270] (4.5,0);
\node[scale=1] at (7.5,-1) {$=A + B + C$};
\end{tikzpicture}
\newline\newline
\begin{tikzpicture}[scale=0.5]
\node[scale=1] at (-1.5,-1) {};
\draw[thick,yscale=0.3] (0,0) circle(0.5);
\draw[thick,yscale=0.3] (2,0) circle(0.5);
\draw[thick,yscale=0.3] (4,0) circle(0.5);
\draw[thick] (-0.5,0) to [out=270, in=180] (1,-2) to[out=0, in=270] (2.5,0);
\draw[thick] (0.5,0) to [out=270, in=270] (1.5,0);
\draw[thick] (3.5,0) to [out=270, in=180] (4,-2) to[out=0, in=270] (4.5,0);
\node [scale=1] at (-3.,-1.4) {$A=\beta_2^2 \sum\limits_{(2,1)}$};
\node at (4,-1) {$2$};
\node at (1,-1) {$1$};
\end{tikzpicture}
$ \ \ \ \ \ $
\begin{tikzpicture}[scale=0.5]
\node[scale=1] at (-1.5,-1) {};
\draw[thick][yscale=0.3] (0,0) circle(0.5);
\draw[thick][yscale=0.3] (2,0) circle(0.5);
\draw[thick][yscale=0.3] (4,0) circle(0.5);
\draw[thick] (-0.5,0) to [out=270, in=180] (0,-2) to[out=0, in=270] (0.5,0);
\draw[thick] (1.5,0) to [out=270, in=180] (2,-2) to[out=0, in=270] (2.5,0);
\draw[thick] (3.5,0) to [out=270, in=180] (4,-2) to[out=0, in=270] (4.5,0);
\node [scale=1] at (-3.3,-1.4) {$B=-\beta_2 \sum\limits_{(1,2)}$};
\node at (4,-1) {$2$};
\node at (2,-1) {$2$};
\node at (0,-1) {$1$};
\end{tikzpicture}
\newline\newline
\begin{tikzpicture}[scale=0.5]
\node[scale=1] at (-1.5,-1) {};
\draw[thick][yscale=0.3] (0,0) circle(0.5);
\draw[thick][yscale=0.3] (2,0) circle(0.5);
\draw[thick][yscale=0.3] (4,0) circle(0.5);
\draw[thick] (-0.5,0) to [out=270, in=180] (0,-2) to[out=0, in=270] (0.5,0);
\draw[thick] (1.5,0) to [out=270, in=180] (2,-2) to[out=0, in=270] (2.5,0);
\draw[thick] (3.5,0) to [out=270, in=180] (4,-2) to[out=0, in=270] (4.5,0);
\node [scale=1] at (-2.8,-1.4) {$C=\beta_1 \sum\limits_{(3)}$};
\node at (4,-1) {$2$};
\node at (2,-1) {$2$};
\node at (0,-1) {$2$};
\end{tikzpicture}
\end{center}
\caption{\label{eq_rel_3_holes_2} Relation in $A(3)$ for $Z(T) = \beta_0 + \beta_1 T+\beta_2 T^2$. Numbers $1$ and $2$ show the number of handles (dots) on the component.   Summation means symmetrization with respect to permutations of  boundary components parametrized by cosets of the stabilizer of the surface in $S_3$. Sums $A,B,C$ have 3, 3, 1 terms respectively (7 terms in the right hand side in total).
}
\end{figure}

For $m=2$, the following relations hold in $A(3)$ and $A(4)$, see Figures~\ref{eq_rel_3_holes_2} and~\ref{eq_rel_4_holes_2}. Figure~\ref{eq_rel_3_holes_2} relation reduces a 3-holed torus to a linear combination of other cobordisms, with each summand $A,B,C$ also invariant under the permutation action of $S_3$. Each of these three terms is associated to a surface that has an obvious $S_{\lambda}$-invariance, for the decomposition $\lambda$ shown under the sum sign. The term is the sum over surfaces in its orbit, as described above in (\ref{eq_orbit}).

Figure~\ref{eq_rel_4_holes_2} relation has a similar presentation. For term $B$ there the stabiliser of the surface is the dihedral group $D_{(4)}\subset S_4$ generated by the permutations $(12),(34)$, and $(13)(24)$. The corresponding subgroup is denoted $(2,2)'$, it contains $S_{(2,2)}$ as an index two subgroup. This relation implies Figure~\ref{eq_rel_3_holes_2} relation by capping off a circle by a handle. Capping off by a disk results in a trivial relation.

\vspace{0.1in}

We can exclude components with four boundary circles ($\ell=4$) using Figure~\ref{eq_rel_4_holes_2} relation. To obtain the relations that simplify a genus $i$ surface with $4-i$ boundary components for $i=1,2$, see inequality (\ref{eq_inequality}),  cap $i$ boundary components by handles (one-holed tori) in  Figure~\ref{eq_rel_4_holes_2} relation, resulting in Figure~\ref{eq_rel_3_holes_2} and Figure~\ref{eq_rel_2_holes_2} left relations.  For $i=3$, there is also the relation that a one-holed connected surface of genus three is $0$ in $A(1)$, see Figure~
\ref{eq_rel_2_holes_2} right.

These relations show that any connected component of genus $g$ with $\ell$ boundary circles and $g+\ell > 2+1$ (since $m=2$) simplifies to a linear combination of surfaces in the set $ \mathscr{A}^2(n)$. Consequently, this set spans $A(n)$, establishing Conjecture~\ref{poly conj} for $m=2$.
\end{proof}

\begin{remark}
Originally, relation (\ref{eq_rel_4_holes_2}) was computed in Sage by finding the kernel of the $(|\mathscr{A}^{2}(4)|+1)\times (|\mathscr{A}^{2}(4)|+1)$-matrix of the quadratic form restricted to the elements of $\mathscr{A}^2(4)$ and the four-holed sphere.
\end{remark}

\begin{figure}[!htbp]
\begin{center}
$ \ \ \ \ \ \ \ \ \ \ $ $ \ \ \ \ \ \ \ $
\begin{tikzpicture}[scale=0.5]
\node[scale=1] at (-1.5,-1) {$\beta_2^5$};
\draw[thick][yscale=0.3] (0,0) circle(0.5);
\draw[thick][yscale=0.3] (2,0) circle(0.5);
\draw[thick][yscale=0.3] (4,0) circle(0.5);
\draw[thick][yscale=0.3] (6,0) circle(0.5);
\draw[thick] (-0.5,0) to [out=270, in=270] (6.5,0);
\draw[thick] (0.5,0) to [out=270, in=270] (1.5,0);
\draw[thick] (2.5,0) to [out=270, in=270] (3.5,0);
\draw[thick] (4.5,0) to [out=270, in=270] (5.5,0);
\node[scale=1] at (11,-1) {$=A + B + \sum
\limits_{i=1}^3 C_i + \sum\limits_{i=1}^4 D_i$};
\end{tikzpicture}
\newline\newline
\begin{tikzpicture}[scale=0.5]
\node[scale=1] at (-1.5,-1) {};
\draw[thick][yscale=0.3] (0,0) circle(0.5);
\draw[thick][yscale=0.3] (2,0) circle(0.5);
\draw[thick][yscale=0.3] (4,0) circle(0.5);
\draw[thick][yscale=0.3] (6,0) circle(0.5);
\draw[thick] (-0.5,0) to [out=270, in=180] (2,-2) to[out=0, in=270] (4.5,0);
\draw[thick] (0.5,0) to [out=270, in=270] (1.5,0);
\draw[thick] (2.5,0) to [out=270, in=270] (3.5,0);
\draw[thick] (5.5,0) to [out=270, in=180] (6,-2) to[out=0, in=270] (6.5,0);
\node[scale=1] at (-3.,-1.3) {$A=\beta_2^4 \sum\limits_{(3,1)}$};
\node at (6,-1) {$2$};
\end{tikzpicture}
$ \ \ \ \ \ \ \ \ \ \ $
\begin{tikzpicture}[scale=0.5]
\node[scale=1] at (-1.5,-1) {};
\draw[thick][yscale=0.3] (0,0) circle(0.5);
\draw[thick][yscale=0.3] (2,0) circle(0.5);
\draw[thick][yscale=0.3] (4,0) circle(0.5);
\draw[thick][yscale=0.3] (6,0) circle(0.5);
\draw[thick] (-0.5,0) to [out=270, in=180] (1,-2) to[out=0, in=270] (2.5,0);
\draw[thick] (0.5,0) to [out=270, in=270] (1.5,0);
\draw[thick] (4.5,0) to [out=270, in=270] (5.5,0);
\draw[thick] (3.5,0) to [out=270, in=180] (5,-2) to[out=0, in=270] (6.5,0);
\node [scale=1] at (-3.,-1.3) {$B=\beta_2^4\sum\limits_{(2,2)'}$};
\node at (5,-1) {$1$};
\node at (1,-1) {$1$};
\end{tikzpicture}
\newline\newline
\begin{tikzpicture}[scale=0.5]
\node[scale=1] at (-1.5,-1) {};
\draw[thick][yscale=0.3] (0,0) circle(0.5);
\draw[thick][yscale=0.3] (2,0) circle(0.5);
\draw[thick][yscale=0.3] (4,0) circle(0.5);
\draw[thick][yscale=0.3] (6,0) circle(0.5);
\draw[thick] (-0.5,0) to [out=270, in=180] (1,-2) to[out=0, in=270] (2.5,0);
\draw[thick] (0.5,0) to [out=270, in=270] (1.5,0);
\draw[thick] (5.5,0) to [out=270, in=180] (6,-2) to[out=0, in=270] (6.5,0);
\draw[thick] (3.5,0) to [out=270, in=180] (4,-2) to[out=0, in=270] (4.5,0);
\node [scale=1] at (-3.5,-1.3) {$C_1=-\beta_2^3\sum\limits_{(2,2)}$};
\node at (6,-1) {$2$};
\node at (4,-1) {$2$};
\node at (1,-1) {};
\end{tikzpicture}
$ \ \ \ \ \ \ \ \ \ \ $
\begin{tikzpicture}[scale=0.5]
\node[scale=1] at (-1.5,-1) {};
\draw[thick][yscale=0.3] (0,0) circle(0.5);
\draw[thick][yscale=0.3] (2,0) circle(0.5);
\draw[thick][yscale=0.3] (4,0) circle(0.5);
\draw[thick][yscale=0.3] (6,0) circle(0.5);
\draw[thick] (-0.5,0) to [out=270, in=180] (1,-2) to[out=0, in=270] (2.5,0);
\draw[thick] (0.5,0) to [out=270, in=270] (1.5,0);
\draw[thick] (5.5,0) to [out=270, in=180] (6,-2) to[out=0, in=270] (6.5,0);
\draw[thick] (3.5,0) to [out=270, in=180] (4,-2) to[out=0, in=270] (4.5,0);
\node [scale=1] at (-3.5,-1.3) {$C_2=-\beta_2^3\sum\limits_{(2,1,1)}$};
\node at (6,-1) {$2$};
\node at (4,-1) {$1$};
\node at (1,-1) {$1$};
\end{tikzpicture}
\newline\newline
\begin{tikzpicture}[scale=0.5]
\node[scale=1] at (-2.,-1) {};
\draw[thick][yscale=0.3] (0,0) circle(0.5);
\draw[thick][yscale=0.3] (2,0) circle(0.5);
\draw[thick][yscale=0.3] (4,0) circle(0.5);
\draw[thick][yscale=0.3] (6,0) circle(0.5);
\draw[thick] (-0.5,0) to [out=270, in=180] (1,-2) to[out=0, in=270] (2.5,0);
\draw[thick] (0.5,0) to [out=270, in=270] (1.5,0);
\draw[thick] (5.5,0) to [out=270, in=180] (6,-2) to[out=0, in=270] (6.5,0);
\draw[thick] (3.5,0) to [out=270, in=180] (4,-2) to[out=0, in=270] (4.5,0);
\node [scale=1] at (-4.,-1.3) {$C_3=\beta_1 \beta_2^2 \sum\limits_{(2,2)}$};
\node at (6,-1) {$2$};
\node at (4,-1) {$2$};
\node at (1,-1) {$1$};
\end{tikzpicture}
$ \ \ \ \ \ \ \ \ \ \ $
\begin{tikzpicture}[scale=0.5]
\node[scale=1] at (-1.5,-1) {};
\draw[thick][yscale=0.3] (0,0) circle(0.5);
\draw[thick][yscale=0.3] (2,0) circle(0.5);
\draw[thick][yscale=0.3] (4,0) circle(0.5);
\draw[thick][yscale=0.3] (6,0) circle(0.5);
\draw[thick] (-0.5,0) to [out=270, in=180] (0,-2) to[out=0, in=270] (0.5,0);
\draw[thick] (1.5,0) to [out=270, in=180] (2,-2) to[out=0, in=270] (2.5,0);
\draw[thick] (5.5,0) to [out=270, in=180] (6,-2) to[out=0, in=270] (6.5,0);
\draw[thick] (3.5,0) to [out=270, in=180] (4,-2) to[out=0, in=270] (4.5,0);
\node [scale=1] at (-3.,-1.3) {$D_1=\beta_2^2 \sum\limits_{(1,3)}$};
\node at (6,-1) {$2$};
\node at (4,-1) {$2$};
\node at (2,-1) {$2$};
\end{tikzpicture}
\newline\newline
\begin{tikzpicture}[scale=0.5]
\node[scale=1] at (-1.5,-1) {};
\draw[thick][yscale=0.3] (0,0) circle(0.5);
\draw[thick][yscale=0.3] (2,0) circle(0.5);
\draw[thick][yscale=0.3] (4,0) circle(0.5);
\draw[thick][yscale=0.3] (6,0) circle(0.5);
\draw[thick] (-0.5,0) to [out=270, in=180] (0,-2) to[out=0, in=270] (0.5,0);
\draw[thick] (1.5,0) to [out=270, in=180] (2,-2) to[out=0, in=270] (2.5,0);
\draw[thick] (5.5,0) to [out=270, in=180] (6,-2) to[out=0, in=270] (6.5,0);
\draw[thick] (3.5,0) to [out=270, in=180] (4,-2) to[out=0, in=270] (4.5,0);
\node [scale=1] at (-3.5,-1.3) {$D_2=2 \beta_2^2 \sum\limits_{(2,2)}$};
\node at (6,-1) {$2$};
\node at (4,-1) {$2$};
\node at (2,-1) {$1$};
\node at (0,-1) {$1$};
\end{tikzpicture}
$ \ \ \ \ \ \ \ \ \ \ $
\begin{tikzpicture}[scale=0.5]
\node[scale=1] at (-2,-1) {};
\draw[thick][yscale=0.3] (0,0) circle(0.5);
\draw[thick][yscale=0.3] (2,0) circle(0.5);
\draw[thick][yscale=0.3] (4,0) circle(0.5);
\draw[thick][yscale=0.3] (6,0) circle(0.5);
\draw[thick] (-0.5,0) to [out=270, in=180] (0,-2) to[out=0, in=270] (0.5,0);
\draw[thick] (1.5,0) to [out=270, in=180] (2,-2) to[out=0, in=270] (2.5,0);
\draw[thick] (5.5,0) to [out=270, in=180] (6,-2) to[out=0, in=270] (6.5,0);
\draw[thick] (3.5,0) to [out=270, in=180] (4,-2) to[out=0, in=270] (4.5,0);
\node [scale=1] at (-4.5,-1.3) {$D_3=-3\beta_1\beta_2\sum\limits_{(1,3)}$};
\node at (6,-1) {$2$};
\node at (4,-1) {$2$};
\node at (2,-1) {$2$};
\node at (0,-1) {$1$};
\end{tikzpicture}
\newline\newline
\begin{tikzpicture}[scale=0.5]
\node[scale=1] at (-3.,-1) {};
\draw[thick][yscale=0.3] (0,0) circle(0.5);
\draw[thick][yscale=0.3] (2,0) circle(0.5);
\draw[thick][yscale=0.3] (4,0) circle(0.5);
\draw[thick][yscale=0.3] (6,0) circle(0.5);
\draw[thick] (-0.5,0) to [out=270, in=180] (0,-2) to[out=0, in=270] (0.5,0);
\draw[thick] (1.5,0) to [out=270, in=180] (2,-2) to[out=0, in=270] (2.5,0);
\draw[thick] (5.5,0) to [out=270, in=180] (6,-2) to[out=0, in=270] (6.5,0);
\draw[thick] (3.5,0) to [out=270, in=180] (4,-2) to[out=0, in=270] (4.5,0);
\node [scale=1] at (-5.,-1.3) {$D_4=(3\beta_1^2 - \beta_0\beta_2)\sum\limits_{(4)}$};
\node at (6,-1) {$2$};
\node at (4,-1) {$2$};
\node at (2,-1) {$2$};
\node at (0,-1) {$2$};
\end{tikzpicture}
\end{center}
\caption{\label{eq_rel_4_holes_2} Relation in $A(4)$ for $Z(T) = \beta_0 + \beta_1 T+\beta_2 T^2$. Numbers $1$ and $2$ show the number of handles (dots) on the component.   Summation means symmetrization with respect to permuting the boundary components, as described in the proof. Sums $A,B,C_1,C_2,C_3,D_1,D_2,D_3,D_4$ have 4, 3, 6, 12, 6, 4, 6, 4, 1 terms respectively (46 terms in the right hand side in total).}
\end{figure}

\begin{figure}[!htb]
\begin{center}
\begin{tikzpicture}[scale=0.5]
\node[scale=1] at (-2.,-1) {};
\draw[thick][yscale=0.3] (0,0) circle(0.5);
\draw[thick][yscale=0.3] (2,0) circle(0.5);
\draw[thick][yscale=0.3] (5,0) circle(0.5);
\draw[thick][yscale=0.3] (7,0) circle(0.5);
\draw[thick] (-0.5,0) to [out=270, in=180] (1,-2) to[out=0, in=270] (2.5,0);
\draw[thick] (0.5,0) to [out=270, in=270] (1.5,0);
\draw[thick] (6.5,0) to [out=270, in=180] (7,-2) to[out=0, in=270] (7.5,0);
\draw[thick] (4.5,0) to [out=270, in=180] (5,-2) to[out=0, in=270] (5.5,0);
\node [scale=1] at (-1.,-1.3) {$\beta_2$};
\node [scale=1] at (3.5,-1.3) {$ = $};
\node at (7,-1) {$2$};
\node at (5,-1) {$2$};
\node at (1,-1) {$2$};
\end{tikzpicture}
$ \ \ \ \ \ \ \ \ \ \ \ \ $
\begin{tikzpicture}
\draw[thick][yscale=0.3] (0,0) circle(0.5);
\draw[thick] (-0.5,0) to [out=270, in=180] (0,-1) to[out=0, in=270] (0.5,0);
\node at (0,-0.5) {$3$};
\node [scale=1] at (1.,-0.5) {$=0$};
\end{tikzpicture}
\end{center}
\caption{\label{eq_rel_2_holes_2} Relations in $A(2)$ and $A(1)$ for $Z(T) = \beta_0 + \beta_1 T+\beta_2 T^2$. Numbers $2$ and $3$ show the number of handles (dots) on the component.
}
\end{figure}
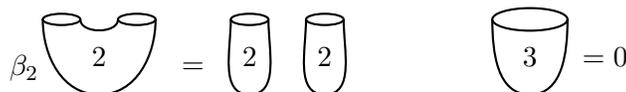

\section{Cobordisms of fractional genus and other decorations}\label{sec_frac_genus}

{\it Fractional genus.}
Recall the defining relations in Proposition~\ref{AP_def_rel} on generators $x$ and $u$ of the algebra $B_S$.
If $\alpha_{n}\not= 0$ for some even $n\ge 0$, then the element
\begin{equation}\label{eq_may_be_fraction}
\alpha_{n}^{-1}x^{n/2} u x^{n/2}
\end{equation}
is an idempotent  in $B_S$. This is an obvious way to  get an idempotent in $B_S$ unless the power series $Z(T)$ has nontrivial coefficients  only at odd powers  of  $T$. With a minor effort, a version of the above  idempotent can be produced in the latter case as well. Namely, for odd $n$ and with $\alpha_n\not=0$, we can try to make sense of  the expression (\ref{eq_may_be_fraction}). For that one needs "cobordism" $x^{1/2}$, which should  be a "genus $1/2$" surface,  with some boundary components. Let us consider an even more general  case of a "genus $1/\ell$" surface for some $\ell>1$. We simply  introduce a fractional dot $x^{1/\ell}$ with the relation that its $\ell$-th power is the handle, see Figures~\ref{fig4_4}   and~\ref{fig4_3_1}.

\begin{figure}[!htb]
\begin{center}
\hspace*{-2mm}
\begin{tikzpicture}
[xscale=0.7, yscale=0.7]

\draw (0,0) to [out=270,in=180] (1,-1.5) to [out=0,in=270] (2,0);
\draw [yscale=0.3] (1,0) circle(1);
\draw [fill=black] (1,-0.7) circle(0.05);
\node [scale=0.7] at (1,-1) {$m/l$};
\node at (3,-0.5) {$ = x^{m/l}$};

\end{tikzpicture}
\caption{\label{fig4_3_1} Dot of fractional order $m/\ell.$}
\end{center}

\end{figure}
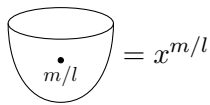
\begin{figure}[!htb]
\begin{center}
\hspace*{-2mm}
\begin{tikzpicture}
[xscale=0.7, yscale=0.7]

\draw (0,0) to [out=270,in=180] (3,-1.5) to [out=0,in=270] (6,0);
\draw [yscale=0.2] (3,0) circle(3);
\draw [fill=black] (1,-0.7) circle(0.05);
\node [scale=0.7] at (1,-1) {$1/l$};
\draw [fill=black] (2,-0.85) circle(0.05);
\node [scale=0.7] at (2,-1.15) {$1/l$};
\draw [fill=black] (3,-0.9) circle(0.05);
\node [scale=0.7] at (3,-1.3) {$1/l$};
\node at (7,-0.5) {$=$};
\node at (2,-2) {$l$ times};

\draw (8.5,0) to [out=270,in=180] (9.5,-1.5) to [out=0,in=270] (10.5,0);
\draw [yscale=0.3] (9.5,0) circle(1);
\draw [fill=black] (9.5,-0.7) circle(0.1);

\node at (12,-0.5) {$=$};

\draw [yscale=0.3] (15,0) circle(1);
\draw (14,0) to [out=250, in=180] (15,-2) to [out=0, in=290] (16,0);

\draw (15.1,-0.5) to [out=230, in=130] (15.1,-1.5);
\draw (15.0,-0.7) to [out=290, in=70] (15.0,-1.3);

\end{tikzpicture}
\begin{tikzpicture}
[xscale=0.7, yscale=0.7]

\draw [yscale=0.3] (0,0) circle(1);
\draw [yscale=0.3] (0,-10) circle(1);
\draw (-1,0) -- (-1,-3);
\draw (1,0) -- (1,-3);
\draw [fill=black] (-0.5,-1) circle(0.05);
\draw [fill=black] (-0.5,-2) circle(0.05);
\node at (0, -1) {$\frac{1}{2}$};
\node at (0, -2) {$\frac{1}{2}$};

\node at (2,-1.5) {$=$};

\draw [yscale=0.3] (4,0) circle(1);
\draw [yscale=0.3] (4,-10) circle(1);
\draw (3,0) -- (3,-3);
\draw (5,0) -- (5,-3);
\draw [fill=black] (3.7,-1.5) circle(0.05);
\node at (4.1, -1.5) {$1$};

\node at (6,-1.5) {$=$};

\draw [yscale=0.3] (8,0) circle(1);
\draw [yscale=0.3] (8,-10) circle(1);
\draw (7,0) -- (7,-3);
\draw (9,0) -- (9,-3);

\draw (8.1,-1.0) to [out=230, in=130] (8.1,-2.0);
\draw (8.0,-1.2) to [out=290, in=70] (8.0,-1.8);

\end{tikzpicture}
$\ \ \ \ \ \ \ \ $
\begin{tikzpicture}
[xscale=0.5, yscale=0.7]

\draw [yscale=0.3] (0,0) circle(1);
\draw [yscale=0.3] (0,-10) circle(1);
\draw (-1,0) to [out=270, in=90] (-1.5,-1.5) to [out=270, in=90] (-1,-3);
\draw (1,0) to [out=270, in=90] (1.5,-1.5) to [out=270, in=90] (1,-3);
\draw (-0.8,-1.4) to [out=300, in=240] (0.8,-1.4);
\draw (-0.6,-1.5) to [out=30, in=150] (0.6,-1.5);
\draw[fill=black] (-0.3,-0.8) circle(0.05);
\node at (0.3,-0.8) {$\frac{1}{2}$};
\draw[fill=black] (-0.3,-2.2) circle(0.05);
\node at (0.3,-2.2) {$\frac{1}{2}$};

\node at (2,-1.5) {$=$};

\draw [yscale=0.3] (4,0) circle(1);
\draw [yscale=0.3] (4,-10) circle(1);
\draw (3,0) to [out=270, in=90] (2.5,-1.5) to [out=270, in=90] (3,-3);
\draw (5,0) to [out=270, in=90] (5.5,-1.5) to [out=270, in=90] (5,-3);
\draw (3.2,-1.) to [out=300, in=240] (4.8,-1.);
\draw (3.4,-1.1) to [out=30, in=150] (4.6,-1.1);
\draw (3.2,-2) to [out=300, in=240] (4.8,-2);
\draw (3.4,-2.1) to [out=30, in=150] (4.6,-2.1);

\end{tikzpicture}
\caption{\label{fig4_4} Top: Fractional dot $x^{1/\ell}$ can freely float along a connected component. Its $\ell$-th power is the handle. Bottom: examples of relations on dots and handles.}
\end{center}
\end{figure}
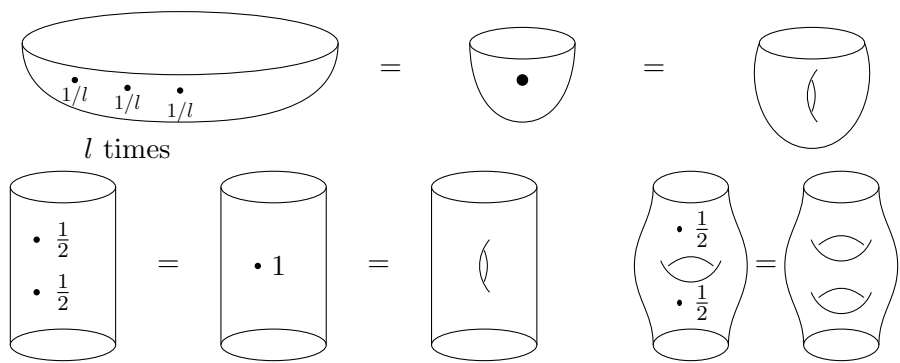

Fractional dots  $a/\ell$ and $b/\ell$, $a,b\in \Z_+$, floating on the same component, can merge into the fractional dot $(a+b)/\ell$. Vice versa, the latter can split into  $a/\ell$ and $b/\ell$ dots. Dot $\ell/\ell=1$ converts into $x$  and equals  a handle on the component.

Formally, one can introduce the category of fractional cobordisms $\Cobtwo^{\ell}$.  Its objects are non-negative integers $n\ge 0$ and morphisms from $n$ to $m$ are the  diffeomorphism classes rel boundary  of oriented cobordisms from $n$ to $m$ circles with dots floating on the  components and labelled by elements of the commutative semigroup $H=\frac{1}{\ell}\Z_+=\{0,1/\ell,2/\ell, \dots\}.$ Dots can merge, adding their labels, and the label $1$ dot equals the handle. Dot $0$ can  be erased. A connected closed cobordism of genus $g$ with a  dot $\frac{m}{\ell}$ reduces to a 2-sphere decorated by the dot  $\frac{m+g\ell}{\ell}\in H$.

Universal constructions for 2-dimensional cobordisms, as described here and in~\cite{Kh1,KS}, extend in a straightforward way to $\Cobtwo^{\ell}$ for any $\ell \ge 2$ (the original theory corresponds to $\ell=1$). Parameters of the theory are $\alpha_{n/\ell} \in \kk$, over all $n\ge 0$, encapsulated by the power series in  $T^{1/\ell}$,
\begin{equation}
    Z_{\alpha}(T^{1/\ell}) = \sum_{n\ge 0}\alpha_{n/\ell} T^{n/\ell}.
\end{equation}
State spaces $A_{\alpha}(k)$ of $k$ circles are defined as in~\cite{Kh1}, and the rationality result is  proved in the same way.
\begin{prop} Vector spaces $A_{\alpha}(k)$ are finite-dimensional for all $k\ge 0$ iff $A_{\alpha}(1)$ is finite-dimensional iff $Z_{\alpha}(T^{1/\ell})$ is a rational function,
\begin{equation}\label{eq_Z_frac}
    Z_{\alpha}(T^{1/\ell})=\frac{P(T^{1/\ell})}{Q(T^{1/\ell})},
\end{equation}
for coprime polynomials $P$ and $Q$, with $Q(0)\not=0$.
\end{prop}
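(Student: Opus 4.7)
The plan is to reduce the fractional-genus proposition to the integer-genus case \cite{Kh1} by the change of variable $S = T^{1/\ell}$ and by observing that, on a single connected component, all handles and dots from $H = \tfrac{1}{\ell}\Z_+$ may be absorbed into a single fractional dot whose label is a nonnegative multiple of $1/\ell$. Under this correspondence, the algebra $A_\alpha(1)$ is generated as a $\kk$-algebra by the fractional handle $x^{1/\ell}$, with $x^{k/\ell}$ corresponding to a disk carrying the dot labeled $k/\ell$; and the bilinear pairing sends the pair $(x^{n/\ell},x^{m/\ell})$ to the closed evaluation $\alpha_{(n+m)/\ell}$.

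I would prove the three conditions equivalent as (A) $\Rightarrow$ (B) $\Rightarrow$ (C) $\Rightarrow$ (A). The implication (A) $\Rightarrow$ (B) is immediate by taking $k=1$. For (B) $\Rightarrow$ (C), suppose $\dim A_\alpha(1) = K < \infty$; then the $K+1$ vectors $1,x^{1/\ell},\dots,x^{K/\ell}$ are linearly dependent in $A_\alpha(1)$, yielding a relation $\sum_{i=0}^{K} c_i\, x^{i/\ell} = 0$. Pairing this relation against $x^{m/\ell}$ for each $m\ge 0$ (equivalently, closing up and evaluating) gives the linear recurrence $\sum_{i=0}^{K} c_i\, \alpha_{(m+i)/\ell} = 0$. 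Equivalently, the ordinary power series $\widetilde{Z}(S) := \sum_{n\ge 0}\alpha_{n/\ell}\,S^n$ satisfies $\widetilde{Z}(S)\cdot\bigl(\sum c_i S^i\bigr) \in \kk[S]$, so $\widetilde{Z}(S)$ is rational in $S$. Since $Z_\alpha(T^{1/\ell}) = \widetilde{Z}(T^{1/\ell})$, this is the desired rationality in the form (\ref{eq_Z_frac}).

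For (C) $\Rightarrow$ (A), take the rational presentation $\widetilde{Z}(S) = P(S)/Q(S)$ with $Q(0) = 1$ and $(P,Q)=1$, and form the corresponding handle-type polynomial $U(y) = y^K Q(y^{-1})$ with $K = \max(\deg P + 1, \deg Q)$. As in Section~\ref{sec_end_one}, the recurrence above together with non-degeneracy of the bilinear pairing on $A_\alpha(1)$ forces the relation $U(x^{1/\ell}) = 0$ in $A_\alpha(1)$, so every fractional handle on a connected component of a viewable cobordism can be reduced to a $\kk$-linear combination of dots with label in $\{0, 1/\ell, \dots, (K-1)/\ell\}$. Thus $A_\alpha(k)$ is spanned by the finite set of viewable surfaces with $k$ ordered boundary circles, each connected component of generalized genus $< K/\ell$, and in particular with a bounded number of components; this is a finite spanning set. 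The argument is verbatim the one in~\cite[Sec.\ 2.4]{Kh1}, adapted by replacing integer genus with the element of $H$, which the fractional dot mechanism in Figures~\ref{fig4_3_1}--\ref{fig4_4} makes legitimate.

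The only nontrivial step is verifying that $U(x^{1/\ell}) = 0$ holds not merely as a relation modulo the kernel of the pairing on $A_\alpha(1)$ but is a valid skein relation when inserted into any viewable cobordism, so that the resulting quotient carries well-defined multiplication and evaluation. I expect this to follow from precisely the universal-pairing reasoning used in the $\ell = 1$ case: the relation is checked against an arbitrary closure and reduces to a true identity among the $\alpha_{n/\ell}$, which is guaranteed by the rationality of $\widetilde{Z}(S)$. With this in hand, the spanning set produced above yields the finiteness of all $A_\alpha(k)$ simultaneously.
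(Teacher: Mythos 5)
Your proposal is correct and in line with the paper, which in Section~\ref{sec_frac_genus} simply asserts that ``the rationality result is proved in the same way'' as in~\cite{Kh1} and gives no further detail: you have filled in exactly the intended adaptation of the integer-genus argument, with $T^{1/\ell}$ playing the role of $T$ and the Hankel/linear-recurrence argument giving (B)~$\Leftrightarrow$~(C) and the induced skein relation $U(x^{1/\ell})=0$ giving the finite spanning set for (C)~$\Rightarrow$~(A). The one step you flag as ``expected to follow'' --- that the relation $U(x^{1/\ell})=0$ in $A_\alpha(1)$ is a genuine skein relation valid after insertion into any component --- does in fact close: pairing a viewable cobordism containing $U(x^{1/\ell})$ on some component against any closure produces, on that component, a closed surface of genus $m/\ell$ for some $m\ge 0$, so the evaluation reduces to $\sum_i c_i\,\alpha_{(m+i)/\ell}=0$, which is precisely the recurrence supplied by the identity $Q(S)\widetilde Z(S)=P(S)$ for all $m\ge 0$ once $K=\max(\deg P+1,\deg Q)$.
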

Generalizations of the Deligne category extend to this case as well, and each rational function as in (\ref{eq_Z_frac}) gives rise to several categories by direct analogy with~\cite{RS}. These categories have finite-dimensional hom spaces and  include the analogue of the partition category, the Deligne category, and their quotients by ideals of negligible morphisms.

\vspace{0.1in}

We should warn the reader that cobordisms of fractional genus, as above, are simply  decorated  surfaces that are morphisms  of $\Cobtwo^{\ell}$. They don't carry any of the rich structure associated with the usual surfaces, such as the mapping class group, moduli spaces of complex structures, and so  on.

\vspace{0.1in}

{\it Commutative monoid decorations.}
More generally, one can take any commutative monoid $H$ (with the binary operation written additively as $+$), together with a monoid homomorphism $\psi:\Z_+ \lra H$.
To $\psi$ one can assign the category  $\Cobtwo^{\psi}$ of  oriented decorated 2D cobordisms. As before, objects of this category are non-negative integers $n\in \Z_+$, while the morphisms are oriented 2D cobordisms (modulo rel boundary  diffeomorphisms) decorated by dots labelled by elements of $H$. Dots labelled $a,b\in H$ floating on the same component can merge into a dot labelled $a+b$,  see Figure~\ref{fig4_5}. Dot labelled $0\in H$ can be erased. A handle on a component equals a dot on  that component labelled by  $\psi(1)$.

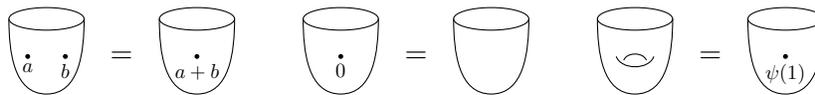
\begin{figure}[ht]
\begin{center}
\begin{tikzpicture}
[scale=0.5]
\draw [yscale=0.3] (1,0) circle(1);
\draw (0,0) to [out=270, in=180] (1,-2) to [out=0, in=270] (2,0);
\draw [fill=black] (0.5,-1) circle(0.05);
\draw [fill=black] (1.5,-1) circle(0.05);
\node [below, scale=0.7] at (1.5,-1) {$b$};
\node [below, scale=0.7] at (0.5,-1) {$a$};
\node at (3,-1) {$=$};
\draw [yscale=0.3] (5,0) circle(1);
\draw (4,0) to [out=270, in=180] (5,-2) to [out=0, in=270] (6,0);
\draw [fill=black] (5,-1) circle(0.05);

\node [below, scale=0.7] at (5.,-1) {$a+b$};
\end{tikzpicture}
$\ \ \ \ \ $
\begin{tikzpicture}
[scale=0.5]
\draw [yscale=0.3] (1,0) circle(1);
\draw (0,0) to [out=270, in=180] (1,-2) to [out=0, in=270] (2,0);
\draw [fill=black] (1,-1) circle(0.05);
\node [below, scale=0.7] at (1,-1) {$0$};
\node at (3,-1) {$=$};
\draw [yscale=0.3] (5,0) circle(1);
\draw (4,0) to [out=270, in=180] (5,-2) to [out=0, in=270] (6,0);
\end{tikzpicture}
$\ \ \ \ \ $
\begin{tikzpicture}
[scale=0.5]
\draw [yscale=0.3] (1,0) circle(1);
\draw (0,0) to [out=270, in=180] (1,-2) to [out=0, in=270] (2,0);
\draw (0.5,-1) to [out=290, in=250] (1.5,-1);
\draw (0.7,-1.1) to [out=70, in=110] (1.3,-1.1);
\node at (3,-1) {$=$};
\draw [yscale=0.3] (5,0) circle(1);
\draw (4,0) to [out=270, in=180] (5,-2) to [out=0, in=270] (6,0);
\draw [fill=black] (5.,-1) circle(0.05);
\node [below, scale=0.7] at (5.,-1) {$\psi(1)$};
\end{tikzpicture}
\caption{\label{fig4_5} Dots $a$ and $b$ merge  into $a+b$ dot; dot $0$ can be erased; handle equals the dot labelled $\psi(1)$.}
\end{center}
\end{figure}

Closed $\psi$-cobordisms are disjoint unions of their connected components, classified by their genus, which is an element of $H$. The analogue of evaluation is a  map  of sets
$\alpha: H\lra \kk$
which can be written via formal power series
\begin{equation}
   Z_{\alpha} = \sum_{h\in H} \alpha_h \, h
\end{equation}
and viewed as an element of  the  dual vector space $(\kk H)^{\ast}$.

In $\Cobtwo^{\psi}$ connected cobordisms from  $0$ to $1$ are parametrized by  elements  $h\in H$  and correspond to a 2-disk with a dot labelled $H$. Consequently, the space $A_{\alpha}(1)$ is the  $\kk H$-submodule of $(\kk H)^{\ast}$ generated by the functional $Z_{\alpha}$. It is finite-dimensional iff $\alpha$ is a representative function on $H$, see \cite{Kh1}.

\vspace{0.1in}

Notice that $\psi$ does not have to be injective. However, one can specialize to the case when $H$ is a free monoid and $\psi$ is injective, and then get any desired defining relations on generators of $H$ by restricting to suitable subspaces of $(\kk H)^{\ast}$. Taking large $H$, however, may move the emphasis from 2D cobordisms and representative functions on them to, for the most part, studying representative functions on $\kk H$, with only a  meagre input from cobordisms.

\vspace{0.1in}

Another potentially interesting specialization is to the periodic genus.  For that specialization genus  does not need to be fractional. Consider the  quotient  of the  cobordism  category by the relation that the $M$-th power of the handle is identity, that is, can  be removed. This corresponds to working with the monoid and map
\begin{equation}
    H = \Z/M\Z , \ \ \psi; \Z_+\lra H, \ \psi(1)=1,
\end{equation}
that is, modding out $\Z_+$ by $M\Z_+$. In the language of $\alpha$-evaluations, one is looking at "$M$-periodic" power series, that is, $\alpha_{M+n}=\alpha_n$ for all $n\ge 0$. Equivalently, the power series
\begin{equation}\label{eq_Z_cycl}
    Z_{\alpha}(T) = \frac{P(T)}{1-T^M}, \ \ \deg(P(T))< M ,
\end{equation}
is determined by the coefficients of $P(T)$. Such  evaluations necessarily extend to the category of cobordisms with integral genus, see the  next remark. Fractional version of (\ref{eq_Z_cycl}) also makes sense, with the power series
\begin{equation}\label{eq_Z_cycl_2}
    Z_{\alpha}(T^{1/\ell}) = \frac{P(T^{1/\ell})}
    {1-T^{M/\ell}},
    \ \ \deg(P(x))< M ,
\end{equation}
and  not necessarily integral $M/\ell$. In this  theory $M/\mathrm{gcd}(M,\ell)$ handles on  a  component  can be erased.

\vspace{0.1in}

\emph{Remark:}
Paper~\cite{KR2} discusses several
rank two Frobenius extensions $R_{\ast}\subset A_{\ast}$  used in various flavors of $SL(2)$ link homology. Here $R_{\ast}$ is a ground commutative ring and $A_{\ast}$ is a Frobenius $R_{\ast}$-algebra, which is, in particular, free of rank two over $R_{\ast}$. Extension $(R_{\mathcal{D}},A_{\mathcal{D}})$ considered in~\cite{KR2} makes use of the   \emph{anti-handle}, a formal inverse of the handle cobordism, denoted by $\star^{-1}$ in that  paper. This extension essentially describes  Lee's homology theory and also gives a monoidal functor from the category  $\Cobtwo^{\psi}$ to the category of free $R_{\mathcal{D}}$-modules, where
\begin{equation}
    H=\Z, \ \ \psi: \Z_+ \lra \Z
\end{equation}
is the usual homomorphism from the monoid of  non-negative integers (under addition) to integers. The functor assigns $A_{\mathcal{D}}^{\otimes n}$ to the  union of $n$ circles and the structure maps of that Frobenius algebra (unit, counit, multiplication, comultiplication) to the basic cobordisms: cup, cap, pants, copants. Multiplication by  the handle endomorphism of $A_{\mathcal{D}}$ is invertible and allows to introduce the antihandle (dot labelled $-1$) as the inverse of the handle endomorphism.

A similar localization appears in~\cite{KR1} in the context of evaluations of unoriented $SL(3)$ foams, where one can invert the discriminant and work with suitable decorations on foams.

\vspace{0.1in}

Allowing connected sums of cobordisms in $\Cobtwo$ with $\mathbb{RP}^2$ (which results in unorientable cobordisms) corresponds to working with the monoid and the map
\begin{equation}
    H=\langle 1,b\rangle /(3b=b+1), \ \ \psi:\Z_+ \lra H, \psi(1)=1,
\end{equation}
with dot labelled $b$ corresponding to the connected sum with $\mathbb{RP}^2$. In this monoid there is no cancellation, and $b+b\not= 1$ although $b+b+b=b+1$. Topologically, connected sum with three  copies of  $\mathbb{RP}^2$ is diffeomorphic to the connected sum with one  $\mathbb{RP}^2$ and the torus, but connected sum with two copies of $\mathbb{RP}^2$ (equivalently, with the Klein bottle) is not diffeomorphic to adding a handle, when applied to an orientable connected component.
Monoid $H$ surjects onto $\frac{1}{2}\Z_+$ by sending $b$ to $\frac{1}{2}$, intertwining homomorphisms $\psi$ for these monoids.

\vspace{0.1in}

{\it Extending to commutative algebras.}
Decorations of two-dimensional cobordisms by elements  of  a commutative monoid can be further generalized. Observe that  a map $\psi$ as above from  the "genus" semigroup $\Z_+$ into a commutative monoid $H$, together with the "trace" or evaluation $\alpha:H\lra \kk$ gives rise to two maps, that we still denote $\psi$ and $\alpha$,
\begin{equation}
    \kk[x]\cong \kk \Z_+ \stackrel{\psi}{\lra} \kk H \stackrel{\alpha}{\lra} \kk.
\end{equation}
The first  map is a homomorphism of commutative  $\kk$-algebras, the second (evaluation) map is $\kk$-linear. One  can generalize from $\kk H$  to a commutative $\kk$-algebra $B$  together with a homomorphism  $\psi$ and a $\kk$-linear  trace map
\begin{equation}\label{eq_data_k}
    \kk \Z_+ \cong \kk[x] \stackrel{\psi}{\lra} B \stackrel{\alpha}{\lra} \kk.
\end{equation}
Instead of the  set-theoretic category $\Cobtwo$ which does not have  a linear structure one starts with  the $\kk$-linear category $\kk\Cobtwo$ with the same objects as $\Cobtwo$ and morphisms -- finite $\kk$-linear combinations  of morphisms in $\Cobtwo$. One then modifies $\kk\Cobtwo$ to the category
$\Cobtwo^{\psi}$ as follows. Category $\Cobtwo^{\psi}$ has objects $n\in \Z_+$. Morphisms are $\kk$-linear combinations of oriented 2D cobordisms as before with dots  labelled by elements of $B$ floating on components and the following relations, see also Figure~\ref{fig4_6}:

\begin{itemize}
    \item Dots are subject to the obvious addition and product rules for elements of $B$,
    \item A handle on a cobordism can  be  replaced by a dot labelled $\psi(x)$,
    \item A closed surface of genus $g$  with dot labelled $b$ evaluates to  $\alpha(b\psi(x)^g)\in \kk.$
\end{itemize}

\begin{figure}[ht]
\begin{center}
\begin{tikzpicture}
\draw (0,0)--(0,1)--(2,1)--(2,0)--cycle;
\draw [fill=black] (0.3,0.5) circle(0.05);
\node at (0.3,0.3) {$a$};
\draw [fill=black] (1.6,0.5) circle(0.05);
\node at (1.5,0.3) {$b$};
\node at (2.5,0.5) {$=$};
\draw (3,0)--(3,1)--(5,1)--(5,0)--cycle;
\draw [fill=black] (4,0.5) circle(0.05);
\node at (4,0.3) {$ab$};
\end{tikzpicture}
$ \ \ \ \ \  \ \ $
\begin{tikzpicture}
\draw (1,0)--(1,1)--(2,1)--(2,0)--cycle;

\node at (2.5,0.5) {$=$};
\draw (3,0)--(3,1)--(5,1)--(5,0)--cycle;
\draw [fill=black] (4,0.5) circle(0.05);
\node at (4.5,0.3) {$\psi(x)$};
\draw [fill=white] (1.5,0.6) to [out=180, in=0] (0.6,0.9) to [out=180, in=90] (0.4,0.5) to [out=270, in=180] (0.6,0.1) to [out=0, in=180] (1.5,0.4);
\draw (0.5,0.5) to [out=290, in=250] (1,0.5);
\draw (0.6,0.45) to [out=60, in=120] (0.9,0.45);
\end{tikzpicture}

$\ \ \ \ $

\begin{tikzpicture}
\draw (0.0,0.5) to [out=290, in=250] (0.5,0.5);
\draw (0.1,0.45) to [out=60, in=120] (0.4,0.45);
\draw (1.0,0.5) to [out=290, in=250] (1.5,0.5);
\draw (1.1,0.45) to [out=60, in=120] (1.4,0.45);
\draw (2.0,0.5) to [out=290, in=250] (2.5,0.5);
\draw (2.1,0.45) to [out=60, in=120] (2.4,0.45);
\draw [yscale=0.3] (1.5,1.5) circle (1.7);
\draw [fill=black] (2.7,0.6) circle(0.05);
\node at (2.7,0.3) {$b$};
\node at (4,0.5) {$=$};
\node [blue] at (1,-0.5) {$g$ handles};
\draw [yscale=0.3] (6.5,1.5) circle (1.7);
\draw [fill=black] (6,0.5) circle(0.05);
\node at (7,0.4) {$\psi(x)^g b$};
\node at (10,0.5) {$=\ \ \alpha\left(b \psi(x)^g\right)$};
\end{tikzpicture}
\caption{\label{fig4_6} Skein relations in category $\Cobtwo^{\psi}$. Note that $a,b$ are elements of an algebra $B$ rather than of a monoid $H$ (where the binary operation is denoted $+$), which leads to the addition in Figure~\ref{fig4_5} becoming multiplication here.}
\end{center}
\end{figure}

In  this way, a surface of  genus $g$ with  dots labelled  $a_1, \dots, a_k$ floating on it reduces to a genus zero surface with the same boundary and a single dot labelled $\psi(x)^g a_1\dots  a_k$.
A closed connected component reduces to a 2-sphere with a dot $b$. It then evaluates to $\alpha(b)\in \kk$. Thus, any closed component evaluates to an element of $\kk$. In this way a dotted cobordism reduces to a viewable cobordism with all components of genus  zero and at most a single dot on each connected  component, labelled by some element of $B$.

Pick a basis $\{b\}_{b\in C}$ of $B$ that contains $1\in  B$. A morphism  from $n$ to $m$ in $\Cobtwo^{\psi}$ reduces to a linear  combination  of genus zero viewable cobordisms with dots on each components labelled by elements of the basis $C$ of  $B$. In fact, such dotted cobordisms constitute a basis in the hom space $\Hom_{\Cobtwo^{\psi}}(n,m)$. Vice  versa, category $\Cobtwo^{\psi}$ can be defined via these  bases and multiplication rules that come from composition of cobordisms, converting a handle to $\psi(x)$, multiplication  in the basis $C$ of  $B$ and evaluation map $\alpha\in B^{\ast}$. Basis elements are
parametrized by a choice of a  set-theoretical  partition  $\lambda\in D^m_n$ of $n+m$ boundary circles together with an assignment of an element of the basis $C$ of $B$ to each component.

\vspace{0.1in}

Next, assume that $\alpha\in B^{\ast}=\Hom_{\kk}(B,\kk)$ is a representative function and exclude the  trivial case $\alpha=0$. This means that the hyperplane $\ker(\alpha)\subset B$  contains an ideal $I\subset B$  of finite codimension, $\dim(B/I)<\infty$, see~\cite{Mn}, for instance. Assume that $I$ is the largest such ideal. Element $\alpha$ generates  a subrepresentation $B\alpha\subset  B^{\ast}$ of $B$ that factors through the action of $B/I$ and is isomorphic to a free rank one $B/I$-module.

\vspace{0.1in}

\emph{Remark:} Algebra $B'=B/I$ is a commutative Frobenius $\kk$-algebra, with the nondegenerate trace $\alpha$ and a preferred element, which is the image of $\psi(x)$ under the quotient map $B\lra B/I$. The constructions that follow can alternatively be done with such a commutative algebra $B'$ (necessarily finite-dimensional over $\kk$), equipped with  a nondegenerate trace and a preferred element.

\vspace{0.1in}

Next, we quotient $\Cobtwo^{\psi}$ by the relations that a dot labelled by $z\in I$ is zero. Such a dot can be expanded as a linear combination in the basis $C$. A possible convenient basis can  be  formed by choosing a basis  a basis $C_I$ of $I$ and extending it to a basis of $B$ that contains $1$ (the  latter  condition is also  for convenience). Let us denote such a basis by $C=C_I\sqcup C',$ with $1\in C'$ and $C'$ descending to a basis of the quotient $B/I$. Set $C'$ is finite.

\vspace{0.1in}

Denote the quotient category by $\scob^{\psi}_{\alpha}$. It is the analogue of the category $\PCobal$ from~\cite{KS}. One  can check that  a  basis of $\Hom_{\scob^{\psi}_{\alpha}}(n,m)$ is given by choosing a set-theoretic partition $\lambda\in D^m_n$ for $n+m$ boundary circles and assigning  an  element of $C'$ to  each component  of  the  partition. In particular, hom spaces in the  category $\scobal^{\psi}$ are finite-dimensional. The space of homomorphisms
\begin{equation}
A^{\psi}_{\alpha}(1):=\Hom_{\scobal^{\psi}}(0,1)
\end{equation}
is a commutative algebra under the pants cobordism, naturally isomorphic  to the Frobenius algebra $B/I$ above. Now form the additive
Karoubi closure
\begin{equation}
    \dcobal^{\psi} \ := \ \Kar(\scobal^{\psi})
\end{equation}
to get a $\kk$-linear idempotent-complete rigid symmetric monoidal category with finite-dimensional hom spaces. This is the analogue  of the Deligne  category for the  data $(B,\psi, \alpha)$ as in (\ref{eq_data_k}), with a representative functional $\alpha$ (trivial case $\alpha=0$ gives the zero category).

Categories $\Cobtwo^{\psi}$,   $\scobal^{\psi}, $  and $\dcobal^{\psi}$ have a trace map  given on a decorated $(n,n)$-cobordism $x$  representing an element in $\Hom(n,n)$  by closing $x$  via $n$ annuli into a closed cobordism $\widehat{x}$ and  evaluating via $\alpha$:
\begin{equation}
    \tr(x) = \alpha(\widehat{x}).
\end{equation}
Denote by  $J^{\psi}_{\alpha}$ the two-sided ideal in $\scobal^{\psi}$ of \emph{negligible morphisms} for the trace $\tr$. Let
\begin{equation}
       \Cob_{\alpha}^{\psi}= \mathrm{SCob}^{\psi}_{\alpha} / J^{\psi}_{\alpha}
\end{equation}
be the quotient  category by this ideal. Likewise, let $\udcobal^{\psi}$ be the quotient of the Deligne category $\dcobal^{\psi}$ by the negligible ideal for $\tr$.

\vspace{0.1in}

For a representative $\alpha$, as before,
these  categories can be organized into the following diagram of categories and functors, with a commutative square on the right.
\begin{equation} \label{eq_seq_cd_4}
\begin{CD}
\Cobtwo @>>> \kk\Cobtwo @>>> \Cobtwo^{\psi} @>>> \mathrm{SCob}^{\psi}_{\alpha}   @>>> \dcobal^{\psi} \\
@.  @.  @.   @VVV     @VVV  \\
 @.   @.    @.  \Cob^{\psi}_{\alpha}  @>>>
 \udcobal^{\psi}
\end{CD}
\end{equation}
This diagrams of categories is fully analogous to the ones described in (\ref{eq_seq_cd_1}) above and  in~\cite{KS,KQR,Kh2}. The four categories in the commutative square have finite-dimensional hom spaces. The two categories on the far right are idempotent complete.

\vspace{0.1in}

\emph{Remark:}
The construction above is likely  to be  more interesting when the algebra $B$ is not very  large.
One may, for  instance, take $B=\kk[x,y]/(g(x,y))$, the quotient of the  ring of polynomials in two variables by a polynomial that depends nontrivially on both  $x$ and  $y$, and define  $\psi:\kk[x]\lra B$  by $\psi(x)=x$.

\vspace{0.1in}

\emph{Remark:} The category of thin flat 2-dimensional cobordisms in~\cite{KQR} has commuting \emph{hole} and \emph{handle} cobordisms. Similar to the discussion above, dot-decorated version of that category can be introduced, with elements of a commutative monoid $H$ floating  on components of cobordism. One fixes two elements of $H$, to equate to a  handle and a hole, respectively. Equivalently, a homomorphism $\psi:\Z_+\times \Z_+\lra H$ can be fixed for that.

If, instead, elements of a commutative $\kk$-algebra $B$ are made to float on cobordism's components, one should choose two elements of $B$, to equate to the handle and the hole, respectively. In the version of the thin cobordism category~\cite{KQR} where side boundaries are colored by colors $\{1,\dots, r\}$, there are $r$ different holes, one for each color of its boundary. Then to combine $B$ with the handles and holes, one chooses $r+1$ elements in $B$.

\end{document}